\documentclass[11pt]{imsart}
\usepackage{geometry}
 \geometry{
 a4paper,
 left=  25mm,
 right= 25mm,
 top=   25mm,
 bottom=25mm,
 }
\usepackage{graphicx} 
\usepackage{amsfonts}
\usepackage{amsmath}
\usepackage{amssymb, amsthm}
\usepackage{url}
\usepackage{comment}
\usepackage{appendix}
\usepackage{footnote}
\usepackage{bbm}
\usepackage{natbib}
\usepackage{mathtools}

\newtheorem {Proposition}{Proposition}[section]
\newtheorem {Lemma}[Proposition] {Lemma}
\newtheorem {Theorem}[Proposition]{Theorem}
\newtheorem {Corollary}[Proposition]{Corollary}
\newtheorem {Remark}[Proposition]{Remark}



\def\log{\mathop{\rm log}\nolimits}

\def\Var{\mathrm{Var}}

\def\x{\mathbf{x}}
\def\y{\mathbf{y}}

\newcommand{\E}{\mathbb{E}}

\newcommand{\N}{\mathbb{N}}
\newcommand{\NN}{\mathbb{N}}

\renewcommand{\P}{\mathbb{P}}

\newcommand{\R}{\mathbb{R}}
\newcommand{\RR}{\mathbb{R}}

\newcommand{\U}{\mathbf{U}} 

\newcommand{\X}{\mathbf{X}} 
\newcommand{\Y}{\mathbf{Y}} 

\newcommand{\Zb}{\mathbf{Z}}

\newcommand{\rD}{\mathrm{D}}

\newcommand{\rF}{\mathrm{F}}

\newcommand{\rK}{\mathrm{K}}

\newcommand{\rM}{\mathrm{M}}

\newcommand{\rS}{\mathrm{S}}
\newcommand{\rT}{\mathrm{T}}

\newcommand{\cB}{\mathcal{B}}

\newcommand{\cK}{\mathcal{K}}

\newcommand{\cM}{\mathcal{M}}
\newcommand{\cN}{\mathcal{N}}
\newcommand{\cO}{\mathcal{O}}
\newcommand{\cP}{\mathcal{P}}

\newcommand{\cU}{\mathcal{U}}

\newcommand{\bi}{\mathbf{i}}
\newcommand{\bj}{\mathbf{j}}
\newcommand{\nnk}{[[n]]^k}
\newcommand{\nn}[1]{[[n]]^{#1}}
\newcommand{\mmk}{[[m]]^k}
\newcommand{\mm}[1]{[[m]]^{#1}}

\def\one{\mathbf{1}}

\def\mP{\mathrm{P}}
\def\mQ{\mathrm{Q}}
\def\mPn{\mathrm{P}_n}
\def\mQm{\mathrm{Q}_m}
\def\mQn{\mathrm{Q}_m} 

\def\AP{\mathcal{A}_{\mP}}
\def\APn{\mathcal{A}_{\mP_n}}
\def\AQ{\mathcal{A}_{\mQ}}
\def\AQm{\mathcal{A}_{\mQ_m}}
\def\AQn{\mathcal{A}_{\mQ_m}}

\def\DPn{\mathcal{D}_{\mP_n}}

\def\DQm{\mathcal{D}_{\mQm}}

\def\MP{\cM_{\mP}}

\def\MQ{\cM_{\mQ}}

\def\bAP{ \bar{\mathcal{A}}_{\mP}}
\def\bAPn{\bar{\mathcal{A}}_{\mPn}}

\def\pipq{\pi_{\mP,\mQ}} 
\def\pinm{\pi_{\mPn,\mQm}} 
\def\xipq{\xi_{\mP,\mQ}}    
\def\ximn{\xi_{\mPn,\mQm}}  

\def\bfpq{\bar{f}_{\mP,\mQ}}   
\def\bfnm{\bar{f}_{\mPn,\mQm}} 
\def\bfmn{\bar{f}_{\mPn,\mQm}} 
\def\bgpq{\bar{g}_{\mP,\mQ}}   
\def\bgmn{\bar{g}_{\mPn,\mQm}} 

\def\Nnm{\left\lceil \log_+^{1/2}(\frac{nm}{n+m})\right\rceil}

\def\bAP{\bar{\mathcal{A}}_{\mP}}
\usepackage{enumerate}
\usepackage{natbib}
\usepackage{bbm}
\usepackage[T1]{fontenc}    
\usepackage{lmodern}        
\usepackage{hyperref}
\usepackage{xurl}
\hypersetup{breaklinks=true}     
\usepackage{url}            
\usepackage{booktabs}       
\usepackage{amsfonts}       
\usepackage{nicefrac}       
\usepackage{microtype}      
\usepackage{graphicx}
\usepackage{color}
\usepackage{xcolor}
\graphicspath{ {./images/} }
\DeclareSymbolFontAlphabet{\amsmathbb}{AMSb}%
\newcommand{\norm}[1]{\left\|#1\right\|}

\newcommand{\coloneqq}{:=}

\newcommand{\convW}{\xrightarrow{\;\;w\;\;}}
\newcommand{\convP}{\xrightarrow{\;\;p\;\;}}
\newcommand{\convAS}{\xrightarrow{\;\;a.s.\;\;}}
\newcommand{\Op}{\cO_p}
\newcommand{\op}{o_p}
\newcommand{\opnm}{\op\left(\sqrt{\frac{n+m}{nm}}\right)}


\begin{document}
\title{Weak Limits for Empirical Entropic Optimal Transport: Beyond Smooth Costs}
\date{\today}
\runauthor{Gonz\'alez-Sanz and Hundrieser}
\runtitle{Weak limits for Empirical Entropic Optimal Transport}
\begin{aug}
\author[A]{\fnms{Alberto} \snm{Gonz\'alez-Sanz}\ead[label=e1]{alberto.gonzalez.sanz@uva.es}}
\and
\author[B]{\fnms{Shayan} \snm{Hundrieser}\ead[label=e2]{s.hundrieser@math.uni-goettingen.de}}

\address[A]{IMUVA, Universidad de Valladolid, Spain; IMT, Universit\'e de Toulouse III, France, \printead{e1}}

\address[B]{IMS, University of G\"ottingen, Germany, \printead{e2}}
\end{aug}

\begin{abstract}
 We establish weak limits for the empirical entropy regularized optimal transport cost, the expectation of the empirical plan and the conditional expectation. Our results require only uniform boundedness of the cost function and no smoothness properties, thus emphasizing the far-reaching regularizing nature of entropy penalization. To derive these results, we employ a novel technique that sidesteps the intricacies linked to empirical process theory and the control of suprema of function classes determined by the cost. Instead, we perform a careful linearization analysis for entropic optimal transport with respect to an empirical $L^2$-norm, which enables a streamlined analysis. As a consequence, our work gives rise to new implications for a multitude of transport-based applications under general costs, including pointwise distributional limits for the empirical entropic optimal transport map estimator, kernel methods as well as regularized colocalization curves. Overall, our research lays the foundation for an expanded framework of statistical inference with empirical entropic optimal transport.
\end{abstract}

\begin{keyword}[class=MSC]
\kwd[Primary ]{62E20} %
\kwd{62R07}%
\kwd{49Q22}%
\kwd[; secondary ]{62G20}%
\kwd{60B10}%
\end{keyword}

\begin{keyword}
\kwd{Central Limit Theorem}
\kwd{Entropy Regularization}
\kwd{Infinite order V-statistics}
\kwd{Optimal transport}
\end{keyword}

\maketitle
\section{Introduction}\label{sec:intro}

For a Polish space $\Omega$ and a cost function $c:\Omega\times\Omega\to \R$ the optimal transport problem between two probability measures $\mP,{\rm Q}\in \mathcal{P}(\Omega)$ is defined~as
\begin{equation}
    \label{eq:OT}
    \rT(\mP,\mQ)=\min_{\pi\in \Pi(\mP,\mQ)} \int_{\Omega\times \Omega} c(\x,\y) d\pi(\x,\y),
\end{equation}
where $\Pi(\mP,\mQ)$ represents the set of couplings between $\mP$ and $\mQ$. 
Over the last several decades, there has been substantial progress in the understanding of optimal transport theory, with comprehensive monographs detailing analytical  \citep{vil03,villani2008optimal,santambrogio2015optimal}, computational \citep{cuturi18}, and statistical aspects \citep{panaretos2020an}. In the context of data science,  optimal transport offers two key entities: optimizers for \eqref{eq:OT}, referred to as \emph{optimal transport plans}, and the optimal value  $\rT(\mP,\mQ)$, known as the \emph{optimal transport cost}. 
The optimal transport plan has been used for domain adaptation \citep{Courty2017OptimalTF}, in multivariate rank based inference \citep{DELBARRIO2020104671,Hallin2020DistributionAQ,deb2019multivariate}, for the definition of a kernel on the space of probability measures \citep{bachoc2020gaussian}, to define a robust classifier \citep{Serrurier_2021_CVPR,BethumePay}, in the context of counterfactual explanations and causal inference \citep{delara2021transportbased,torous2021optimal}, and  in multiple output quantile regression \citep{Barrio2022NonparametricMC}.
Since, for certain cost functions \citep[Chapter~6]{villani2008optimal}, the optimal transport cost serves to define a distance between probability measures it has been used as a discriminator in generative adversarial networks \citep{WGANS}, as a  test statistic \citep{Hallin2020MultivariateGT,Javi2,sommerfeld2018} and as a penalty term in fair learning \citep{Risser2019TacklingAB}. 

Nonetheless, there are two primary drawbacks of optimal transport that impede its utilization for sizable and high-dimensional datasets: its \emph{computational complexity} -- exact solvers generally scale cubically in the number of support points of the two underlying measures, and its \emph{statistical complexity} -- plagued by the curse of dimensionality. %
Indeed, although the statistical complexity of estimating the optimal transport cost is generically determined by the minimum intrinsic dimension of the two measures as well as the regularity of the cost function, as outlined by the recently discovered lower complexity adaptation principle \citep{weed2019sharp,hundrieser2022empirical}, if both population measures have high intrinsic dimensions slow convergence occur, which deteriorate exponentially in the size of the dimension. 

For instance, if the cost $c$ is the squared Euclidean distance and $\Omega=\mathbb{R}^d$, the minimax rate of estimation for the optimal transport cost through a sample $\X_1, \dots, \X_n$ and $\Y_1, \dots, \Y_m$ of i.i.d. random variables  which follow the probability distributions $\mP$ and $\mQ$, respectively, is of order $(n\wedge m)^{-2/(4\vee d)}$ \citep{manole2021sharp}.  
Similar observations have been made for the optimal transport map, i.e., when the optimal transport plan is concentrated the graph of a map \citep{HutterRigollet, manole2021plugin}. This last point is important to provide statistical guarantees for the aforementioned applications of optimal transport, since in practice, practitioners only have access to the mentioned samples and not to the underlying probabilities $\mP$ and $\mQ$. That is, they are only able to calculate the optimal plan and costs between  the empirical measures $\mP_n=\frac{1}{n}\sum_{i=1}^n\delta_{\X_i} $ and  $\mQ_m=\frac{1}{m}\sum_{j=1}^m\delta_{\Y_j} $. 

As a solution to the computational challenges, \cite{cuturi13} proposed adding relative entropy ${\rm KL}$ -- defined for measures $\alpha$ and $\beta$ as ${\rm KL}(\alpha \mid \beta) = \int \log(\frac{d\alpha}{d\beta}(x))d\alpha(x)$ if $\alpha$ is absolutely continuous with respect to $\beta$, $\alpha\ll \beta$, and $+\infty$ otherwise -- to the original transport problem as a penalty term, i.e.,
\begin{equation}\label{kanto_entrop}
\rS_{\epsilon}(\mP,\mQ)=\inf_{\pi\in \Pi(\mP,\mQ)} \int_{\Omega\times \Omega} c(\x,\y)d\pi(\x,\y)+\epsilon {\rm KL}(\pi \mid \mP \otimes \mQ).
\end{equation}
The value $\rS_{\epsilon}(\mP,\mQ)$ is referred to as the \emph{entropy regularized optimal transport cost} and a unique optimizer $\pi_{\mP,\mQ}\in \Pi(\mP,\mQ)$ for \eqref{kanto_entrop} always exists under bounded costs (see, e.g., \cite{nutz2022entropic} or Proposition \ref{prop:structureEOT}). This optimizer is referred to as the \emph{entropy regularized optimal transport plan}. 
Being a strictly convex optimization problem, the entropic optimal transport problem \eqref{kanto_entrop} also admits a dual formulation which is the basis for the Sinkhorn algorithm, offering a computational complexity that is an order of magnitude faster compared to the original linear programming problem  \citep{altschuler2017near, dvurechensky2018computational}.

Moreover, entropy regularization is not only computationally beneficial, but also in terms of statistical complexity (see literature review in Section \ref{subsec:related_work}), with convergence parametric rates of order $(n\wedge m)^{-1/2}$ under fixed regularization parameter $\epsilon>0$ provided that the cost function is sufficiently smooth \citep{Genevay2018SampleCO, mena2019, Chizat2020}. Only recently, it has been discovered by \cite{rigollet2022sample} that this behavior is generically true for any kind of bounded costs, which can be considered as a starting point of this work. 

Motivated by these developments, this work is concerned with providing an exhaustive analysis on distributional limits for empirical entropic optimal transport quantities. This extends a previous line of research by \cite{klatt2020empirical, hundrieser2021limit} for discrete settings and \cite{ del2022improved,gonzalez2022weak, goldfeld2022statistical,Goldfeld2022LimitTF} for smooth costs.
To simplify as much as possible the notation, we assume throughout this work that $\epsilon=1$. This is not a genuine restriction since $\rS_\epsilon(\mP, \mQ) = \epsilon \rS_1'(\mP, \mQ)$ for any $\epsilon>0$, 
where $\rS_1'$ denotes the entropic transport cost between $\mP$ and $\mQ$ for the cost $c/\epsilon$. Further, for notation we  suppress the regularization parameter and write $\rS(\mP, \mQ)\coloneqq\rS_1(\mP, \mQ)$. 

The main results of this paper are stated as follows and are proven in Section \ref{sec:main_results}. \newpage
\begin{Theorem} \label{Theo:MainResults} For a  Polish space $\Omega$  consider a bounded and Borel-measurable cost function $c\in L^{\infty}(\Omega\times \Omega)$. Then, for $n,m\rightarrow \infty$  with $\frac{m}{n+m}\to \lambda \in (0,1)$ the following holds.
\begin{enumerate}%
    \item\label{item:main_costs} The empirical entropic optimal transport cost satisfies asymptotically
    \begin{align*}\sqrt{\frac{n\, m}{n+m}}\Big( \rS(\mPn, \mQm) - \rS(\mP, \mQ) \Big)\convW \mathcal{N}\big(0, \sigma^2_{\lambda}\big)
    \end{align*}
    with asymptotic variance $\sigma^2_{\lambda}\coloneqq\lambda \operatorname{Var}_{\X\sim\mP}(f_{\mP,\mQ}(\X))+(1-\lambda) \operatorname{Var}_{\Y\sim\mQ}(g_{\mP,\mQ}(\Y))$, where  $(f_{\mP,\mQ},g_{\mP,\mQ})$ denotes a  solution for the dual entropic optimal transport problem \eqref{eq:dual_entrop}.
    \item\label{item:main_plan} The expectation of $\eta\in L^\infty(\Omega\times 
\Omega)$ with respect to the empirical entropic optimal transport plan satisfies asymptotically
\begin{align*}
   \sqrt{\frac{n\, m}{n+m}}\left(\E_{(\X,\Y)\sim \pinm}[\eta(\X,\Y)] - \E_{(\X,\Y)\sim \pipq}[\eta(\X,\Y)] \right)%
\convW \cN(0,\sigma^2_{\lambda}(\eta))
\end{align*}
    with asymptotic variance $\sigma^2_{\lambda}(\eta)$ given in \eqref{eq:VarianceEvalPlan} and characterized by $\mP, \mQ$, $c$, $\eta$.
\item\label{item:main_condplan} The conditional expectation of $\eta\in L^\infty(\Omega\times 
\Omega)$ with respect to the empirical entropic optimal transport plan given that $\X = \x\in\Omega$ satisfies asymptotically
\begin{multline*}
        \sqrt{\frac{n\, m}{n+m}}\left(\E_{(\X,\Y)\sim \pinm}[\eta(\X,\Y) | \X = \x] - \E_{(\X,\Y)\sim \pipq}[\eta(\X,\Y)|\X = \x] \right)\\
         \convW \cN(0,\sigma^2_{\lambda}(\eta, \x))
\end{multline*}
    with asymptotic variance 
$   \sigma^2_{\lambda}(\eta, \x)$ given in \eqref{eq:VarianceEvalCondPlan} and characterized by $\mP, \mQ$, $c$, $\eta(\x,\cdot)$.
   \end{enumerate}
\end{Theorem}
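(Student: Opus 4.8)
The plan is to reduce each of the three statements to the classical Lindeberg--Lévy central limit theorem for an average of i.i.d.\ bounded random variables, by establishing a first-order linearization of the relevant functional of $(\mPn,\mQm)$ around $(\mP,\mQ)$ with a remainder that is $\opnm$. Throughout I work with the dual problem \eqref{eq:dual_entrop} and its maximizers, the Schrödinger potentials $(\fpq,\gpq)$, which by Proposition~\ref{prop:structureEOT} exist, are uniformly bounded in terms of $\|c\|_{\infty}$, and satisfy the marginal equations making $x\mapsto\int e^{\fpq(x)+\gpq(y)-c(x,y)}\,d\mQ(y)\equiv 1$ ($\mP$-a.s.) and symmetrically in $y$. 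Since the optimizer is unique only up to the gauge $(\fpq,\gpq)\mapsto(\fpq+a,\gpq-a)$, I fix a normalization once and for all; this is harmless because the potentials enter the final answers only through the gauge-invariant pairing $\int\fpq\,d(\mPn-\mP)+\int\gpq\,d(\mQm-\mQ)$ (invariant since $\mP,\mPn,\mQ,\mQm$ are all probability measures) and its analogues.

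For Part~\eqref{item:main_costs} I would sandwich $\rS(\mPn,\mQm)-\rS(\mP,\mQ)$ between the two dual-objective differences obtained by inserting the optimizer of one problem into the other. Expanding the dual objective, using $d\mPn\,d\mQm-d\mP\,d\mQ=(d\mPn-d\mP)\,d\mQ+d\mP\,d(\mQm-\mQ)+(d\mPn-d\mP)\,d(\mQm-\mQ)$, and applying the Schrödinger equations for $(\mP,\mQ)$ to kill the two first-order pieces of the exponential term, the lower side becomes
\begin{align*}
\rS(\mPn,\mQm)-\rS(\mP,\mQ)\;\ge\;&\int\fpq\,d(\mPn-\mP)+\int\gpq\,d(\mQm-\mQ)\\
&{}-\iint e^{\fpq+\gpq-c}\,d(\mPn-\mP)\,d(\mQm-\mQ),
\end{align*}
where the last term is a completely degenerate bilinear statistic of variance $\O(\tfrac1{nm})$, hence $\opnm$. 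The matching upper bound comes from inserting $(\fnm,\gnm)$; it differs from the lower-bound expansion by at most the empirical suboptimality gap $\Phi_{\mPn,\mQm}(\fnm,\gnm)-\Phi_{\mPn,\mQm}(\fpq,\gpq)\ge 0$ of the true potentials plus the population strong-concavity defect $\Phi_{\mP,\mQ}(\fpq,\gpq)-\Phi_{\mP,\mQ}(\fnm,\gnm)\ge 0$. I would bound both by a stability estimate for the potentials: the dual objective is, near the optimum, uniformly strongly concave in $L^2(\mP)\times L^2(\mQ)$ (its negative Hessian is, up to the product measure, multiplication by the density $e^{\fpq+\gpq-c}$, bounded above and away from $0$), so an $L^2$-control of the fluctuation of the dual gradient at $(\fpq,\gpq)$ yields $\|\fnm-\fpq\|_{L^2(\mP)}^2+\|\gnm-\gpq\|_{L^2(\mQ)}^2=\Op(\tfrac{n+m}{nm})$ and puts the two quantities above at the same order. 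After scaling by $\sqrt{nm/(n+m)}$ everything but the linear term vanishes, and $\sqrt{nm/(n+m)}\big(\int\fpq\,d(\mPn-\mP)+\int\gpq\,d(\mQm-\mQ)\big)$ is a sum of two independent centered sample means of bounded i.i.d.\ variables; the CLT together with $\tfrac{m}{n+m}\to\lambda$ gives $\cN(0,\sigma^2_{\lambda})$ with the stated variance.

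For Parts~\eqref{item:main_plan} and~\eqref{item:main_condplan} I would use that $\E_{\pipq}[\eta]=\iint\eta(x,y)\,e^{\fpq(x)+\gpq(y)-c(x,y)}\,d\mP(x)\,d\mQ(y)$ (and, for the conditional version, the analogous expression with $\eta(\x,\cdot)$ and the renormalized conditional density) depends on $(\mP,\mQ)$ both directly, through the integrating measures, and indirectly, through the potentials. The direct dependence contributes linear terms handled exactly as above; the indirect dependence requires the Hadamard differentiability of the Schrödinger map $(\mP,\mQ)\mapsto(\fpq,\gpq)$ with respect to perturbations of the marginals, whose derivative solves the linearized Schrödinger system and is a bounded linear operator by boundedness of $c$ — again extracted from the strong-concavity/linearization analysis of Part~\eqref{item:main_costs}. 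Combining the two contributions yields an influence-function representation $\E_{\pinm}[\eta]-\E_{\pipq}[\eta]=\int\phi_\eta\,d(\mPn-\mP)+\int\psi_\eta\,d(\mQm-\mQ)+\opnm$ with explicit bounded $\phi_\eta,\psi_\eta$ determined by $\mP,\mQ,c,\eta$, and the CLT for the linear term gives $\cN(0,\sigma^2_{\lambda}(\eta))$; replacing $\eta$ by $\eta(\x,\cdot)$ and accounting for the conditional normalization gives $\cN(0,\sigma^2_{\lambda}(\eta,\x))$.

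I expect the main obstacle to be the stability step — controlling the quadratic remainder in the value expansion (equivalently, the $L^2$ rate of the potentials) \emph{without} any smoothness of $c$ and \emph{without} invoking empirical-process/Donsker arguments for the classes $\{c(\cdot,y)\}$, $\{c(x,\cdot)\}$. The point is that one never needs a uniform control of $\mPn-\mP$: the remainder only ever pairs $\mPn-\mP$ (resp.\ $\mQm-\mQ$) with the single fixed bounded function $e^{\fpq+\gpq-c}$ and with the potential increments, so $L^2$-type bounds plus the degenerate-bilinear bound suffice, and the strong concavity of the dual converts the $L^2$ fluctuation of the gradient into the $L^2$ rate of the argmax. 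Making every error term provably $\opnm$, and tracking the gauge consistently across the three statements, is the technical heart of the argument.
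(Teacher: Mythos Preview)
Your Part~\ref{item:main_costs} argument is sound and in fact more direct than the paper's. The sandwich is correct, the cross term $\iint e^{\fpq+\gpq-c}\,d(\mPn-\mP)\,d(\mQm-\mQ)$ is a fully degenerate two-sample statistic of variance $O((nm)^{-1})$, and the gap between the two sides is bounded by second-order Taylor remainders of the dual objective. One caveat: the ``strong concavity $\Rightarrow$ $L^2$-rate of the argmax'' step naturally delivers the rate in the \emph{empirical} norm $L^2(\mPn)\times L^2(\mQm)$ (this is the Rigollet--Stromme estimate the paper also uses), not in $L^2(\mP)\times L^2(\mQ)$ as you write. The population-norm rate you need for the term $\Phi_{\mP,\mQ}(\fpq,\gpq)-\Phi_{\mP,\mQ}(\fnm,\gnm)$ can be recovered from the empirical one by writing $\bfmn-\bfpq$ via one Sinkhorn update and bounding the resulting pieces, but this deserves a line. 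The paper, by contrast, does not sandwich: it writes $\rS(\mPn,\mQm)-\rS(\mP,\mQ)=\int\bfpq\,d(\mPn-\mP)+\int\bgpq\,d(\mQm-\mQ)+\int(\bfmn-\bfpq)\,d\mPn+\int(\bgmn-\bgpq)\,d\mQm$ and kills the last two terms as a special case ($\tilde\eta\equiv 1$) of its general linearization Theorem~\ref{Theo:PotIntApprox}. Your route buys a self-contained proof of Part~\ref{item:main_costs}; the paper's route buys a single engine that also drives Parts~\ref{item:main_plan} and~\ref{item:main_condplan}.

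For Parts~\ref{item:main_plan} and~\ref{item:main_condplan} there is a genuine gap. The quadratic $L^2$-rate of the potentials is not enough: after linearizing $\xinm-\xipq\approx\xipq(\bfmn-\bfpq+\bgmn-\bgpq)$ you must show that $\int\eta\,\xipq\,(\bfmn-\bfpq)\,d\mPn\,d\mQm$ itself admits an influence-function expansion $\int\phi\,d(\mPn-\mP)+\int\psi\,d(\mQm-\mQ)+\opnm$ with \emph{fixed} $\phi,\psi$. The implicit-function linearization of the Schr\"odinger system at $(\mPn,\mQm)$ produces $(\bfmn-\bfpq,\bgmn-\bgpq)\approx\overline\Gamma_{n,m}^{-1}(\cdot)$ with the \emph{empirical} operators $\APn,\AQm$ inside the inverse; replacing them by the population operators $\AP,\AQ$ so that the result integrates to $\int\phi\,d(\mPn-\mP)+\int\psi\,d(\mQm-\mQ)$ is the whole difficulty, and it is exactly where a naive ``Hadamard differentiability of the Schr\"odinger map'' appeal would silently require a Donsker embedding for the class $\{e^{-c(\cdot,y)}\}$. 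The paper avoids this by never leaving the empirical norm: it truncates the Neumann series for $\overline\Gamma_{n,m}^{-1}$ at order $N\asymp\log^{1/2}(nm/(n+m))$, swaps $\APn,\AQm$ for auxiliary operators $\DPn,\DQm$ built from the population density, rewrites the truncation as an explicit $V$-statistic of growing order, reduces to a $U$-statistic, and reads off the H\'ajek projection (Section~\ref{sec:proofs}, Steps~2--6). Your final paragraph correctly names the obstacle for Part~\ref{item:main_costs}, but the ``$L^2$-type bounds plus degenerate-bilinear'' mechanism you propose does not supply this operator-replacement step for Parts~\ref{item:main_plan} and~\ref{item:main_condplan}.
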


It is worth pointing out that the distributional limits are not distribution free, i.e., the variances $\sigma^2_{\lambda}$, $\sigma^2_{\lambda}(\eta)$ and $\sigma^2_{\lambda}(\eta, \x)$ depend on $\mP$ and $\mQ$. For this purpose we provide in Section \ref{sec:main_results} consistent estimators for the asymptotic variances. Moreover, for practical purposes we only focus on the two-sample case since entropic optimal transport quantities are essentially only computable for discrete measures.  %

Remarkably, the weak limits given in Theorem \ref{Theo:MainResults} require no smoothness or regularity conditions on the underlying cost function which paves the way for transport-based inference for metric based costs on metric spaces. In addition, smoothness of the cost is a somewhat obtrusive assumption as it prohibits the use of prominent Euclidean ground costs $(\x,\y)\mapsto |\x-\y|^p$ for $p$ not even. Moreover, Theorem \ref{Theo:MainResults} considerably broadens the perspective of underlying ground spaces and is not limited to bounded sets within (finite dimensional) Euclidean spaces. 
Indeed, current data science is not limited to smooth spaces; for example, time series data is frequently represented as functional data (in a Banach or infinite-dimensional Hilbert space, see  \cite{Horvth2012} and references therein) and images can be represented as probability measures \citep{feydy2017optimal,DeLaraDiffeo}.

Overall, Theorem \ref{Theo:MainResults} essentially just leaves open the statistical analysis of empirical entropic optimal transport for unbounded cost functions. While it is plausible to expect that similar assertions persist given adequate concentration of the underlying measures, formalizing these claims proves difficult due to the absence of appropriate quantitative limits for the relevant objects of interest (Remark \ref{rmk:unboundedCosts}).

The proof of Theorem \ref{Theo:MainResults} is based on a careful linearization analysis of optimal solutions for the dual entropic optimal transport problem (see Theorem \ref{Theo:PotIntApprox}). We pursue this approach by relying on $L^2$-norms with respect to empirical measures and suitably controlling the arising the approximating errors. This way, we succeed in sidestepping issues from empirical process theory such as ensuring that the class of optimal transport potentials is a Donsker\footnote{In the literature of empirical processes, a class of functions $\mathcal{F}$ is Donsker if the central limit theorem is valid in $\ell^{\infty}(\mathcal{F})$ (see e.g., \citealt{van1996weak}).} class, which would inevitably require certain regularity conditions imposed on the cost function. Notably, this empirical norm was also utilized by \cite{rigollet2022sample} for the derivation of convergence rates~and non-asymptotic concentration bounds for empirical entropic optimal transport quantities. 

\subsection{Summary of applications}

By transitioning from smooth to bounded costs, our assumptions now facilitate asymptotically valid inferences concerning both entropic optimal transport cost and plan in non-Euclidean spaces and general costs. These insights hold particular importance for a variety of applications, such as in computational biology \citep{evans2012phylogenetic,Schiebinger19, tameling2021Colocalization, wang2021optimal} where metric-based costs are commonly employed, transport-based signal analysis incorporating non-smooth signal functions \citep{thorpe2017transportation}, registration invariant object discrimination in metric measure spaces \citep{chowdhury2019gromov,weitkamp2022distribution} utilizing costs associated with the underlying metrics, and general transport-based measures for dependency on metric spaces \citep{nies2021transport, liu2022entropy, wiesel2022measuring}.

As tangible examples of our theoretical framework, we present in Section \ref{sec:Applications} distributional limits for the empirical Sinkhorn cost as initially introduced by \cite{cuturi13}, along with the divergence \citep{genevay2018}, entropic optimal transport maps \citep{seguy2018large}, Sinkhorn kernels for Gaussian processes \citep{bachoc2023gaussian}, and transport-based colocalization curves \citep{klatt2020empirical}.

Overall, our findings emphasize that entropy regularization considerably enhances the statistical complexity and streamlines transport inference in general Polish spaces, paving the way for further research and applications across a range of fields.

\subsection{Related works}\label{subsec:related_work}
Several recent works have studied the statistical behavior of the regularized transport problem. A first contribution was made by \cite{Genevay2018SampleCO} who showed for compactly supported measures and smooth costs that
\begin{equation}\label{eq:sampleCO}
\E\left(\left\vert \rS_{\epsilon}(\mP_n, \mQ_m ) -\rS_{\epsilon}(\mP, \mQ )\right\vert\right) \leq \cK(c,\epsilon)\sqrt{\frac{n+m}{nm}}
\end{equation}
with a constant $\cK(c,\epsilon)$ that depends exponentially on the ratio of the cost and the regularization parameter $\epsilon$. For squared Euclidean costs and sub-Gaussian measures the constant was then improved to a polynomial dependence by \cite{mena2019} and further refined for compactly supported measures by \cite{Chizat2020}. The strategy used in these works is based on the observation that the degree of H\"older-regularity of a function $f(\x)=\int e^{-c(\x,\y)} d\mu(\y)$, where $\mu$ is a positive measure, is the same as that of the cost $c$. Thus, under a smooth cost function, the collection of feasible dual entropic optimal transport potentials is included in the set of $\alpha$-H\"older continuous functions $\mathcal{C}^{\alpha}(\Omega)$ for $\alpha>0$ arbitrarily large. Therefore, upon selecting  $\alpha$ sufficiently large, the class $\mathcal{C}^{\alpha}(\Omega)$ is Donsker (see, e.g., \citealt{van1996weak}), i.e.,
\begin{equation*}
\sup_{f\in \mathcal{C}^{\alpha}(\Omega)} \left\vert \int f d(\mP_n-\mP) \right\vert =\mathcal{O}_{p}(n^{-1/2}).
\end{equation*}
Notably, the arguments underlying \eqref{eq:sampleCO} require a finite-dimensional Euclidean domain and the cost function to be sufficiently smooth. %

For sub-Gaussian measures \cite{mena2019} and squared Euclidean costs also showed that the fluctuations of the empirical regularized cost around its expected value are asymptotically Gaussian, i.e., 
\begin{equation}\label{eq:fluctuations}
    \sqrt{\frac{n\, m}{n+m}}(\rS(\mP_n,{\rm Q}_m)-\E(\rS(\mP_n,{\rm Q}_m)))\convW \mathcal{N}(0, \lambda \operatorname{Var}_{\mP}(f_{\mP,\mQ})+(1-\lambda) \operatorname{Var}_{\mQ}(g_{\mP,\mQ})),
\end{equation}
    for $n,m\to \infty$ and $\frac{m}{n+m}\to \lambda\in (0,1)$. Let us point out that \eqref{eq:sampleCO} does not permit replacement of the expectation $\E(\rS(\mP_n,{\rm Q}_m))$ by its population counterpart $\rS(\mP, \mQ)$ in \eqref{eq:fluctuations}, hindering further statistical inference with the entropic optimal transport cost. 
    
    Initial contributions, asserting the validity of such a replacement  were made by \cite{bigot2019CentralLT} and \cite{klatt2020empirical} for finitely supported measures, and later extended by \cite{hundrieser2021limit} for suitably dominated costs for measures with countable support and sub-Exponential concentration. Their techniques rely on a differentiability analysis of the entropic optimal transport cost and the functional delta method (see, e.g., \citealt{Roemisch04}). 

    Moreover, using a refined analysis of the bias, \cite{del2022improved} proved for sub-Gaussian measures that the bias convergence rate is $n^{-1}$, allowing interchangeability of $\E(\rS(\mP_n,{\rm Q}_m))$ in \eqref{eq:fluctuations} with $\rS(\mP,{\rm Q})$. In parallel, \cite{goldfeld2022statistical} %
    confirmed this result using a unified framework for regularized optimal transport by relying on the functional delta method. Their approach also asserts consistency of the bootstrap and asymptotic efficiency of the empirical plug-in estimator. Notably, both techniques by \cite{del2022improved} and \cite{goldfeld2022statistical} crucially rely on smoothness of the cost function in order for entropic optimal transport potentials to lie in a Donsker class.

In addition to distributional limits for the empirical entropic optimal transport cost, multiple works also analyzed weak limits of the corresponding entropic optimal plan. First contributions to this subject were done by \cite{klatt2020empirical} and \cite{hundrieser2021limit} for a discrete ground space $\Omega$ in the Banach space $\ell^1(\Omega\times \Omega)$. For more general spaces similar results cannot hold since the $\ell^1$-norm is too strong; instead results for continuous settings only consider expectations of single functions with respect to the entropic optimal transport plan. 
The two works \cite{gonzalez2022weak,Goldfeld2022LimitTF} that provide such weak limits start with the study of the empirical  entropic optimal transport potentials, i.e., dual optimizers of \eqref{eq:dual_entrop}. To this end, if one follows the path of empirical processes, it must be ensured that the difference between the empirical and population potential is described in terms of a Hadamard differentiable function of the empirical process in a Donsker class. As a consequence, this approach is not suitable for non-continuous costs.

A paradigm shift in the smoothness assumptions for the cost function is carried out by \cite{rigollet2022sample}, who show for uniformly bounded and measurable costs that empirical entropic optimal transport quantities generically converge with parametric order $n^{-1/2}$. Further, they show the bias of the empirical entropy regularized optimal transport cost to converge at the rate $n^{-1}$, which previously was only known for squared Euclidean costs by \cite{del2022improved}. 
In their analysis, \cite{rigollet2022sample}, similar to \cite{del2022improved}, begin by studying the convergence rates of the potentials. However, they linearize these potentials with respect to the empirical $L^2(\mPn)\times L^2(\mQm)$ norm instead of a fixed norm in a Banach space (H\"older continuous functions in \citealt{del2022improved}). In conjunction with independence of random variables, their approach asserts appropriate upper bounds on the mean absolute deviation for entropic potentials and costs, as well as expectations of functions with respect to entropic plans without reliance on suprema over function classes.

Moreover, let us also point out that \cite{Harchaoui2020} proposed a different estimator, namely $\pi'_{\mPn,\mQm}$, of $\pipq$ based on a regularization procedure inspired by Schr\"odinger’s lazy gas experiment, which enables an explicit expression for the optimizer.  In their work, they also derived a pointwise weak limit for $\sqrt{\frac{n\, m}{n+m}}(\pi'_{\mPn,\mQm}-\pipq)$. The asymptotic variance of their limit matches that in Theorem~\ref{Theo:MainResults}.\ref{item:main_plan} and they conjecture the validity of such a limit for the {\it ``common''} empirical entropy regularized  plan, $\pi_{\mPn,\mQm}$. For smooth costs, \cite{gonzalez2022weak} proved this conjecture. %

  In the context of unregularized optimal transport, these results cannot be obtained in the same manner. Neither using our technique nor using the empirical processes based technique. Generally, there is no closed form to determine the solutions of the dual problem, and the optimization problem cannot be written as a Z-estimation problem, which is the fundamental basis of our reasoning. Furthermore, it is well-known that the regularity of the cost and densities does not imply the regularity of the potentials in general \citep{Ma2005,Loeper2009}, so that the optimization class of the dual formulation cannot be reduced to a Donsker class. Additionally, the transport problem suffers from the curse of dimensionality, and one cannot expect limits with parametric  rate  in dimensions greater than 4 \citep{manole2021sharp}. However, the convergence rate of the fluctuations (difference between empirical cost and its expected value) is parametric, irrespective of the dimension, and they turn out to be asymptotically Gaussian if the potentials are unique up to additive constants (see \citealt{delbarrio2019,delbarrio2021central,Javi2} for weak limits and \citealt{staudt2022uniqueness} for the uniqueness conditions of the potentials).

Due to the aforementioned curse of dimensionality, we cannot replace the expected value of the empirical cost by its population value except in cases where at least one probability is supported on a set of sufficiently low dimension \citep{hundrieser2022unifying, Hundrieser23_OT_estcost,hundrieser2022empirical}, such as the semi-discrete case \citep{del2022semi,sadhu2023limit}, the discrete case \citep{sommerfeld2018,tameling18}, or the well-known univariate problem \citep{Munk98,del1999central,BarrioGine2005,delBarrioGordaliza2019, hundrieser2022statistics}. It is worth pointing out that weak limits for the empirical (unregularized) optimal transport cost are not always centered normal, which is in strict contrast to the entropy regularized setting. Indeed, if the collection of population dual potentials is non-unique, normal limits occur -- an observation which is known for general supremum type functionals \citep{carcamo2020directional}.

\subsection{Notation}\label{notation} The random variables considered in this work are measurable functions on the same underlying  probability  space $(\boldsymbol{\Omega}, \mathcal{F}, \P)$.  Throughout this work, $\Omega$ is a Polish space and probability measures ${\rm P}$ and ${\rm Q}$ are assumed to be Borel measures, which we denote by ${\rm P},{\rm Q}\in \mathcal{P}(\Omega)$. 
For a general Banach space $\mathcal{B}$, the norm is denoted as $\|\cdot\|_{\mathcal{B}}$. The operator norm of an operator $M:\mathcal{B}\to \mathcal{B}$ is denoted as 
$ \norm{M}_{\mathcal{B}}=\sup_{\norm{x}_{\mathcal{B}}\leq 1}\norm{Mx}_{\mathcal{B}.}$
Given a random sequence $\{w_n\}_{n\in}\subset \mathcal{B}$  and a real random sequence $\{a_n\}_{n\in\N}$,  the notation  $w_n=o_p(a_n)$ means that the sequence $w_n/a_n$ tends to $0$ in probability, and  $w_n=\mathcal{O}_p(a_n)$ that $w_n/a_n$ is stochastically bounded. %
 In the proofs of the results, we will use the notation $\cK(a_1, \dots, a_k)$ to indicate a constant that depends exclusively on $a_1, \dots, a_k$ and which may vary from line to line. 
 Further, given positive sequences $\{a_n\}_{n\in \N}, \{b_n\}_{n\in \N}\subset (0,\infty)$ the notation $ a_n\asymp b_n$ indicates that $ a_n/ b_n \to (0, \infty)$ as $n\to \infty$. 
Weak convergence of random variables is denoted by \smash{$\convW$},  convergence in probability by \smash{$\convP$}, and almost sure convergence by \smash{$\convAS$}.
For a Borel measure $\mu$ on $\Omega$, the space $L^2(\mu)$ denotes the vector space of square-integrable functions, while $L^2_0(\mu)$ denotes the subset of functions in $L^2(\mu)$ which admit expectation zero.  Further, we denote by $L^\infty(\Omega)$ the space of uniformly bounded and Borel-measurable functions on $\Omega$. The uniform norm of a function $f:\Omega\to \R$ is denoted by 
$\|f\|_{\infty}=\sup_{\x\in \Omega} \vert f(x)\vert$. Moreover, for ease of notation we write 
$ \norm{\cdot}_n=\norm{\cdot}_{L^2(\mPn)}$, $ \norm{\cdot}_m=\norm{\cdot}_{L^2(\mQn)}$ and $ \norm{\cdot}_{n\times m}=\norm{\cdot}_{L^2(\mPn\times \mQm)}$.
The expected value of a function $f$ with respect to a probability  measure $\mP$ is denoted as 
$ \int f d\mP= \int f(\x) d\mP(\x)=\E_{\X\sim \mP}[f(\X)],$
while its variance as
$ \operatorname{Var}_{\X\sim \mP}[f(\X)].$ Finally,  we write $\log_+(t) \coloneqq \max(1,\log(t))$.

\section{Preliminaries}\label{sec:preliminaries}

In this section we outline structural insights about entropy regularized optimal transport and introduce relevant operators which arise in the formulation of our weak limits. 

\begin{Proposition}[{\citealt[Theorems 4.2 and 4.7, Equation 4.11]{nutz2021introduction}}]\label{prop:structureEOT}
    Let $\mP, \mQ\in \cP(\Omega)$ be probability measures on a Polish space $\Omega$ and consider a uniformly bounded, Borel measurable cost function $c\colon \Omega \times \Omega \rightarrow \RR$. Define the kernel $C(\x,\y) = \exp(-c(\x,\y))$. Then, the following assertions hold.
    \begin{enumerate}%
        \item Strong duality is satisfied in the sense that 
            \begin{multline}    
            \label{eq:dual_entrop}
            \rS(\mP,{\rm Q})= \sup_{\substack{f\in L^\infty(\mP) \\ g\in L^\infty(\mQ)}}\bigg[ \int f(\x) d\mP(\x) + \int g(\y) d\mQ(\y)\\
            -\, \int  C(\x,\y) e^{f(\x)+g(\y)} d\mP(\x) d\mQ(\y) +1\bigg].
            \end{multline}
        \item There exists a unique optimizer $\pi_{\mP, \mQ}\in \Pi(\mP, \mQ)$ for \eqref{kanto_entrop}, referred to as the entropic optimal transport plan, given by \begin{subequations}\label{eq:optimallityCodPlan}
  \begin{align}
      d\pi_{\mP,\mQ}(\x,\y)&=\xi_{\mP,\mQ}(\x,\y) d\mP(\x) d\mQ(\y),\label{eq:optimallityCodPlan_density}
      \intertext{where the density is characterized by}
     \xi_{\mP,\mQ} (\x,\y)&= C(\x,\y) e^{f_{\mP,\mQ}(\x)+g_{\mP,\mQ}(\y)},\label{eq:optimallityCodPlan_representation}
  \end{align}
  \end{subequations}
        for a pair of potentials $(f_{\mP,\mQ},g_{\mP,\mQ})\in L^\infty(\mP)\times L^\infty(\mQ)$, referred to as entropic optimal transport potentials, which solve \eqref{kanto_entrop}.
\item \label{item:optimalityCondition}Any pair $(f_{\mP,\mQ},g_{\mP,\mQ})$ of entropy regularized optimal transport potentials can be written one in terms of each other, 
\begin{equation}
\begin{aligned}
    \label{eq:optimallityCodt0}
       f_{\mP,\mQ}&=-\log\left(\int C(\cdot,\y) e^{g_{\mP,\mQ}(\y)}d\mQ(\y)\right), \quad \mP-\text{a.s.}\\
    g_{\mP,\mQ}&=-\log\left(\int C(\x,\cdot)  e^{f_{\mP,\mQ}(\x)}d\mP(\x)\right), \quad \mQ-\text{a.s.}
\end{aligned}
\end{equation}
These equalities are necessary and sufficient for optimality of potentials, i.e., a~pair of potentials $(f,g)\in L^\infty(\mP)\times L^\infty(\mQ)$ solves \eqref{eq:dual_entrop} if and only if \eqref{eq:optimallityCodt0} is met.
    \end{enumerate}
\end{Proposition}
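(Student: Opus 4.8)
The plan is to follow the now-standard route to the structure theory of entropic optimal transport under bounded costs (as in \citet{nutz2021introduction}), organised around four steps: existence and uniqueness of the primal optimiser by the direct method; a one-line weak-duality estimate; attainment of the dual together with the derivation of the Schr\"odinger system; and, finally, the equivalence asserted in the last part of the statement, extracted from the equality case of weak duality. Throughout I would abbreviate $\mu \coloneqq \mP\otimes\mQ$ and use that $C = e^{-c}$ is bounded above and bounded away from zero since $c\in L^\infty(\Omega\times\Omega)$.

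For the first step, since any $\pi\in\Pi(\mP,\mQ)$ with finite $\mathrm{KL}(\pi\mid\mu)$ is absolutely continuous with respect to $\mu$, I would reparametrise \eqref{kanto_entrop} over densities $h = d\pi/d\mu\ge 0$ subject to the marginal constraints $\int h(\x,\y)\,d\mQ(\y) = 1$ for $\mP$-a.e.\ $\x$ and $\int h(\x,\y)\,d\mP(\x) = 1$ for $\mQ$-a.e.\ $\y$. The functional $h\mapsto \int c\,h\,d\mu + \int h\log h\,d\mu$ is strictly convex and, by Jensen, its entropy part is nonnegative; since $c$ is bounded, the values $\int h\log h\,d\mu$ are bounded along any minimising sequence, which by de~la~Vall\'ee--Poussin forces uniform integrability and hence, by Dunford--Pettis, relative weak sequential compactness in $L^1(\mu)$. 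Along a weakly convergent subsequence the linear term is continuous (as $c\in L^\infty$), the entropy is weakly lower semicontinuous, and the affine marginal constraints pass to the limit, producing the unique minimiser $\xi_{\mP,\mQ}$ and thus $\pi_{\mP,\mQ}$ as in \eqref{eq:optimallityCodPlan_density}. For weak duality, for any $(f,g)\in L^\infty(\mP)\times L^\infty(\mQ)$ and any feasible $h$ I would combine $\int (f(\x)+g(\y))\,h\,d\mu = \int f\,d\mP + \int g\,d\mQ$ with the pointwise inequality $a\log a - a t + e^{t}\ge a$, valid for $a\ge 0$ and $t\in\R$ (equivalently $e^{s}\ge 1+s$ after substituting $s = t-\log a$), applied with $a = h(\x,\y)$ and $t = f(\x)+g(\y)-c(\x,\y)$; integrating yields
\[
\int c\,h\,d\mu + \int h\log h\,d\mu \;\ge\; \int f\,d\mP + \int g\,d\mQ - \int C\,e^{f+g}\,d\mu + 1 ,
\]
with equality precisely when $h = C e^{f+g}$ holds $\mu$-a.e. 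Optimising both sides gives the inequality ``$\ge$'' in \eqref{eq:dual_entrop}.

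The third step is to produce a dual pair realising this equality; by the equality case above this amounts to finding $(f,g)$ with $\xi_{\mP,\mQ} = C e^{f+g}$, and demanding that this density have marginals $\mP$ and $\mQ$ is precisely the system \eqref{eq:optimallityCodt0}. I envisage two routes. The variational one: perturb the optimal $h$ by $h(1+t\zeta)$ with $\zeta$ bounded and $L^2(\pi_{\mP,\mQ})$-orthogonal to all functions of the form $\phi(\x)+\psi(\y)$, and deduce from stationarity at $t=0$ that $c + \log\xi_{\mP,\mQ}$ splits $\pi_{\mP,\mQ}$-a.e.\ as $f(\x)+g(\y)$; boundedness of $c$ together with a truncation is needed to make sense of the integrals involving $\log\xi_{\mP,\mQ}$. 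The fixed-point one: iterate the Sinkhorn maps $f_{k+1} = -\log\int C(\cdot,\y)e^{g_k(\y)}\,d\mQ(\y)$ and $g_{k+1} = -\log\int C(\x,\cdot)e^{f_{k+1}(\x)}\,d\mP(\x)$, note that boundedness of $c$ renders the iterates uniformly bounded after one step and the dual objective monotone along the iteration, and extract a limit $(f_{\mP,\mQ},g_{\mP,\mQ})$ solving \eqref{eq:optimallityCodt0} by a compactness argument or a contraction argument in Hilbert's projective metric. Either way $\xi_{\mP,\mQ} = C e^{f_{\mP,\mQ}+g_{\mP,\mQ}}$ is feasible and saturates the displayed bound, so strong duality \eqref{eq:dual_entrop} and the representations \eqref{eq:optimallityCodPlan_density} and \eqref{eq:optimallityCodPlan_representation} follow at once. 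For the last part of the statement: if $(f,g)$ attains the supremum in \eqref{eq:dual_entrop}, then -- the common optimal value being the primal minimum -- the equality case of the displayed estimate applied with $h = \xi_{\mP,\mQ}$ forces $\xi_{\mP,\mQ} = C e^{f+g}$ $\mu$-a.e., and taking marginals yields \eqref{eq:optimallityCodt0}; conversely, a solution of \eqref{eq:optimallityCodt0} makes $C e^{f+g}\,d\mu$ a feasible coupling attaining the lower bound, so that $(f,g)$ is dual-optimal.

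The genuine obstacle, I expect, is the third step -- dual attainment for a merely bounded, Borel-measurable cost. Without any continuity of $c$ one cannot compactify the set of candidate potentials via Arzel\`a--Ascoli, and the linear span $\{\phi(\x)+\psi(\y)\}$ need not be closed in $L^2(\pi_{\mP,\mQ})$, so the variational route requires the truncation argument and the Sinkhorn route requires the a~priori uniform bounds; it is precisely the uniform boundedness of $c$ that makes both arguments work. As this is classical material, in the paper I would simply cite \citet{nutz2021introduction}.
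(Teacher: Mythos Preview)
Your proposal is correct, and your concluding instinct matches the paper exactly: the paper does not prove this proposition at all but simply attributes it to \citet[Theorems 4.2 and 4.7, Equation 4.11]{nutz2021introduction} and moves on. Your sketch of the standard argument (direct method for primal existence, Fenchel--Young for weak duality, Sinkhorn or variational attainment for the dual) is accurate and well-organised, but it is more than the paper itself provides.
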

The optimality conditions in \eqref{eq:optimallityCodt0} also enable a canonical extension of the potentials beyond the supports of $\mP$ and $\mQ$, so that they are well-defined everywhere on $\Omega$.
For our purposes, we will frequently consider the unique pair of potentials
$(f_{\mP}, g_{\mQ})\in L^\infty(\Omega)\times L^\infty(\Omega)$ such that everywhere on $\Omega$ it holds that
\begin{equation}\label{eq:optimalityCriterionNicePotentials}
\begin{aligned}
 f_{\mP,\mQ}=-\log\left(\int C(\cdot,\y) e^{g_{\mP,\mQ}(\y)}d\mQ(\y)\right),\quad 
    g_{\mP,\mQ}&=-\log\left(\int C(\x,\cdot)e^{f_{\mP,\mQ}(\x)}d\mP(\x)\right), \\
    \int g_{\mP,\mQ}(\y) d\mQ(\y) &= 0.
\end{aligned}
\end{equation}
This also uniquely determines the density $\xipq$ everywhere on $\Omega\times \Omega$, even beyond the support of the underlying measures. Moreover, given empirical measures $\mPn, \mQm$ we define the empirically shifted population potentials $(\bfpq, \bgpq)\in L^\infty(\Omega)\times L^\infty(\Omega)$ by  
$$ \bfpq(\x) := f_{\mP,\mQ}(\x) + \int g_{\mP,\mQ}(\y) d\mQn(\y), \quad  \bgpq(\y) := g_{\mP,\mQ}(\y) - \int g_{\mP,\mQ}(\y) d\mQn(\y).$$
Based on Proposition \ref{prop:structureEOT}.\ref{item:optimalityCondition} this still defines a pair of entropic optimal transport potentials which now depends on the empirical measure $\mQn$ and are suitably shifted to satisfy $\int \bgpq(\y) d\mQm(\y)=0$. 
With these conventions at our disposal we formulate a suitable bound for entropic optimal transport potentials in terms of uniform bound for the cost function.

\begin{Lemma}[{\citealt[Theorem 1.2]{marino2020optimal}}]\label{lem:regularity}
   Consider a uniformly bounded cost function $c\in L^\infty(\Omega^2)$, then for any $\mP,\mQ\in 
    \mathcal{P}(\Omega)$ it holds that
    $$\inf_{v\in \R}\{\|f_{\mP,\mQ}-v\|_{\infty}+\| g_{\mP,\mQ}+v\|_{\infty}\}\leq \, \frac{3}{2} \norm{c}_\infty.$$
    In particular, for the pair of optimal potentials $(\bfpq, \bgpq)$ such that $\int \bgpq d \mQm = 0$, it holds (deterministically) that $\max(\|\bfpq\|_\infty, \|\bgpq\|_\infty) \leq 3\norm{c}_\infty$. 
    Further, it holds  that
    $$ \exp(-3\norm{c}_\infty) \leq \xi_{\mP, \mQ}(\x, \y) \leq \exp(3\norm{c}_\infty) \quad \text{ for any } \,\x, \y \in \Omega.$$
\end{Lemma}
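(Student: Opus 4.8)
The plan is to prove the three displayed bounds in sequence, with the first being the only substantial one and the other two following by elementary manipulations.

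\emph{Step 1: the symmetric potential bound.} For this I would reduce everything to a statement about the Sinkhorn operator. Recall from \eqref{eq:optimallityCodt0} that any optimal pair satisfies $f = -\log\int C(\cdot,\y)e^{g(\y)}d\mQ(\y)$ and $g = -\log\int C(\x,\cdot)e^{f(\x)}d\mP(\x)$. Since $\exp(-\norm{c}_\infty) \le C(\x,\y) \le \exp(\norm{c}_\infty)$ pointwise, one gets for any probability measure $\mu$ and any bounded $h$ the pointwise two-sided estimate $-\norm{c}_\infty - \log\int e^{h}d\mu \le -\log\int C e^{h}d\mu \le \norm{c}_\infty - \log\int e^{h}d\mu$. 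Writing $T_\mP(h)(\x) := -\log\int C(\x,\y)e^{h(\y)}d\mQ(\y)$ (and symmetrically $T_\mQ$), the key observation is that $T_\mP$ contracts oscillation: $\mathrm{osc}(T_\mP h) \le \mathrm{osc}(h)$ trivially, but more is true once $C$ is bounded away from $0$ and $\infty$, namely a Birkhoff–Hilbert-type contraction. However, the clean route to the constant $3/2$ is the one-shot estimate: normalize $g$ so that $\int g \,d\mQ = v$ for the optimal $v$; then $f = T_\mP(g)$ has $\|f - v'\|_\infty \le \tfrac12\mathrm{osc}(T_\mP g) \le \tfrac12 \mathrm{osc}(g)$ for a suitable centering $v'$, and similarly oscillation of $g$ is controlled by $\mathrm{osc}(C)$-type quantities. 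Since this is exactly \citet[Theorem 1.2]{marino2020optimal}, I would simply invoke it for the first inequality rather than reprove it, and concentrate the write-up on deriving the remaining two consequences from it.

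\emph{Step 2: from the symmetric bound to $\max(\|\bfpq\|_\infty,\|\bgpq\|_\infty) \le 3\norm{c}_\infty$.} Let $v^\star$ be a near-optimal value in the infimum, so $\|f_{\mP,\mQ} - v^\star\|_\infty + \|g_{\mP,\mQ} + v^\star\|_\infty \le \tfrac32\norm{c}_\infty$; in particular each summand is at most $\tfrac32\norm{c}_\infty$. Now by definition $\bgpq = g_{\mP,\mQ} - \int g_{\mP,\mQ}\,d\mQm$, and $\int g_{\mP,\mQ}\,d\mQm = \int (g_{\mP,\mQ}+v^\star)\,d\mQm - v^\star$, so $\bgpq = (g_{\mP,\mQ}+v^\star) - \int(g_{\mP,\mQ}+v^\star)\,d\mQm$, whence $\|\bgpq\|_\infty \le 2\|g_{\mP,\mQ}+v^\star\|_\infty \le 3\norm{c}_\infty$. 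For $\bfpq$, note $\bfpq = f_{\mP,\mQ} + \int g_{\mP,\mQ}\,d\mQm$, and using the identity $f_{\mP,\mQ} = -\log\int C(\cdot,\y)e^{g_{\mP,\mQ}(\y)}d\mQ(\y)$ together with the bound on $C$ gives $\|\,f_{\mP,\mQ} + \log\int e^{g_{\mP,\mQ}}d\mQ\,\|_\infty \le \norm{c}_\infty$; combined with Jensen ($\log\int e^{g}d\mQ \ge \int g\,d\mQ$) and the above one controls $\|\bfpq\|_\infty$. Alternatively, and more cleanly, since $(\bfpq,\bgpq)$ is itself an optimal pair one can re-apply the symmetric inequality with the centering $v = 0$: because $\int \bgpq\,d\mQm = 0$ one shows the optimal $v$ in $\inf_v\{\|\bfpq - v\|_\infty + \|\bgpq + v\|_\infty\}$ can be taken within $\tfrac12\mathrm{osc}$-distance of $0$, yielding $\max(\|\bfpq\|_\infty,\|\bgpq\|_\infty) \le 3\norm{c}_\infty$. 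I would present whichever of these is shortest.

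\emph{Step 3: the density bounds.} From \eqref{eq:optimallityCodPlan_representation}, $\xi_{\mP,\mQ}(\x,\y) = C(\x,\y)e^{f_{\mP,\mQ}(\x)+g_{\mP,\mQ}(\y)}$, and this quantity is invariant under the shift $(f,g)\mapsto(f+v,g-v)$, so we may evaluate it with the pair $(\bfpq,\bgpq)$. Then $|f_{\mP,\mQ}(\x)+g_{\mP,\mQ}(\y)| = |\bfpq(\x)+\bgpq(\y)| \le 6\norm{c}_\infty$ and $\exp(-\norm{c}_\infty)\le C(\x,\y)\le\exp(\norm{c}_\infty)$, which would nominally give $\exp(-7\norm{c}_\infty) \le \xi \le \exp(7\norm{c}_\infty)$. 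To get the sharper $\exp(\pm 3\norm{c}_\infty)$ one must not bound $f$ and $g$ separately: instead use \eqref{eq:optimallityCodt0} directly, $\xi_{\mP,\mQ}(\x,\y) = \dfrac{C(\x,\y)e^{g_{\mP,\mQ}(\y)}}{\int C(\x,\y')e^{g_{\mP,\mQ}(\y')}d\mQ(\y')}$, a ratio of a value of $\y'\mapsto C(\x,\y')e^{g(\y')}$ to its $\mQ$-average, hence bounded above and below by $\exp(\mathrm{osc}_{\y'}(\,\cdot\,))$; since $\mathrm{osc}(c) \le 2\norm{c}_\infty$ is too lossy here, one uses instead that $\int\bgpq\,d\mQm=0$ and $\|\bgpq\|_\infty\le 3\norm{c}_\infty$ — but the genuinely sharp constant again comes from the Marino–Gerolin estimate controlling $\mathrm{osc}(g_{\mP,\mQ})$ by $3\norm{c}_\infty$, so $\xi \in [\exp(-3\norm{c}_\infty),\exp(3\norm{c}_\infty)]$ after combining the $\mathrm{osc}(c) \le \ldots$ and $\mathrm{osc}(g)$ contributions appropriately.

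\emph{Main obstacle.} The only real difficulty is tracking the \emph{sharp} constants ($\tfrac32$ and $3$ rather than some larger multiple of $\norm{c}_\infty$): the naive triangle-inequality bounds on $f$, $g$ separately and then on $\xi$ lose a factor, and recovering the stated constants requires the oscillation-contraction structure of the Sinkhorn fixed-point equations rather than crude sup-norm estimates. Since the first inequality is quoted verbatim from \citet{marino2020optimal}, the write-up can lean on it and the remaining work is bookkeeping with the shift invariance of $\xi_{\mP,\mQ}$ and the normalization $\int\bgpq\,d\mQm = 0$.
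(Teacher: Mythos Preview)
The paper does not prove this lemma at all; it is quoted directly from \citet[Theorem 1.2]{marino2020optimal}, and the two corollaries (the bound on $\max(\|\bfpq\|_\infty,\|\bgpq\|_\infty)$ and the two-sided bound on $\xi_{\mP,\mQ}$) are stated without argument. So there is nothing to compare against, and your plan to invoke Marino--Gerolin for the first inequality and derive the rest is exactly what is needed.

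Your Step 2 for $\bgpq$ is correct. For $\bfpq$ you are overcomplicating things: with $v^\star$ (near-)optimal, write
\[
\bfpq = (f_{\mP,\mQ}-v^\star) + \int (g_{\mP,\mQ}+v^\star)\,d\mQm,
\]
so $\|\bfpq\|_\infty \le \|f_{\mP,\mQ}-v^\star\|_\infty + \|g_{\mP,\mQ}+v^\star\|_\infty \le \tfrac32\|c\|_\infty$, which is even stronger than stated. No need for Jensen or the Sinkhorn relation.

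Your Step 3 also misses the short route. The quantity $f_{\mP,\mQ}(\x)+g_{\mP,\mQ}(\y)$ is invariant under the shift $(f,g)\mapsto (f-v,g+v)$, so the first inequality gives directly
\[
|f_{\mP,\mQ}(\x)+g_{\mP,\mQ}(\y)| \le \|f_{\mP,\mQ}-v^\star\|_\infty + \|g_{\mP,\mQ}+v^\star\|_\infty \le \tfrac32\|c\|_\infty,
\]
and since $|c(\x,\y)|\le\|c\|_\infty$ one gets $|\log\xi_{\mP,\mQ}(\x,\y)| \le \tfrac52\|c\|_\infty \le 3\|c\|_\infty$. There is no need to go through oscillation bounds, the ratio representation, or the normalization $\int\bgpq\,d\mQm=0$; the ``sharp constant'' difficulty you anticipate does not arise once you exploit shift-invariance of $f+g$ rather than bounding $f$ and $g$ separately.
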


For the formulation of the asymptotic variances in our weak limit limits, we need to introduce the following operators 
 \begin{equation}\label{APop}
     \mathcal{A}_{\mP}f = \frac{\int e^{f_{\mP,\mQ}(\x)}C(\x,\cdot)f(\x)d\mP(\x)}{\int e^{f_{\mP,\mQ}(\x)}C(\x,\cdot)d\mP(\x)},
         \quad  \mathcal{A}_{\mQ}g = \frac{\int e^{g_{\mP,\mQ}(\y)}C(\cdot,\y)g(\y)d\mQ(\y)}{\int e^{g_{\mP,\mQ}(\x)}C(\cdot,\y)d\mQ(\y)}
 \end{equation}
 and $\bAP f=\mathcal{A}_{\mP}f-\int \mathcal{A}_{\mP}f d\mQ$. Additionally, we define the centering operators $$ \MP f = f - \int f d \mP, \quad \MQ g = g - \int g d \mQ.$$
 The following result establishes important properties for the operators $\bAP\AQ$ and $\AQ\bAP$ which arise in the asymptotic variance for the weak limit for the (conditional) expectation with respect to the empirical entropic optimal transport plan. The proof is provided in Appendix \ref{app:preliminaries}.

\begin{Lemma}\label{lemma:empiricalInverse}
Consider a uniformly bounded cost function $c\in L^\infty(\Omega^2)$.
Then, the operators $\AQ \bAP:  L^2(\mP)\to  L^2(\mP)$ and $\bAP \AQ:  L^2_0(\mQ)\to  L^2_0(\mQ)$, defined in \eqref{APop}, have the following properties.
    \begin{enumerate}
        \item \label{lem:ApEqualAp0} It holds that $\bAP = \MQ \AP = \AP \MP $ and  $\MP \AQ = \AQ \MQ$. In particular, $$\bAP \AQ = \AP \AQ \MQ \quad \text{ and } \quad \AQ \bAP = \AQ \AP \MP.$$ 
        Further, $\bAP \AQ=\AP\AQ$ on $L^2_0(\mQ)$, and  $\AQ \bAP=\AQ\AP$ on $L^2_0(\mP)$.
        \item \label{1} The eigenvalues of the operators $\mathcal{A}_{\mQ} \bAP$ and $\bAP \mathcal{A}_{\mQ}$ belong to $[0,1]$.
        \item \label{3} The eigenvalues of $\AQ\bAP: L^2(\mP)\to  L^2(\mP) $ and $\bAP\AQ: L^2_0(\mQ)\to  L^2_0(\mQ) $, namely resp. $\lambda_{ \mQ, \mP}$  and $\lambda_{\mP, \mQ}$, are upper bounded by %
        \begin{equation}
            \label{eq:boundeig}
            \lambda_{ \mQ, \mP}, \ \lambda_{\mP, \mQ}\leq \delta(c) \coloneqq 1- \exp(3 \norm{c}_\infty) <1\, . 
        \end{equation}
        \item \label{2} The operators $(I_{L^2(\mP)}-\AQ\bAP)^{-1}: L^2(\mP)\to  L^2(\mP)$ and $(I_{L^2_0(\mQ)}-\bAP\AQ)^{-1}: L^2_0(\mQ)\to  L^2_0(\mQ)$ are well-defined. 
    \end{enumerate}
\end{Lemma}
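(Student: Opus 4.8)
The plan is to establish the four items in order, since each builds on the previous one. First, for item~\ref{lem:ApEqualAp0}, I would verify the algebraic identities by direct computation. The key observation is that $\AP$ maps constants to constants: applying $\AP$ to the constant function $\one$ gives $\frac{\int e^{f_{\mP,\mQ}(\x)}C(\x,\cdot)d\mP(\x)}{\int e^{f_{\mP,\mQ}(\x)}C(\x,\cdot)d\mP(\x)} = \one$, and more importantly the optimality condition \eqref{eq:optimalityCriterionNicePotentials} says that $\int e^{f_{\mP,\mQ}(\x)}C(\x,\cdot)d\mP(\x) = e^{-g_{\mP,\mQ}(\cdot)}$, which allows one to simplify the denominators. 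From $\AP \one = \one$ one gets $\AP \MP = \AP - (\int (\cdot)\, d\mP)\AP\one = \AP - \int(\cdot)\,d\mP$, but one must check this equals $\AP - \int \AP(\cdot)\,d\mP$, i.e.\ that $\int \AP f\, d\mP = \int f\, d\mP$; this is a Fubini computation using the marginal constraint that $\int e^{f_{\mP,\mQ}(\x)}C(\x,\y)e^{g_{\mP,\mQ}(\y)}d\mP(\x) = 1$ for $\mQ$-a.e.\ $\y$ (the first-marginal condition for $\pipq$). Similarly $\MQ \AP = \AP - \int \AP(\cdot)\,d\mQ = \bAP$ is immediate from the definition of $\bAP$, and the analogous statements for $\AQ$, $\MQ$, $\MP$ follow by symmetry. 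The ``in particular'' identities and the restriction statements to $L^2_0$ are then purely formal consequences of $\MP|_{L^2_0(\mP)} = I$ and $\MQ|_{L^2_0(\mQ)} = I$.

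For items~\ref{1} and~\ref{3}, the strategy is to recognize $\AP$ and $\AQ$ as conditional-expectation-type (Markov) operators with respect to the coupling $\pipq$. Writing out, $\AP f(\y) = \E_{(\X,\Y)\sim\pipq}[f(\X)\mid \Y = \y]$ and $\AQ g(\x) = \E_{(\X,\Y)\sim\pipq}[g(\Y)\mid \X = \x]$ after using \eqref{eq:optimallityCodPlan_representation} to rewrite the kernels; this uses the same marginal simplification as above. Consequently $\AQ\AP$ is a composition of two conditional expectations, hence a contraction on $L^2$, and composing with the (also contractive) centering operators keeps everything in $[0,1]$ for the self-adjoint pieces — giving item~\ref{1}. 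For the strict bound in item~\ref{3}, I would argue that the density $\xi_{\mP,\mQ}$ is bounded below by $\exp(-3\|c\|_\infty)$ via Lemma~\ref{lem:regularity}, so the coupling $\pipq$ dominates a positive multiple of the independent coupling $\mP\otimes\mQ$; this Doeblin-type minorization forces a spectral gap. Concretely, for $f\in L^2_0(\mP)$ one estimates $\langle \AQ\AP f, f\rangle_{L^2(\mP)}$ from above by splitting $\xi_{\mP,\mQ} = \exp(-3\|c\|_\infty) + (\xi_{\mP,\mQ} - \exp(-3\|c\|_\infty))$; the first term annihilates $f$ because $f$ has mean zero, and bounding the second term by its sup times the product measure yields a factor $1 - \exp(-3\|c\|_\infty) = \delta(c)$ times $\|f\|_{L^2(\mP)}^2$. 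The same works verbatim on $L^2_0(\mQ)$ for $\bAP\AQ$, using item~\ref{lem:ApEqualAp0} to replace $\bAP\AQ$ by $\AP\AQ$ there.

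Item~\ref{2} is then immediate: a self-adjoint (or more generally, power-bounded) operator with spectrum contained in $[0,\delta(c)]$ with $\delta(c)<1$ has $I - (\cdot)$ invertible by the Neumann series $\sum_{k\ge 0}(\AQ\bAP)^k$, which converges in operator norm. One small subtlety is that $\AQ\bAP$ on $L^2(\mP)$ is not obviously self-adjoint (it involves the non-self-adjoint $\MP$), so I would either argue via the similarity $\AQ\bAP = \AQ\AP\MP$ together with the fact that on the complement of $L^2_0(\mP)$ (the constants) $\AQ\bAP$ acts as $0$ — since $\bAP\one=0$ — reducing invertibility of $I-\AQ\bAP$ to invertibility of $I$ on constants plus invertibility of $I - \AQ\AP$ on $L^2_0(\mP)$, where the spectral bound applies; or invoke that the spectral radius bound $\delta(c)<1$ alone suffices for Neumann convergence regardless of self-adjointness. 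The main obstacle I anticipate is item~\ref{3}: getting the \emph{explicit} constant $\delta(c)$ rather than merely ``$<1$'' requires carefully exploiting the two-sided bound on $\xi_{\mP,\mQ}$ from Lemma~\ref{lem:regularity} in the right inner-product estimate, and being careful about which operator ($\AQ\bAP$ vs.\ $\AQ\AP$) one is bounding and on which mean-zero subspace. Everything else is bookkeeping with conditional expectations and the marginal constraints of the entropic plan.
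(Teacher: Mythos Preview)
Your plan for items~\ref{lem:ApEqualAp0}, \ref{1}, and \ref{2} matches the paper's proof almost exactly: identify $\AP f(\y)=\int\xi_{\mP,\mQ}(\x,\y)f(\x)\,d\mP(\x)$ as a conditional expectation under $\pipq$, use the marginal constraints to get the centering identities, invoke Jensen for the contraction, and close with a Neumann series. One slip: in item~\ref{lem:ApEqualAp0} the identity you need is $\int \AP f\,d\mQ=\int f\,d\mP$ (not $d\mP$ on the left), since $\AP f$ is a function of $\y$ and $\bAP=\MQ\AP$ by definition.

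For item~\ref{3} your route is genuinely different from the paper's. The paper does \emph{not} use a Doeblin split: instead it takes an eigenvector $f\in L^2_0(\mP)$ of $\AQ\AP$, builds a companion $g$ so that $(f,g)$ is an eigenvector of the block operator $\Gamma = I + \begin{pmatrix}0&\AQ\\ \AP&0\end{pmatrix}$ with eigenvalue $1-\lambda$, and then computes $\langle\Gamma(f,g),(f,g)\rangle=\int(f+g)^2\,\xi_{\mP,\mQ}\,d\mP\,d\mQ$; the lower bound $\xi_{\mP,\mQ}\geq e^{-3\|c\|_\infty}$ plus $\int f\,d\mP=0$ gives $1-\lambda\geq e^{-3\|c\|_\infty}$ directly. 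Your Doeblin idea also works and is arguably more elementary, but the step ``bounding the second term by its sup times the product measure'' is not the right mechanism: the sup of $\xi_{\mP,\mQ}-e^{-3\|c\|_\infty}$ is $e^{3\|c\|_\infty}-e^{-3\|c\|_\infty}$, not $\delta(c)$. The correct argument is that $\tilde\xi:=(\xi_{\mP,\mQ}-e^{-3\|c\|_\infty})/(1-e^{-3\|c\|_\infty})$ is again the density of a coupling of $\mP$ and $\mQ$ (both marginals integrate to $1$), so the associated operator is still a contraction; this yields $\|\AP f\|_{L^2(\mQ)}\leq\delta(c)\|f\|_{L^2(\mP)}$ for $f\in L^2_0(\mP)$, hence in fact eigenvalues of $\AQ\AP$ on $L^2_0(\mP)$ are bounded by $\delta(c)^2$, slightly sharper than what the paper states. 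So your approach buys a cleaner constant and avoids the block-operator detour, at the cost of needing the residual-kernel-is-Markov observation rather than a crude sup bound.
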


\begin{Remark}[Unbounded costs]\label{rmk:unboundedCosts}
Our weak limits crucially rely on the uniform boundedness of the cost function, which asserts quantitative uniform bound for the potentials. Extending the weak limits to unbounded costs requires $(i)$ the derivation of quantitative bound for entropic optimal transport potentials as in Lemma \ref{lem:regularity}, but more importantly $(ii)$ a similar kind of eigenvalue gap as in Lemma \ref{lemma:empiricalInverse}. Although it is reasonable to expect these aspects remain valid under sufficient concentration of the underlying measures, its formalization is a challenge and left as future work.
\end{Remark}

\section{Weak Limits}\label{sec:main_results}

In this section we state the distributional limits for different empirical quantities from entropic optimal transport. We consider throughout probability measures $\mP, \mQ\in \cP(\Omega)$ and a bounded, measurable cost function $c\in L^\infty(\Omega\times \Omega)$. We stress that the subsequent results require no smoothness or continuity assumptions of the cost. For independent random variables $\X_1, \dots, \X_n \sim \mP$ and $\Y_1, \dots, \Y_m\sim \mQ$ we define empirical measures $\mPn \coloneqq \frac{1}{n} \sum_{i = 1}^{n} \delta_{\X_i}$ and $\mQm \coloneqq \frac{1}{m} \sum_{j= 1}^{n} \delta_{\Y_j}$. All subsequent asymptotic %
two-sample results are stated for $n,m\rightarrow \infty$ with $\frac{m}{n+m}\rightarrow \lambda \in (0,1)$. %

\subsection{Entropic optimal transport potentials}
A central element for our subsequent weak limits is the following approximation result for the integral of a bounded function $\eta \in L^\infty(\Omega\times \Omega)$ multiplied by an entropic optimal transport potential. %
 Before stating the result we introduce the notation
\begin{align}\label{eq:DefEtaX}
    &\eta_{\x}:\x\to \int \eta(\x,\y)\xi_{\mP,\mQ}(\x,\y)d\mQ(\y)\ \ \text{and}\ \   \eta_{\y}:\y\to \int \eta(\x,\y)\xi_{\mP,\mQ}(\x,\y)d\mP(\x),
\end{align}

\begin{Theorem}\label{Theo:PotIntApprox}
For a bounded function $\eta \in L^\infty(\Omega\times \Omega)$ the entropic optimal transport potentials $(\bfmn, \bgmn), (\bfpq, \bgpq)$ such that $\int \bgmn d\mPn =\int \bgpq d\mPn =0$   satisfy asymptotically  
\begin{align*}
    \int \eta(\x,\y)&(\bfmn(\x)  - \bfpq(\x))\xipq(\x,\y) d(\mPn \otimes \mQm)(\x,\y)\\
    =& \int \Big((I_{L^2_0(\mP)}-\AQ\AP)^{-1}\big(\AQ\AP \eta_{\x}-\E_{\X\sim \mP}[\eta_{\x}(\X)]\big)\Big) d (\mPn - \mP) \\
&-\int \Big((I_{L^2_0(\mQ)}-\AP\AQ)^{-1}\big(\AP \eta_{\x}-\E_{\X\sim \mP}[\eta_{\x}(\X)]\big)\Big) d (\mQm - \mQ)+ \op\left(\sqrt{\frac{n+m}{nm}}\right),
    \\
     \int \eta(\x,\y)&(\bgmn(\y)  - \bgpq(\y))\xipq(\x,\y) d(\mPn \otimes \mQm)(\x,\y)\\
    =&\int \Big((I_{L^2_0(\mP)}-\AP\AQ)^{-1}\big(\AP\AQ \eta_{\y}-\E_{\Y\sim \mQ}[\eta_{\y}(\Y)]\big)\Big) d (\mQm - \mQ)\\
    &-\int \Big((I_{L^2_0(\mQ)}-\AQ\AP)^{-1}\big(\AQ \eta_{\y}-\E_{\Y\sim \mQ}[\eta_{\y}(\Y)]\big)\Big) d (\mPn - \mP)+ \op\left(\sqrt{\frac{n+m}{nm}}\right).
\end{align*}

\end{Theorem}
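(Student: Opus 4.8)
The plan is to linearize the optimality conditions \eqref{eq:optimallityCodt0} for the empirical potentials around the population potentials, working throughout with the empirical $L^2$-norms $\norm{\cdot}_n$ and $\norm{\cdot}_m$ rather than a fixed Banach norm. Write $\Delta f := \bfmn - \bfpq$ and $\Delta g := \bgmn - \bgpq$, both of which are deterministically bounded by $6\norm{c}_\infty$ by Lemma~\ref{lem:regularity}. The first step is to expand the empirical Schrödinger equations: since $\bfmn = -\log\big(\int C(\cdot,\y)e^{\bgmn(\y)}\,d\mQm(\y)\big)$ and similarly for $\bgmn$, while $\bfpq, \bgpq$ satisfy the analogous equations against $\mP,\mQ$, subtracting and using the second-order Taylor expansion of $t\mapsto -\log(t)$ around the population integrals yields, $\mPn$-almost surely,
\begin{align*}
\Delta f &= \AQ \Delta g \;+\; \big[\text{first-order empirical fluctuation term in } \mQm - \mQ\big] \;+\; R_f,\\
\Delta g &= \AP \Delta f \;+\; \big[\text{first-order empirical fluctuation term in } \mPn - \mP\big] \;+\; R_g,
\end{align*}
where $\AP, \AQ$ are exactly the operators in \eqref{APop} (they are the derivatives of the softmin maps), and $R_f, R_g$ are quadratic remainders controlled by $\norm{\Delta f}_n^2 + \norm{\Delta g}_m^2$ plus cross terms with the empirical fluctuations. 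The uniform bounds from Lemma~\ref{lem:regularity} make all the exponential prefactors bounded above and below, so the Taylor remainders are genuinely second order in the $L^2$-norms.

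The second step is to solve this coupled linear system. Substituting one equation into the other gives $(I - \AQ\AP)\Delta f = (\text{linear fluctuations}) + (\text{remainders})$ on $L^2(\mPn)$; by Lemma~\ref{lemma:empiricalInverse}, the eigenvalues of $\AQ\AP$ (equivalently $\AQ\bAP$ on the right centering, after accounting for the normalization $\int \bgmn\,d\mPn = 0$) are bounded by $\delta(c) < 1$, so $(I - \AQ\AP)^{-1}$ exists with operator norm $\leq (1-\delta(c))^{-1}$, uniformly in the sample. This is what lets me invert and obtain $\Delta f$ (and $\Delta g$) as $(I-\AQ\AP)^{-1}$ applied to an explicit first-order term, plus a remainder. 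Crucially, the same eigenvalue gap argument must be run at the empirical level — i.e. one needs the gap for the operators built from $\mPn,\mQm$ as well — but Lemma~\ref{lemma:empiricalInverse} applies verbatim to any pair of probability measures, so the empirical versions $\AQ[\mPn,\mQm]\AP[\mPn,\mQm]$ also have spectral radius $\leq \delta(c)$; one then argues that the empirical inverse operators converge to the population ones in the relevant operator-norm sense (using a law of large numbers for the kernels $C(\x,\y)e^{f(\x)+g(\y)}$), so that to leading order one may replace the empirical operators by their population counterparts at the cost of an $\op(\sqrt{(n+m)/(nm)})$ error.

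The third step is to contract against the weight $\eta(\x,\y)\xipq(\x,\y)$ and integrate against $\mPn \otimes \mQm$. Because $\int \eta(\x,\y)\xipq(\x,\y)\,d\mQ(\y) = \eta_\x(\x)$ by definition \eqref{eq:DefEtaX}, integrating the expansion of $\Delta f$ against this weight turns the operator $\AQ\AP$ and its inverse into their adjoints acting on $\eta_\x$, producing exactly the terms $(I_{L^2_0(\mP)} - \AQ\AP)^{-1}(\AQ\AP\eta_\x - \E_{\mP}[\eta_\x])$ integrated against $\mPn - \mP$ and the companion term against $\mQm - \mQ$ — here one uses that $\AP$ and $\AQ$ are, up to the reweighting by $e^{f}, e^{g}$ and the kernel $C$, mutually adjoint in the appropriate $L^2$ pairing, and that the centering operators $\MP, \MQ$ intertwine with them as in Lemma~\ref{lemma:empiricalInverse}.\ref{lem:ApEqualAp0}, which is what collapses $\bAP$ to $\AP$ on the mean-zero subspaces and produces the $\E_{\mP}[\eta_\x]$ recentering. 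Replacing $\mPn \otimes \mQm$ by $\mP \otimes \mQ$ in the leading linear term introduces only a higher-order error since the integrand is already $\O_p(\sqrt{(n+m)/(nm)})$ in $L^2$ and the measure perturbation is another $\O_p(\sqrt{(n+m)/(nm)})$. The second display (for $\Delta g$) is symmetric, obtained by swapping the roles of $\mP\leftrightarrow\mQ$, $f\leftrightarrow g$, $\eta_\x\leftrightarrow\eta_\y$.

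The main obstacle I anticipate is the bookkeeping of error terms in the right order. One must show that $\norm{\Delta f}_n$ and $\norm{\Delta g}_m$ are themselves $\O_p(\sqrt{(n+m)/(nm)})$ before the quadratic remainders $R_f, R_g$ can be declared $\op(\sqrt{(n+m)/(nm)})$ — this is a bootstrapping argument: a crude bound (from Lemma~\ref{lem:regularity} plus a first pass through the linearized system with the invertibility of $I - \AQ\AP$) gives $\O_p(\sqrt{(n+m)/(nm)})$ rates for the $L^2$-discrepancies, which then feeds back to make the remainders negligible, following the scheme of \cite{rigollet2022sample}. The subtle point is that the empirical operators, the empirical normalization $\int \bgmn \, d\mPn = 0$, and the substitution $\mPn\otimes\mQm \to \mP\otimes\mQ$ all must be handled simultaneously without circular dependence; keeping track of which quantities are deterministic (the uniform bounds) versus stochastic (the fluctuations and the operator-norm convergence) is where the care is needed. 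The smoothing by the bounded kernel $C$ and the uniform spectral gap $\delta(c)<1$ are the two structural facts that make every step go through without any Donsker-type entropy condition on the cost.
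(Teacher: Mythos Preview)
Your linearization in the first step (Taylor-expand the Schr\"odinger equations, collect a linear system governed by $I-\AQm\APn$ and its symmetric counterpart, bound the quadratic remainder via Lemma~\ref{lem:regularity} and the empirical $L^2$-rates from \cite{rigollet2022sample}) is exactly what the paper does in its Step~1 (Proposition~\ref{prop:linearPot}). The gap is in your second step, specifically the sentence ``one then argues that the empirical inverse operators converge to the population ones in the relevant operator-norm sense (using a law of large numbers for the kernels), so that to leading order one may replace the empirical operators by their population counterparts.'' This is the step that does \emph{not} go through without smoothness of $c$, and it is precisely the obstacle the paper's machinery is built to avoid.

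The difficulty is that $(I_{L^2(\mPn)}-\AQm\APn)^{-1}$ acts on $L^2(\mPn)$ while $(I_{L^2(\mP)}-\AQ\AP)^{-1}$ acts on $L^2(\mP)$; these are different, random, spaces and there is no fixed ambient Banach space in which both live unless the cost is smooth (which is exactly what enabled the H\"older-space arguments in \cite{del2022improved,gonzalez2022weak,Goldfeld2022LimitTF}). A direct ``law of large numbers for the kernels'' would at best control $\norm{\DPn-\AP}$ or $\norm{\APn-\AP}$ acting on a \emph{fixed} function, but the function you need to apply the inverse to, namely $\cB_\y(\mQm-\mQ)$, is itself random and lives in $L^2(\mPn)$; you cannot separate the randomness of the operator from the randomness of the argument by an operator-norm bound alone. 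The paper therefore takes a genuinely different route: it replaces $(I-\AQm\APn)^{-1}$ by a \emph{truncated} Neumann series $\sum_{k=0}^N(\DQm\DPn)^k$ with $N\asymp \log^{1/2}(nm/(n+m))$ in auxiliary operators $\DPn,\DQm$ that use the population density $\xipq$ but empirical integration (Lemma~\ref{LemmaBoundOfEigD}, Proposition~\ref{prop:linearPotOpchange}); it then expands the resulting expression as an explicit infinite-order $V$-statistic in the sample points (Proposition~\ref{prop:Vstatistic}), passes to a $U$-statistic by a combinatorial counting argument (Proposition~\ref{prop:Ustatistic}), and finally computes the H\'ajek projection and checks its variance matches (Propositions~\ref{prop:writeFproperly}--\ref{prop:HajekProjection}). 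The population operators $\AP,\AQ$ reappear only at the very end, inside the H\'ajek projection, where they act on deterministic functions and the limit is obtained by the ordinary CLT. Your adjointness/integration argument in the third step is morally what the H\'ajek projection computes, but it only becomes rigorous after the $V$-to-$U$ passage has decoupled the dependence between the operator iterates and the fluctuation term.
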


The proof is deferred to Section \ref{sec:proofs}. 
It is worth noting that the operators that appear on the right-hand side of the asymptotic equality are well-defined (in the sense of the existence of the inverse) thanks to Lemma~\ref{lemma:empiricalInverse}. More precisely, Lemma~\ref{lemma:empiricalInverse}.\ref{2} implies the existence and continuity of $(I_{L^2_0(\mP)}-\AQ\bAP)^{-1}:L^2_0(\mP)\to  L^2_0(\mP)$ and $(I_{L^2_0(\mQ)}-\bAP\AQ)^{-1}: L^2_0(\mQ)\to  L^2_0(\mQ)$.  Lemma~\ref{lemma:empiricalInverse}.\ref{lem:ApEqualAp0}  states that $\bAP\AQ=\AP\AQ$ in $L^2_0(\mQ)$ and $\AQ\bAP=\AQ\AP$ in $L^2_0(\mP)$ so that $(I_{L^2_0(\mP)}-\AQ\AP)^{-1}:L^2_0(\mP)\to  L^2_0(\mP)$ and $(I_{L^2_0(\mQ)}-\AP\AQ)^{-1}: L^2_0(\mQ)\to  L^2_0(\mQ)$ are continuous operators.

By choosing a function $\eta$ that is constant in one component we also obtain an approximation result for the integral with respect to only one measure. The proof of the following result is relegated to Appendix \ref{app:main}.

\begin{Corollary}\label{cor:PotIntApprox}
For a bounded function $\tilde \eta \in L^\infty(\Omega)$ the entropic optimal transport potentials as in Theorem \ref{Theo:PotIntApprox}  satisfy asymptotically 
\begin{subequations}
\begin{align}
     &\int \tilde \eta  (\bfmn  - \bfpq)d\mPn\label{stat1CoroApproxPoint}
   \\ = &\int \Big((I_{L^2_0(\mP)}-\AQ\AP)^{-1}\big(\AQ\AP \tilde\eta-\E_{\X\sim \mP}[\tilde\eta(\X)]\big)\Big) d (\mPn - \mP)\notag \\
&-\int \Big((I_{L^2_0(\mQ)}-\AP\AQ)^{-1}\big(\AP \tilde\eta-\E_{\X\sim \mP}[\tilde\eta(\X)]\big)\Big) d (\mQm - \mQ)+ \op\left(\sqrt{\frac{n+m}{nm}}\right),\notag\\
      &\int \tilde \eta  (\bgmn  - \bgpq)d\mPn\label{stat2CoroApproxPoint}
    \\=&\int \Big((I_{L^2_0(\mQ)}-\AP\AQ)^{-1}\big(\AP\AQ \tilde\eta-\E_{\Y\sim \mQ}[\tilde\eta(\Y)]\big)\Big) d (\mQm - \mQ)\notag\\
    &-\int \Big((I_{L^2_0(\mP)}-\AQ\AP)^{-1}\big(\AQ \tilde\eta-\E_{\Y\sim \mQ}[\tilde\eta(\Y)]\big)\Big) d (\mPn - \mP)+ \op\left(\sqrt{\frac{n+m}{nm}}\right).\notag
\end{align}
\end{subequations}
As a consequence, if $\tilde \eta$ is constant, 
it holds that 
$$ \bigg\vert  \int \tilde \eta(\bfmn  - \bfpq)d\mPn\bigg\vert+\bigg\vert\int \tilde \eta(\bgmn  - \bgpq)d\mQm\bigg\vert=\op\left(\sqrt{\frac{n+m}{nm}}\right).$$
\end{Corollary}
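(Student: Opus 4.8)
\textbf{Proof proposal for Corollary \ref{cor:PotIntApprox}.}

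The plan is to deduce the stated identities from Theorem \ref{Theo:PotIntApprox} by a careful choice of the test function $\eta$, and then to extract the final ($\op$-negligibility) claim as the special case of a constant $\tilde\eta$. First I would set $\eta(\x,\y) = \tilde\eta(\x)$, a function depending only on the first coordinate. With this choice, the quantities $\eta_\x$ and $\eta_\y$ from \eqref{eq:DefEtaX} simplify: since $\int \xipq(\x,\y)\,d\mQ(\y) = e^{\fpq(\x)}\int C(\x,\y)e^{\gpq(\y)}\,d\mQ(\y) = 1$ by the marginal constraint on $\pipq$ (equivalently by \eqref{eq:optimallityCodt0}), one gets $\eta_\x(\x) = \tilde\eta(\x)\int \xipq(\x,\y)\,d\mQ(\y) = \tilde\eta(\x)$. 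Hence the left-hand side of the first identity in Theorem \ref{Theo:PotIntApprox} becomes $\int \tilde\eta(\x)(\bfmn(\x) - \bfpq(\x))\big(\int\xipq(\x,\y)\,d\mQm(\y)\big)\,d\mPn(\x)$; here one must replace the inner $\mQm$-integral of $\xipq(\x,\cdot)$ by $1$. This replacement is \emph{not} exact — it introduces the error $\int \tilde\eta(\x)(\bfmn(\x)-\bfpq(\x))\big(\int \xipq(\x,\y)\,d(\mQm-\mQ)(\y)\big)\,d\mPn(\x)$ — so the key step is to show this error is $\op(\sqrt{(n+m)/(nm)})$. This follows because $\|\bfmn - \bfpq\|_\infty$ is $\op(1)$ (indeed $\Op(\sqrt{(n+m)/(nm)})$ by the potential convergence rates underlying Theorem \ref{Theo:PotIntApprox}, combined with the deterministic bound $2\cdot 3\|c\|_\infty$ from Lemma \ref{lem:regularity}), while $\sup_{\x}\big|\int\xipq(\x,\y)\,d(\mQm-\mQ)(\y)\big|$ is $\Op(\sqrt{(n+m)/(nm)})$ — the latter requires a uniform-in-$\x$ control, available since $\xipq(\x,\cdot)$ ranges over a uniformly bounded family (Lemma \ref{lem:regularity}) and one can invoke the same linearization machinery / empirical-$L^2$ arguments used in the proof of Theorem \ref{Theo:PotIntApprox}; alternatively one bounds $\|\bfmn-\bfpq\|_n$ in empirical $L^2$ and pairs it with an empirical-$L^2$ bound on $\int\xipq(\x,\cdot)d(\mQm-\mQ)$ via Cauchy–Schwarz. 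After this replacement, the left-hand side is exactly $\int \tilde\eta(\bfmn-\bfpq)\,d\mPn$, and the right-hand side of Theorem \ref{Theo:PotIntApprox} already has the claimed form with $\eta_\x$ replaced by $\tilde\eta$, giving \eqref{stat1CoroApproxPoint}. The identity \eqref{stat2CoroApproxPoint} is obtained identically from the second display of Theorem \ref{Theo:PotIntApprox}, using that $\eta_\x = \tilde\eta$ in that formula as well (the $\bgmn$-display of Theorem \ref{Theo:PotIntApprox} is stated in terms of $\eta_\y$, so here I would instead take $\eta(\x,\y) = \tilde\eta(\y)$ to produce \eqref{stat2CoroApproxPoint}, with the symmetric simplification $\eta_\y = \tilde\eta$ using $\int\xipq(\x,\y)\,d\mP(\x) = 1$ and the analogous $\mPn$-replacement error bound).

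For the final statement, suppose $\tilde\eta \equiv \kappa$ is constant. Then $\tilde\eta - \E_{\X\sim\mP}[\tilde\eta(\X)] = \kappa - \kappa = 0$, so in \eqref{stat1CoroApproxPoint} the arguments of both inverse operators, namely $\AQ\AP\tilde\eta - \E_{\X\sim\mP}[\tilde\eta(\X)]$ and $\AP\tilde\eta - \E_{\X\sim\mP}[\tilde\eta(\X)]$, reduce to $\AQ\AP\kappa - \kappa$ and $\AP\kappa - \kappa$ respectively. Now $\AP$ applied to a constant returns that constant, since $\AP\kappa = \kappa\cdot\frac{\int e^{\fpq(\x)}C(\x,\cdot)\,d\mP(\x)}{\int e^{\fpq(\x)}C(\x,\cdot)\,d\mP(\x)} = \kappa$, and likewise $\AQ\kappa = \kappa$; hence $\AQ\AP\kappa = \kappa$ and $\AP\kappa = \kappa$, so both arguments vanish. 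Consequently the first two terms on the right-hand side of \eqref{stat1CoroApproxPoint} are identically zero, leaving $\int \tilde\eta(\bfmn - \bfpq)\,d\mPn = \op(\sqrt{(n+m)/(nm)})$. The same reasoning applied to \eqref{stat2CoroApproxPoint} with constant $\tilde\eta$ gives $\int \tilde\eta(\bgmn - \bgpq)\,d\mQm = \op(\sqrt{(n+m)/(nm)})$ (note that a constant is in $L^2_0$ only after centering, but here the relevant arguments vanish outright so no subtlety arises; one does use $\AP\AQ\kappa = \kappa$ and $\AQ\kappa = \kappa$). Adding the two and using the triangle inequality yields the displayed bound.

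The main obstacle is the $\mQm \to \mQ$ (resp. $\mPn \to \mP$) replacement inside the inner integral against $\xipq$: one needs a \emph{uniform over the first (resp. second) coordinate} rate for the empirical fluctuations $\int \xipq(\x,\cdot)\,d(\mQm - \mQ)$, or else to route around it by pairing empirical-$L^2$ norms via Cauchy–Schwarz and invoking $\|\bfmn - \bfpq\|_n = \Op(\sqrt{(n+m)/(nm)})$ together with an empirical-$L^2$ bound of the same order on the $\xipq$-fluctuation term — the latter being a consequence of the boundedness of $\xipq$ (Lemma \ref{lem:regularity}) and a standard variance computation. Either route is light compared with Theorem \ref{Theo:PotIntApprox} itself; the remaining steps (the algebraic simplifications $\eta_\x = \tilde\eta$, $\AP\kappa = \AQ\kappa = \kappa$) are routine.
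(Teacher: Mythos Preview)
Your approach is correct in spirit but differs from the paper's, and one of your two suggested routes for the replacement error has a gap. The paper applies Theorem~\ref{Theo:PotIntApprox} with $\eta(\x,\y)=\tilde\eta(\x)/\xipq(\x,\y)$ rather than $\eta(\x,\y)=\tilde\eta(\x)$. This is legitimate because $\xipq$ is uniformly bounded away from zero (Lemma~\ref{lem:regularity}), so $\tilde\eta/\xipq\in L^\infty(\Omega\times\Omega)$. With this choice the factor $\xipq$ in the left-hand side of Theorem~\ref{Theo:PotIntApprox} cancels exactly, the integrand depends only on $\x$, and one obtains $\int\tilde\eta(\bfmn-\bfpq)\,d\mPn$ with no replacement error at all; moreover $\eta_\x=\tilde\eta$ drops out immediately. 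Your choice $\eta=\tilde\eta$ forces you to control the extra term $\int\tilde\eta(\bfmn-\bfpq)\big(\int\xipq(\cdot,\y)\,d(\mQm-\mQ)(\y)\big)\,d\mPn$, which is genuinely extra work.

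Regarding that extra term: your Cauchy--Schwarz route is fine, since $\|\bfmn-\bfpq\|_n=\Op(\sqrt{(n+m)/(nm)})$ (this is exactly the Rigollet--Stromme bound used throughout the paper) and a direct variance computation gives $\E\|\int\xipq(\cdot,\y)\,d(\mQm-\mQ)(\y)\|_n^2\leq \cK(c)/m$. However, your first route --- bounding $\|\bfmn-\bfpq\|_\infty$ by $\Op(\sqrt{(n+m)/(nm)})$ --- is not justified under the paper's hypotheses: only empirical $L^2$ rates are available for the potential difference when $c$ is merely bounded, and no uniform rate is established anywhere in the paper. So you should drop that alternative. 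The treatment of the constant case is identical to the paper's and correct.
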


\subsection{Entropic optimal transport cost}
Based on our characterization of the asymptotic fluctuation for empirical potentials, we now derive  our main result for the empirical entropic optimal transport cost, whose proof we detail at the end of the section. 

\begin{Theorem} \label{Theo:TCLcost} The empirical entropic optimal transport cost satisfies asymptotically
    \begin{align*} \rS(\mPn, \mQm) - \rS(\mP, \mQ)=  \int f_{\mP,\mQ} d(\mP_n-\mP) + \int g_{\mP,\mQ} d(\mQ_m-\mQ) +\op\left(\sqrt{\frac{n+m}{nm}}\right).
    \end{align*}
\end{Theorem}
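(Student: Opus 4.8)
\textbf{Proof plan for Theorem \ref{Theo:TCLcost}.}

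The plan is to start from the strong duality formula \eqref{eq:dual_entrop} and exploit its variational (first-order optimality) structure. Write $\Phi_{\mPn,\mQm}(f,g)$ for the dual objective evaluated at measures $\mPn,\mQm$, so that $\rS(\mPn,\mQm) = \Phi_{\mPn,\mQm}(\bfmn,\bgmn)$ and $\rS(\mP,\mQ)=\Phi_{\mP,\mQ}(\bfpq,\bgpq)$, where I use the empirically shifted potentials normalized by $\int \bgmn\, d\mQm = \int \bgpq\, d\mQm = 0$ (so the additive-constant ambiguity is fixed consistently). The first step is the trivial but crucial decomposition
\begin{align*}
\rS(\mPn,\mQm) - \rS(\mP,\mQ)
&= \big(\Phi_{\mPn,\mQm}(\bfpq,\bgpq) - \Phi_{\mP,\mQ}(\bfpq,\bgpq)\big) \\
&\quad + \big(\Phi_{\mPn,\mQm}(\bfmn,\bgmn) - \Phi_{\mPn,\mQm}(\bfpq,\bgpq)\big).
\end{align*}
The first bracket is the ``explicit'' part: plugging the fixed population potentials into the empirical objective, the terms $\int \bfpq\, d(\mPn-\mP)$, $\int \bgpq\, d(\mQm-\mQm)=0$, and $\int C e^{\bfpq+\bgpq}\, d(\mPn\otimes\mQm - \mP\otimes\mQ)$ appear; using $\xipq = C e^{\bfpq+\bgpq}$ and the marginal identities $\int \xipq(\x,\cdot)\,d\mQ = e^{-\bfpq(\cdot)+\int \bgpq d\mQ}$ style relations from \eqref{eq:optimallityCodt0}, the product-measure fluctuation $\int \xipq\, d(\mPn\otimes\mQm-\mP\otimes\mQ)$ should collapse, after adding and subtracting the mixed terms $\int \xipq\,d(\mPn\otimes\mQ)$ and $\int\xipq\,d(\mP\otimes\mQm)$, into $\int(\text{something})\,d(\mPn-\mP) + \int(\text{something})\,d(\mQm-\mQ)$ that exactly cancels parts of the linear terms, leaving $\int f_{\mP,\mQ}\,d(\mPn-\mP) + \int g_{\mP,\mQ}\,d(\mQm-\mQ)$ up to a remainder; the remainder $\int \xipq\, d(\mPn-\mP)\otimes(\mQm-\mQ)$ is a degenerate V-statistic term of order $\Op((n+m)/(nm)) = \op(\sqrt{(n+m)/(nm)})$.

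The second bracket is where Theorem \ref{Theo:PotIntApprox} (or rather Corollary \ref{cor:PotIntApprox}) enters. Since $(\bfmn,\bgmn)$ is the maximizer of $\Phi_{\mPn,\mQm}$, a second-order Taylor expansion around it gives
\begin{align*}
\Phi_{\mPn,\mQm}(\bfmn,\bgmn) - \Phi_{\mPn,\mQm}(\bfpq,\bgpq)
= \tfrac12 \langle D^2\Phi_{\mPn,\mQm}\,(\Delta f,\Delta g),(\Delta f,\Delta g)\rangle + (\text{h.o.t.}),
\end{align*}
with $(\Delta f,\Delta g) = (\bfmn-\bfpq,\bgmn-\bgpq)$ and the first-order term vanishing by optimality of $(\bfmn,\bgmn)$. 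The Hessian of the dual objective is a bounded, explicit quadratic form (governed by $\xi_{\mPn,\mQm}$), and by the rate bounds for the potentials of \cite{rigollet2022sample} one has $\|\Delta f\|_n + \|\Delta g\|_m = \Op(\sqrt{(n+m)/(nm)})$, hence this whole bracket is $\Op((n+m)/(nm)) = \op(\sqrt{(n+m)/(nm)})$. Alternatively — and this is perhaps cleaner and is the route I would actually take — expand around the population optimum instead: $\Phi_{\mPn,\mQm}(\bfmn,\bgmn) - \Phi_{\mPn,\mQm}(\bfpq,\bgpq)$ can be bounded via the same Taylor argument, but one should be careful that the first-order term of $\Phi_{\mPn,\mQm}$ at $(\bfpq,\bgpq)$ is \emph{not} zero (only the first-order term at the empirical optimum is); that nonzero gradient is itself $\Op(\sqrt{(n+m)/(nm)})$ and pairs against $(\Delta f,\Delta g) = \Op(\sqrt{(n+m)/(nm)})$ to again give $\op(\sqrt{(n+m)/(nm)})$. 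Either way the conclusion is that the second bracket does not contribute at the $\sqrt{(n+m)/(nm)}$ scale, so only the explicit first bracket survives.

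The main obstacle is the bookkeeping in the first bracket: correctly matching the product-measure linearization of $\int C e^{\bfpq+\bgpq}\,d(\mPn\otimes\mQm)$ against the linear terms so that everything cancels down to exactly $\int f_{\mP,\mQ}\,d(\mPn-\mP)+\int g_{\mP,\mQ}\,d(\mQm-\mQ)$, and verifying that the leftover is genuinely the bilinear degenerate term of order $(n+m)/(nm)$. The key identities are the marginal relations $\int \xipq(\x,\y)\,d\mQ(\y) = 1$ for $\mP$-a.e.\ $\x$ and $\int \xipq(\x,\y)\,d\mP(\x)=1$ for $\mQ$-a.e.\ $\y$ (equivalently $\int C(\x,\y)e^{\bgpq(\y)}d\mQ(\y) = e^{-\bfpq(\x)}$ from \eqref{eq:optimallityCodt0}, with a symmetric statement), which turn $\int \xipq\,d(\mPn\otimes\mQ) = 1 + \op(\cdot)$-type expressions into the clean linear functionals needed; a secondary subtlety is that the shift normalization uses $\mQm$, so one must track the $\int\bgpq\,d\mQm$ correction and check it is absorbed, using the last assertion of Corollary \ref{cor:PotIntApprox} that constant test functions contribute only $\op(\sqrt{(n+m)/(nm)})$. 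Once these cancellations are organized, the statement follows immediately.
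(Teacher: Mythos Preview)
Your approach is correct but takes a genuinely different route from the paper's. The paper's proof is much shorter because it first observes that at the optimum the exponential term in the dual equals $1$ (since $\int \xi_{\mP,\mQ}\,d\mP\,d\mQ=1$ and likewise for the empirical problem), so that simply
\[
\rS(\mPn,\mQm)-\rS(\mP,\mQ)=\int \bfmn\,d\mPn+\int \bgmn\,d\mQm-\int \bfpq\,d\mP-\int \bgpq\,d\mQ,
\]
and then adds and subtracts to isolate $\int(\bfmn-\bfpq)\,d\mPn+\int(\bgmn-\bgpq)\,d\mQm$, which is killed by invoking Corollary~\ref{cor:PotIntApprox} with $\tilde\eta\equiv 1$. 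That corollary, however, rests on the heavy Theorem~\ref{Theo:PotIntApprox} (the six-step linearization of potentials).

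Your decomposition instead keeps the full dual objective and splits into (i) the fluctuation of $\Phi_{\cdot,\cdot}(\bfpq,\bgpq)$ in the measures, which produces the linear terms plus the degenerate bilinear remainder $\int\xipq\,d(\mPn-\mP)\otimes(\mQm-\mQ)=\Op(1/\sqrt{nm})$ via the marginal identities $\int\xipq(\x,\cdot)\,d\mQ=1=\int\xipq(\cdot,\y)\,d\mP$; and (ii) the second bracket, which by strict concavity and the Taylor argument you sketch is $\Op(\|\Delta f\|_n^2+\|\Delta g\|_m^2)=\Op((n+m)/(nm))$ using only the rate from \cite{rigollet2022sample}. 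This is a \emph{more elementary} proof: it bypasses Theorem~\ref{Theo:PotIntApprox} entirely and needs only the uniform boundedness of the potentials (Lemma~\ref{lem:regularity}) and the $L^2$ rate of \cite{rigollet2022sample}. The paper's route is shorter on the page because the hard work has been exported to Theorem~\ref{Theo:PotIntApprox}, which they need anyway for the plan and conditional-plan limits; your route shows that the \emph{cost} result alone requires strictly less.

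One small cleanup: your closing worry about the shift constant and the appeal to Corollary~\ref{cor:PotIntApprox} for constants is unnecessary in your own argument---constants integrate to zero against $\mPn-\mP$ and $\mQm-\mQ$, so the difference between $(\bfpq,\bgpq)$ and $(f_{\mP,\mQ},g_{\mP,\mQ})$ is immaterial in the linear terms, and you never need Corollary~\ref{cor:PotIntApprox} at all.
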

The univariate central limit theorem gives the weak limit of Theorem~\ref{Theo:MainResults}.\ref{item:main_costs}. The variance of such a limit is 
${\sigma}^2_{\lambda}$ which is not distribution free, i.e., the variance of the Gaussian limit depends on the population probability measures $\mP$ and $\mQ$. To provide greater practical usefulness to the obtained result, we propose (as in \citealt{del2022improved}) for  ${\sigma}^2_{\lambda}$ the estimator 
\begin{align*}
    \label{estimated_variance2}
\hat{\sigma}_{n,m}^2:=%
\frac{m}{n+m}\operatorname{\Var}_{\X\sim \mP}[f_{\mP_n,\mQ_m}(\X)] + \frac{n}{n+m}\operatorname{\Var}_{\Y\sim \mQ}[g_{\mP_n,\mQ_m}(\Y)].
\end{align*}
The following result shows the consistency of $\hat{\sigma}_{n,m}^2$. The proof is given in Appendix \ref{app:main}.

\begin{Proposition}\label{prop:TCLcost}
    The proposed variance estimator satisfies asymptotically
    $$ \hat{\sigma}_{n,m}^2 \xrightarrow{\;\;p\;\;} {\sigma}_{\lambda}^2. $$
    As a consequence, provided that ${\sigma}_{\lambda}^2>0$, it follows that
   \begin{equation*}
    \sqrt{\frac{nm}{n+m}}\frac{\rS(\mPn,\mQm)-\rS(\mP,\mQ)}{\hat{\sigma}_{n,m}}\convW \cN(0,1).
\end{equation*}
\end{Proposition}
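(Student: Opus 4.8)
\textbf{Proof plan for Proposition~\ref{prop:TCLcost}.}
The plan is to prove consistency of $\hat\sigma_{n,m}^2$ by handling its two summands separately, and the main point is that each empirical variance involves the \emph{population} measure paired with the \emph{empirical} potential, so it is a deterministic (conditional on the data) functional of the random potentials $(f_{\mPn,\mQm},g_{\mPn,\mQm})$. First I would note that by the triangle inequality and the decomposition $\frac{m}{n+m}\to\lambda$, $\frac{n}{n+m}\to 1-\lambda$, it suffices to show $\operatorname{Var}_{\X\sim\mP}[f_{\mPn,\mQm}(\X)]\convP\operatorname{Var}_{\X\sim\mP}[f_{\mP,\mQ}(\X)]$ and analogously for $g$; I will only discuss the $f$-term. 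Since $\operatorname{Var}_{\X\sim\mP}[f(\X)] = \int f^2\,d\mP - (\int f\,d\mP)^2$, and since $\|f_{\mPn,\mQm}\|_\infty, \|f_{\mP,\mQ}\|_\infty \le 3\|c\|_\infty$ by Lemma~\ref{lem:regularity}, the map $f\mapsto\operatorname{Var}_{\X\sim\mP}[f(\X)]$ is Lipschitz on the uniformly bounded ball with respect to $\|\cdot\|_{L^2(\mP)}$ (with constant depending only on $\|c\|_\infty$). Hence the whole claim reduces to showing
\[
\big\| f_{\mPn,\mQm} - f_{\mP,\mQ}\big\|_{L^2(\mP)} \convP 0 .
\]

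The key step is therefore an $L^2(\mP)$-consistency statement for the empirical potentials; here one must be slightly careful about the normalization, since the potentials are only unique up to an additive constant. I would use the normalization already fixed in the paper, namely work with $(\bfmn,\bgmn)$ and $(\bfpq,\bgpq)$ satisfying $\int \bgmn\,d\mPn = \int\bgpq\,d\mPn = 0$ (the convention under which Theorem~\ref{Theo:PotIntApprox} and Corollary~\ref{cor:PotIntApprox} are stated), and observe that the variance of $f$ is invariant under adding constants to $f$, so it does not matter which normalized representative is used as long as $f$ and its target differ by a constant. To get $\|\bfmn-\bfpq\|_{L^2(\mP)}\convP 0$ I would appeal to the linearization machinery: Theorem~\ref{Theo:PotIntApprox} (or its one-component Corollary~\ref{cor:PotIntApprox}) applied with a well-chosen test function $\tilde\eta$ gives that integrals of $(\bfmn-\bfpq)$ against bounded functions, in particular $\int(\bfmn-\bfpq)^2\,d\mPn$ type expressions, are controlled by $d(\mPn-\mP)$ and $d(\mQm-\mQ)$ integrals of bounded functions plus $\op\big(\sqrt{(n+m)/(nm)}\big)$; since the Glivenko--Cantelli / LLN gives $\int h\,d(\mPn-\mP)\convP 0$ and $\int h\,d(\mQm-\mQ)\convP 0$ for any bounded $h$, this yields the required $\op(1)$ bound. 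Alternatively, and perhaps cleaner, one can invoke the rate bound implicit in the framework of \cite{rigollet2022sample} (restated via the quantitative estimates underlying Theorem~\ref{Theo:PotIntApprox}), which directly gives $\E\|\bfmn-\bfpq\|_{L^2(\mP)}^2 = \O\big((n+m)/(nm)\big)\to 0$, hence convergence in probability.

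Putting these together: on the event (of probability tending to one, in fact deterministically by Lemma~\ref{lem:regularity}) that all potentials are bounded by $3\|c\|_\infty$, Lipschitzness of $f\mapsto\operatorname{Var}_{\X\sim\mP}[f(\X)]$ plus $\|\bfmn-\bfpq\|_{L^2(\mP)}\convP 0$ gives $\operatorname{Var}_{\X\sim\mP}[f_{\mPn,\mQm}(\X)]\convP\operatorname{Var}_{\X\sim\mP}[f_{\mP,\mQ}(\X)]$, and symmetrically for the $g$-term with $\|\cdot\|_{L^2(\mQ)}$; combining with the weights $\frac{m}{n+m}\to\lambda$ yields $\hat\sigma_{n,m}^2\convP\sigma_\lambda^2$. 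The final display then follows from Theorem~\ref{Theo:TCLcost} together with Theorem~\ref{Theo:MainResults}.\ref{item:main_costs} and Slutsky's lemma, dividing the asymptotically $\cN(0,\sigma_\lambda^2)$ quantity by $\hat\sigma_{n,m}\convP\sigma_\lambda>0$. The main obstacle is the $L^2$-consistency of the empirical potentials under the correct normalization — i.e.\ making sure the additive-constant ambiguity is handled so that one genuinely compares $\bfmn$ to $\bfpq$ (not to some differently-centered representative) — but this is exactly what the normalization conventions preceding Theorem~\ref{Theo:PotIntApprox} are set up to deliver, and the variance functional's invariance under constant shifts removes any residual ambiguity.
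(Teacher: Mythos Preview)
Your plan has a real gap, and it stems from conflating the empirical and population $L^2$-norms. First, note that the paper's own proof actually treats the estimator as $\operatorname{Var}_{\X\sim\mP_n}[f_{\mPn,\mQm}(\X)]$ (the displayed definition with $\mP$ appears to be a typo; only the $\mPn$-version is computable from data). The paper's argument then stays entirely in the empirical norm: the Rigollet--Stromme bound (their Equation~(4.3) and Lemma~14) gives
\[
\E\Big[\big\|\bfmn-\bfpq\big\|_{L^2(\mPn)}^2\Big]\le \frac{\cK(c)}{n},
\]
which, together with the uniform bound from Lemma~\ref{lem:regularity}, controls $\big|\operatorname{Var}_{\mPn}[f_{\mPn,\mQm}]-\operatorname{Var}_{\mPn}[f_{\mP,\mQ}]\big|$. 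The remaining step, $\operatorname{Var}_{\mPn}[f_{\mP,\mQ}]\to\operatorname{Var}_{\mP}[f_{\mP,\mQ}]$, is a plain law of large numbers because $f_{\mP,\mQ}$ is a \emph{fixed} bounded function.

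Your route instead requires $\|\bfmn-\bfpq\|_{L^2(\mP)}\convP 0$, and your two proposed justifications do not deliver this. Theorem~\ref{Theo:PotIntApprox} and Corollary~\ref{cor:PotIntApprox} concern \emph{linear} functionals $\int\tilde\eta\,(\bfmn-\bfpq)\,d\mPn$ with a \emph{fixed, deterministic} $\tilde\eta$; they do not control the quadratic quantity $\int(\bfmn-\bfpq)^2\,d\mPn$ (which would need the random choice $\tilde\eta=\bfmn-\bfpq$), and in any case they integrate against $\mPn$, not $\mP$. Likewise, the Rigollet--Stromme bound is in $L^2(\mPn)$, not in $L^2(\mP)$; the whole point of their framework---and of this paper---is precisely to avoid controlling potentials outside the sample, so the $L^2(\mP)$ bound is \emph{not} directly available. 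It can be recovered by a separate argument via the extension formula $\bfmn(\x)=-\log\int C(\x,\y)e^{\bgmn(\y)}d\mQm(\y)$ (as in the proof of Lemma~\ref{lem:pointwiseConvergence}) combined with a variance computation for the resulting empirical process term, but you would have to supply that argument explicitly. The paper's route sidesteps the issue altogether by never leaving $L^2(\mPn)$ until the very last LLN step, which is applied to a deterministic function.
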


\subsection{Expectation under entropic optimal transport plan} 
We now focus on weak limits for the expectation of a bounded function $\eta \in L^\infty(\Omega\times \Omega)$ with respect to the entropic optimal transport plan, whose proof is stated at the end of the section.

\begin{Theorem}\label{Theo:TCLPlan} The expectation of $\eta\in L^\infty(\Omega\times 
\Omega)$ with respect to the entropic optimal transport plan satisfies asymptotically
\begin{align*}
    \E_{(\X,\Y)\sim \pinm}&[\eta(\X,\Y)] - \E_{(\X,\Y)\sim \pipq}[\eta(\X,\Y)] \\
    =&{\frac{1}{{n}}}\sum_{i=1}^n (I_{L^2_0(\mP)}-\AQ\AP)^{-1}\big(\eta_{\x}-\AQ\eta_{\y}\big)(\X_i)\\
&+{ \frac{1}{{m}}}{\sum_{j=1}^m (I_{L^2_0(\mQ)}-\AP\AQ)^{-1}\big(\eta_{\y}-\AP\eta_{\x}\big)(\Y_j)}
    +o_p\left(\sqrt{\frac{n+ m}{n\,m}}\right).
\end{align*}
\end{Theorem}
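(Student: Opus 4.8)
The plan is to reduce the weak limit for $\E_{(\X,\Y)\sim \pinm}[\eta(\X,\Y)]$ to the linearization result for potentials already established in Theorem~\ref{Theo:PotIntApprox}. First I would expand the difference using the density representation \eqref{eq:optimallityCodPlan}: since $d\pinm(\x,\y) = \ximn(\x,\y)\,d(\mPn\otimes\mQm)(\x,\y)$ and $d\pipq(\x,\y) = \xipq(\x,\y)\,d(\mP\otimes\mQ)(\x,\y)$, with $\ximn(\x,\y) = C(\x,\y)e^{\bfmn(\x)+\bgmn(\y)}$ (and analogously for $\xipq$, using the versions normalized by $\int\bgmn d\mPn = \int\bgpq d\mPn = 0$ as in Theorem~\ref{Theo:PotIntApprox}), I would write
\begin{align*}
\E_{(\X,\Y)\sim \pinm}[\eta(\X,\Y)] - \E_{(\X,\Y)\sim \pipq}[\eta(\X,\Y)]
&= \int \eta\, \ximn\, d(\mPn\otimes\mQm) - \int \eta\, \xipq\, d(\mP\otimes\mQ).
\end{align*}
The strategy is then a telescoping decomposition into three pieces: (i) replacing $\xipq$-integration against $\mP\otimes\mQ$ by integration against $\mPn\otimes\mQm$, which is a standard (degenerate) V-statistic term handled by a Hoeffding-type decomposition and contributes the leading empirical-process fluctuation of $\eta_{\x}$ and $\eta_{\y}$; (ii) the first-order Taylor term coming from $\ximn - \xipq \approx \xipq\big((\bfmn-\bfpq) + (\bgmn-\bgpq)\big)$ integrated against $\mPn\otimes\mQm$, which is precisely the object controlled by Theorem~\ref{Theo:PotIntApprox}; and (iii) the quadratic remainder from the exponential, of order $\|\bfmn-\bfpq\|_n^2 + \|\bgmn-\bgpq\|_m^2$, which I expect to be $\op(\sqrt{(n+m)/(nm)})$ by the convergence rates for potentials (implicit in the proof of Theorem~\ref{Theo:PotIntApprox} and in \citealt{rigollet2022sample}).

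Next I would combine the linear pieces. From Theorem~\ref{Theo:PotIntApprox}, the $\bfmn - \bfpq$ term and the $\bgmn - \bgpq$ term each yield two integrals against $\mPn - \mP$ and $\mQm - \mQ$ respectively, involving the resolvents $(I - \AQ\AP)^{-1}$ on $L^2_0(\mP)$ and $(I - \AP\AQ)^{-1}$ on $L^2_0(\mQ)$ applied to combinations of $\eta_\x$, $\eta_\y$ and their $\AP$-, $\AQ$-images. Adding these to the V-statistic term from (i) — which after linearization contributes $\int \eta_\x\, d(\mPn - \mP) + \int \eta_\y\, d(\mQm-\mQ)$ — I would collect all $d(\mPn-\mP)$ contributions into a single function on $\Omega$ and all $d(\mQm-\mQ)$ contributions into another. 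The algebraic heart is to verify that, after using the operator identities in Lemma~\ref{lemma:empiricalInverse}.\ref{lem:ApEqualAp0} (in particular $\bAP\AQ = \AP\AQ$ on $L^2_0(\mQ)$, $\AQ\bAP = \AQ\AP$ on $L^2_0(\mP)$, and the centering relations) together with the resolvent identity $(I-\AQ\AP)^{-1}\AQ = \AQ(I-\AP\AQ)^{-1}$, the $d(\mPn - \mP)$-coefficient simplifies to $(I_{L^2_0(\mP)}-\AQ\AP)^{-1}(\eta_\x - \AQ\eta_\y)$ and the $d(\mQm - \mQ)$-coefficient to $(I_{L^2_0(\mQ)}-\AP\AQ)^{-1}(\eta_\y - \AP\eta_\x)$. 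Rewriting $\int h\, d(\mPn - \mP) = \frac1n\sum_{i=1}^n h(\X_i) - \E_{\mP}[h(\X)]$ and absorbing the constant (it vanishes because the resolvent images lie in $L^2_0$) gives exactly the claimed expansion; the $\op$-terms aggregate into the stated $\op(\sqrt{(n+m)/(nm)})$.

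The main obstacle I anticipate is the bookkeeping in step (i): the cross term $\int \eta\, \xipq\, d\big((\mPn - \mP)\otimes(\mQm - \mQ)\big)$ is a genuinely degenerate second-order V-statistic, and I must argue it is $\op(\sqrt{(n+m)/(nm)})$ — this follows from a variance computation of order $\mathcal{O}(1/(nm))$, using boundedness of $\eta$ and the uniform bounds $\exp(-3\|c\|_\infty) \le \xipq \le \exp(3\|c\|_\infty)$ from Lemma~\ref{lem:regularity}, since $n m \gg (n+m)$ under $\frac{m}{n+m}\to\lambda\in(0,1)$. The other delicate point is checking that the diagonal V-statistic corrections ($i = j$-type terms, and the self-normalization from $\mPn\otimes\mQm$ versus independent sampling) are lower order, which again is a routine $\mathcal{O}(1/(n\wedge m))$ bound once boundedness is invoked; I would state these as a lemma and defer the computation. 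Finally, I should be careful that the potentials in Theorem~\ref{Theo:PotIntApprox} are normalized by $\int \bgmn\, d\mPn = 0$ rather than $\int \bgmn\, d\mQm = 0$, so when I substitute the plan density $\ximn = C e^{\bfmn + \bgmn}$ I must check the product $\bfmn + \bgmn$ is normalization-invariant (it is, since shifting $f$ by $v$ and $g$ by $-v$ leaves the sum unchanged), so the choice of normalization is immaterial for the plan and Theorem~\ref{Theo:PotIntApprox} applies verbatim.
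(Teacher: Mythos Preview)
Your proposal is correct and follows essentially the same route as the paper: decompose into $A_{n,m}=\int\eta(\ximn-\xipq)\,d(\mPn\otimes\mQm)$ and $B_{n,m}=\int\eta\,\xipq\,d((\mPn\otimes\mQm)-(\mP\otimes\mQ))$, linearize $A_{n,m}$ via the exponential inequality and feed the first-order piece into Theorem~\ref{Theo:PotIntApprox}, handle $B_{n,m}$ by a Hoeffding/U-statistic decomposition, and then combine algebraically using the resolvent identities. The paper outsources the remainder bound to \cite[Eq.~(4.3), Lemma~14]{rigollet2022sample}, the $B_{n,m}$ step to \cite[Theorem~12.6]{vaart_1998}, and the final algebraic collapse to a computation in \cite{gonzalez2022weak}; you propose to do each of these by hand, but the structure is identical.
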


Recall from Lemma \ref{lemma:empiricalInverse} that the operators $(1 - \AQ\bAP)^{-1}$ and $(1 - \bAP\AQ)^{-1}$ are well-defined due to the fact that $(\eta_{\x}-\AQ\eta_{\y})(\X_i)$ and $(\eta_{\y}-\AP\eta_{\x})(\Y_j)$ are centered for all $i,j$. We prove the claim only for the first one and $i=1$, i.e., $\E[(\eta_{\x}-\AQ\eta_{\y})(\X_1)]=0$. To do so, set $u(\Y_2)=\E[\xipq(\X_2,\Y_2)\eta(\X_2,\Y_2)\vert \Y_2] $ and note by $\pipq\in \Pi(\mP,\mQ)$ that
\begin{equation}\label{eq:ExpectationExchange}
\begin{aligned}
    \E[(\AQ\eta_{\y})(\X_1)]&=\E[\xipq(\X_1,\Y_2)u(\Y_2)]=\E[u(\Y_2)]=\E[\xipq(\X_2,\Y_2)\eta(\X_2,\Y_2)]\\
    &=\E[\E[\xipq(\X_2,\Y_2)\eta(\X_2,\Y_2)\vert \X_2]]=\E[\eta_{\x}(\X_2)]=\E[\eta_{\x}(\X_1)],
\end{aligned}
\end{equation}
which proves the claim. 

Further, from Theorem~\ref{Theo:TCLPlan}, we conclude  Theorem~\ref{Theo:MainResults}.\ref{item:main_plan}, where the variance of the limit is 
\begin{equation}\label{eq:VarianceEvalPlan}
\begin{aligned}
   \sigma^2_{\lambda}(\eta)\coloneqq & \lambda \operatorname{Var}_{\X\sim \mP}\left[ (I_{L^2_0(\mP)}-\AQ\AP)^{-1}\big(\eta_{\x}-\AQ\eta_{\y}\big)(\X)\right]\\
  &+(1-\lambda )\operatorname{Var}_{\Y\sim \mQ}\left[ (I_{L^2_0(\mQ)}-\AP\AQ)^{-1}\big(\eta_{\y}-\AP\eta_{\x}\big)(\Y)\right].
\end{aligned}
\end{equation} 
It is worth noting that \cite{Harchaoui2020}, using regularization based on the Schr\"odinger bridge, also obtained the same result and conjectured the truth of Theorem~\ref{Theo:TCLPlan}. In \cite{gonzalez2022weak} the conjecture was proven for smooth costs and compactly supported $\mP$ and $\mQ$. With similar assumptions,   \cite{Goldfeld2022LimitTF}  obtained Gaussian limits but with unspecified variances.   In contrast, our result establishes the veracity of the conjecture for bounded costs without any assumptions on the supports or smoothness. 

As an estimator for the asymptotic variance $\sigma^2_\lambda(\eta)$ we define for $N\in \NN\cup\{\infty\}$,
\begin{align*}
 \hat\sigma^2_{n,m,N}(\eta)\coloneqq &\frac{m}{n+m}\operatorname{Var}_{\X\sim \mPn} \left[\sum_{k = 0}^N(\AQn\APn)^{k}\big(\hat\eta_{\x,n,m}-\AQn\hat\eta_{\y,n,m}\big)(\X)\right]\\
  &+ \frac{n}{n+m} \operatorname{Var}_{\Y\sim \mQm} \left[ \sum_{k = 0}^N(\APn\AQm)^{k}\big(\hat\eta_{\y,n,m}-\APn\hat \eta_{\x,n,m}\big)(\Y)\right],
\end{align*}
where $(\hat\eta_{\x,n,m}, \hat\eta_{\y,n,m})$ denote the empirical counterparts to $(\eta_{\x},\eta_{\y})$, defined by
\begin{align*}
    &\hat\eta_{\x,n,m}:\x\!\to\!\int \!\eta(\x,\y)\xi_{\mPn,\mQm}(\x,\y)d\mQn(\y)\ \ \text{and}\ \   \hat\eta_{\y,n,m}:\y\!\to\!\int \!\eta(\x,\y)\xi_{\mPn,\mQn}(\x,\y)d\mPn(\x).
\end{align*}
The following result shows consistency of this estimator and is proven in Appendix \ref{app:main}.
\begin{Proposition}\label{Proposition:VariancePlans}
The proposed variance estimator satisfies asymptotically for any choice of $N = N(n,m)\in \NN\cup\{\infty\}$ with $N\rightarrow \infty$ for $n,m\to \infty$ and $m/(n+m)\to\lambda$ that
    $$  \hat\sigma^2_{n,m,N}(\eta) \xrightarrow{\;\;p\;\;}  \sigma^2_{\lambda}(\eta). $$
    As a consequence, provided that $\sigma^2_{\lambda}(\eta)>0$, it follows  that
    $$ \sqrt{\frac{nm}{n+m}}\frac{\E_{(\X,\Y)\sim \pinm}[\eta(\X,\Y)] - \E_{(\X,\Y)\sim \pipq}[\eta(\X,\Y)]}{ \hat\sigma_{n,m,N}(\eta)} \convW \cN(0,1).$$
\end{Proposition}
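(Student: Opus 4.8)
\textbf{Proof plan for Proposition \ref{Proposition:VariancePlans}.}

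The plan is to prove the convergence $\hat\sigma^2_{n,m,N}(\eta) \convP \sigma^2_\lambda(\eta)$ in two conceptually separate steps, after which the second assertion follows immediately from Theorem~\ref{Theo:TCLPlan}, Slutsky's lemma, and the continuous mapping theorem (dividing the weak limit $\cN(0,\sigma^2_\lambda(\eta))$ by $\hat\sigma_{n,m,N}(\eta)$, which converges in probability to the positive constant $\sigma_\lambda(\eta)$). The first step is a \emph{truncation/tail estimate}: because the relevant operators have operator norm bounded by $\delta(c) = 1 - \exp(3\|c\|_\infty) < 1$ on the appropriate $L^2_0$ spaces (Lemma~\ref{lemma:empiricalInverse}.\ref{3}), the Neumann series $\sum_{k=0}^\infty (\AQ\AP)^k = (I - \AQ\AP)^{-1}$ converges geometrically, and likewise for the empirical operators once one shows their norms are also eventually bounded by some $\delta' < 1$ with high probability. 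Hence the difference between the $N$-truncated variance expression and the $N=\infty$ version is $O(\delta'^{\,N})$, uniformly in the data on a high-probability event, and vanishes as $N\to\infty$; this reduces the problem to the case $N = \infty$.

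The second step is the \emph{plug-in consistency} for $N=\infty$: show that
$$\operatorname{Var}_{\X\sim\mPn}\big[(I_{L^2_0(\mPn)}-\AQn\APn)^{-1}(\hat\eta_{\x,n,m}-\AQn\hat\eta_{\y,n,m})(\X)\big] \convP \operatorname{Var}_{\X\sim\mP}\big[(I_{L^2_0(\mP)}-\AQ\AP)^{-1}(\eta_{\x}-\AQ\eta_{\y})(\X)\big],$$
and the analogous statement with $\mP,\mQ$ interchanged; combined with $m/(n+m)\to\lambda$ this yields the claim. For this I would proceed by chaining several approximations: (i) the empirical potentials $(f_{\mPn,\mQm}, g_{\mPn,\mQm})$ and hence the empirical densities $\xi_{\mPn,\mQm}$ converge, in a suitable sense (e.g.\ $L^2$ with respect to the empirical or population measures), to their population counterparts — this should follow from Corollary~\ref{cor:PotIntApprox} (taking $\tilde\eta$ ranging over indicators or a dense family) together with the uniform bounds of Lemma~\ref{lem:regularity}; (ii) consequently $\hat\eta_{\x,n,m}\to\eta_{\x}$ and $\hat\eta_{\y,n,m}\to\eta_{\y}$ and the empirical operators $\APn,\AQm$ converge to $\AP,\AQ$ in operator norm; (iii) the map $(T, h)\mapsto (I - T)^{-1}h$ is jointly continuous on $\{\|T\|\le\delta'\}\times L^2$, so $(I-\AQn\APn)^{-1}(\hat\eta_{\x,n,m}-\AQn\hat\eta_{\y,n,m})$ converges to $(I-\AQ\AP)^{-1}(\eta_{\x}-\AQ\eta_{\y})$; (iv) finally the empirical variance $\operatorname{Var}_{\X\sim\mPn}$ of a function converging to a fixed bounded function converges to the population variance, by a standard law-of-large-numbers argument (all functions here are uniformly bounded thanks to Lemma~\ref{lem:regularity}, so there is no integrability issue).

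The main obstacle I anticipate is step (ii)/(iii): controlling the convergence of the \emph{empirical operators} $\APn, \AQm$ — which are integral operators with random kernels built from the empirical measures and empirical potentials — to their population versions, and ensuring this convergence is strong enough (operator norm on $L^2$, not merely pointwise) to pass through the operator inverse. One must be careful that $\APn$ maps $L^2(\mPn)$ to $L^2(\mQm)$ while $\AP$ maps $L^2(\mP)$ to $L^2(\mQ)$, so "convergence" requires identifying these spaces via a common reference, or working with the compositions $\AQn\APn$ and $\AQ\AP$ acting on a fixed space directly; the uniform kernel bounds $\exp(-3\|c\|_\infty)\le\xi\le\exp(3\|c\|_\infty)$ from Lemma~\ref{lem:regularity} are the key tool that keeps all these operators uniformly bounded and lets one trade operator-norm control for $L^2$-convergence of the kernels, which in turn follows from the potential convergence established via Corollary~\ref{cor:PotIntApprox}. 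A secondary technical point is verifying that the empirical eigenvalue gap $\delta'<1$ holds with probability tending to one; this should follow from the deterministic bound in Lemma~\ref{lemma:empiricalInverse}.\ref{3} applied with $\mPn,\mQm$ in place of $\mP,\mQ$, since that bound depends only on $\|c\|_\infty$, giving in fact $\delta' = \delta(c)$ deterministically.
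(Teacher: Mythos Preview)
Your truncation step and the eigenvalue-gap observation ($\delta' = \delta(c)$ deterministically, since Lemma~\ref{lemma:empiricalInverse} applies verbatim to $\mPn,\mQm$) are correct and match the paper's Step~0. The gap is in your step~(ii)/(iii): operator-norm convergence $\APn\to\AP$, $\AQm\to\AQ$ does \emph{not} hold under the paper's assumptions (bounded measurable cost on a general Polish space). Even with the kernel fixed at $\xi_{\mP,\mQ}$, the difference $(\APn-\AP)f(\y)=\int \xi_{\mP,\mQ}(\x,\y)f(\x)\,d(\mPn-\mP)(\x)$ is an empirical process, and controlling its $L^2(\mQ)$-norm uniformly over $\|f\|_{L^2(\mP)}\le 1$ is precisely a Donsker/Glivenko--Cantelli statement for the class $\{\x\mapsto\xi_{\mP,\mQ}(\x,\y)f(\x)\}$. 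For merely bounded measurable $\xi_{\mP,\mQ}$ this class is too large; such uniform control is exactly what smoothness of the cost would buy, and avoiding it is the whole point of the paper. Your remark that ``$L^2$-convergence of the kernels gives operator-norm control'' would be valid if the operators integrated against the \emph{same} measure, but $\APn$ integrates against $\mPn$ while $\AP$ integrates against $\mP$, so the measure difference persists and cannot be absorbed into kernel convergence.

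The paper circumvents this by never comparing empirical and population operators in operator norm. Instead it chains through auxiliary operators $\DPn,\DQm$ (population kernel $\xi_{\mP,\mQ}$, empirical measure), which are $\|\xi_{\mPn,\mQm}-\xi_{\mP,\mQ}\|_{n\times m}$-close to $\APn,\AQm$ in the empirical operator norm (Lemma~\ref{LemmaBoundOfEigD}); it then fixes a slowly growing $N=\lceil\log_+^{1/2}(nm/(n+m))\rceil$ so that the accumulated factor $\cK(c)^N$ in the $N$-term sum is still $o\big(\sqrt{nm/(n+m)}\big)$; and crucially, the passage from $\DQm\DPn$ to $\AQ\AP$ is carried out only on the \emph{fixed deterministic} function $\eta_{\x}-\AQ\eta_{\y}$, where it reduces to an ordinary law of large numbers, not a uniform statement. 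If you want to repair your argument, replace operator-norm convergence by this applied-to-a-fixed-function convergence and keep $N$ finite but growing.
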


\subsection{Conditional expectation under entropic optimal transport plan}
We also derive the weak limit for the conditional expectation of a bounded function $\eta \in L^\infty(\Omega\times \Omega)$ under the entropic optimal transport plan. The proof is given at the end of the section.

\begin{Theorem}\label{Theo:TCLCondPlan} The conditional expectation of $\eta\in L^\infty(\Omega\times 
\Omega)$ with respect to the entropic optimal transport plan given that $\X = \x\in\Omega$ satisfies asymptotically
\begin{align*}
    \E_{(\X,\Y)\sim \pinm}&[\eta(\X,\Y) | \X = \x] - \E_{(\X,\Y)\sim \pipq}[\eta(\X,\Y)|\X = \x] \\
    &\!\!\!\!\!\!\!\!\!\!\!\!\!\!\!\!\!\!\!\!\!=\frac{1}{n}\sum_{i = 1}^n\Big((I_{L^2_0(\mP)}-\AQ\AP)^{-1}\AQ\big\{ [\eta(\x,\cdot)-\eta_{\x}(\x)] \xipq(\x,\cdot) \big\} \Big)(\X_i)\\
    &\!\!\!\!\!\!\!\!\!\!\!\!\!\!\!+\frac{1}{m}\sum_{j = 1}^m\Big((I_{L^2_0(\mQ)}-\AP\AQ)^{-1}\big\{ [\eta(\x,\cdot)-\eta_{\x}(\x)] \xipq(\x,\cdot) \big\} \Big)(\Y_j)  + \opnm .
\end{align*}
\end{Theorem}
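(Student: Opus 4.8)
\textbf{Proof plan for Theorem \ref{Theo:TCLCondPlan}.}
The plan is to derive the conditional-expectation limit from the already-established machinery for potentials, exactly as one expects Theorems \ref{Theo:TCLcost} and \ref{Theo:TCLPlan} to be proven. First I would write out the conditional expectation explicitly: since $d\pinm(\x,\y) = \ximn(\x,\y)\, d(\mPn\otimes\mQm)(\x,\y)$ with $\ximn(\x,\y) = C(\x,\y)e^{\bfmn(\x)+\bgmn(\y)}$, the conditioning on $\X = \x$ kills the $\x$-marginal normalization and leaves
$$\E_{(\X,\Y)\sim\pinm}[\eta(\X,\Y)\mid\X = \x] = \frac{\int \eta(\x,\y) C(\x,\y) e^{\bgmn(\y)}\, d\mQm(\y)}{\int C(\x,\y) e^{\bgmn(\y)}\, d\mQm(\y)},$$
and similarly for the population plan with $\bgpq$ and $\mQ$ in place of $\bgmn$ and $\mQm$. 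The dependence on $\bfmn(\x)$ has cancelled, which is the structural reason why the limit only involves $\eta(\x,\cdot)$ and not the $\x$-potential fluctuation. Using the optimality relation \eqref{eq:optimallityCodt0}, the denominator equals $e^{-\bfmn(\x)}$ (resp. $e^{-\bfpq(\x)}$) when $\x$ is a support point, but for the canonical extension I would just carry the ratio along.

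Next I would linearize the ratio. Write the numerator and denominator as integrals of $\eta(\x,\cdot)\ximn(\x,\cdot)$ and $\ximn(\x,\cdot)$ against $\mQm$, compared with the population versions against $\mQ$. The difference splits into (i) replacing $\mQm$ by $\mQ$ — a term of size $\Op(m^{-1/2})$ controlled by a CLT — and (ii) replacing $\bgmn$ by $\bgpq$ inside the exponential. For (ii), a first-order Taylor expansion of $e^{\bgmn} - e^{\bgpq} \approx (\bgmn - \bgpq)e^{\bgpq}$ together with Lemma \ref{lem:regularity} (which bounds the potentials and $\xi$ uniformly, so all exponentials are bounded above and below) reduces matters to integrals of the form $\int \psi(\x,\y)(\bgmn(\y) - \bgpq(\y))\xipq(\x,\y)\, d\mQm(\y)$ for suitable bounded $\psi$. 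The quadratic remainder is $\op(\sqrt{(n+m)/(nm)})$ because $\|\bgmn - \bgpq\|$ in the relevant empirical $L^2$-sense is $\Op(\sqrt{(n+m)/(nm)})$ — this is precisely the kind of bound underpinning Theorem \ref{Theo:PotIntApprox} and Corollary \ref{cor:PotIntApprox}. Applying Theorem \ref{Theo:PotIntApprox} with the test function $\eta'(\x',\y) := [\eta(\x,\y) - \eta_{\x}(\x)]\,\mathbbm{1}$-type object — more precisely, recognizing that $\int[\eta(\x,\y) - \eta_{\x}(\x)]\xipq(\x,\y)(\bgmn(\y) - \bgpq(\y))\, d\mQm(\y)$ is exactly the left-hand side of the second display in Theorem \ref{Theo:PotIntApprox} for the choice $\eta(\x',\y) \leftarrow [\eta(\x,\y) - \eta_{\x}(\x)]\xipq(\x,\y)/\xipq(\x',\y)$ appropriately reduced, or by going through Corollary \ref{cor:PotIntApprox} — I would extract the linear representation in terms of $d(\mPn - \mP)$ and $d(\mQm - \mQ)$.

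The combinatorics of assembling the pieces is then the technical heart: I expect the $\mQm$-CLT term from (i) to combine with the $d(\mQm - \mQ)$-part of the Theorem \ref{Theo:PotIntApprox} contribution, and after simplification using Lemma \ref{lemma:empiricalInverse}.\ref{lem:ApEqualAp0} (the identities $\bAP\AQ = \AP\AQ\MQ$, $\AQ\bAP = \AQ\AP\MP$, and $\AP\MP\AQ = \AP\AQ\MQ$ etc.) one should land on the stated form, where $\{[\eta(\x,\cdot) - \eta_{\x}(\x)]\xipq(\x,\cdot)\}$ appears as the driving centered function: note it is automatically $\mQ$-centered because $\int[\eta(\x,\y) - \eta_{\x}(\x)]\xipq(\x,\y)\, d\mQ(\y) = \eta_{\x}(\x) - \eta_{\x}(\x) = 0$, so all inverse operators $(I - \AQ\AP)^{-1}$ and $(I - \AP\AQ)^{-1}$ act on $L^2_0$-spaces and are well-defined by Lemma \ref{lemma:empiricalInverse}.\ref{2}. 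The one contribution that requires care is why the $d(\mPn - \mP)$ term ends up as $(I - \AQ\AP)^{-1}\AQ\{\cdots\}$ with one extra $\AQ$ relative to the $\mQm$ term: this is the signature of the fact that $\bgmn - \bgpq$ is driven through the Schrödinger fixed-point by both samples, and the $\X$-sample only enters after one application of the $\mP\to\mQ$ averaging operator $\AQ$. I would verify this by bookkeeping the symmetric system of linearized optimality equations for $(\bfmn - \bfpq, \bgmn - \bgpq)$ exactly as in the proof of Theorem \ref{Theo:PotIntApprox}, specialized to the denominator $= e^{-\bfmn(\x)}$ relation so that only $\bgmn - \bgpq$ survives after the cancellation. \textbf{The main obstacle} will be handling the canonical extension of the potentials to points $\x$ outside $\supp(\mPn)$ uniformly — Theorem \ref{Theo:PotIntApprox} is stated with integrals against $d(\mPn\otimes\mQm)$, so I would need to check that the pointwise-in-$\x$ statement here follows by choosing $\eta$ supported near $\x$ and passing to a localized version, or alternatively re-run the linearization argument directly at the fixed point $\x$ using only the $\y$-marginal structure, which is cleaner since $\x$ is frozen and only the $\bgmn$-equation on $\mQm$ needs to be resolved against the $\mPn$-driven perturbation.
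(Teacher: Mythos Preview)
Your proposal is correct and follows essentially the same approach as the paper's proof: split the difference into a density-change piece $\int \eta(\x,\cdot)(\ximn - \xipq)(\x,\cdot)\,d\mQm$ and a measure-change piece $\int \eta(\x,\cdot)\xipq(\x,\cdot)\,d(\mQm-\mQ)$, Taylor-expand the exponential in the first piece to isolate $\int[\eta(\x,\cdot)-\eta_\x(\x)]\xipq(\x,\cdot)(\bgmn-\bgpq)\,d\mQm$, and then invoke Corollary~\ref{cor:PotIntApprox} with $\tilde\eta(\y)=[\eta(\x,\y)-\eta_\x(\x)]\xipq(\x,\y)$. The obstacle you flag --- pointwise control at the frozen $\x$ --- is handled in the paper by a separate Lemma~\ref{lem:pointwiseConvergence} establishing $\bfmn(\x)\convP\bfpq(\x)$, whose proof is exactly your second suggested route: rewrite $e^{-\bfmn(\x)}-e^{-\bfpq(\x)}$ via the optimality relation as an integral against $\mQm$ and bound it by $\cK(c)\|\bgmn-\bgpq\|_{m}$ plus a law-of-large-numbers term.
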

Note  by Lemma \ref{lemma:empiricalInverse} that the right-hand side is well-defined  since $[\eta(\x, \Y) - \eta_\x(\x)]\xipq(\x,\Y)$ is a centered random variable, which is a consequence of \eqref{eq:optimallityCodt0}. Moreover, from Theorem \ref{Theo:TCLCondPlan} we conclude Theorem \ref{Theo:MainResults}.\ref{item:main_condplan}, where variance of the limit is given by
\begin{equation}\label{eq:VarianceEvalCondPlan}
\begin{aligned}
   \sigma^2_{\lambda}(\eta, \x)\coloneqq&
\lambda\operatorname{Var}_{\X\sim \mP}\left[ (I_{L^2_0(\mP)}-\AQ\AP)^{-1}\AQ\big\{ [\eta(\x,\cdot)-\eta_{\x}(\x)] \xipq(\x,\cdot) \big\} (\X)\right]\!\!\!\!\!\!\!\!\!\!\\
  &+(1-\lambda )\operatorname{Var}_{\Y\sim \mQ}\left[ (I_{L^2_0(\mQ)}-\AP\AQ)^{-1}\big\{ [\eta(\x,\cdot)-\eta_{\x}(\x)] \xipq(\x,\cdot) \big\} (\Y)\right].\!\!\!\!\!\!\!\!\!\!
\end{aligned}
\end{equation}
As an estimator for the asymptotic variance $\sigma^2_{\lambda}(\eta, \x)$ we consider for $N\in \NN\cup\{\infty\}$,
\begin{align*}
    \hat\sigma^2_{n,m,N}(\eta, \x)\coloneqq& 
    \frac{m}{n+m}\operatorname{Var}_{\X\sim \mPn}\left[\sum_{k = 0}^N (\AQm\APn)^k\AQm\big\{ [\eta(\x,\cdot)-\hat\eta_{\x,n,m}(\x)] \ximn(\x,\cdot) \big\} (\X)\right]\\
    &+\frac{n}{n+m}\operatorname{Var}_{\Y\sim \mQm}\left[\sum_{k = 0}^N (\APn\AQm)^k\big\{ [\eta(\x,\cdot)-\hat\eta_{\x,n,m}(\x)] \ximn(\x,\cdot) \big\} (\Y)\right],
\end{align*}
where $\hat\eta_{\x,n,m}$ is defined just as before. The subsequent result asserts consistency of the variance estimator for a suitable choice of $N$. Its proof is given in Appendix \ref{app:main}.

\begin{Proposition}\label{Proposition:VarianceCondPlans}
The proposed variance estimator satisfies asymptotically for any choice of $N = N(n,m)\in \NN\cup\{\infty\}$ with $N\rightarrow \infty$ for $n,m\to \infty$ and $m/(n+m)\to\lambda$ that
     $$  \hat\sigma^2_{n,m,N}(\eta, \x) \xrightarrow{\;\;p\;\;}  \sigma^2_{\lambda}(\eta, \x). $$
    As a consequence, provided that $\sigma^2_{\lambda}(\eta, \x)>0$, it follows  that
    $$ \sqrt{\frac{nm}{n+m}}\frac{\E_{(\X,\Y)\sim \pinm}[\eta(\X,\Y)|\X = \x] - \E_{(\X,\Y)\sim \pipq}[\eta(\X,\Y)|\X = \x]}{ \hat\sigma_{n,m,N}(\eta, \x)} \convW \cN(0,1).$$
\end{Proposition}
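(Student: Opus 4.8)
\textbf{Proof plan for Proposition \ref{Proposition:VarianceCondPlans}.} The plan is to follow the same two-part template already used for Proposition \ref{prop:TCLcost} and Proposition \ref{Proposition:VariancePlans}: first establish the consistency $\hat\sigma^2_{n,m,N}(\eta,\x)\convP\sigma^2_\lambda(\eta,\x)$, and then combine this with Theorem \ref{Theo:TCLCondPlan} and Slutsky's lemma to obtain the studentized weak limit. The second step is entirely routine once the first is in place: the numerator, rescaled by $\sqrt{nm/(n+m)}$, converges weakly to $\cN(0,\sigma^2_\lambda(\eta,\x))$ by Theorem \ref{Theo:MainResults}.\ref{item:main_condplan} (which itself follows from Theorem \ref{Theo:TCLCondPlan} via the Lindeberg--Feller CLT applied to the two independent triangular arrays, using that the summands are bounded by Lemma \ref{lem:regularity}), and dividing by $\hat\sigma_{n,m,N}(\eta,\x)\convP\sigma_\lambda(\eta,\x)>0$ yields the standard normal limit.

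For the consistency step I would compare $\hat\sigma^2_{n,m,N}(\eta,\x)$ term by term with $\sigma^2_\lambda(\eta,\x)$. The weights $m/(n+m)$ and $n/(n+m)$ converge deterministically to $\lambda$ and $1-\lambda$. For the first (the $\X$-) term one must show
\[
\operatorname{Var}_{\X\sim\mPn}\!\left[\textstyle\sum_{k=0}^N(\AQm\APn)^k\AQm\big\{[\eta(\x,\cdot)-\hat\eta_{\x,n,m}(\x)]\ximn(\x,\cdot)\big\}(\X)\right]\convP \operatorname{Var}_{\X\sim\mP}\!\left[(I-\AQ\AP)^{-1}\AQ\big\{[\eta(\x,\cdot)-\eta_\x(\x)]\xipq(\x,\cdot)\big\}(\X)\right],
\]
and analogously for the $\Y$-term with $(I-\AP\AQ)^{-1}$. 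This I would decompose into three sources of error: (i) replacing the empirical operators $\APn,\AQm$, the empirical density $\ximn$, and the empirical object $\hat\eta_{\x,n,m}$ by their population analogues $\AP,\AQ,\xipq,\eta_\x$; (ii) replacing the truncated Neumann sum $\sum_{k=0}^N$ by the full inverse $(I-\AQ\AP)^{-1}=\sum_{k=0}^\infty(\AQ\AP)^k$; and (iii) replacing the empirical variance $\operatorname{Var}_{\X\sim\mPn}$ by the population variance $\operatorname{Var}_{\X\sim\mP}$ of the (now population) function. For (ii), Lemma \ref{lemma:empiricalInverse}.\ref{3} gives the uniform spectral gap $\|\AQ\AP\|\leq\delta(c)<1$ (and likewise $\|\AP\AQ\|\leq\delta(c)$), so the tail $\sum_{k>N}(\AQ\AP)^k$ has operator norm at most $\delta(c)^{N+1}/(1-\delta(c))\to 0$ since $N\to\infty$; one also needs that the empirical operators satisfy $\|\AQm\APn\|\leq\delta(c)$ deterministically, which follows by the same argument as Lemma \ref{lemma:empiricalInverse}.\ref{3} applied to $\mPn,\mQm$ in place of $\mP,\mQ$ (the bound in Lemma \ref{lem:regularity} and the eigenvalue estimate depend only on $\|c\|_\infty$), so the truncation error is uniform in $n,m$. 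For (iii), since all functions involved are uniformly bounded by a constant depending only on $\|c\|_\infty$, $\|\eta\|_\infty$ (again via Lemma \ref{lem:regularity}), the law of large numbers gives $\operatorname{Var}_{\X\sim\mPn}[h]\to\operatorname{Var}_{\X\sim\mP}[h]$ for fixed bounded $h$, and a standard perturbation argument (variance is Lipschitz in the function on the $\|\cdot\|_\infty$-ball) handles the joint convergence.

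The main obstacle is part (i), the operator-convergence step: one must show that $\APn\to\AP$ and $\AQm\to\AQ$ in a sense strong enough that, after composing $O(N)$ of them (with $N\to\infty$, possibly $N=\infty$) and applying them to the empirical kernel object $[\eta(\x,\cdot)-\hat\eta_{\x,n,m}(\x)]\ximn(\x,\cdot)$, the resulting function converges in $L^2(\mPn)$ (and in sup norm) to its population counterpart. The key points are: the operator norms are uniformly bounded by $1$ (Lemma \ref{lemma:empiricalInverse}.\ref{1}), so composing many of them does not amplify errors geometrically, and the truncation (ii) means only the tail beyond a slowly growing $N$ — which is negligible — ever involves ``many'' compositions; the operators $\APn,\AQm$ depend on the empirical potentials $f_{\mPn,\mQm},g_{\mPn,\mQm}$ and the empirical measures only through bounded exponential integrands, so their convergence reduces to the consistency of the empirical potentials (which is available from the machinery of Corollary \ref{cor:PotIntApprox} / the results of \citealt{rigollet2022sample}) together with Glivenko--Cantelli-type control of $\int C(\x,\cdot)e^{f_{\mPn,\mQm}}d\mPn$ versus $\int C(\x,\cdot)e^{f_{\mP,\mQ}}d\mP$; and $\hat\eta_{\x,n,m}\to\eta_\x$ and $\ximn\to\xipq$ uniformly follow similarly. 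Since this is precisely the argument already carried out in the proof of Proposition \ref{Proposition:VariancePlans} (the only change being the inner function $[\eta(\x,\cdot)-\hat\eta_{\x,n,m}(\x)]\ximn(\x,\cdot)$ in place of $\hat\eta_{\x,n,m}-\AQn\hat\eta_{\y,n,m}$, and one extra application of $\AQm$), I would structure the proof to invoke that argument verbatim, indicating only the modifications: the inner function is still uniformly bounded (by Lemma \ref{lem:regularity}) and converges uniformly to $[\eta(\x,\cdot)-\eta_\x(\x)]\xipq(\x,\cdot)$, which is all that is needed to feed into the same Neumann-series-plus-LLN scheme.
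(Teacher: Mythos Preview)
Your plan is essentially the paper's: invoke the proof of Proposition \ref{Proposition:VariancePlans} verbatim, noting that the only change is the inner function, and then finish with Slutsky. The paper makes exactly this reduction, so the global structure is right.

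One point needs tightening. You assert that the inner function $[\eta(\x,\cdot)-\hat\eta_{\x,n,m}(\x)]\ximn(\x,\cdot)$ ``converges uniformly'' to its population analogue. Under the paper's assumptions this is too strong: uniform convergence of $\ximn(\x,\cdot)$ in $\y$ would require uniform convergence of $\bgmn$ to $\bgpq$, which is not available without smoothness of the cost (and avoiding such hypotheses is the whole point of the paper). What the paper actually proves, and what suffices for the Neumann-series-plus-LLN scheme you outlined, is $L^2(\mQm)$ convergence,
\[
\norm{\ximn(\x,\cdot)-\xipq(\x,\cdot)}_m = \mathcal{O}_p\!\left(\sqrt{\frac{n+m}{nm}}\right).
\]
This is obtained by bounding $|\ximn(\x,\y)-\xipq(\x,\y)| \leq \cK(c)\big(|\bfmn(\x)-\bfpq(\x)| + |\bgmn(\y)-\bgpq(\y)|\big)$ via the mean value theorem, then using Lemma \ref{lem:pointwiseConvergence} for the pointwise term at the fixed $\x$ and the $L^2(\mQm)$ potential bound from \cite{rigollet2022sample} for the $\y$ term. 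The scalar $|\hat\eta_{\x,n,m}(\x)-\eta_\x(\x)| = \mathcal{O}_p\big(\sqrt{(n+m)/(nm)}\big)$ is handled by adapting Theorem 4 of \cite{rigollet2022sample}. Once you replace ``uniform'' by ``$L^2(\mQm)$'' at this step, your argument matches the paper's.
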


\subsection{Proofs for weak limits}
We now provide the proofs for Theorems \ref{Theo:TCLcost}, \ref{Theo:TCLPlan} and \ref{Theo:TCLCondPlan}. All of them crucially rely on the linearization result for the empirical entropic optimal transport potentials. 
\begin{proof}[Proof of Theorem \ref{Theo:TCLcost}]
By the dual formulation \eqref{eq:dual_entrop} and invariance of the dual objective under shifts of the form $(-1, 1)$ it holds 
\begin{align*}
     \rS(\mP_n,{\rm Q}_m)-\rS(\mP,{\rm Q})=& \int\bfmn  d\mP_n + \int \bgmn d\mQ_m- \int \bfpq  d\mP - \int \bgpq d\mQ\\
    = &\int (\bfmn -  \bfpq  )d\mP_n + \int (\bgmn - \bgpq) d\mQ_m\\
 &+ \int \bfpq  d(\mPn-\mP) + \int \bgpq d(\mQm-\mQ).
\end{align*}
Invoking Corollary \ref{cor:PotIntApprox} for $\tilde \eta \equiv 1$ asserts that $\int (\bfmn -  \bfpq  )d\mP_n + \int (\bgmn - \bgpq) d\mQ_m = \op\left(\sqrt{\frac{n+m}{nm}}\right)$, which proves the claim.  
\end{proof}

\begin{proof}[Proof of Theorem \ref{Theo:TCLPlan}]
We perform the following decomposition 
\begin{align*}
    \int \eta d \Big( \pinm - \pipq \Big)   = \int \eta (\xi_{\mP_n,\mQ_m}-\xi_{\mP,\mQ})d {\mP_n \otimes \mQ_m}+ \int \eta \xi_{\mP,\mQ}d ({\mP_n \otimes \mQ_m}-{\mP\otimes \mQ})
\end{align*}
We denote the first expression on the right-hand side by $A_{n,m}$ and the second by $B_{n,m}$.
     
For the analysis of $A_{n,m}$ we define $h_{n,m}=f_{\mP_n,\mQ_m}+g_{\mP_n,\mQ_m}$ and $h=f_{\mP,\mQ}+g_{\mP,\mQ}$. Then we note that the inequality $\vert e^x-e^y-e^y(x-y)\vert \leq e^{x+y}\vert x-y\vert^2$ for $x,y\in \RR$ implies 
\begin{align*}
   &\| \xi_{\mP_n,\mQ_m}-\xi_{\mP,\mQ}- \xi_{\mP,\mQ}(h_{n,m}-h)\|_{n\times m}\\
\leq  & e^{\|h_{n,m}\|_{\infty}+\|h\|_{\infty}}\|h_{n,m}-h \|_{n\times m}^2=\opnm,
\end{align*}
where the last equality is consequence of Lemma \ref{lem:regularity} and \cite[Inequality (4.3) and Lemma 14]{rigollet2022sample}. Then $A_{n,m}= A_{n,m}'+o_p(\sqrt{(n+m)/nm})$, where 
   \begin{align*}
    A_{n,m}'=\int \eta \xi_{\mP,\mQ}\,\left( f_{\mP_n,\mQ_m}+g_{\mP_n,\mQ_m}-f_{\mP,\mQ}-g_{\mP,\mQ}\right)  d {\mP_n \mQ_m}.
\end{align*}
Invoking Theorem \ref{Theo:PotIntApprox} for the function $\eta $ it follows for $\eta_{\x}$ and $\eta_{\y}$ from \eqref{eq:DefEtaX} that 
\begin{align*}
 A_{n,m}'=& \int \Big((I_{L^2_0(\mP)}-\AQ\AP)^{-1}\big(\AQ\AP \eta_{\x}-\AQ\eta_{\y}-\E_{\X\sim \mP}[\eta_{\x}(\X)-\eta_{\y}(\Y)]\big)\Big) d (\mPn - \mP) \\
&+\int \Big((I_{L^2_0(\mQ)}-\AP\AQ)^{-1}\big(\AP\AQ \eta_{\y}-\AP\eta_{\x}-\E_{\Y\sim \mQ}[\eta_{\y}(\Y)-\eta_{\x}(\X)]\Big) d (\mQm - \mQ)
\end{align*}
holds up to additive $\op\left(\sqrt{\frac{n+m}{nm}}\right)$ terms. Since $\E_{\X\sim \mP}[\eta_{\x}(\X)-\eta_{\y}(\Y)]=0$ (see relation \eqref{eq:ExpectationExchange}), we can rewrite the previous asymptotic equality as 
\begin{align*}
 A_{n,m}'=& \int \Big((I_{L^2_0(\mP)}-\AQ\AP)^{-1}\big(\AQ\AP \eta_{\x}-\AQ\eta_{\y}\Big) d (\mPn - \mP) \\
&+\int \Big((I_{L^2_0(\mQ)}-\AP\AQ)^{-1}\big(\AP\AQ \eta_{\y}-\AP\eta_{\x}\Big) d (\mQm - \mQ)+\op\left(\sqrt{\frac{n+m}{nm}}\right).
\end{align*}

Moreover, using the theory of $U$-statistics \cite[Theorem 12.6]{vaart_1998} it follows for $B_{n,m}$ that 
\begin{align*}B_{n,m}&=\int \eta \xi_{\mP,\mQ} d ((\mP_n - \mP) \mQ + \mP(\mQ_m - \mQ)) + o_p\left(\sqrt{\frac{n+m}{nm}}\right)\\
&=\int \eta_{\x} d (\mP_n - \mP)  + \int \eta_{\y}  d(\mQ_m - \mQ) + \op\left(\sqrt{\frac{n+m}{nm}}\right).
\end{align*}
By the same computation as from page 18 of \cite{gonzalez2022weak} it follows that %
\begin{align*}
    A_{n,m}' + B_{n,m} &= \int \Big((I_{L^2_0(\mP)}-\AQ\AP)^{-1}\big( \eta_{\x}-\AQ \eta_{\y}\big)\Big) d (\mPn - \mP) \\
     &+ \int \Big((I_{L^2_0(\mQ)}-\AP\AQ)^{-1}\big( \eta_{\y}-\AP \eta_{\x}\big)\Big) d (\mQm - \mQ) + \op\left(\sqrt{\frac{n+m}{nm}}\right).\qedhere
\end{align*}
\end{proof}

For the proof of Theorem \ref{Theo:TCLCondPlan} we will rely on the following result on pointwise convergence of empirical potentials whose proof is deferred to Appendix \ref{app:main}.

\begin{Lemma}\label{lem:pointwiseConvergence}
    Fix $(\x, \y)\in \Omega^2$. Then, it holds for $n,m \rightarrow \infty$ with $\frac{m}{n+m} \rightarrow \lambda \in (0,1)$, 
     $$ \vert \bfmn(\x)- \bfpq(\x)\vert +\vert  \bgmn(\y)-\bgpq(\y)\vert =\Op\left(\sqrt{\frac{n+m}{n\, m}}\right).$$
    In particular, it follows that
    $$ \bfmn(\x)\convP \bfpq(\x) \quad \text{ and } \quad  \bgmn(\y)\convP \bgpq(\y).$$
\end{Lemma}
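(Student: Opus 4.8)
The plan is to fix $(\x,\y)\in\Omega^2$ and apply the optimality conditions \eqref{eq:optimallityCodt0} at these points to express the pointwise differences $\bfmn(\x)-\bfpq(\x)$ and $\bgmn(\y)-\bgpq(\y)$ in terms of integrals of the known functions $\xipq(\x,\cdot)$ and $\xipq(\cdot,\y)$ against the potential increments. Concretely, since $\bfmn(\x)=-\log\!\int C(\x,\y')e^{\bgmn(\y')}d\mQm(\y')$ and $\bfpq(\x)=-\log\!\int C(\x,\y')e^{\bgpq(\y')}d\mQ(\y')$, subtracting and using that $\log$ is Lipschitz on the interval where the arguments live (these are bounded away from $0$ and $\infty$ by Lemma \ref{lem:regularity}, which gives $\exp(-3\|c\|_\infty)\le\xi_{\mP,\mQ}\le\exp(3\|c\|_\infty)$ and the analogous deterministic bounds for the empirical potentials), I would linearize the exponential via $|e^a-e^b-e^b(a-b)|\le e^{a+b}|a-b|^2$ exactly as in the proof of Theorem \ref{Theo:TCLPlan}. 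This reduces the claim to showing that
$$
\int C(\x,\y')e^{\bgpq(\y')}(\bgmn(\y')-\bgpq(\y'))\,d\mQm(\y') \;+\; \int C(\x,\y')e^{\bgpq(\y')}\,d(\mQm-\mQ)(\y')
$$
is $\Op(\sqrt{(n+m)/(nm)})$, with the remainder from linearization being $\op$ of the same order by the $L^2(\mQm)$-rate $\|\bgmn-\bgpq\|_m=\Op(\sqrt{(n+m)/(nm)})$ established in \cite{rigollet2022sample} (their Inequality (4.3) and Lemma 14), together with $\|\bgmn-\bgpq\|_\infty\le 6\|c\|_\infty$.

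The second integral above is a centered empirical average of the bounded function $\y'\mapsto C(\x,\y')e^{\bgpq(\y')}=\xipq(\x,\y')$ against $\mQm-\mQ$, hence $\Op(m^{-1/2})=\Op(\sqrt{(n+m)/(nm)})$ by the CLT (or Chebyshev). For the first integral, note that $\y'\mapsto\xipq(\x,\y')$ is a fixed bounded function, so this is exactly $\int \xipq(\x,\cdot)(\bgmn-\bgpq)\,d\mQm$, which is of the type handled by Corollary \ref{cor:PotIntApprox} with $\tilde\eta=\xipq(\x,\cdot)$: that corollary gives an explicit representation as integrals of bounded functions against $\mPn-\mP$ and $\mQm-\mQ$ plus $\op(\sqrt{(n+m)/(nm)})$, hence is $\Op(\sqrt{(n+m)/(nm)})$. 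Combining, $\bfmn(\x)-\bfpq(\x)=\Op(\sqrt{(n+m)/(nm)})$, and the argument for $\bgmn(\y)-\bgpq(\y)$ is symmetric, using the first optimality relation in \eqref{eq:optimallityCodt0} and Corollary \ref{cor:PotIntApprox} with $\tilde\eta=\xipq(\cdot,\y)$. Adding the two bounds gives the displayed estimate, and convergence in probability is immediate since $\sqrt{(n+m)/(nm)}\to 0$.

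The main obstacle is the bookkeeping around the logarithm and the empirical normalization: one must verify that all arguments of $\log$ stay in a compact subinterval of $(0,\infty)$ \emph{uniformly} (so the Lipschitz constant is a genuine constant $\cK(\|c\|_\infty)$), and that the shift convention $\int\bgmn\,d\mPn=\int\bgpq\,d\mPn=0$ used in the statement is compatible with invoking Corollary \ref{cor:PotIntApprox}, which is stated for potentials normalized by $\int\bgmn\,d\mPn=\int\bgpq\,d\mPn=0$ as well — so no re-normalization is needed, but one should check the constant shift between the two normalization conventions contributes only an $\op$ term, which follows from the last assertion of Corollary \ref{cor:PotIntApprox}. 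A minor additional point is handling the case where $\x$ or $\y$ lies outside $\supp(\mP)$ or $\supp(\mQ)$: this is exactly why the canonical everywhere-defined extensions in \eqref{eq:optimalityCriterionNicePotentials} were introduced, so the optimality identities hold at the fixed point regardless.
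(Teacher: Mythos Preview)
Your argument is correct and logically non-circular (Corollary~\ref{cor:PotIntApprox} rests only on Theorem~\ref{Theo:PotIntApprox}, whose proof in Section~\ref{sec:proofs} does not use the present lemma), but it is heavier than the paper's route. The paper does not linearize the exponential and does not invoke Corollary~\ref{cor:PotIntApprox} at all. Instead it writes, directly from the optimality relation,
\[
\bigl|e^{-\bfmn(\x)}-e^{-\bfpq(\x)}\bigr|
\le \Bigl|\int C(\x,\y')\bigl(e^{\bgmn(\y')}-e^{\bgpq(\y')}\bigr)d\mQm(\y')\Bigr|
+\Bigl|\int C(\x,\y')e^{\bgpq(\y')}d(\mQm-\mQ)(\y')\Bigr|,
\]
bounds the first term via $|e^{a}-e^{b}|\le e^{\max(|a|,|b|)}|a-b|$ and Cauchy--Schwarz by $\cK(c)\,\|\bgmn-\bgpq\|_{m}$, and then cites the $L^{2}(\mQm)$-rate from \cite{rigollet2022sample} directly; the second term is $\Op(m^{-1/2})$ by a variance bound. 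Since $e^{-\bfmn(\x)}$ and $e^{-\bfpq(\x)}$ are deterministically bounded away from~$0$ by Lemma~\ref{lem:regularity}, taking logarithms preserves the rate.

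The point is that you already invoke the $L^{2}(\mQm)$ rate for your quadratic remainder; that \emph{same} rate, combined with Cauchy--Schwarz, handles your linear term $\int\xipq(\x,\cdot)(\bgmn-\bgpq)\,d\mQm$ in one line, making the appeal to Corollary~\ref{cor:PotIntApprox} superfluous. What your approach buys is an explicit first-order expansion of the pointwise difference in terms of the operators $(I-\AP\AQ)^{-1}$ and $(I-\AQ\AP)^{-1}$, which is more informative than a mere $\Op$ bound; but for the lemma as stated that extra precision is not used, and the paper's two-line argument suffices.
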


\begin{proof}[Proof of Theorem \ref{Theo:TCLCondPlan}]
   For $\x\in \Omega$ fixed we perform the decomposition 
    \begin{align*}
        & \int \eta(\x,\y) \Big( \ximn(\x, \y) d \mQm(\y)  - \xipq(\x,\y)d\mQ(\y) \Big) \\
        =  &  \int \eta(\x,\y) \Big( \ximn(\x, \y) - \xipq(\x,\y)\Big)d\mQm(\y) +  \int \eta(\x,\y) \xipq (\x,\y) d \Big(\mQm(\y)  - d\mQ(\y) \Big).
    \end{align*}
    We denote the first time by $A_{n,m}$ and the latter term by $B_{n,m}$. 
    For the analysis of the term $A_{n,m}$ recall for any $\x\in \Omega$ by the optimality condition \eqref{eq:optimallityCodt0} that 
    \begin{equation*}
    \begin{aligned}
    \exp(\bfpq(\x)) &= \left(\int C(\x,\y) \exp(\bgpq(\y)) d \mQ(\y)\right)^{-1},\\
    \exp(\bfmn(\x)) &= \left(\int C(\x,\y) \exp(\bgmn(\y)) d \mQm(\y)\right)^{-1}.
\end{aligned}
\end{equation*}
Hence, for any $(\x, \y)\in \Omega\times \Omega$ it follows that 
\begin{align*}
 & (\ximn(\x, \y) - \xipq(\x,\y))\\
=&\Big( \exp(\bfmn(\x) + \bgmn(\y)) - \exp(\bfpq(\x) + \bgpq(\y))  \Big)C(\x,\y) \\
=&  \exp(\bfmn(\x))  \Big(  \exp(\bgmn(\y)) - \exp(\bgpq(\y))\Big)C(\x,\y)\\
& - \left(\frac{\int C(\x,\y') \exp(\bgmn(\y')) d \mQm(\y') - \int C(\x,\y') \exp(\bgpq(\y')) d \mQ(\y')}{\int C(\x,\y') \exp(\bgmn(\y')) d \mQm(\y') \int C(\x,\y') \exp(\bgpq(\y')) d \mQ(\y')} \right)\\
& \quad \quad \times  \exp\Big( \bgpq(\y)\Big)C(\x,\y).
\end{align*}
The second summand on the right-hand side can be rewritten as 
\begin{align*}
& - \Bigg[\left(\frac{\int C(\x,\y') \big(\exp(\bgmn(\y')) -  \exp(\bgpq(\y'))  \big) d \mQm(\y') }{\int C(\x,\y') \exp(\bgmn(\y')) d \mQm(\y') \int C(\x,\y') \exp(\bgpq(\y')) d \mQ(\y')} \right) \\
&{\color{white}-\Bigg[}+ \left(\frac{\int C(\x,\y') \exp(\bgpq(\y')) d (\mQm - \mQ)(\y')}{\int C(\x,\y') \exp(\bgmn(\y')) d \mQm(\y') \int C(\x,\y') \exp(\bgpq(\y')) d \mQ(\y')} \right)\Bigg] \\
& \quad \quad \times \exp\Big( \bgpq(\y) \Big)C(\x,\y)\\
=& - \Bigg[\left(\int C(\x,\y') \big(\exp(\bgmn(\y')) -  \exp(\bgpq(\y'))  \big) d \mQm(\y')  \right)\\
&  {\color{white}-\Bigg[}+\left(\int C(\x,\y') \exp(\bgpq(\y')) d (\mQm - \mQ)(\y')  \right)\Bigg]\exp\Big( \bfmn(\x) \Big)\xipq(\x,\y).
\end{align*}
Invoking Lemma \ref{lem:pointwiseConvergence} it follows that  $\bfmn(\x)\convP \bfpq(\x)$. Moreover, by our quantitative bounds $\norm{\bgmn}_\infty, \norm{\bgpq}_\infty \leq \cK(c)$ it follows using the inequality $|e^x-e^y - e^y(x-y)|\leq e^{x+ y} (x-y)^2$ for $x,y\in \RR$, in conjunction with  Slutsky's Lemma that
    \begin{align*}
     (\ximn(\x, \y)& - \xipq(\x,\y)) =  \xipq(\x,\y) \Big(  \bgmn(\y) - \bgpq(\y)\Big)  \\
&-  \Big(\int \xipq(\x,\y') \left\{\bgmn(\y') - \bgpq(\y') \right\} d \mQm(\y')  \Big) \xipq(\x,\y)\\
&- \Big(\int \xipq(\x,\y') d (\mQm - \mQ)(\y')   \Big)\xipq(\x,\y) \\
&+ \cK(c)[\bgmn(\y) - \bgpq(\y)]^2 + \op(\norm{\bgmn - \bgpq}_{m}).
    \end{align*}
We thus conclude that $A_{n,m}$ equals
\begin{align*}
    & \int \eta(\x,\y)  (\ximn(\x, \y)- \xipq(\x,\y)) d\mQm(\y)\\
    =& \int \eta(\x,\y) \xipq(\x,\y) \Big(  \bgmn(\y) - \bgpq(\y)\Big) d\mQm(\y)\\
    &- \int \xipq(\x,\y')  \Big(  \bgmn(\y') - \bgpq(\y')\Big) d\mQm(\y') \int \eta(\x,\y) \xi(\x,\y) d \mQm(\y)\\
    &-\int \xipq(\x,\y')  d (\mQm - \mQ)(\y')  \int \eta(\x,\y) \xi(\x,\y) d \mQm(\y) + \op\left(\sqrt{\frac{n+m}{nm}}\right).
\end{align*}
By the strong law of large numbers it follows that  $\int \eta(\x,\y) \xi(\x,\y) d \mQm(\y) \convAS \eta_{\x}(\x)$ for $\eta_{\x}$ defined in \eqref{eq:DefEtaX}. Further, note by \eqref{eq:optimallityCodt0} that  \begin{align*}
    \E_{\Y\sim \mQ}\left[(\eta(\x, \Y) - \eta_\x(\x))\xipq(\x,\Y)\right] = 0 . %
\end{align*}
Hence, invoking Corollary \ref{cor:PotIntApprox} it follows that $A_{n,m} + B_{n,m}$ can be represented as 
\begin{align*}
    & \int \Big((I_{L_0^2(\mQ)} -\AP\AQ)^{-1}\AP\AQ\big\{ [\eta(\x,\cdot)-\eta_{\x}(\x)] \xipq(\x,\cdot) \big\} \Big) d (\mQm - \mQ)\\
    &-\int\Big((I_{L_0^2(\mP)}-\AQ\AP)^{-1}\AQ\big\{ [\eta(\x,\cdot)-\eta_{\x}(\x)] \xipq(\x,\cdot) \big\}\Big) d (\mPn - \mP)\\
    &+\int  [\eta(\x,\cdot)-\eta_{\x}(\x)] \xipq(\x,\cdot) d (\mQm - \mQ) +
   \op\left(\sqrt{\frac{n+m}{nm}}\right)\\
   =& \int \Big((I_{L_0^2(\mQ)}-\AP\AQ)^{-1}\big\{ [\eta(\x,\cdot)-\eta_{\x}(\x)] \xipq(\x,\cdot) \big\} \Big) d (\mQm - \mQ) \\
    &+\int \Big((I_{L_0^2(\mP)}-\AQ\AP)^{-1}\AQ\big\{ [\eta(\x,\cdot)-\eta_{\x}(\x)] \xipq(\x,\cdot) \big\} \Big) d (\mPn - \mP) +  \opnm ,
\end{align*}
where the equality is a consequence of an analogous computation as on page 18 of \cite{gonzalez2022weak}. %
\end{proof}

\section{Applications}\label{sec:Applications}

As a consequence of the weak limits from previous section we highlight various implications of our general theory. In all those applications we tacitly assume the ground space $\Omega$ to be Polish and the cost function $c$ to be uniformly bounded. Further, our results are always to be understood for $n,m \rightarrow \infty$ with $\frac{m}{n+m}\to \lambda\in (0,1)$.

\subsection{Sinkhorn costs}
In his original work \cite{cuturi13} proposed the use of the Sinkhorn cost, defined by
\begin{equation*}
\mathrm{d_S}(\mP,\mQ)\coloneqq\E_{(\X,\Y)\sim \pi_{\mP,\mQ}}\left[c(\X,\Y)\right],
\end{equation*}
    which arises by  selecting the evaluation function $\eta\in L^\infty(\Omega\times \Omega)$ as the cost function $c$ itself. Theorem~\ref{Theo:TCLPlan} now asserts a general asymptotically normal behavior which was previously only known for discrete settings \citep{klatt2020empirical, hundrieser2021limit} and for smooth costs \citep{gonzalez2022weak, Goldfeld2022LimitTF}.  
    \begin{Corollary}\label{Coro:SinkhCost}
    The Sinkhorn cost satisfies asymptotically
        \begin{equation*}
    \sqrt{\frac{n\, m}{n+m}}\left( \mathrm{d_S}(\mP_n,\mQ_m)- \mathrm{d_S}(\mP,\mQ) \right)\convW \cN\left(0,\sigma^2_{\lambda}\left(c\right)\right),
\end{equation*}
where the variance $\sigma^2_{\lambda}(c)$ is defined in \eqref{eq:VarianceEvalPlan}. 
    \end{Corollary}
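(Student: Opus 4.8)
The plan is to read off Corollary~\ref{Coro:SinkhCost} directly from Theorem~\ref{Theo:TCLPlan} by instantiating the bounded evaluation function $\eta$ with the cost $c$ itself. Since $c\in L^\infty(\Omega\times \Omega)$ by hypothesis, this is a legitimate choice, and by definition $\E_{(\X,\Y)\sim\pi_{\mP,\mQ}}[c(\X,\Y)] = \mathrm{d_S}(\mP,\mQ)$ and likewise for the empirical plan, so the left-hand side of Theorem~\ref{Theo:TCLPlan} with $\eta = c$ is precisely $\mathrm{d_S}(\mP_n,\mQ_m) - \mathrm{d_S}(\mP,\mQ)$.

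First I would invoke Theorem~\ref{Theo:TCLPlan} with $\eta = c$ to obtain the asymptotic linearization
\begin{align*}
 \mathrm{d_S}(\mP_n,\mQ_m) - \mathrm{d_S}(\mP,\mQ)
 =& \frac{1}{n}\sum_{i=1}^n (I_{L^2_0(\mP)}-\AQ\AP)^{-1}\big(c_{\x}-\AQ c_{\y}\big)(\X_i)\\
 &+\frac{1}{m}\sum_{j=1}^m (I_{L^2_0(\mQ)}-\AP\AQ)^{-1}\big(c_{\y}-\AP c_{\x}\big)(\Y_j) + \opnm,
\end{align*}
where $c_{\x}, c_{\y}$ are the functions from \eqref{eq:DefEtaX} with $\eta$ replaced by $c$. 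The two averages are sums of i.i.d.\ centered random variables (centeredness of the summands is exactly the content of the computation \eqref{eq:ExpectationExchange} applied to $\eta = c$), and they have finite variance because the operators $(I_{L^2_0(\mP)}-\AQ\AP)^{-1}$ and $(I_{L^2_0(\mQ)}-\AP\AQ)^{-1}$ are bounded on the respective spaces by Lemma~\ref{lemma:empiricalInverse} while $c_{\x}, c_{\y}$ are bounded functions (using Lemma~\ref{lem:regularity} to bound $\xipq$). Multiplying through by $\sqrt{nm/(n+m)}$, the two independent normalized averages converge jointly, by the classical (Lindeberg--L\'evy) central limit theorem together with the scaling $m/(n+m)\to\lambda$, to independent Gaussians with variances $\lambda\operatorname{Var}_{\X\sim\mP}[(I_{L^2_0(\mP)}-\AQ\AP)^{-1}(c_{\x}-\AQ c_{\y})(\X)]$ and $(1-\lambda)\operatorname{Var}_{\Y\sim\mQ}[(I_{L^2_0(\mQ)}-\AP\AQ)^{-1}(c_{\y}-\AP c_{\x})(\Y)]$ respectively; the $\opnm$ remainder is absorbed by Slutsky's lemma. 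The sum of these two variances is exactly $\sigma^2_{\lambda}(c)$ as given by \eqref{eq:VarianceEvalPlan} with $\eta = c$, which yields the claim.

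Honestly there is no real obstacle here: the corollary is a direct specialization of an already-proven theorem, and the only points worth spelling out are that $c$ is an admissible choice of $\eta$ (immediate from the uniform boundedness hypothesis) and that $\mathrm{d_S}$ is literally the plan-expectation functional for $\eta = c$. If I wanted to be careful I would note that the statement is vacuous (or the limit degenerate) when $\sigma^2_{\lambda}(c) = 0$, in which case the conclusion reads as convergence to the Dirac mass at $0$, consistent with interpreting $\cN(0,0)$ as a point mass; but since the corollary is stated without the nondegeneracy caveat, this is a matter of convention rather than a gap.
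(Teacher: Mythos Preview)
Your proposal is correct and matches the paper's approach exactly: the paper introduces Corollary~\ref{Coro:SinkhCost} by noting that the Sinkhorn cost ``arises by selecting the evaluation function $\eta\in L^\infty(\Omega\times\Omega)$ as the cost function $c$ itself'' and that ``Theorem~\ref{Theo:TCLPlan} now asserts a general asymptotically normal behavior,'' giving no further proof. Your write-up simply spells out this one-line specialization (plus the routine CLT/Slutsky step already used to pass from Theorem~\ref{Theo:TCLPlan} to Theorem~\ref{Theo:MainResults}.\ref{item:main_plan}).
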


\subsection{Entropic optimal transport maps}
For probability measures $\mP, \mQ$ on a Euclidean space $\Omega =\RR^d$, \cite{seguy2018large} introduced an entropic surrogate for the (unregularized) optimal transport map, defined for $\x\in \RR^d$ by
        $$ \rM(\x)\coloneqq \E_{(\X,\Y)\sim \pipq}[\Y | \X = \x],$$
        where $\pipq$ denotes the entropic optimal transport plan between $\mP$ and $\mQ$ for cost function $c$. 
        Replacing the population measures $(\mP, \mQ)$ by empirical measures $(\mPn, \mQm)$ yields a computationally appealing map estimator $ \rM_{n,m}$ for the (unregularized) optimal transport map which has been analyzed only recently by \cite{pooladian2021entropic, pooladian2022debiaser,pooladian2023minimax} for squared Euclidean costs in various settings. Theorem \ref{Theo:TCLCondPlan} yields novel pointwise weak limits for the empirical entropic optimal map estimator which follow by considering the vector valued map $\eta = (\eta_i)_{i \in \{1, \dots, d\}}$ with $\eta_i\colon \RR^{2d} \rightarrow \RR^d, (\x,\y) \mapsto \y_i$ for any $i \in \{1, \dots, d\}$. 
        \begin{Corollary}
  \label{Coro:Maps}  
  For any $\x\in \R^d$ it follows that 
        $$   \sqrt{\frac{nm}{n+m}}\big( \rM_{n,m}(\x) - \rM(\x)\big) \convW \cN(0,\Sigma^2_{\lambda}(\eta, \x)),$$
        with asymptotic covariance given by $(\Sigma_{\lambda})_{i,j} = \lambda\operatorname{Cov}\left( \X_i, \X_j \right)+ (1-\lambda )\operatorname{Cov}\left( \Y_i, \Y_j \right)$ for \begin{align*}
            \X_i &:= (1-\AQ\AP)^{-1}\AQ\big\{ [\eta_i(\cdot)-(\rM(\x))_{i}] \xipq(\x,\cdot) \big\} (\Y), \text{ with } \X\sim \mP,&&\\
            \Y_i &:= (1-\AP\AQ)^{-1}\big\{ [\eta_i(\cdot)-(\rM(\x))_{i}] \xipq(\x,\cdot) \big\} (\Y), \text{ with } \Y\sim \mQ.&&
        \end{align*} 
        \end{Corollary}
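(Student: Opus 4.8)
\textbf{Proof proposal for Corollary \ref{Coro:Maps}.}
The plan is to reduce the statement to a coordinatewise application of Theorem \ref{Theo:TCLCondPlan} followed by the multivariate Lindeberg--L\'evy central limit theorem. First I would fix $\x \in \R^d$ and observe that, by definition, $(\rM(\x))_i = \E_{(\X,\Y)\sim \pipq}[\eta_i(\X,\Y) \mid \X = \x]$ for the bounded function $\eta_i(\x,\y) = \y_i$; here boundedness is not literally true on all of $\R^d$, so I would either restrict to the compactly supported case (as in the cited works) or remark that only the values of $\eta_i(\x,\cdot)$ on a set carrying the relevant mass matter and that the quantitative potential bounds of Lemma \ref{lem:regularity} localize the plan, so the argument of Theorem \ref{Theo:TCLCondPlan} goes through verbatim with $\eta_i$ in place of $\eta$. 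Applying Theorem \ref{Theo:TCLCondPlan} to each coordinate $i \in \{1,\dots,d\}$ then gives the asymptotic linearization
\begin{align*}
\sqrt{\tfrac{nm}{n+m}}\big((\rM_{n,m}(\x))_i - (\rM(\x))_i\big)
= \sqrt{\tfrac{nm}{n+m}}\Big(\tfrac1n\textstyle\sum_{k=1}^n a_i(\X_k) + \tfrac1m\sum_{l=1}^m b_i(\Y_l)\Big) + \op(1),
\end{align*}
where $a_i := (I-\AQ\AP)^{-1}\AQ\{[\eta_i(\x,\cdot)-(\rM(\x))_i]\xipq(\x,\cdot)\}$ and $b_i := (I-\AP\AQ)^{-1}\{[\eta_i(\x,\cdot)-(\rM(\x))_i]\xipq(\x,\cdot)\}$ are the centered $L^2$ functions appearing there (well-defined by Lemma \ref{lemma:empiricalInverse}, using the centering noted after the theorem).

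Next I would assemble these $d$ scalar expansions into a single vector identity: the error vector equals $\sqrt{nm/(n+m)}$ times $(\tfrac1n\sum_k a(\X_k) + \tfrac1m\sum_l b(\Y_l))$ up to a $\op(1)$ vector, where $a = (a_i)_i$ and $b = (b_i)_i$ are $\R^d$-valued with mean zero under $\mP$ and $\mQ$ respectively. Since the $\X_k$ and the $\Y_l$ are independent of each other and i.i.d.\ within each sample, and since $\frac{m}{n+m}\to\lambda$, I would write $\sqrt{nm/(n+m)} = \sqrt{\tfrac{m}{n+m}}\sqrt{n} = \sqrt{\tfrac{n}{n+m}}\sqrt{m}$ and apply the Cram\'er--Wold device together with the classical multivariate CLT to the two independent triangular arrays $\sqrt{m/(n+m)}\cdot\tfrac1{\sqrt n}\sum_k a(\X_k)$ and $\sqrt{n/(n+m)}\cdot\tfrac1{\sqrt m}\sum_l b(\Y_l)$. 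Their limits are independent centered Gaussians with covariances $\lambda\operatorname{Cov}_{\X\sim\mP}(a(\X))$ and $(1-\lambda)\operatorname{Cov}_{\Y\sim\mQ}(b(\Y))$; adding them and invoking Slutsky to absorb the $\op(1)$ term yields the claimed $\cN(0,\Sigma^2_\lambda(\eta,\x))$ with $(\Sigma_\lambda)_{i,j} = \lambda\operatorname{Cov}(a_i(\X),a_j(\X)) + (1-\lambda)\operatorname{Cov}(b_i(\Y),b_j(\Y))$, which matches the stated formula after identifying $\X_i$ and $\Y_i$ in the statement with $a_i(\X)$ and $b_i(\Y)$ (the notational clash in the statement, where $\X_i$ denotes both a coordinate and the linearized summand, I would resolve by renaming).

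The main obstacle is the unboundedness of $\eta_i(\x,\y) = \y_i$ on $\R^d$, which formally violates the hypothesis $\eta \in L^\infty(\Omega\times\Omega)$ of Theorem \ref{Theo:TCLCondPlan}; everything else is a routine CLT bookkeeping exercise. I would handle this either by stating the corollary under the compact-support assumption used in the cited entropic-map literature, or — preferably — by a truncation argument: replace $\eta_i$ by $\eta_i^{(R)}(\x,\y) = (\y_i \wedge R)\vee(-R)$, note that $\xipq(\x,\cdot)$ is bounded above by $\exp(3\|c\|_\infty)$ by Lemma \ref{lem:regularity} so that the contribution of $\{|\y_i|>R\}$ to all the relevant integrals and $L^2(\mQ)$-norms is controlled by $\int_{|\y_i|>R}|\y_i|\,d\mQ(\y)$, and let $R\to\infty$ after the weak limit; uniform integrability from a mild moment assumption on $\mQ$ (e.g.\ $\E\|\Y\|^{2}<\infty$) makes the truncated limits converge to the asserted one and the remainder uniformly negligible. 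A brief remark to this effect suffices; the substance of the corollary is the direct specialization of Theorem \ref{Theo:TCLCondPlan}.
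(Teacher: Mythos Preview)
Your approach matches the paper's: the corollary is stated there as a direct specialization of Theorem~\ref{Theo:TCLCondPlan} applied coordinatewise to $\eta_i(\x,\y)=\y_i$, with no separate proof given. Your write-up is in fact more complete than the paper's treatment --- you spell out the Cram\'er--Wold / multivariate CLT step for the joint limit and, importantly, flag the unboundedness of $\eta_i$ on $\R^d$, which the paper does not address; your proposed resolution (compact support or a truncation argument using the uniform density bound from Lemma~\ref{lem:regularity}) is the natural one.
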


\subsection{Sinkhorn divergence}
Despite having lower statistical and computational complexity, the regularized transport cost lacks direct practical applications. Unlike its non-regularized counterpart, the fact that $\mP=\mQ$  does not imply $\rS({\rm P},{\rm Q})= 0$. This delimits its practical use regarding discrimination of probability measures. This phenomenon is a result of what is commonly referred to as the entropy bias. To overcome this issue \cite{genevay2018} introduce the Sinkhorn divergence:
$$\rD(\mP,{\rm Q})\coloneqq \rS(\mP,{\rm Q})-{\textstyle \frac{1}{2}}\left(  \rS(\mP,\mP)+ \rS({\rm Q},{\rm Q}) \right).$$
\cite{Feydy2019Interpolating} proved that if $\Omega$ is a compact metric space
and $c$ is a Lipschitz cost function that induces a positive universal kernel $K(x,y)=e^{-c},$ then $\rD(\mP,{\rm Q})$ is symmetric in $\mP,{\rm Q}$ and  
$\rD(\mP,{\rm Q})\geq 0$, with $\rD(\mP,{\rm Q})= 0$ if and only if $\mP={\rm Q}$. Therefore, as suggested by \cite{gonzalez2022weak},   $\rD(\mP_n,{\rm Q}_m)$ can be used as a test statistic for the nonparametric goodness-of-fit testing $H_0: \mP=\mQ$ vs. $H_1: \mP\neq \mQ$. Theorem~\ref{Theo:TCLcost} provides the weak limits for $\sqrt{\frac{n\, m}{n+m}}(\rD(\mP,{\rm Q})-\rD(\mP,{\rm Q}))$. 
\begin{Corollary}
    \label{Divergence}
    The empirical Sinkhorn divergence satisfies asymptotically
    $$ \sqrt{\frac{n\, m}{n+m}}\Big(\rD(\mP,{\rm Q})-\rD(\mP,{\rm Q})\Big)\convW \mathcal{N}(0,\sigma_{\lambda,Div}^2),$$
y    with asymptotic variance 
    \begin{align*}
    \sigma_{\lambda,Div}^2\coloneqq &\lambda \operatorname{Var}_{\X\sim\mP}\left[\left[f_{\mP,\mQ}-\frac{f_{\mP,\mP}+f_{\mQ,\mQ}}{2}\right](\X)\right]\\ &+(1-\lambda) \operatorname{Var}_{\Y\sim\mQ}\left[\left[g_{\mP,\mQ}-\frac{g_{\mP,\mP}+g_{\mQ,\mQ}}{2}\right](\Y)\right].
    \end{align*}
\end{Corollary}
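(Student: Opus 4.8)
\textbf{Proof proposal for Corollary \ref{Divergence}.}

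The plan is to decompose the empirical Sinkhorn divergence into its three constituent entropic optimal transport costs and apply Theorem~\ref{Theo:TCLcost} to each of them, then combine the resulting linearizations. Writing
\[
\rD(\mPn,\mQm) - \rD(\mP,\mQ) = \Big(\rS(\mPn,\mQm) - \rS(\mP,\mQ)\Big) - \tfrac12\Big(\rS(\mPn,\mPn) - \rS(\mP,\mP)\Big) - \tfrac12\Big(\rS(\mQm,\mQm) - \rS(\mQ,\mQ)\Big),
\]
the first difference is handled directly by Theorem~\ref{Theo:TCLcost}, yielding the linearization $\int f_{\mP,\mQ}\,d(\mPn-\mP) + \int g_{\mP,\mQ}\,d(\mQm-\mQ)$ up to $\op(\sqrt{(n+m)/(nm)})$. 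The subtle point is the ``self-transport'' terms $\rS(\mPn,\mPn)$ and $\rS(\mQm,\mQm)$, in which the \emph{same} empirical measure appears in both marginals, so Theorem~\ref{Theo:TCLcost} does not apply verbatim.

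For $\rS(\mPn,\mPn) - \rS(\mP,\mP)$ I would argue as follows. First, treat it as a perturbation in both arguments simultaneously: introduce the intermediate quantity $\rS(\mPn,\mP)$ and write
\[
\rS(\mPn,\mPn) - \rS(\mP,\mP) = \big(\rS(\mPn,\mPn) - \rS(\mPn,\mP)\big) + \big(\rS(\mPn,\mP) - \rS(\mP,\mP)\big).
\]
By the dual formulation \eqref{eq:dual_entrop} and the shift-invariance argument used in the proof of Theorem~\ref{Theo:TCLcost}, each difference linearizes against the corresponding potential; because the cost here is $c$ restricted to $\supp(\mP)$, the relevant potentials are $f_{\mP,\mP}$ and $g_{\mP,\mP}$. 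Using Corollary~\ref{cor:PotIntApprox} (with $\tilde\eta \equiv 1$) to discard the $\int(\bar f_{\ast} - \bar f_{\ast})\,d\mPn$-type remainders exactly as in the proof of Theorem~\ref{Theo:TCLcost}, one obtains
\[
\rS(\mPn,\mPn) - \rS(\mP,\mP) = \int f_{\mP,\mP}\,d(\mPn-\mP) + \int g_{\mP,\mP}\,d(\mPn-\mP) + \op\!\left(\sqrt{\tfrac{n+m}{nm}}\right).
\]
The same reasoning applied to $\mQ$ gives the analogous expression with $\mQm$, $f_{\mQ,\mQ}$, $g_{\mQ,\mQ}$. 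Substituting all three linearizations into the decomposition of $\rD$, the terms regroup (note $\int f_{\mP,\mP}\,d(\mPn-\mP) + \int g_{\mP,\mP}\,d(\mPn-\mP) = \int (f_{\mP,\mP}+g_{\mP,\mP})\,d(\mPn-\mP)$, and one should be careful about which potential normalization is used so that the cross terms cancel consistently) into
\[
\int \Big[f_{\mP,\mQ} - \tfrac{f_{\mP,\mP}+f_{\mQ,\mQ}}{2}\Big]\,d(\mPn-\mP) + \int \Big[g_{\mP,\mQ} - \tfrac{g_{\mP,\mP}+g_{\mQ,\mQ}}{2}\Big]\,d(\mQm-\mQ) + \op\!\left(\sqrt{\tfrac{n+m}{nm}}\right),
\]
where I have used that for the symmetric problem $\rS(\mP,\mP)$ the two potentials may be chosen so that $f_{\mP,\mP}$ plays the role of the $\mP$-potential and likewise for $\mQ$. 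Since $\mPn - \mP$ and $\mQm - \mQ$ are independent, applying the multivariate (here bivariate, then one-dimensional) central limit theorem to the sum of these two independent centered empirical averages, rescaled by $\sqrt{nm/(n+m)}$ and using $m/(n+m)\to\lambda$, gives the claimed normal limit with variance $\sigma_{\lambda,Div}^2$.

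The main obstacle I anticipate is the bookkeeping around the \emph{symmetric} problem $\rS(\mPn,\mPn)$: one must verify that the linearization of a diagonal perturbation $(\mP,\mP) \mapsto (\mPn,\mPn)$ really produces the sum of \emph{both} potentials integrated against the \emph{single} empirical fluctuation $\mPn - \mP$, rather than, say, a factor-of-two or a cross-term artifact, and that the $\op$ remainders from the two-step telescoping (first perturbing one marginal, then the other) genuinely remain $\op(\sqrt{(n+m)/(nm)})$ — this requires that Theorem~\ref{Theo:PotIntApprox}/Corollary~\ref{cor:PotIntApprox} apply uniformly enough when the ``reference'' measure in the first step is itself the random $\mPn$ rather than $\mP$. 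A clean way to sidestep this is to note that $\rS(\cdot,\cdot)$ restricted to pairs $(\mu,\mu)$ is, via the dual \eqref{eq:dual_entrop} with the symmetry $f=g$ up to the $(-1,1)$ shift, governed by a single potential, so that the one-sample analogue of Theorem~\ref{Theo:TCLcost} yields directly $\rS(\mPn,\mPn)-\rS(\mP,\mP) = \int 2f_{\mP,\mP}\,d(\mPn-\mP) + \op(\sqrt{(n+m)/(nm)})$ with $f_{\mP,\mP}=g_{\mP,\mP}$ under the symmetric normalization; re-expressing $2f_{\mP,\mP} = f_{\mP,\mP}+g_{\mP,\mP}$ then matches the stated variance. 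Everything else is the routine combination of independent CLTs already used for Theorem~\ref{Theo:MainResults}.\ref{item:main_costs}.
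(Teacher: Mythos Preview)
Your strategy---split $\rD$ into its three constituent $\rS$ terms, linearize each via Theorem~\ref{Theo:TCLcost}, and combine via the CLT---is precisely what the paper intends; it offers no separate argument beyond the sentence ``Theorem~\ref{Theo:TCLcost} provides the weak limits.'' Your caution about the diagonal terms $\rS(\mPn,\mPn)$, $\rS(\mQm,\mQm)$ is in fact more careful than the paper, which glosses over it. The resolution you sketch last is the right one: the dual decomposition in the proof of Theorem~\ref{Theo:TCLcost} and the vanishing of $\int(\bar f_n-\bar f)\,d\mPn+\int(\bar g_n-\bar g)\,d\mPn$ via Corollary~\ref{cor:PotIntApprox} with $\tilde\eta\equiv 1$ carry over when $\mQ=\mP$ and $\mQm=\mPn$, because the quantitative inputs (Lemmas~\ref{lem:regularity} and~\ref{lemma:empiricalInverse}, and the Rigollet--Stromme potential estimates) depend only on $\|c\|_\infty$; the $U$-statistic combinatorics of Section~\ref{sec:proofs} require only minor adjustment once one identifies $\Y_j=\X_j$.

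There is, however, a genuine slip in your final regrouping---one your own hedge (``one should be careful about which potential normalization is used'') already flags. You correctly linearize $\rS(\mPn,\mPn)-\rS(\mP,\mP)=\int(f_{\mP,\mP}+g_{\mP,\mP})\,d(\mPn-\mP)+\op(\cdot)$, and by the same reasoning $\rS(\mQm,\mQm)-\rS(\mQ,\mQ)=\int(f_{\mQ,\mQ}+g_{\mQ,\mQ})\,d(\mQm-\mQ)+\op(\cdot)$. The latter involves only the $\Y$-sample, so $f_{\mQ,\mQ}$ cannot appear in the $d(\mPn-\mP)$ integral. The correct combination is therefore
\[
\int\Big[f_{\mP,\mQ}-\tfrac{f_{\mP,\mP}+g_{\mP,\mP}}{2}\Big]d(\mPn-\mP)\;+\;\int\Big[g_{\mP,\mQ}-\tfrac{f_{\mQ,\mQ}+g_{\mQ,\mQ}}{2}\Big]d(\mQm-\mQ),
\]
not the expression you wrote with $f_{\mQ,\mQ}$ in the first bracket and $g_{\mP,\mP}$ in the second. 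The variance displayed in the corollary appears to carry the same misprint; your intermediate computation, not the target formula, is the one consistent with the decomposition.
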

It should be noted that under the null hypothesis, the limit is degenerate, i.e., $ \sigma_{\lambda,Div}^2=0$. Therefore, under $\mP=\mQ$, the convergence rate of the empirical Sinkhorn divergence is strictly faster than $\sqrt{
{n\, m}/({n+m})}$. For smooth costs, \cite{gonzalez2022weak} and \cite{Goldfeld2022LimitTF} showed that the correct rate to obtain non-degenerate limits is ${n\, m}/({n+m})$. This requires a second-order analysis of the regularized transport cost, which is beyond the scope of this work and will be considered as future work.
\subsection{Sinkhorn kernels (for Gaussian processes)}
Data such as images, shapes, or media can be viewed as elements of the probability measure space  \citep{glaunes2004diffeomorphic,muandet2012learning,ginsbourger2016design}. To analyze these data, several proposals define kernels on this space. Maximum mean discrepancy \citep{gretton2012kernel}, Sliced Wasserstein \citep{kolouri2018sliced}, or multivariate transport-based quantile  maps \citep{bachoc2020gaussian}, to name just a few, are examples of kernels on the space of probability measures. In the context of regularized transport, \cite{bachoc2023gaussian} defines a kernel as follows:
$$ \rK(\mP,\mQ)\coloneqq\rF\left(\operatorname{Var}_{\U\sim\mathcal{U}}[g_{\rm P,\mathcal{U}}(\U)-g_{\rm Q, \mathcal{U}}(\U)]\right),$$
where $\mathcal{U}$ is a fixed probability measure,
 $\rF(\sqrt{\cdot})$ is completely monotone on $[0, \infty)$ and there exists a finite non-negative Borel measure ${\rm \nu}$ on $[0, \infty)$ such that 
    $\rF(t) = \int_{0}^{\infty}
    e^{ - u t^2}
   d {\rm \nu} (u )$ for any $t \geq 0$. 
From Theorem~\ref{Theo:PotIntApprox} we obtain the limit of the following estimator of the kernel 
$$ \rK_{n}(\mP_n,\mQ_m)\coloneqq\rF\left(\operatorname{Var}_{\U\sim\mathcal{U}_{n}}[g_{\mPn,\mathcal{U}_{n}}(\U)-g_{\mQm, \mathcal{U}_{n}}(\U)]\right),$$
where $\mathcal{U}_{n}=\frac{1}{n}\sum_{i=1}^{n} \delta_{\U_i}$ denotes the empirical measure for a sequence $\U_1, \dots \U_{n}$ of i.i.d. random variables with law  $\mathcal{U}$. 
\begin{Corollary}\label{Coro:Kernel}
    If $\rF$ is continuously differentiable and $g_{\rm P,\mathcal{U}}\neq g_{\rm Q,\mathcal{U}}$, then 
    \begin{align*}
        \sqrt{\frac{n\, m}{n+m}}  \Big(\rK_{n}(\mP_n,\mQ_m)-\rK(\mP,\mQ)\Big) \convW \cN\left(0,  \rF'( \operatorname{Var}_{\U\sim\mathcal{U}}[g_{\mP,\mathcal{U}}(\U)-g_{\mQ, \mathcal{U}}(\U)])^2\, \tilde \sigma_{\lambda}^2\right),
    \end{align*}
    where $\rF'$ denotes the derivative of $\rF$ and $\tilde \sigma^2_{\lambda}$ is given by
\begin{multline*}
4\lambda\operatorname{Var}_{\U\sim\mathcal{U}}\bigg[ \Big( g_{\mP,\mathcal{U}}(\U)- g_{\mQ, \mathcal{U}}(\U)-\E_{\U\sim\mathcal{U}}[g_{\mP,\mathcal{U}}(\U)- g_{\mQ, \mathcal{U}}(\U)] \Big)^2 \\
    + \bigg( \Big((I_{L^2_0(\cU)}-\AQ{\mathcal{A}}_{\mathcal{U}})^{-1}\AQ{\mathcal{A}}_{\mathcal{U}} 
         +(I_{L^2_0(\cU)}-\AP{\mathcal{A}}_{\mathcal{U}})^{-1}\AP{\mathcal{A}}_{\mathcal{U}} \Big)(\bar{g}_{\mP,\mathcal{U}}-\bar{g}_{\mQ,\mathcal{U}})\bigg)(\U)\bigg]\\
         +4\, (1-\lambda)\operatorname{Var}_{\Y\sim\mQ} \bigg[ \Big((I_{L^2_0(\mQ)}-{\mathcal{A}}_{\mathcal{U}}\AQ)^{-1}\big({\mathcal{A}}_{\mathcal{U}} (\bar{g}_{\mP,\mathcal{U}}-\bar{g}_{\mQ,\mathcal{U}})\big)\Big)(\Y)\bigg]\\
         + 4 \, \lambda \operatorname{Var}_{\X\sim\mP} \bigg[ \Big((I_{L^2_0(\mP)}-{\mathcal{A}}_{\mathcal{U}}\AP)^{-1}\big({\mathcal{A}}_{\mathcal{U}} (\bar{g}_{\mP,\mathcal{U}}-\bar{g}_{\mQ,\mathcal{U}})\big)\Big)(\X)\bigg].
\end{multline*}
\end{Corollary}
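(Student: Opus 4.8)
\textbf{Proof proposal for Corollary~\ref{Coro:Kernel}.}

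The plan is to treat the estimator $\rK_n(\mP_n,\mQ_m) = \rF\big(V_{n,m}\big)$ as a smooth (one-dimensional) functional of the random quantity $V_{n,m} \coloneqq \operatorname{Var}_{\U\sim\mathcal{U}_n}[g_{\mPn,\mathcal{U}_n}(\U) - g_{\mQm,\mathcal{U}_n}(\U)]$ and apply the delta method once a central limit theorem for $V_{n,m}$ is established. Thus the crux is to show
\begin{equation*}
  \sqrt{\tfrac{nm}{n+m}}\big(V_{n,m} - V\big) \convW \cN(0, \tilde\sigma^2_\lambda), \qquad V \coloneqq \operatorname{Var}_{\U\sim\mathcal{U}}[g_{\mP,\mathcal{U}}(\U) - g_{\mQ,\mathcal{U}}(\U)],
\end{equation*}
after which continuous differentiability of $\rF$ together with Slutsky's lemma yields the stated limit with the extra factor $\rF'(V)^2$. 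Note here that $\mathcal{U}_n$ is an empirical measure of size $n$ built from an independent sample $\U_1,\dots,\U_n \sim \mathcal{U}$, while $\mPn$ has size $n$ and $\mQm$ has size $m$; the three samples are mutually independent, and the relevant rate is $\sqrt{nm/(n+m)}$ with $m/(n+m)\to\lambda$.

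First I would expand the variance functional. Writing $\psi \coloneqq g_{\mP,\mathcal{U}} - g_{\mQ,\mathcal{U}}$ and $\psi_{n,m} \coloneqq g_{\mPn,\mathcal{U}_n} - g_{\mQm,\mathcal{U}_n}$, one has
\begin{equation*}
  V_{n,m} - V = \Big(\textstyle\int \psi_{n,m}^2\, d\mathcal{U}_n - (\textstyle\int \psi_{n,m}\, d\mathcal{U}_n)^2\Big) - \Big(\textstyle\int \psi^2\, d\mathcal{U} - (\textstyle\int \psi\, d\mathcal{U})^2\Big).
\end{equation*}
Using the shifted potentials $\bar g_{\mP,\mathcal{U}}, \bar g_{\mQ,\mathcal{U}}$ normalized so that $\int \bar g\, d\mathcal{U}_n = 0$ (so the empirical mean term vanishes for those), I would Taylor-expand the square to first order: the dominant contribution is $2\int \psi\,(\psi_{n,m} - \psi)\, d\mathcal{U}_n + \int \psi^2\, d(\mathcal{U}_n - \mathcal{U}) - 2(\int\psi\,d\mathcal{U})\int\psi\,d(\mathcal{U}_n-\mathcal{U})$, with the quadratic remainder $\int(\psi_{n,m}-\psi)^2 d\mathcal{U}_n$ being $\op(\sqrt{(n+m)/nm})$ by the $L^2$-rate for potentials from \cite{rigollet2022sample} and Lemma~\ref{lem:regularity}. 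Here Theorem~\ref{Theo:PotIntApprox}, applied separately to the pair $(\mP,\mathcal{U})$ with evaluation function $\eta(\x,\u) = \bar g_{\mP,\mathcal{U}}(\u) - \bar g_{\mQ,\mathcal{U}}(\u)$ (constant in the first argument, so really Corollary~\ref{cor:PotIntApprox} applies) and to $(\mQ,\mathcal{U})$ analogously, linearizes $\int \psi\,(g_{\mPn,\mathcal{U}_n} - g_{\mP,\mathcal{U}})\, d\mathcal{U}_n$ and the $\mQ$-counterpart into sums of i.i.d. terms over the $\X_i$, $\Y_j$, and $\U_k$ samples. The term $\int\psi^2 d(\mathcal{U}_n-\mathcal{U})$ and $\int\psi\,d(\mathcal{U}_n-\mathcal{U})$ combine into the i.i.d. sum over the $\U_k$ with summand $(\psi(\U_k) - \E\psi)^2$ after the usual variance-of-empirical-mean manipulation, which explains the first line of $\tilde\sigma^2_\lambda$ (the $\bar g_{\mP,\mathcal{U}} - \bar g_{\mQ,\mathcal{U}}$ term paired with the resolvents $\AQ\mathcal{A}_\mathcal{U}$ and $\AP\mathcal{A}_\mathcal{U}$ comes from the $\mathcal{U}_n$-dependence inside the potentials). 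Collecting all i.i.d. contributions by sample, the three independent sums — over $\X_i$, over $\Y_j$, over $\U_k$ — give three asymptotically independent Gaussians whose variances add up to $\tilde\sigma^2_\lambda$ after multiplying by the appropriate $\lambda$ or $(1-\lambda)$ weight and the factor $4$ from the $2\times$ in the expansion squared; a final application of the multivariate Lindeberg–Feller CLT (or Theorem~12.6 of \cite{vaart_1998} for the V-statistic pieces) yields the joint limit, and Cramér–Wold plus the delta method finishes the proof.

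The main obstacle I anticipate is bookkeeping the dependence on the \emph{third} sample $\mathcal{U}_n$ correctly: the potentials $g_{\mPn,\mathcal{U}_n}$ depend on $\mathcal{U}_n$ both through the reference measure in the Sinkhorn fixed-point equation and through the integration measure in $\int \cdot\, d\mathcal{U}_n$, so one must carefully apply the linearization of Theorem~\ref{Theo:PotIntApprox} with $\mathcal{U}_n$ playing the role of the ``$\mQm$'' empirical measure while simultaneously controlling the fluctuation of $\mathcal{U}_n$ as the integrator — this produces the two resolvent operators $(I-\AQ\mathcal{A}_\mathcal{U})^{-1}\AQ\mathcal{A}_\mathcal{U}$ and $(I-\AP\mathcal{A}_\mathcal{U})^{-1}\AP\mathcal{A}_\mathcal{U}$ acting on $\bar g_{\mP,\mathcal{U}} - \bar g_{\mQ,\mathcal{U}}$ appearing inside the $\U$-variance. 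Verifying that all cross terms between the three samples are genuinely lower order (which relies on independence and on the $\op$ rates being uniform, via Lemma~\ref{lem:regularity} and the eigenvalue gap of Lemma~\ref{lemma:empiricalInverse}) is the technical heart; the delta-method step itself is routine once the CLT for $V_{n,m}$ is in hand.
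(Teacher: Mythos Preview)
Your proposal is correct and follows essentially the same strategy as the paper's proof: linearize the inner variance functional via Corollary~\ref{cor:PotIntApprox} (applied once with $(\mP,\mathcal{U})$ and once with $(\mQ,\mathcal{U})$, taking $\tilde\eta = \bar g_{\mP,\mathcal{U}} - \bar g_{\mQ,\mathcal{U}}$), control the quadratic remainder with the $L^2$-rate for potentials, and apply the delta method through the outer $\rF$. The paper organizes the computation slightly differently---it first splits $\rK_n(\mPn,\mQm)-\rK(\mP,\mQ)$ into $(\rK_n(\mPn,\mQm)-\rK_n(\mP,\mQ))+(\rK_n(\mP,\mQ)-\rK(\mP,\mQ))$ and Taylor-expands each piece separately, whereas you expand $V_{n,m}-V$ in one go and defer the delta method to the end---but the substance (identical use of Corollary~\ref{cor:PotIntApprox}, the same polarization of the squared norm, and the same i.i.d.\ decomposition across the three independent samples) is the same.
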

The proof of Corollary \ref{Coro:Kernel} is stated in Appendix \ref{app:proofsApplications}. 
As in the case of the Sinkhorn divergence (Corollary~\ref{Divergence}) the limit degenerates to $0$ under the null hypothesis $H_0: \mP=\mQ$. The analysis of the limit distribution for this setting requires, as in Corollary~\ref{Divergence}, a second order chaos development of the potentials, which is the scope of future research.

\subsection{Optimal transport colocalization curves}
In the context of computational biology, \cite{klatt2020empirical} propose a colocalization measure for the interaction of protein networks, which is built on the concept of entropic optimal transport. This is a regularized variant of a transport-based concept advocated by \cite{tameling2021Colocalization}. Colocalization analysis for images generated by super-resolution fluorescence microscopy has emerged as a critical tool to analyze the interactions of protein structures as it quantifies their spatial closeness \citep{adler2010quantifying, zinchuk2011quantitative, wang2019spatially}.

In the presented methodology, \cite{klatt2020empirical} consider Euclidean costs $c(x,y) = \|x-y\|$ and define the regularized colocalization measure as the amount of mass transported at scales less than or equal to $t\geq 0$:
\begin{equation*}
\operatorname{RCol}_c(\mP, \mQ,t):=\pi_{\mP,\mQ}({c(\x,\y)\leq t}).
\end{equation*}
Essentially, this measure quantifies the spatial closeness of the underlying structures at various scales. In Theorem 7.1 of \cite{klatt2020empirical}, they provide confidence intervals for the discretized protein distribution (motivated by having only finite number of pixels), using distributional limits. Our Theorem \ref{Theo:TCLPlan} does not necessitate discretized signals and allows the development of confidence bands (for a finite number of $t$ choices) for any type of underlying signal. Moreover, our general theory also remains valid under more general cost functions, which might be of interest for the modelling of non-Euclidean colocalization. Overall, we arrive at the following result.
\begin{Corollary}
    For any finite collection $\{t_1, \dots, t_l\}\subseteq \RR$ the regularized colocalization curve satisfies asymptotically
    \begin{align*}
        \sqrt{\frac{nm}{n+m}}\Big( \operatorname{RCol}_c(\mPn, \mQm,t_i)-  \operatorname{RCol}_c(\mP, \mQ,t_i)\Big)_{i = 1, \dots, l} \convW \cN\Big(0, \Sigma_\lambda(t_1, \dots, t_l)\Big),
    \end{align*}
    with covariance $ \Sigma_\lambda(t_1, \dots, t_l)\in \RR^{l\times l}$ given by $\Sigma_\lambda(t_1, \dots, t_l)_{ij} = \operatorname{Cov}(\Zb_i, \Zb_j)$ for 
    $$\Zb_i \coloneqq  \sqrt{\lambda}(I_{L^2_0(\mP)}-\AQ\AP)^{-1}\big(\eta_{i,\x}-\AQ\eta_{i,\y}\big)(\X)+ \sqrt{1-\lambda} (I_{L^2_0(\mQ)}-\AP\AQ)^{-1}\big(\eta_{i,\y}-\AP\eta_{i,\x}\big)(\Y)$$
    with independent $\X\sim \mP$ and $\Y\sim \mQ$, and evaluation functions $\eta_i(\x,\y) = 1(c(\x,\y)\leq t_i)$.
\end{Corollary}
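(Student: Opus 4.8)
The plan is to express the regularized colocalization curve at each scale $t_i$ as the expectation of a bounded evaluation function under the entropic optimal transport plan, apply the linearization of Theorem~\ref{Theo:TCLPlan} coordinate by coordinate, and then promote the $l$ resulting one–dimensional expansions to a joint central limit theorem via the Cram\'er--Wold device.

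Concretely, I would first note that, since $c\in L^\infty(\Omega\times\Omega)$ is Borel measurable, each evaluation function $\eta_i(\x,\y)=\mathbf 1(c(\x,\y)\le t_i)$ belongs to $L^\infty(\Omega\times\Omega)$, and that by definition of the entropic plan
\[
    \operatorname{RCol}_c(\mP,\mQ,t_i)=\pipq\big(c(\x,\y)\le t_i\big)=\E_{(\X,\Y)\sim\pipq}[\eta_i(\X,\Y)],
\]
and likewise $\operatorname{RCol}_c(\mPn,\mQm,t_i)=\E_{(\X,\Y)\sim\pinm}[\eta_i(\X,\Y)]$. Hence Theorem~\ref{Theo:TCLPlan} applies to each $\eta_i$ and gives, with $\eta_{i,\x},\eta_{i,\y}$ as in \eqref{eq:DefEtaX} and
\[
    \varphi_i:=(I_{L^2_0(\mP)}-\AQ\AP)^{-1}\big(\eta_{i,\x}-\AQ\eta_{i,\y}\big),\qquad
    \psi_i:=(I_{L^2_0(\mQ)}-\AP\AQ)^{-1}\big(\eta_{i,\y}-\AP\eta_{i,\x}\big),
\]
the asymptotic expansion $\operatorname{RCol}_c(\mPn,\mQm,t_i)-\operatorname{RCol}_c(\mP,\mQ,t_i)=\tfrac1n\sum_{k=1}^n\varphi_i(\X_k)+\tfrac1m\sum_{j=1}^m\psi_i(\Y_j)+\opnm$. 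Each $\varphi_i$ is centered and square integrable under $\mP$ (and $\psi_i$ under $\mQ$): indeed $\eta_{i,\x},\eta_{i,\y}$ are bounded because $\xipq$ is bounded by Lemma~\ref{lem:regularity} and $\AP,\AQ$ are averaging operators, the centering $\eta_{i,\x}-\AQ\eta_{i,\y}\in L^2_0(\mP)$ follows from \eqref{eq:ExpectationExchange}, and $(I_{L^2_0(\mP)}-\AQ\AP)^{-1}$ is a bounded operator on $L^2_0(\mP)$ by Lemma~\ref{lemma:empiricalInverse}.

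Next I would stack these $l$ identities into an $\R^l$–valued one and multiply by $\sqrt{nm/(n+m)}$. Since $l$ is fixed, the vector of remainders is still $\opnm$, so by Slutsky's lemma it suffices to identify the limit of
\[
    \sqrt{\tfrac{nm}{n+m}}\Big(\tfrac1n\sum_{k=1}^n\varphi_i(\X_k)+\tfrac1m\sum_{j=1}^m\psi_i(\Y_j)\Big)_{i=1}^l
    =\sqrt{\tfrac{m}{n+m}}\,\tfrac1{\sqrt n}\sum_{k=1}^n\Phi(\X_k)+\sqrt{\tfrac{n}{n+m}}\,\tfrac1{\sqrt m}\sum_{j=1}^m\Psi(\Y_j),
\]
where $\Phi=(\varphi_i)_{i=1}^l$ and $\Psi=(\psi_i)_{i=1}^l$. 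By the Cram\'er--Wold device, I would fix $a\in\R^l$ and apply the Lindeberg--L\'evy central limit theorem to the two i.i.d. sums $\tfrac1{\sqrt n}\sum_k\langle a,\Phi(\X_k)\rangle$ and $\tfrac1{\sqrt m}\sum_j\langle a,\Psi(\Y_j)\rangle$ (valid by the square integrability just noted), use $m/(n+m)\to\lambda$, and invoke independence of the two samples to conclude that $\langle a,\cdot\rangle$ applied to the displayed expression converges weakly to $\cN\big(0,\lambda\operatorname{Var}_{\X\sim\mP}[\langle a,\Phi(\X)\rangle]+(1-\lambda)\operatorname{Var}_{\Y\sim\mQ}[\langle a,\Psi(\Y)\rangle]\big)$. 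Since $\Zb_i=\sqrt\lambda\,\varphi_i(\X)+\sqrt{1-\lambda}\,\psi_i(\Y)$ with $\X\sim\mP$ and $\Y\sim\mQ$ independent, that variance equals $\operatorname{Var}[\langle a,\Zb\rangle]$ with $\Zb=(\Zb_i)_{i=1}^l$, and letting $a$ range over $\R^l$ identifies the joint limit as $\cN(0,\Sigma_\lambda(t_1,\dots,t_l))$ with $\Sigma_\lambda(t_1,\dots,t_l)_{ij}=\operatorname{Cov}(\Zb_i,\Zb_j)=\lambda\operatorname{Cov}_{\X\sim\mP}(\varphi_i(\X),\varphi_j(\X))+(1-\lambda)\operatorname{Cov}_{\Y\sim\mQ}(\psi_i(\Y),\psi_j(\Y))$, the cross terms vanishing by independence of $\X$ and $\Y$.

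The argument is essentially bookkeeping once Theorem~\ref{Theo:TCLPlan} is available; the only point needing mild care is the passage from the coordinatewise expansions to a genuinely multivariate statement, namely checking that the $\op$–terms combine (immediate for finite $l$) and that the leading i.i.d. sums admit a joint Gaussian limit, which the Cram\'er--Wold reduction together with the square integrability of the $\varphi_i,\psi_i$ settles. No analytic input beyond the results already established in the paper is required.
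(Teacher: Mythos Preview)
Your proposal is correct and follows exactly the approach the paper intends: the corollary is stated without proof as an immediate consequence of Theorem~\ref{Theo:TCLPlan} applied to the bounded evaluation functions $\eta_i(\x,\y)=\mathbf 1(c(\x,\y)\le t_i)$, and you have simply spelled out the routine passage from the coordinatewise linearizations to the joint limit via Cram\'er--Wold. There is nothing to add.
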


\section{Proof of Linearization of Empirical Potentials}\label{sec:proofs}

In this section we formulate the proof of  Theorem \ref{Theo:PotIntApprox}, our central result for the linearization of the empirical optimal transport potentials with respect to the (non-deterministic) empirical measure norm. At its core, our proof is motivated by the implicit function theorem, however several obstacles arrive which are due to the changing norm. Overall, the proof consists of six steps, which we now summarize. Proofs to auxiliary results are detailed in Appendix~\ref{app:proofs}. 

\begin{enumerate}
    \item \textit{Linearization of optimality criterion}: We start by linearizing the optimality criterion from \eqref{eq:optimalityCriterionNicePotentials} for which we perform a careful analysis of involved approximation errors (Proposition \ref{prop:linearPot}). Invoking an implicit function approach, we characterize the fluctuation of the empirical entropic optimal transport potentials in the space $L^2(\mPn)\times L^2(\mQn)$ in terms of the inverse of an operator involving $\bAPn$ and $\AQm$. %
    \item \textit{Approximation of empirical operator with auxiliary population operators}: In this step, we approximate the inverse of certain operators involving empirical operators $\bAPn$ and $\AQm$ by a truncated series up to order $N\in \NN$ involving certain auxiliary operators $\DPn$ and $\DQm$ (Proposition \ref{prop:linearPotOpchange}). This substitution allows us to bridge the gap between the empirical setting and the population setting without extending empirical potentials, providing a sensible framework for our analysis. 
    \item \textit{Explicit representation of linearization with auxiliary operators}: As a next step we derive an explicit representation of the terms introduced in the previous step and arrive at an infinite order $V$-statistic (Proposition \ref{prop:Vstatistic}) that depends on $N$.
    \item \textit{From $V$-Statistic to $U$-Statistic}: We then show that the $V$-statistic can be approximated in terms of a $U$-statistic, which is more amenable to statistical analysis (Proposition \ref{prop:Ustatistic}). Our approach relies on combinatorial arguments, which show that the relative amount of dependent terms is asymptotically negligible.
    \item \textit{Derivation of the H\'ajek projection}: This step consists in computing the H\'ajek projection for the $U$-statistic from previous step (Proposition \ref{prop:writeFproperly}). The resulting projection yields a suitable representation as a truncated series of population operators.
    \item \textit{Weak convergence of the H\'ajek projection}: Finally, we further refine our representation of the  H\'ajek projection to return to an infinite series (Proposition \ref{prop:FandFinf}) and show its weak convergence by assessing its asymptotic variance (Proposition \ref{prop:HajekProjection}).
\end{enumerate}

Before we start with the different step we recall the set of assumptions under which all the subsequent results will be shown. We will refer to them as the \emph{standard assumptions}: 
The space $\Omega$ is Polish and the cost function $c\in L^\infty(\Omega\times \Omega)$ is uniformly bounded and measurable. Further, let $\X_1, \dots, \X_n$  and $\Y_1, \dots, \Y_m$ be two mutually independent samples of i.i.d. random variables with respective distributions  $\mP, \mQ\in \mathcal{P}(\Omega)$, and denote by $\mPn$ and $\mQn$ the corresponding empirical measures. Moreover, let $\eta \in L^\infty(\Omega\times \Omega)$ be a uniformly bounded and measurable function. 

\subsection*{Step 1 - Linearization of optimality criterion}
We observe that the optimality conditions \eqref{optimalvalues} allow us to write the potentials as a solution of an algebraic equation in $L^2(\mPn)\times L^2_0(\mQm)$. 
Let 
$$\overline F_{n,m}:L^2(\mP_n)\times L^2_0(\mQ_m) \to L^2(\mP_n)\times L^2_0(\mQ_m)  $$
be defined as 
\begin{equation*}
\overline F_{n,m}\left(\begin{array}{c}
     f  \\
     g
\end{array} \right)=
    \left(\begin{array}{c}
          f+\log\left(\int e^{g(\y)}C(\cdot,\y)d\mQ_m(\y)\right)\\
    g+\log\left(\int e^{f(\x)}C(\x,\cdot)d\mP_n(\x)\right) - \int \log\left(\int e^{f(\x)}C(\x,\y)d\mP_n(\x)\right)d \mQm(\y)
    \end{array}\right)\, .
\end{equation*}
 We want to differentiate $\overline F_{n,m}$ w.r.t. $(f,g)^t$, and evaluate that derivative in the optimal values, i.e.,
\begin{equation}
    \label{optimalvalues}
 \left(\begin{array}{c}
     f_{\mP_n,\mQ_m}  \\
     g_{\mP_n,\mQ_m}
\end{array} \right)
\end{equation}
We need to obtain that  derivative in the strong sense, but we first need to find a candidate. Then we set a direction $(f,g)^t$ and compute the directional derivative of $F$ in \eqref{optimalvalues}. The result is the operator 
\begin{align*}
    \begin{split}
        {\overline \Gamma_{n,m}}:L^2(\mP_n)\times L^2_0(\mQ_m)&\to L^2(\mP_n)\times L^2_0(\mQ_m), \\
        \left(\begin{array}{c}
f  \\
     g
\end{array} \right)&\mapsto \left(\begin{array}{c}
f  \\
     g
\end{array} \right)
      +\left(\begin{array}{c}
    \AQm\\
\bAPn
    \end{array}\right)\left(\begin{array}{c}
g  \\
     f
\end{array} \right)
    \end{split}
\end{align*}
 with
 \begin{equation*}
     \APn f = \frac{\int e^{f_{\mP_n,\mQ_m}(\x)}C(\x,\cdot)f(\x)d\mP_n(\x)}{\int e^{f_{\mP_n,\mQ_m}(\x)}C(\x,\cdot)d\mP_n(\x)},
         \quad  \AQm g = \frac{\int e^{g_{\mP_n,\mQ_m}(\y)}C(\cdot,\y)g(\y)d\mQ_m(\y)}{\int e^{g_{\mP_n,\mQ_m}(\x)}C(\cdot,\y)d\mQ_m(\y)},
 \end{equation*}
 and $$\bAPn f = \APn f - \int (\APn f) d \mQm.$$ 

\begin{Proposition}\label{prop:linearPot}
Under the standard assumptions, it holds for $n,\, m\to \infty$ with $m/(n+m)\to \lambda\in (0,1)$ that 
    \begin{equation}
    \label{linearBothrows}
  \E\left\|  \overline \Gamma_{n,m}\left(\begin{array}{c}
     \bfpq - \bfmn   \\
     \bgpq-\bgmn
\end{array} \right)-\left(\begin{array}{c}
     \mathcal{B}_\y(\mQ_m-\mQ)  \\
      \mathcal{B}_\x(\mP_n-\mP)-\int \mathcal{B}_\x(\mP_n-\mP)d\mQ_m
\end{array}\right)\right\|_{n\times m}{\leq \frac{\cK(c)}{n}},
\end{equation}
where we define
\begin{equation}\label{eq:DefinitionB}
     \left(\begin{array}{c}
     \mathcal{B}_\y(\mQ_m-\mQ)  \\
      \mathcal{B}_\x(\mP_n-\mP)
\end{array}\right)\coloneqq \left(\begin{array}{c}
     \int \xi_{\mP,\mQ}(\cdot,\y)d(\mQ_m-\mQ)(\y) \\
     \int \xi_{\mP,\mQ} (\x, \cdot)d(\mP_n-\mP)(\x)
\end{array} \right).
\end{equation}
Moreover, it holds 
$$ \E\left( \int \mathcal{B}_\x(\mP_n-\mP) d\mQ_m \right)^2\leq \frac{\cK(c)}{n\, m}. $$ 
\end{Proposition}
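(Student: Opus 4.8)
The plan is to realize the empirical potentials $(\bfmn,\bgmn)$ as a root of the map $\overline F_{n,m}$ and the population potentials $(\bfpq,\bgpq)$ as an approximate root (since they satisfy the population optimality criterion but not the empirical one), then Taylor-expand $\overline F_{n,m}$ around $(\bfmn,\bgmn)$ and read off that the first-order term is precisely $\overline\Gamma_{n,m}(\bfpq-\bfmn,\bgpq-\bgmn)^t$, the zeroth-order discrepancy is the empirical-process term $(\cB_\y(\mQ_m-\mQ),\cB_\x(\mP_n-\mP)-\int\cB_\x(\mP_n-\mP)d\mQ_m)^t$, and everything else is a quadratic remainder. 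Concretely, one writes $\overline F_{n,m}(\bfpq,\bgpq) - \overline F_{n,m}(\bfmn,\bgmn) = \overline F_{n,m}(\bfpq,\bgpq) - 0$, so the left-hand side of \eqref{linearBothrows} is controlled once one shows $(i)$ $\overline F_{n,m}(\bfpq,\bgpq)$ equals the stated empirical-process term up to $O(1/n)$ in $\|\cdot\|_{n\times m}$-expectation, and $(ii)$ the difference $\overline F_{n,m}(\bfpq,\bgpq)-\overline F_{n,m}(\bfmn,\bgmn)-\overline\Gamma_{n,m}(\bfpq-\bfmn,\bgpq-\bgmn)^t$ is a quadratic remainder of the same order.

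For step $(i)$: evaluating the first coordinate of $\overline F_{n,m}$ at $(\bfpq,\bgpq)$ gives $\bfpq + \log\!\big(\int e^{\bgpq(\y)}C(\cdot,\y)\,d\mQ_m(\y)\big)$; using the population identity $\bfpq = -\log\!\big(\int e^{\bgpq(\y)}C(\cdot,\y)\,d\mQ(\y)\big)$ from \eqref{eq:optimallityCodt0}, this becomes $\log\!\big(1 + \int e^{\bfpq(\cdot)+\bgpq(\y)}C(\cdot,\y)\,d(\mQ_m-\mQ)(\y)\big) = \log\!\big(1 + \int \xipq(\cdot,\y)\,d(\mQ_m-\mQ)(\y)\big)$. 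Since $\|\xipq\|_\infty \le \exp(3\|c\|_\infty)$ by Lemma~\ref{lem:regularity}, the integral $\int\xipq(\cdot,\y)d(\mQ_m-\mQ)(\y)$ is uniformly bounded and has second moment $O(1/m)$; expanding $\log(1+t) = t + O(t^2)$ yields the claimed term $\cB_\y(\mQ_m-\mQ)$ up to a remainder whose expected squared $L^2(\mPn)$-norm is $O(1/m^2) = O(1/n^2)$, hence $O(1/n)$ after taking the square root of the expectation — here one must be slightly careful and either bound $\E\|\cdot\|_{n\times m}$ by $(\E\|\cdot\|_{n\times m}^2)^{1/2}$ via Jensen, or track the $O(t^2)$ term directly; the quadratic remainder involves $\E[(\int\xipq d(\mQ_m-\mQ))^2]$ uniformly in the first variable, which is $O(1/m)$. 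The second coordinate is analogous, using the second identity in \eqref{eq:optimallityCodt0}, except that one additionally subtracts $\int(\cdots)d\mQ_m$ to land in $L^2_0(\mQ_m)$, producing exactly the structure on the left of \eqref{linearBothrows}.

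For step $(ii)$, the quadratic remainder: the relevant nonlinearity is $g\mapsto \log(\int e^{g(\y)}C(\cdot,\y)d\mQ_m(\y))$ (and its transpose), whose second derivative is controlled because, by Lemma~\ref{lem:regularity}, $\|\bfmn\|_\infty, \|\bgmn\|_\infty, \|\bfpq\|_\infty, \|\bgpq\|_\infty \le 3\|c\|_\infty$, so all exponentials appearing are bounded above and below by constants depending only on $\|c\|_\infty$. A second-order Taylor estimate then bounds the remainder by $\cK(c)\big(\|\bfpq-\bfmn\|_{n}^2 + \|\bgpq-\bgmn\|_{m}^2\big)$ in $\|\cdot\|_{n\times m}$; invoking \citet[Inequality~(4.3) and Lemma~14]{rigollet2022sample} (the same estimate used in the proof of Theorem~\ref{Theo:TCLPlan}), this is $\opnm^2 \cdot \cK(c)$, hence $O(1/n)$ in expectation up to the required order — again this should really be phrased as: the squared norms have expectation $O((n+m)/(nm)) = O(1/n)$, and the quadratic-in-small-quantity remainder therefore has expectation $O(1/n^2) = o(1/n)$, comfortably within the stated bound. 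For the final display, $\int\cB_\x(\mP_n-\mP)\,d\mQ_m = \iint \xipq(\x,\y)\,d(\mP_n-\mP)(\x)\,d\mQ_m(\y)$ is a product of two independent centered-in-expectation averages (centering in $\x$ under $\mP$; the $\y$-average is over the independent sample), so its square has expectation bounded by $\|\xipq\|_\infty^2/(nm) \le \exp(6\|c\|_\infty)/(nm)$, which is $\cK(c)/(nm)$; one uses independence of the two samples together with the fact that $\int \xipq(\x,\y)\,d\mP(\x) = e^{-\bgpq(\y)}\cdot(\text{const})$ need not vanish, so one should instead expand $\int\xipq(\x,\y)d(\mP_n-\mP)(\x)$ as the genuinely centered object it is and bound its second moment uniformly in $\y$ by $\cK(c)/n$, then integrate against $d\mQ_m(\y)$ and use Fubini.

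\emph{Main obstacle.} The delicate point is keeping track of which error terms are $O(1/n)$ versus $O(\sqrt{(n+m)/(nm)})$ in the correct norm and expectation: the right-hand side of \eqref{linearBothrows} is the \emph{stronger} bound $\cK(c)/n$, so every remainder must be genuinely quadratic in a $\sqrt{(n+m)/(nm)}$-sized quantity (giving $O(1/n)$) or genuinely $O(1/n)$ from a variance estimate, and none may be merely linear. This forces one to be scrupulous about the $\log(1+t) = t + O(t^2)$ expansions and about invoking the $L^2$-rate of \citet{rigollet2022sample} for $\|\bfmn-\bfpq\|_n + \|\bgmn-\bgpq\|_m$ \emph{before} — not after — the linearization, so that the self-referential quadratic remainder is legitimately of lower order. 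The changing norm $\|\cdot\|_{n\times m}$ (random, depending on the samples) is the reason one cannot simply cite the implicit function theorem and instead must do this expansion by hand.
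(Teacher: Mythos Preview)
Your approach matches the paper's proof essentially step for step: Taylor-expand $\overline F_{n,m}$ at the empirical optimizer $(\bfmn,\bgmn)$ to isolate $\overline\Gamma_{n,m}(\bfpq-\bfmn,\bgpq-\bgmn)$ with a quadratic remainder controlled via Lemma~\ref{lem:regularity} and \cite{rigollet2022sample}, then identify $\overline F_{n,m}(\bfpq,\bgpq)$ with $\log(1+\cB_\y(\mQ_m-\mQ))$ (resp.\ its centered transpose) and linearize the logarithm. Your identification of the ``main obstacle'' --- that every term must be genuinely quadratic to hit the $\cK(c)/n$ rate --- is exactly right, and your self-correction that the quadratic remainder in step~(ii) has expectation $O((n+m)/(nm))\asymp O(1/n)$ is the correct bookkeeping; the subsequent claim that it is $O(1/n^2)$ is a slip (the remainder is already the square, not the square of a square).

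There is, however, a real gap in your argument for the final display $\E\big(\int\cB_\x(\mP_n-\mP)\,d\mQ_m\big)^2\le \cK(c)/(nm)$. Your proposed route --- bound $\E[\cB_\x(\mP_n-\mP)(\y)^2]\le\cK(c)/n$ uniformly in $\y$, then Jensen and Fubini against $d\mQ_m$ --- yields only $\cK(c)/n$, not $\cK(c)/(nm)$; Jensen loses the extra $1/m$. What is needed is that, writing the double sum as $\frac{1}{nm}\sum_{i,j}Z_{ij}$ with $Z_{ij}=\xipq(\X_i,\Y_j)-\int\xipq(\x,\Y_j)d\mP(\x)$, the summands are pairwise \emph{uncorrelated}: for $i\neq i'$ this follows by conditioning on the $\Y$'s, but for $i=i'$, $j\neq j'$ one must additionally use $\E[Z_{ij}\mid\X_i]=\int\xipq(\X_i,\y)d\mQ(\y)-1=0$ from the \emph{other} marginal condition in \eqref{eq:optimallityCodt0}. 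With both centerings in hand the variance collapses to $\frac{1}{nm}\E[Z_{11}^2]\le\cK(c)/(nm)$. Your first instinct (``product of two independent centered averages'') had the right rate for the wrong reason; the object is not a product, but it is a degenerate two-sample $U$-statistic kernel.
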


\subsection*{Step 2 - Approximation of empirical operator with auxiliary population operators}
From Lemma~\ref{lemma:empiricalInverse}, the inverse operators $(I_{L^2(\mP_n)}-\AQm\bAPn)^{-1}:L^2(\mP_n)\to L^2(\mP_n)$ and $(I_{L^2_0(\mQ_m)}-\bAPn\AQm)^{-1}:L^2_0(\mQ_m)\to L^2_0(\mQ_m)$ are well-defined. Basic algebraic calculations show 
\begin{equation}\label{eq:InverseGammaDefinition}
    {\textstyle\small \overline \Gamma_{n,m}^{-1}\left(\begin{array}{c} f\\ g \end{array}\right) = \left(\begin{array}{c}
           (I_{L^2(\mP_n)}-\AQm\bAPn)^{-1}f - (I_{L^2(\mP_n)}-\AQm\bAPn)^{-1}\AQm g \\
      (I_{L^2_0(\mQ_m)}-\bAPn\AQm)^{-1} g -  (I_{L^2_0(\mQ_m)}-\bAPn\AQm)^{-1}\bAPn f  \end{array}\right)},
\end{equation}
so that $\overline\Gamma_{n,m}$ is invertible in $L^2(\mPn)\times L^2_0(\mQ_m)$. 
At this point, let us recall that the inverse of an operator $(I-A)$ with $\|A\|<1$ can be expressed in terms of a Neumann series $(I-A)^{-1}=\sum_{k=0}^\infty A^k$, where the series converges in norm operator. In particular, it enables the subsequent approximation result.

\begin{Lemma}\label{Coro:limitSum}
Under the standard assumptions, it holds for any $n,m,N\in \NN$ and $\kappa \in \RR$  and $\delta(c)\in (0,1)$ from Lemma \ref{lemma:empiricalInverse} that
\begin{align*}
            &\sup_{\substack{f\in L^2_0(\mPn)\\ \|f\|_{n}\leq 1} } \|\sum_{k=0}^{N} (\AQm\APn)^k (f+\kappa \one) -(I_{L^2(\mP_n)}-\AQm\bAPn)^{-1} (f+\kappa \one)\|_{n}  \leq \frac{\delta(c)^{N+1}}{1-\delta(c)} + N|\kappa|,\\
           & \sup_{\substack{g\in L^2_0(\mQm)\\ \|g\|_{m}\leq 1}}\| \sum_{k=0}^{N} (\APn\AQm)^k (g +\kappa \one) -(I_{L^2_0(\mQ_m)}-\bAPn\AQm)^{-1}(g)\|_{m}  \leq \frac{\delta(c)^{N+1}}{1-\delta(c)} + (N+1)|\kappa|.
\end{align*}
\end{Lemma}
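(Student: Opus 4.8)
The plan is to prove both displayed bounds at once, the two being mirror images of each other. The guiding observation is that although the statement mixes $\APn$ with $\bAPn$, on the centered subspaces $L^2_0(\mPn)$ and $L^2_0(\mQm)$ these operators act identically; so, after separating the constant component of $f+\kappa\one$ (resp.\ $g+\kappa\one$), the estimate collapses to a Neumann‑series tail bound for an operator that is an $L^2$‑contraction of norm at most $\delta(c)$.

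First I would assemble the algebraic facts, all of which are Lemma~\ref{lemma:empiricalInverse}.\ref{lem:ApEqualAp0} read with $(\mP,\mQ)$ replaced by the empirical pair $(\mPn,\mQm)$:
\[
\bAPn=\MQm\APn=\APn\MPn,\qquad \MPn\AQm=\AQm\MQm .
\]
Together with $\APn\one=\AQm\one=\one$ (each numerator equals its denominator when the input is the constant $\one$) these give: $\bAPn\one=\MQm\one=0$, hence $\AQm\bAPn\one=0$; $\APn$ maps $L^2_0(\mPn)$ into $L^2_0(\mQm)$ and $\AQm$ maps $L^2_0(\mQm)$ into $L^2_0(\mPn)$; and $\APn$ agrees with $\bAPn$ on $L^2_0(\mPn)$, since $\MPn$ is the identity there. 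A one‑line induction on $k$ then yields, for $f\in L^2_0(\mPn)$, that $(\AQm\APn)^kf=(\AQm\bAPn)^kf\in L^2_0(\mPn)$, and for $g\in L^2_0(\mQm)$, that $(\APn\AQm)^kg=(\bAPn\AQm)^kg\in L^2_0(\mQm)$; at each step one only needs the previous iterate to lie in the appropriate centered space, which the invariances above guarantee.

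With this in hand the estimate is short. Write $f+\kappa\one$ with $f\in L^2_0(\mPn)$, $\norm{f}_n\le1$. On the constant part, $(\AQm\APn)^k(\kappa\one)=\kappa\one$ gives $\sum_{k=0}^N(\AQm\APn)^k(\kappa\one)=(N+1)\kappa\one$, while $\AQm\bAPn\one=0$ gives $(I_{L^2(\mPn)}-\AQm\bAPn)^{-1}(\kappa\one)=\kappa\one$, so the constant parts leave a residue of exactly $N\kappa\one$, of norm $N|\kappa|$. On the centered part, the induction lets me replace $\APn$ by $\bAPn$ and write
\[
\sum_{k=0}^N(\AQm\APn)^kf-(I_{L^2(\mPn)}-\AQm\bAPn)^{-1}f=-\sum_{k=N+1}^\infty(\AQm\bAPn)^kf,
\]
the series converging because $\AQm\bAPn$ is, on $L^2(\mPn)$, a positive self‑adjoint operator ($\AQm$ and $\APn$ being mutually adjoint and fixing $\one$), so its operator norm equals its top eigenvalue, which is $\le\delta(c)$ by Lemma~\ref{lemma:empiricalInverse}.\ref{3}; hence this term has norm at most $\sum_{k\ge N+1}\delta(c)^k\norm{f}_n\le \delta(c)^{N+1}/(1-\delta(c))$. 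The triangle inequality gives the first bound. The second display is identical with the roles of $\mPn$ and $\mQm$ swapped, except that its target is $(I_{L^2_0(\mQm)}-\bAPn\AQm)^{-1}g$ with \emph{no} constant added; so the constant part $\sum_{k=0}^N(\APn\AQm)^k(\kappa\one)=(N+1)\kappa\one$ is not cancelled at all, which is exactly why the stated bound carries $(N+1)|\kappa|$ rather than $N|\kappa|$.

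The only subtle points, and where I would spend the care, are bookkeeping: (i) checking that every Neumann iterate stays in the correct centered subspace so that $\APn$ and $\bAPn$ may be interchanged — a repeated use of $\bAPn=\MQm\APn=\APn\MPn$ and $\MPn\AQm=\AQm\MQm$; and (ii) making sure one has a genuine \emph{operator‑norm} contraction constant $\delta(c)$ rather than only a spectral‑radius bound, which is why the self‑adjoint positive structure behind Lemma~\ref{lemma:empiricalInverse} (not merely the existence of the inverse established there) is what is used. The mismatch between $N|\kappa|$ and $(N+1)|\kappa|$ is then automatic from the asymmetry of the two statements.
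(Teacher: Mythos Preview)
Your proof is correct and follows essentially the same route as the paper: split $f+\kappa\one$ into its centered and constant parts, use that $\APn$ and $\bAPn$ agree on $L^2_0(\mPn)$ so that $(\AQm\APn)^k f=(\AQm\bAPn)^k f$ for centered $f$, and bound the Neumann tail via $\norm{\AQm\bAPn}_n\le\delta(c)$, tracking separately the constant contribution that produces $N|\kappa|$ versus $(N+1)|\kappa|$. Your added remark that the eigenvalue bound from Lemma~\ref{lemma:empiricalInverse}.\ref{3} is a genuine operator-norm bound because $\AQm\APn=\APn^*\APn$ is self-adjoint positive (and $\AQm\bAPn$ decomposes blockwise along $L^2_0(\mPn)\oplus\mathrm{span}(\one)$) is a detail the paper leaves implicit.
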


 Having $\mathcal{A}_{\mP}$ and $\mathcal{A}_{\mQ}$   instead of their empirical versions $\APn$ and $\AQm$ as in Proposition~\ref{prop:linearPot}, would provide an easier way to apply this result to find the weak limits of the regularized optimal transport plans.  However, the operator $\mathcal{A}_{\mP}$ (resp. $\mathcal{A}_{\mQ}$) is not defined $ L^2(\mP_n)$  (resp. $ L^2(\mQ_m)$), but only on $ L^2(\mP)$ (resp. $ L^2(\mQ)$). The auxiliary operators
  \begin{equation*}
     \mathcal{D}_{\mP_n}f = \frac{\int e^{f_{\mP,\mQ}(\x)}C(\x,\cdot)f(\x)d\mP_n(\x)}{\int e^{f_{\mP,\mQ}(\x)}C(\x,\cdot)d\mP(\x)},
         \quad  \mathcal{D}_{\mQ_m}g = \frac{\int e^{g_{\mP,\mQ}(\y)}C(\cdot,\y)g(\y)d\mQ_m(\y)}{\int e^{g_{\mP,\mQ}(\x)}C(\cdot,\y)d\mQ(\y)}
 \end{equation*}
 solve this issue. The subsequent result provides quantitative bounds for the difference between $\APn$ and $\DPn$ (resp.\ $\AQm$ and $\DQm$), which we use to approximate the different blocks in the representation \eqref{eq:InverseGammaDefinition} of $\overline\Gamma_{n,m}^{-1}$ in terms of $\DPn$ and $\DQm$. 
 
 \begin{Lemma}\label{LemmaBoundOfEigD}
Under the standard assumptions, the following bounds are satisfied deterministically for constants $\cK(c), \cK'(c), \cK''(c), \cK'''(c)\geq 1$ which only depend on $c$.
 \begin{enumerate}
    \item For the operator norms induced by $L^2(\mPn)$ and $L^2(\mQm)$ it holds that 
    \begin{align*}
        \max\left(\norm{\DPn - \APn}_{n,m}, \norm{\DQm - \AQm}_{m,n}\right)\leq \norm{\xipq - \ximn}_{n\times m} \leq \cK(c).
    \end{align*}
     \item For the operator norms induced by $L^2(\mPn)$ and $L^2(\mQm)$ it holds that 
    \begin{align*}
        \max\left(\norm{\DPn }_{n,m}, \norm{\DQm}_{m,n}\right)\leq 1 + \norm{\xipq - \ximn}_{n\times m}\leq \cK'(c)
    \end{align*}
    as well as 
    \begin{multline*}
   \max(\|(\DQm\DPn-\AQm\APn)\|_{n}, \|(\DPn\DQm-\APn\AQm)\|_{m}) \\
  \leq  \cK'(c) \|\xi_{\mP,\mQ}- \xi_{\mPn,\mQm}\|_{n\times m}\leq \cK''(c).
    \end{multline*}
	\item For any $n,m,N\in \NN$ and $\kappa\in \RR$ it holds for  $\delta(c)<1$ is defined in \eqref{eq:boundeig} that
\begin{subequations}
\begin{align}
           &\label{eq:UpperBoundDPnDQm1}\sup_{\substack{f\in L^2_0(\mPn)\\ \|f\|_{n}\leq 1} } \left\|\left[\sum_{k=0}^{N} (\DQm\DPn)^k -(I_{L^2(\mP_n)}-\AQm\bAPn)^{-1} \right] (f+\kappa \one)\right\|_{n} \\
          \label{eq:UpperBoundDPnDQm2} +&\sup_{\substack{f\in L^2_0(\mPn)\\ \|f\|_{n}\leq 1} } \left\|\left[\sum_{k=0}^{N} (\DPn\DQm)^k \DPn-(I_{L^2_0(\mQm)}-\bAPn\AQm)^{-1}\bAPn \right] (f + \kappa \one)\right\|_{m}\\
           \label{eq:UpperBoundDPnDQm3} +& \sup_{\substack{g\in L^2_0(\mQm)\\ \|g\|_{m}\leq 1}}\left\| \sum_{k=0}^{N} (\DPn\DQm)^k (g +\kappa \one) - (I_{L^2_0(\mQ_m)}-\bAPn\AQm)^{-1}(g)\right\|_{m}  \\
           \label{eq:UpperBoundDPnDQm4} +& \sup_{\substack{g\in L^2_0(\mQm)\\ \|g\|_{m}\leq 1}}\left\| \sum_{k=0}^{N} (\DQm\DPn)^k \DQm (g +\kappa \one) - (I_{L^2(\mPn)}-\AQm\bAPn)^{-1} \AQm(g)\right\|_{n}\\
           \notag\leq & \;\;\cK'''(c)^N(\norm{\xipq - \ximn}_{n\times m} + |\kappa|)+\frac{4\delta(c)^{N+1}}{1-\delta(c)}.
\end{align}
\end{subequations}

\end{enumerate}
\end{Lemma}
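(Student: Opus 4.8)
The plan is to prove the three parts in order; Parts 1 and 2 are short, while Part 3 is the substantive one.

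\textbf{Parts 1 and 2.} The key preliminary observation is that, by the optimality conditions \eqref{eq:optimallityCodt0} applied to $(\mP,\mQ)$ and to $(\mPn,\mQm)$, all four operators are plain integral operators against the entropic densities: for $\mQm$-a.e.\ $\y$ one has $\APn f(\y)=\int\ximn(\x,\y)f(\x)\,d\mPn(\x)$ and $\DPn f(\y)=\int\xipq(\x,\y)f(\x)\,d\mPn(\x)$, and symmetrically for $\AQm,\DQm$. Hence $(\DPn-\APn)f(\y)=\int(\xipq-\ximn)(\x,\y)f(\x)\,d\mPn(\x)$, and Cauchy--Schwarz in $\x$ followed by integration in $\y$ gives $\norm{(\DPn-\APn)f}_m\le\norm{f}_n\,\norm{\xipq-\ximn}_{n\times m}$, and likewise for $\DQm-\AQm$; Lemma~\ref{lem:regularity} then bounds $\norm{\xipq-\ximn}_{n\times m}\le\exp(3\norm{c}_\infty)\eqqcolon\cK(c)$, which is Part 1. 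Since $\int\ximn(\x,\y)\,d\mPn(\x)=1$ and $\int\ximn(\x,\y)\,d\mQm(\y)=1$ (again by \eqref{eq:optimallityCodt0}), Jensen's inequality yields $\norm{\APn}_{n,m}\le1$ and $\norm{\AQm}_{m,n}\le1$, whence $\norm{\DPn}_{n,m}\le1+\norm{\xipq-\ximn}_{n\times m}\eqqcolon\cK'(c)$; writing $\DQm\DPn-\AQm\APn=\DQm(\DPn-\APn)+(\DQm-\AQm)\APn$ and using submultiplicativity with Part 1 then gives the composition bounds of Part 2, with a constant $\cK''(c)$.

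\textbf{Part 3.} First I would invoke Lemma~\ref{Coro:limitSum} to replace each of the four inverse operators on the left-hand sides of \eqref{eq:UpperBoundDPnDQm1}--\eqref{eq:UpperBoundDPnDQm4} by the corresponding truncated Neumann series built from $\AQm\APn$ (resp.\ $\APn\AQm$), with the appropriate pre- or post-composition by $\APn$ or $\AQm$, at the cost of an error $\le\frac{\delta(c)^{N+1}}{1-\delta(c)}+(N+1)|\kappa|$; the required variants with a trailing $\APn$ or a leading $\AQm$ follow from the stated form of Lemma~\ref{Coro:limitSum} together with the elementary identities $(I-PQ)^{-1}P=P(I-QP)^{-1}$, $\int\APn h\,d\mQm=\int h\,d\mPn$, $\APn\one=\AQm\one=\one$, and $\bAPn=\MQm\APn=\APn\MPn$ (the empirical analogue of Lemma~\ref{lemma:empiricalInverse}.\ref{lem:ApEqualAp0}). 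It then remains to compare the truncated series built from the $\D$-operators with those built from the $\A$-operators. Writing $T$ for a product of $\DPn,\DQm$ and $S$ for the matching product of $\APn,\AQm$, the telescoping identity $T^k-S^k=\sum_{j=0}^{k-1}T^j(T-S)S^{k-1-j}$, together with $\norm{T}\le\cK'(c)^2$, $\norm{S}\le1$, and $\norm{T-S}\le(1+\cK'(c))\norm{\xipq-\ximn}_{n\times m}$ from Part 2, gives $\norm{\sum_{k=0}^N(T^k-S^k)}\le N^2\cK'(c)^{2N}(1+\cK'(c))\norm{\xipq-\ximn}_{n\times m}$; the trailing $\DPn$ factor in \eqref{eq:UpperBoundDPnDQm2} and \eqref{eq:UpperBoundDPnDQm4} contributes one further factor $\DPn-\APn$ of the same order. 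For the $\kappa\one$ corrections I would use that $S^k\one=\one$ while $\norm{T^k\one}\le\cK'(c)^{2k}$, so their total contribution is at most $(N+1)(\cK'(c)^{2N}+1)|\kappa|$. Finally, collecting the four estimates and choosing $\cK'''(c)$ large enough (depending only on $\cK'(c)$, hence on $c$) so that every polynomial-in-$N$ prefactor times $\cK'(c)^{O(N)}$ is dominated by $\cK'''(c)^N$ for all $N\in\NN$ produces the claimed bound $\cK'''(c)^N(\norm{\xipq-\ximn}_{n\times m}+|\kappa|)+\frac{4\delta(c)^{N+1}}{1-\delta(c)}$.

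\textbf{Main obstacle.} The delicate point is entirely in Part 3: one must (i) carefully align each of the four truncated $\D$-series with the correct inverse operator, reconciling the centering operators $\MPn,\MQm,\bAPn$ and tracking the non-cancelling $\kappa\one$ residuals, and (ii) verify that the polynomial-in-$N$ factors produced by the telescoping sums are absorbable into a single exponential constant $\cK'''(c)^N$. Parts 1 and 2, by contrast, are routine consequences of Cauchy--Schwarz and Jensen's inequality.
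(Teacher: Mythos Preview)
Your proposal is correct and follows essentially the same line as the paper's proof: Parts 1 and 2 are handled identically via Cauchy--Schwarz, Jensen, and the decomposition $\DQm\DPn-\AQm\APn=\DQm(\DPn-\APn)+(\DQm-\AQm)\APn$, and Part 3 proceeds by combining Lemma~\ref{Coro:limitSum} with a telescoping bound on $\sum_k[(\DQm\DPn)^k-(\AQm\APn)^k]$ and absorbing polynomial-in-$N$ prefactors into $\cK'''(c)^N$. The only minor organizational differences are that the paper uses the cruder power bound $\|(A+B)^k-A^k\|\le 2^k\max(\|A\|,\|B\|)^{k-1}\|B\|$ in place of your explicit telescoping sum, and for \eqref{eq:UpperBoundDPnDQm2}, \eqref{eq:UpperBoundDPnDQm4} it splits directly as $\sum_k(\DPn\DQm)^k(\DPn-\bAPn)(f+\kappa\one)+[\sum_k(\DPn\DQm)^k-(I-\bAPn\AQm)^{-1}]\bAPn f$ rather than invoking the resolvent identity $(I-PQ)^{-1}P=P(I-QP)^{-1}$; both routes lead to the same estimate.
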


To conclude, via Proposition~\ref{prop:linearPot}, we obtain the following result.

\begin{Proposition}\label{prop:linearPotOpchange}
Under the standard assumptions, it holds for $\cB_\x, \cB_\y$ defined in \eqref{eq:DefinitionB} as $n,\, m\to \infty$, with $\frac{m}{n+m}\to \lambda\in (0,1)$, and $N=N(n,m) = \Nnm$ that 
    \begin{equation*}
   \left\|  \left(\begin{array}{c}
     f_{\mP_n,\mQ_m}- f_{\mP,\mQ}   \\
     g_{\mP_n,\mQ_m} -
     g_{\mP,\mQ}
\end{array} \right)-\Theta_{n,m}^N\left(
\begin{array}{c}
\mathcal{B}_\y (\mQm-\mQ)\\
\mathcal{B}_\x (\mPn-\mP)
\end{array} \right)\right\|_{n\times m}{=\opnm},
\end{equation*}
Here, the operator $\Theta_{n,m}^N:L^2_0(\mP_n)\times L^2_0(\mQ_m) \to L^2(\mP_n)\times L^2(\mQ_m) $ is defined~as
$$  {\textstyle\small \left(\begin{array}{c}
     f \\
     g 
\end{array}\right)\mapsto\left(\begin{array}{c}
           -\sum_{k=0}^N(\mathcal{D}_{\mQ_m}\mathcal{D}_{\mP_n})^k f + \sum_{k=0}^N(\mathcal{D}_{\mQ_m}\mathcal{D}_{\mP_n})^{k}\mathcal{D}_{\mQ_m}g \\
      -\sum_{k=0}^N(\mathcal{D}_{\mP_n}\mathcal{D}_{\mQ_m})^{k} g +  \sum_{k=0}^N(\mathcal{D}_{\mP_n}\mathcal{D}_{\mQ_m})^{k} \mathcal{D}_{\mP_n} f  \end{array}\right)}. $$
\end{Proposition}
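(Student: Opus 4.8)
\textbf{Proof plan for Proposition~\ref{prop:linearPotOpchange}.}
The plan is to chain together the two preceding results. Proposition~\ref{prop:linearPot} already provides, up to an $L^2(\mPn)\times L^2(\mQm)$-error of order $\cK(c)/n = \opnm$ (recall $\sqrt{(n+m)/(nm)} \asymp n^{-1/2}$ under the regime $m/(n+m)\to\lambda$), the identity
\begin{equation*}
\overline\Gamma_{n,m}\left(\begin{array}{c} \bfpq - \bfmn \\ \bgpq - \bgmn \end{array}\right) = \left(\begin{array}{c} \cB_\y(\mQm - \mQ) \\ \cB_\x(\mPn-\mP) - \int \cB_\x(\mPn-\mP)\,d\mQm \end{array}\right),
\end{equation*}
and the last displayed bound of that proposition shows the correction term $\int \cB_\x(\mPn-\mP)\,d\mQm$ is $\Op((nm)^{-1/2}) = \opnm$ and may be dropped. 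First I would apply $\overline\Gamma_{n,m}^{-1}$ (well-defined on $L^2(\mPn)\times L^2_0(\mQm)$ by Lemma~\ref{lemma:empiricalInverse} via the explicit block form \eqref{eq:InverseGammaDefinition}) to both sides, using that $\|\overline\Gamma_{n,m}^{-1}\|$ is bounded by a constant depending only on $c$ (again from the eigenvalue gap $\delta(c)<1$), so the $\opnm$ remainder is preserved. Note that both $\cB_\y(\mQm-\mQ)$ and the (truncated) $\cB_\x(\mPn-\mP)$ lie in $L^2_0(\mQm)$ and $L^2(\mPn)$ respectively up to negligible constants, so the inverse is legitimately applied; one should address the centering of $\cB_\y(\mQm-\mQ)$ with respect to $\mQm$ — it need not be exactly centered, but its $\mQm$-mean is $\int \cB_\y \, d\mQm$ applied to an empirical fluctuation and thus of the same negligible order, which is exactly the kind of bookkeeping encapsulated in the $+N|\kappa|$, $+(N+1)|\kappa|$ slack terms of Lemma~\ref{Coro:limitSum} and \eqref{eq:UpperBoundDPnDQm1}--\eqref{eq:UpperBoundDPnDQm4}.

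Next I would pass from $(\bfpq,\bgpq)$, $(\bfmn,\bgmn)$ — the potentials normalized via $\int \bgpq\, d\mQm = \int \bgmn\, d\mQm = 0$ — to the canonically normalized $(f_{\mP,\mQ},g_{\mP,\mQ})$, $(f_{\mPn,\mQm},g_{\mPn,\mQm})$. By construction these differ by additive constants of the form $\pm\int g_{\mP,\mQ}\, d\mQm$, which is an empirical fluctuation around $\int g_{\mP,\mQ}\, d\mQ = 0$ and hence $\Op((nm)^{-1/2})$; since $\Theta_{n,m}^N$ applied to a constant produces an error controlled by the $|\kappa|$-dependent slack (growing at most linearly in $N$, hence with $N \asymp \log^{1/2}_+(nm/(n+m))$ still $\opnm$), this shift is absorbed into the remainder. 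So it suffices to prove the identity for $\overline\Gamma_{n,m}^{-1}$ applied to the right-hand side, with $\Theta_{n,m}^N$ in place of the true inverse.

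The remaining and main step is to show $\|\overline\Gamma_{n,m}^{-1}(v) - \Theta_{n,m}^N(v)\|_{n\times m} = \opnm$ for $v = (\cB_\y(\mQm-\mQ),\ \cB_\x(\mPn-\mP))$. I would expand $\overline\Gamma_{n,m}^{-1}$ via the four blocks in \eqref{eq:InverseGammaDefinition}, which involve $(I-\AQm\bAPn)^{-1}$, $(I-\AQm\bAPn)^{-1}\AQm$, $(I-\bAPn\AQm)^{-1}$, and $(I-\bAPn\AQm)^{-1}\bAPn$; the operator $\Theta_{n,m}^N$ replaces each of these by its $N$-th order approximation built from $\DQm,\DPn$ exactly as in \eqref{eq:UpperBoundDPnDQm1}--\eqref{eq:UpperBoundDPnDQm4}. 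Applying Lemma~\ref{LemmaBoundOfEigD}.3 to each block, the total operator-norm error acting on $v$ is at most
\begin{equation*}
\cK'''(c)^{N}\big(\|\xipq - \ximn\|_{n\times m} + |\kappa|\big) + \frac{4\delta(c)^{N+1}}{1-\delta(c)},
\end{equation*}
where the relevant $\kappa$ (from the centering discussion above) is $\Op((nm)^{-1/2})$. Here is the crux: $\|\xipq - \ximn\|_{n\times m}$ is $\Op((nm)^{-1/2} \log^{?})$ by Lemma~\ref{lem:regularity} together with the concentration estimates of \cite{rigollet2022sample} (Inequality (4.3) and Lemma~14), the second term decays geometrically in $N$, and $\cK'''(c)^N$ grows at most like $\exp(\cK(c) N)$. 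The choice $N = \Nnm = \lceil \log_+^{1/2}(nm/(n+m))\rceil$ is precisely tuned so that $\cK'''(c)^N \cdot (nm/(n+m))^{-1/2}$ is $o((n+m)/(nm))^{1/2}$ — because $N$ grows like $\sqrt{\log}$, one has $\cK'''(c)^N = e^{O(\sqrt{\log})} = o((nm/(n+m))^{\epsilon})$ for every $\epsilon>0$ — while simultaneously $\delta(c)^{N+1} = o((n+m)/(nm))^{1/2}$ since $\delta(c)<1$ and again $N\to\infty$. Finally one multiplies by $\|v\|_{n\times m} = \Op((nm)^{-1/2})$ (the empirical fluctuations $\cB_\y(\mQm-\mQ)$, $\cB_\x(\mPn-\mP)$ are root-$n$ consistent by the CLT in $L^2$, using boundedness of $\xipq$ from Lemma~\ref{lem:regularity}), and the product of two factors each $\opnm$-ish needs care — in fact one factor is $\Op((nm)^{-1/2})$ and the other (the bracketed operator error above) is $\opnm/\Op((nm)^{-1/2}) = o_p(1)$ after the $N$-tuning, so the product is genuinely $\opnm$. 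The main obstacle, and where I would spend the most care, is exactly this three-way balancing act between the geometric decay $\delta(c)^{N}$, the exponential blow-up $\cK'''(c)^N$, and the polynomial concentration rate of $\|\xipq-\ximn\|_{n\times m}$, verifying that the prescribed $N \asymp \log_+^{1/2}(nm/(n+m))$ makes every cross term fall below the $\sqrt{(n+m)/(nm)}$ threshold.
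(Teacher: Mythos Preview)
Your overall strategy matches the paper's: invert $\overline\Gamma_{n,m}$ via \eqref{eq:InverseGammaDefinition}, replace each of the four blocks by the corresponding $N$-truncated series in $\DQm,\DPn$ using Lemma~\ref{LemmaBoundOfEigD}.3, and control the error against the size $\|v\|_{n\times m}=\Op(\sqrt{(n+m)/(nm)})$ of the fluctuation vector. The paper proceeds exactly this way and finishes by showing $\|\cB_\y(\mQm-\mQ)\|_n=\Op(m^{-1/2})$ directly via an elementary second-moment computation.

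However, your rate bookkeeping in the ``crux'' paragraph contains a genuine error. You assert that $\delta(c)^{N+1}=o\!\big(\sqrt{(n+m)/(nm)}\big)$; with $N\asymp\log_+^{1/2}\!\big(\tfrac{nm}{n+m}\big)$ this is false, since $\delta(c)^{N}=\exp\!\big(-|\!\log\delta(c)|\sqrt{\log}\big)$ decays more slowly than any negative power and is certainly not $o\big((nm/(n+m))^{-1/2}\big)$. Likewise the claim ``$\cK'''(c)^N\cdot(nm/(n+m))^{-1/2}=o\big(\sqrt{(n+m)/(nm)}\big)$'' amounts to $\cK'''(c)^N=o(1)$, which is impossible for $\cK'''(c)>1$. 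What you \emph{do} have is $\cK'''(c)^N=e^{O(\sqrt{\log})}=o\big((nm/(n+m))^{\epsilon}\big)$ for every $\epsilon>0$, hence $\cK'''(c)^N\|\xipq-\ximn\|_{n\times m}=o_p(1)$, and trivially $\delta(c)^{N+1}=o(1)$; so the bracketed operator error is merely $o_p(1)$, not $o_p\big(\sqrt{(n+m)/(nm)}\big)$.

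Fortunately, $o_p(1)$ is exactly what is needed, and your own final sentence gets this right: the product of the $o_p(1)$ operator error with $\|v\|_{n\times m}=\Op(\sqrt{(n+m)/(nm)})$ is $\opnm$. This is precisely how the paper argues (it writes the bound as $o_p(1)\cdot\|\cB_\y(\mQm-\mQ)\|_n+\opnm$). So the fix is simply to delete the two incorrect rate claims and keep only the correct accounting you give at the end: the truncation tail $\delta(c)^{N+1}/(1-\delta(c))$ contributes $o(1)\cdot\|v\|$, and the operator-swap term $\cK'''(c)^N\|\xipq-\ximn\|_{n\times m}$ contributes $o_p(1)\cdot\|v\|$. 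As a minor point, the constant shift $\int g_{\mP,\mQ}\,d\mQm$ is $\Op(m^{-1/2})$, not $\Op((nm)^{-1/2})$ as you wrote.
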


\subsection*{Step 3 - Explicit representation of linearization with auxiliary operators}

For the proof of Theorem \ref{Theo:PotIntApprox} we define the quantities
\begin{equation}\label{eq:Anml-quantity}
\begin{aligned}
     A_{n,m}^{(1)}&\coloneqq {  \int \eta(\x,\y)(\bfmn(\x)  - \bfpq(\x))\xipq(\x,\y) d(\mPn \otimes \mQm)(\x,\y),}\\
     A_{n,m}^{(2)}&\coloneqq \int \eta(\x,\y)(\bgmn(\x)  - \bgpq(\x))\xipq(\x,\y) d(\mPn \otimes \mQm)(\x,\y).
\end{aligned}
\end{equation}
Next, we infer, using Proposition \ref{prop:linearPotOpchange} for $N=N(n,m) = \Nnm$, that $ A_{n,m}^{(l)} =  \tilde A_{n,m}^{(l)} + o_p(\sqrt{(n+m)/nm})$ for $l\in \{1,2\}$ with 
\begin{align*}
     \tilde A_{n,m}^{(1)} &\coloneqq \int  \eta  \left(-
  \sum_{k=0}^N(\mathcal{D}_{\mQ_m}\mathcal{D}_{\mP_n})^k \cB_{\y}(\mQm - \mQ)+ \sum_{k=0}^N(\mathcal{D}_{\mQ_m}\mathcal{D}_{\mP_n})^{k}\mathcal{D}_{\mQ_n} \cB_{\x}(\mPn - \mP)\right)\xi_{\mP,\mQ}d {\mP_n \mQ_m},\\ 
   \tilde A_{n,m}^{(2)} &\coloneqq \int\eta \left( - \sum_{k=0}^N(\mathcal{D}_{\mP_n}\mathcal{D}_{\mQ_m})^{k}\cB_{\x}(\mPn - \mP)+ \sum_{k=0}^N(\mathcal{D}_{\mP_n}\mathcal{D}_{\mQ_m})^k \mathcal{D}_{\mP_n}\cB_{\y}(\mQm - \mQ)\right)\xi_{\mP,\mQ}d {\mP_n \mQ_m},
\end{align*}
where  $\cB_\x$, $\cB_\y$ are defined in \eqref{eq:DefinitionB}. %
The following result shows that each one of the terms of $\tilde A_{n,m}^{(1)}$ and $\tilde A_{n,m}^{(2)}$ can be written as an infinite order $V$-statistic.  Before stating the mentioned result, for $n\in \N$, we denote $[[n]]=\{1, \dots, n\}$ and $\mathbf{i}=(i_1, \dots, i_k)\in [[n]]^k$.  Furthermore, for $\mathbf{i}\in [[n]]^k$, the notation $\X_{\mathbf{i}}$ stands for $(\X_{i_1}, \dots, \X_{i_k})$.

\begin{Proposition}\label{prop:Vstatistic}
Under the standard assumptions, it holds for $l\in \{1,2\}$ as $n,\, m\to \infty$, with $\frac{m}{n+m}\to \lambda\in (0,1)$, and $N=N(n,m) = \Nnm$ that,\begin{align}\label{eq:AnmPrime_representation}
     \tilde A_{n,m}^{(l)}= \sum_{k = 0}^N \left[-U_{n,m,k}^{(2l-1)}+ U_{n,m,k}^{(2l)}\right]+o_p\left(\sqrt{\frac{n+m}{nm}}\right),%
\end{align} where we set
 \begin{align*}
      & U_{n,m,k}^{(1)} = \frac{1}{n^{k+1}\, m^{k+2}} \sum_{\substack{\mathbf{i}\in [[n]]^{k+1}\\ {\mathbf{j}\in [[m]]^{k+2}}}}
  f_k^{(1)}( \X_{\mathbf{i}},\Y_{\mathbf{j}}), \ & U_{n,m,k}^{(2)} = \frac{1}{n^{k+2}\, m^{k+2}} \sum_{\substack{\mathbf{i}\in [[n]]^{k+2}\\ {\mathbf{j}\in [[m]]^{k+2}}}}
  f_k^{(2)}( \X_{\mathbf{i}},\Y_{\mathbf{j}}),\\
  & U_{n,m,k}^{(3)} = \frac{1}{n^{k+2}\, m^{k+1}} \sum_{\substack{\mathbf{i}\in [[n]]^{k+2}\\ {\mathbf{j}\in [[m]]^{k+1}}}}
  f_k^{(3)}( \X_{\mathbf{i}},\Y_{\mathbf{j}}), \ & U_{n,m,k}^{(4)} = \frac{1}{n^{k+2}\, m^{k+2}} \sum_{\substack{\mathbf{i}\in [[n]]^{k+2}\\ {\mathbf{j}\in [[m]]^{k+2}}}}
  f_k^{(4)}( \X_{\mathbf{i}},\Y_{\mathbf{j}}),
    \end{align*}
and, upon abbreviating $\xi \coloneqq \xipq$, write
\begin{align*}
    f_k^{(1)}( \X_{\mathbf{i}},\Y_{\mathbf{j}})&= \xi(\X_{i_{k+1}}\Y_{j_{k+1}})\eta(\X_{i_{k+1}},\Y_{j_{k+1}})  
     (\xi(\X_{i_1},\Y_{j_{k+2}})-1)
  \prod_{l=1}^{k}\xi(\X_{i_l},\Y_{j_l})\xi(\X_{i_{l+1}},\Y_{j_l}),\\
   f_k^{(2)}( \X_{\mathbf{i}},\Y_{\mathbf{j}})&= \xi(\X_{i_{k+1}}\Y_{j_{k+1}})\eta(\X_{i_{k+1}},\Y_{j_{k+1}})   \xi(\X_{i_1},\Y_{j_{k+2}})  (\xi(\X_{i_{k+2}},\Y_{j_{k+2}})-1)
   \\
   & \qquad \qquad \qquad\qquad\qquad  \qquad \qquad \qquad\qquad\times\prod_{l=1}^{k}\xi(\X_{i_l},\Y_{j_l})\xi(\X_{i_{l+1}},\Y_{j_l}),\\
   f_k^{(3)}( \X_{\mathbf{i}},\Y_{\mathbf{j}})&= \xi(\X_{i_{k+1}}\Y_{j_{k+1}})\eta(\X_{i_{k+1}},\Y_{j_{k+1}})  
     (\xi(\X_{i_{k+2}},\Y_{j_1})-1)
   \prod_{l=1}^{k}\xi(\X_{i_l},\Y_{j_l})\xi(\X_{i_{l}},\Y_{j_{l+1}}),\\
f_k^{(4)}( \X_{\mathbf{i}},\Y_{\mathbf{j}})&= \xi(\X_{i_{k+1}}\Y_{i_{k+1}})\eta(\X_{i_{k+1}},\Y_{j_{k+1}})   \xi(\X_{i_{k+2}},\Y_{j_{1}})  (\xi(\X_{i_{k+2}},\Y_{j_{k+2}})-1)
   \\
   & \qquad \qquad \qquad\qquad\qquad  \qquad \qquad \qquad\qquad\times\prod_{l=1}^{k}\xi(\X_{i_l},\Y_{j_l})\xi(\X_{i_{l}},\Y_{j_{l+1}}).
\end{align*}
   
\end{Proposition}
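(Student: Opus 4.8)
The plan is to obtain the identity by expanding \emph{every} operator appearing in $\tilde A_{n,m}^{(l)}$ into an iterated sum over the sample indices, and then reading off, by relabeling, the combinatorial shapes $f_k^{(1)},\dots,f_k^{(4)}$. No analytic estimate enters at this stage; the work is entirely bookkeeping.

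First I would record the two algebraic simplifications that make the expansion transparent. By the optimality conditions \eqref{eq:optimallityCodt0} the denominators in the definitions of $\DPn$ and $\DQm$ equal $e^{-g_{\mP,\mQ}(\y)}$ and $e^{-f_{\mP,\mQ}(\x)}$, so these auxiliary operators are \emph{exactly} the empirical integral operators with kernel $\xipq$,
\[
\DPn h(\y)=\int \xipq(\x,\y)\,h(\x)\,d\mPn(\x),\qquad \DQm h(\x)=\int \xipq(\x,\y)\,h(\y)\,d\mQm(\y);
\]
it is precisely this exactness, unavailable for the empirical operators $\APn,\AQm$, that keeps the present step error-free. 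Likewise, since $\int\xipq(\x,\cdot)\,d\mQ\equiv 1$ and $\int\xipq(\cdot,\y)\,d\mP\equiv 1$, the two bias inputs reduce to
\[
\cB_\y(\mQm-\mQ)(\x)=\frac1m\sum_{j=1}^m\big(\xipq(\x,\Y_j)-1\big),\qquad \cB_\x(\mPn-\mP)(\y)=\frac1n\sum_{i=1}^n\big(\xipq(\X_i,\y)-1\big),
\]
which is the source of the $(\xi-1)$ factors and of one index range in each $U_{n,m,k}^{(\cdot)}$.

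Next I would expand $(\DQm\DPn)^k$, together with its right compositions with an additional $\DQm$ or $\DPn$, into $2k$-fold (respectively $(2k+1)$-fold) iterated empirical integrals. With $\x^{(0)}$ denoting the evaluation point,
\[
(\DQm\DPn)^k h(\x^{(0)})=\int\cdots\int\Big(\prod_{l=1}^k\xipq(\x^{(l-1)},\y^{(l)})\,\xipq(\x^{(l)},\y^{(l)})\Big)h(\x^{(k)})\prod_{l=1}^k d\mPn(\x^{(l)})\,d\mQm(\y^{(l)}),
\]
whereas for $\tilde A_{n,m}^{(2)}$ the corresponding chain carries $\xipq(\x^{(l)},\y^{(l-1)})\,\xipq(\x^{(l)},\y^{(l)})$, the roles of the $\x$- and $\y$-variables being interchanged. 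Substituting these expansions into $\tilde A_{n,m}^{(l)}$, performing the outer integration $\int\eta\,\xipq\,d(\mPn\otimes\mQm)$, and collapsing each iterated integral against $\mPn$ and $\mQm$ into a finite sum over index tuples $\mathbf{i},\mathbf{j}$ of the lengths stated, one obtains after relabeling exactly $\sum_{k=0}^N\big(-U_{n,m,k}^{(2l-1)}+U_{n,m,k}^{(2l)}\big)$. Since the $k$-sum is finite and every operator involved is a genuine finite empirical integral operator, the identity holds exactly, so the remainder $\opnm$ displayed in the statement is not actually needed here; we retain it only for consistency with the surrounding reductions.

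The only real difficulty is organizational. One must (i) choose the index labeling so that the chain product whose consecutive factors share the $\Y$-index, $\xipq(\X_{i_l},\Y_{j_l})\,\xipq(\X_{i_{l+1}},\Y_{j_l})$, appears throughout $\tilde A_{n,m}^{(1)}$ while the one whose consecutive factors share the $\X$-index, $\xipq(\X_{i_l},\Y_{j_l})\,\xipq(\X_{i_l},\Y_{j_{l+1}})$, appears throughout $\tilde A_{n,m}^{(2)}$; (ii) keep track of which end of the chain is glued to the outer $\eta\,\xipq$ factor, producing the distinguished indices $i_{k+1},j_{k+1}$, and which end meets the $(\xi-1)$ bias factor, producing $i_1$ or $j_1$ and $i_{k+2}$ or $j_{k+2}$; and (iii) read off the normalizations $n^{k+1}m^{k+2}$, $n^{k+2}m^{k+2}$ and $n^{k+2}m^{k+1}$ according to whether a trailing $\DQm$ or $\DPn$ is present and on which side the $\cB$-term sits. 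The quantitative matters, namely controlling the tail in $k$ and replacing these infinite-order $V$-statistics by $U$-statistics, are postponed to Steps 4--6.
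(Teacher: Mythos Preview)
Your proposal is correct and follows essentially the same route as the paper's proof: both rely on the observation that the auxiliary operators $\DPn,\DQm$ are \emph{exactly} the empirical integral operators with kernel $\xipq$ (via the optimality conditions), then expand $(\DQm\DPn)^k$ and its variants inductively into iterated empirical sums, substitute the bias terms $\cB_\x,\cB_\y$ in their $(\xi-1)$ form, and read off the $V$-statistic kernels $f_k^{(1)},\dots,f_k^{(4)}$ after relabeling. You are also right that the identity is exact and the $o_p$ remainder is superfluous at this step; the paper's proof likewise carries out a pure algebraic computation with no error term.
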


\subsection*{Step 4 - From $V$-Statistic to $U$-Statistic}

Proposition~\ref{prop:Vstatistic} shows that $\tilde A_{n,m}^{(l)}$ for $l\in \{1, 2\}$ is, up to negligible error, a $V$-statistic of order $k(n,m)$, with $k(n,m)\to \infty$ as $n,m\to \infty$. That is, an infinite order $V$-statistic.  The following result shows that $\tilde A_{n,m}^{(l)}$ for $l\in \{1, 2\}$ is, up to negligible error, an infinite order $U$-statistic. 
\begin{Proposition}\label{prop:Ustatistic}
Under the standard assumptions, let $U_{n,m,k}^{(1)}$, $U_{n,m,k}^{(2)}$, $U_{n,m,k}^{(3)}$ and  $U_{n,m,k}^{(4)}$, be as in Proposition~\ref{prop:Vstatistic} and let $ \mathcal{S}_{k,n}\subset [[n]]^k$ be set of $\mathbf{i}\in [[n]]^k$ such that $i_{p}\neq i_{q}$, for $p\neq q$. Then the following equalities hold up to additive $o_p\left(\sqrt{\frac{n+m}{nm}}\right)$ terms for $n,m\to \infty$, with $\frac{m}{n+m}\to \lambda\in (0,1)$ and $N=\Nnm$,
 \begin{align*}
    \!\!\!\!\!\!  &\sum_{k = 0}^{N} U_{n,m,k}^{(1)} = \sum_{k = 0}^{N} \frac{1}{n^{k+1}\, m^{k+2}} \sum_{\substack{\mathbf{i}\in \mathcal{S}_{k+1,n}\\ {\mathbf{j}\in \mathcal{S}_{k+2,m}}}}
  \!\!\!\!\!\!f_k^{(1)}( \X_{\mathbf{i}},\Y_{\mathbf{j}}),  \!\!\!& \sum_{k = 0}^{N} U_{n,m,k}^{(2)} = \sum_{k = 0}^{N} \frac{1}{n^{k+2}\, m^{k+2}} \sum_{\substack{\mathbf{i}\in \mathcal{S}_{k+2,n}\\ {\mathbf{j}\in \mathcal{S}_{k+2,m}}}}
  \!\!\!\!\!\!f_k^{(2)}( \X_{\mathbf{i}},\Y_{\mathbf{j}}),\\
 \!\!\!\!\!\! & \sum_{k = 0}^{N} U_{n,m,k}^{(3)} =\sum_{k = 0}^{N} \frac{1}{n^{k+2}\, m^{k+1}} \sum_{\substack{\mathbf{i}\in \mathcal{S}_{k+2,n}\\ {\mathbf{j}\in \mathcal{S}_{k+1},m}}}
  \!\!\!\!\!\!f_k^{(3)}( \X_{\mathbf{i}},\Y_{\mathbf{j}}), \!\!\!& \sum_{k = 0}^{N} U_{n,m,k}^{(4)} =\sum_{k = 0}^{N} \frac{1}{n^{k+2}\, m^{k+2}} \sum_{\substack{\mathbf{i}\in \mathcal{S}_{k+2,n}\\ {\mathbf{j}\in \mathcal{S}_{k+2,m}}}}
  \!\!\!\!\!\!f_k^{(4)}( \X_{\mathbf{i}},\Y_{\mathbf{j}}).
    \end{align*} 
\end{Proposition}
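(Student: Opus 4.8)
The plan is to show that, for each $l\in\{1,2,3,4\}$, the difference between $\sum_{k=0}^N U_{n,m,k}^{(l)}$ and the same sum restricted to tuples with pairwise distinct indices is bounded \emph{deterministically} by a quantity that is $o\!\left(\sqrt{\frac{n+m}{nm}}\right)$; this instantly yields the asserted $o_p$ identities. I would spell out the case $l=1$, the other three being identical up to index bookkeeping. By definition, $U_{n,m,k}^{(1)} - \frac{1}{n^{k+1}m^{k+2}}\sum_{\mathbf{i}\in\mathcal{S}_{k+1,n},\,\mathbf{j}\in\mathcal{S}_{k+2,m}} f_k^{(1)}(\X_\mathbf{i},\Y_\mathbf{j})$ equals $\frac{1}{n^{k+1}m^{k+2}}$ times the sum of $f_k^{(1)}(\X_\mathbf{i},\Y_\mathbf{j})$ over all ``diagonal'' pairs $(\mathbf{i},\mathbf{j})$, i.e.\ those for which $\mathbf{i}\in[[n]]^{k+1}$ has a repeated coordinate or $\mathbf{j}\in[[m]]^{k+2}$ has one. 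A union bound over the at most $\binom{a}{2}$ coincidence patterns shows that, for any length $a$, at most $\binom{a}{2}\,n^{a-1}$ tuples in $[[n]]^a$ fail to have distinct coordinates; hence the number of diagonal pairs above is at most $\binom{k+1}{2}n^{k}m^{k+2} + \binom{k+2}{2}n^{k+1}m^{k+1}$.

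Next I would control the integrand uniformly. By Lemma~\ref{lem:regularity}, $\exp(-3\norm{c}_\infty)\le \xipq\le\exp(3\norm{c}_\infty)$; setting $\cK(c):=\exp(3\norm{c}_\infty)\ge 1$, every factor $\xi$ and every factor $(\xi-1)$ occurring in $f_k^{(1)}$ has absolute value at most $\cK(c)$, and since $f_k^{(1)}$ is a product of $2k+2$ such factors times one factor $\eta$, we get $\norm{f_k^{(1)}}_\infty\le \cK(c)^{2k+2}\norm{\eta}_\infty$ (and likewise for $f_k^{(2)},f_k^{(3)},f_k^{(4)}$). Dividing the combinatorial count by $n^{k+1}m^{k+2}$ and summing over $k$ then gives
\begin{align*}
\left| \sum_{k=0}^N U_{n,m,k}^{(1)} - \sum_{k=0}^N \frac{1}{n^{k+1}m^{k+2}}\sum_{\substack{\mathbf{i}\in\mathcal{S}_{k+1,n}\\ \mathbf{j}\in\mathcal{S}_{k+2,m}}} f_k^{(1)}(\X_\mathbf{i},\Y_\mathbf{j}) \right|
&\le \sum_{k=0}^N \left(\frac{\binom{k+1}{2}}{n} + \frac{\binom{k+2}{2}}{m}\right)\cK(c)^{2k+2}\norm{\eta}_\infty\\
&\le \frac{\cK(c)^{2}\,(N+2)^{3}\,\cK(c)^{2N}\,\norm{\eta}_\infty}{n\wedge m},
\end{align*}
using $\binom{k+1}{2}+\binom{k+2}{2}=(k+1)^2$ and $\frac1n+\frac1m\le\frac{2}{n\wedge m}$ in the last step.

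It remains to check that this deterministic bound is $o\!\left(\sqrt{\frac{n+m}{nm}}\right)$ for the prescribed truncation level $N = \Nnm$. Since $\frac{nm}{n+m}\asymp n\wedge m$ (as $\tfrac12(n\wedge m)\le\tfrac{nm}{n+m}\le n\wedge m$), it suffices that $(N+2)^3\cK(c)^{2N} = o\!\left(\big(\tfrac{nm}{n+m}\big)^{1/2}\right)$. But $N\asymp \big(\log\tfrac{nm}{n+m}\big)^{1/2}$, hence $(N+2)^3\cK(c)^{2N}=\exp\!\big(3\log(N+2)+2\log\cK(c)\cdot N\big)=\exp\!\big(o(\log\tfrac{nm}{n+m})\big)=\big(\tfrac{nm}{n+m}\big)^{o(1)}$, which is indeed $o\!\left(\big(\tfrac{nm}{n+m}\big)^{1/2}\right)$. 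The three remaining identities follow by the identical argument, only with the index lengths $(k+2,k+2)$, $(k+2,k+1)$, $(k+2,k+2)$ in the combinatorial count and the same uniform bound $\norm{f_k^{(l)}}_\infty\le\cK(c)^{2k+2}\norm{\eta}_\infty$ throughout.

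The main obstacle — and the reason the precise rate $N=\Nnm$ must appear — is exactly this final balancing. The truncation level $N$ is forced to be large by the $\delta(c)^N$-type approximation errors accumulated in Steps~1--3, yet the only control we have on $f_k^{(l)}$ is the crude pointwise bound $\cK(c)^{2k+2}\norm{\eta}_\infty$, which injects a factor $\cK(c)^{2N}$ into the diagonal estimate; one must verify that the square-root-logarithmic growth of $\Nnm$ keeps this exponential-in-$N$ blow-up sub-polynomial in $n\wedge m$. Granting this, the rest is a routine union bound over coincidence patterns combined with the two-sided pointwise bound on $\xipq$ from Lemma~\ref{lem:regularity}.
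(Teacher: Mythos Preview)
Your proposal is correct and follows essentially the same route as the paper: both arguments split off the ``diagonal'' tuples (those with a repeated index), count them combinatorially as $O\!\big((k+2)^2 n^{k+1}m^{k+2}\tfrac{n+m}{nm}\big)$, bound $|f_k^{(1)}|$ by $\cK(c)^{O(k)}\|\eta\|_\infty$ via Lemma~\ref{lem:regularity}, and then check that summing over $k\le N=\Nnm$ keeps the total below $o\!\big(\sqrt{\tfrac{n+m}{nm}}\big)$ because $N^3\cK(c)^{2N}=\big(\tfrac{nm}{n+m}\big)^{o(1)}$. The only cosmetic difference is that you work with the deterministic pointwise bound $\|f_k^{(1)}\|_\infty$ (valid since $\eta\in L^\infty$) whereas the paper bounds $\E|f_k^{(1)}|$; your version is marginally cleaner and yields the same conclusion.
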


\subsection*{Step 5 - Derivation of H\'ajek projection}
Next, we define the random variables
\begin{align*}
      &\U_{n,m,N}^{(1)} = \sum_{k = 0}^{N} \frac{1}{n^{k+1}\, m^{k+2}} \sum_{\substack{\mathbf{i}\in \mathcal{S}_{k+1,n}\\ {\mathbf{j}\in \mathcal{S}_{k+2,m}}}}
  f_k^{(1)}( \X_{\mathbf{i}},\Y_{\mathbf{j}}),  & \U_{n,m,N}^{(2)} = \sum_{k = 0}^{N} \frac{1}{n^{k+2}\, m^{k+2}} \sum_{\substack{\mathbf{i}\in \mathcal{S}_{k+2,n}\\ {\mathbf{j}\in \mathcal{S}_{k+2,m}}}}
  f_k^{(2)}( \X_{\mathbf{i}},\Y_{\mathbf{j}}),\\
  & \U_{n,m,N}^{(3)} =\sum_{k = 0}^{N} \frac{1}{n^{k+2}\, m^{k+1}} \sum_{\substack{\mathbf{i}\in \mathcal{S}_{k+2,n}\\ {\mathbf{j}\in \mathcal{S}_{k+1,m}}}}
  f_k^{(3)}( \X_{\mathbf{i}},\Y_{\mathbf{j}}),  &\U_{n,m,N}^{(4)} =\sum_{k = 0}^{N} \frac{1}{n^{k+2}\, m^{k+2}} \sum_{\substack{\mathbf{i}\in \mathcal{S}_{k+2,n}\\ {\mathbf{j}\in \mathcal{S}_{k+2,m}}}}
  f_k^{(4)}( \X_{\mathbf{i}},\Y_{\mathbf{j}}),
    \end{align*}
and consider the H\'ajek projection of each $\U^{(l)}_{n,m,N}$ for  $l \in \{1,\dots, 4\}$ onto the set of random variables $\sum_{i= 1}^{n} \mathbf{g}_{i,x}(\X_i) + \sum_{j = 1}^{m} \mathbf{g}_{j,y}(\Y_j)$. We denote it by 
\begin{align*}
   \frac{1}{n}\sum_{i= 1}^{n}  F^{(l)}_{N, x}(\X_i) + \frac{1}{m}\sum_{j= 1}^{m}  F^{(l)}_{N,y}(\Y_j).
\end{align*}
The respective functions $F^{(l)}_{N,x}$ and $F^{(l)}_{N,y}$ are given by suitable conditional expectations and  following result shows that  $F^{(l)}_{k}(\y)$ can be written in a more appropriate way.

  \begin{Proposition}\label{prop:writeFproperly}
Under the standard assumptions, it holds that
          \begin{align*}%
   F^{(1)}_{N,y}(\y) &=  \sum_{k=0}^N\left\{( (\AP\AQ)^{k} \AP \eta_{\x})(\y)-\E[(\AP\AQ)^{k}\AP \eta_{\x})(\Y)]\right\}, & F^{(1)}_{N,x}(\x) &= 0,\\
   F^{(2)}_{N,x}(\x) &=  \sum_{k=0}^N\left\{( (\AQ\AP)^{k} \AQ\AP\eta_{\x})(\x)-\E[( (\AQ\AP)^{k} \AQ\AP\eta_{\x})(\X)]\right\},& F^{(2)}_{N,y}(\y) &= 0,\\
   F^{(3)}_{N,x}(\x) &= \sum_{k=0}^N \left\{( (\AQ\AP)^{k} \AQ\eta_{\y})(\x)-\E[( (\AQ\AP)^{k} \AQ\eta_{\y})(\X)]\right\},& F^{(3)}_{N,y}(\y) &= 0,\\
   F^{(4)}_{N,y}(\y) &= \sum_{k=0}^N\left\{ ( (\AP\AQ)^{k} \AP\AQ\eta_{\y})(\y)-\E[( (\AP\AQ)^{k} \AP\AQ\eta_{\y})(\Y)]\right\},& F^{(4)}_{N,x}(\x) &= 0,
   \end{align*}
where $\X\sim {\rm P}$ and $\Y\sim {\rm Q}$. Further, it holds $\E(F^{(l)}_{N,x}(\X)) = \E(F^{(l)}_{N,y}(\Y)) = \E(\U^{(l)}_{n,m,N})=0$ for any $l\in \{1, \dots, 4\}$, $z\in \{x,y\}$,  and $n,m, N\in \NN$.
  \end{Proposition}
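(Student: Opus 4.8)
The plan is to evaluate the Hájek projection directly from its definition and then simplify. For a statistic $S$ of the independent samples $\X_1,\dots,\X_n,\Y_1,\dots,\Y_m$, its projection onto the linear span of $\{\sum_i a(\X_i)+\sum_j b(\Y_j)\}$ equals $\sum_{i}(\E[S\mid\X_i]-\E S)+\sum_{j}(\E[S\mid\Y_j]-\E S)+\E S$; since the observations are i.i.d.\ within each sample, $\E[\U^{(l)}_{n,m,N}\mid\X_i=\x]$ does not depend on $i$ and $\E[\U^{(l)}_{n,m,N}\mid\Y_j=\y]$ does not depend on $j$, so matching with the asserted form identifies $\tfrac1n F^{(l)}_{N,x}(\x)$ with $\E[\U^{(l)}_{n,m,N}\mid\X_1=\x]-\E[\U^{(l)}_{n,m,N}]$ and $\tfrac1m F^{(l)}_{N,y}(\y)$ with $\E[\U^{(l)}_{n,m,N}\mid\Y_1=\y]-\E[\U^{(l)}_{n,m,N}]$. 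Hence it suffices to compute, for each fixed $k$ and each family $l$, the one-sample conditional expectations of the kernels $f^{(l)}_k$ together with their means. The only analytic inputs are the bistochasticity relation $\int\xipq(\x,\y)\,d\mP(\x)=\int\xipq(\x,\y)\,d\mQ(\y)=1$ --- exactly \eqref{eq:optimallityCodt0} rewritten via $\xipq=C\,e^{\fpq+\gpq}$ --- the integral representations $\AP h(\y)=\int h(\x)\xipq(\x,\y)\,d\mP(\x)$ and $\AQ h(\x)=\int h(\y)\xipq(\x,\y)\,d\mQ(\y)$, and the definitions $\eta_{\x}(\x)=\int\eta(\x,\y)\xipq(\x,\y)\,d\mQ(\y)$, $\eta_{\y}(\y)=\int\eta(\x,\y)\xipq(\x,\y)\,d\mP(\x)$ from \eqref{eq:DefEtaX}.

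Next I would exploit that each kernel $f^{(l)}_k$ carries a distinguished \emph{centered leaf} factor $\big(\xipq(\cdot,\cdot)-1\big)$ one of whose two arguments --- $\Y_{j_{k+2}}$ in $f^{(1)}_k$, $\X_{i_{k+2}}$ in $f^{(2)}_k$ and $f^{(3)}_k$, $\Y_{j_{k+2}}$ in $f^{(4)}_k$ --- occurs in no other factor. Integrating that variable against its law and using $\int\xipq\,d\mP=\int\xipq\,d\mQ=1$ makes the term vanish. Three consequences follow: $\E[f^{(l)}_k]=0$, hence $\E[\U^{(l)}_{n,m,N}]=0$; conditioning on any sample point distinct from the leaf variable again leaves that leaf free and kills the term, so $F^{(1)}_{N,x}=F^{(2)}_{N,y}=F^{(3)}_{N,y}=F^{(4)}_{N,x}=0$; and in the surviving direction only the single index position carrying the leaf contributes to the conditional expectation. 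The distinct-index restrictions and the factorial prefactors $(n)_{k+1}/n^{k+1}$, $(m)_{k+2}/m^{k+2}$, etc., differ from $1$ only by $O(k^2/n)+O(k^2/m)$; since the operator words produced below act as contractions with ratio $\delta(c)<1$ on centered functions (Lemma~\ref{lemma:empiricalInverse}) and $\xipq$ is uniformly bounded (Lemma~\ref{lem:regularity}), these corrections are summable over $k\le N$ and negligible after multiplication by the $1/n$ or $1/m$ prefactor.

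For the surviving position, conditioning on the leaf-adjacent sample point turns the dependency graph of $f^{(l)}_k$ into a single path whose one end is the $\eta$-weighted factor $\eta(\cdot,\cdot)\xipq(\cdot,\cdot)$ (a free leaf) and whose other end is the centered factor $\xipq(\cdot,\cdot)-1$ evaluated at the conditioned point. Integrating out the free variables one at a time starting from the $\eta$-end, the first step produces $\eta_{\x}$ or $\eta_{\y}$, and each subsequent step applies exactly one operator --- $\AP$ when an $\X$-variable is eliminated, $\AQ$ when a $\Y$-variable is eliminated --- because at that moment the active variable lies on precisely one not-yet-contracted edge. After $2k$ steps (families $1,3$) or $2k+1$ steps (families $2,4$) one is left with $(\AQ\AP)^k\eta_\x$, $(\AP\AQ)^k\AP\eta_\x$, $(\AP\AQ)^k\eta_\y$, or $(\AQ\AP)^k\AQ\eta_\y$ respectively, sitting at the variable adjacent to the conditioned point; the final integration against $\xipq(\cdot,\cdot)-1$ contributes one more $\AP$ or $\AQ$ and subtracts its mean, yielding $(\AP\AQ)^k\AP\eta_{\x}$ for $F^{(1)}_{N,y}$, $(\AQ\AP)^k\AQ\AP\eta_{\x}$ for $F^{(2)}_{N,x}$, $(\AQ\AP)^k\AQ\eta_{\y}$ for $F^{(3)}_{N,x}$, and $(\AP\AQ)^k\AP\AQ\eta_{\y}$ for $F^{(4)}_{N,y}$. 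That the constant subtracted on the $\X$-side coincides with the one written on the $\Y$-side in the statement follows from the adjoint-type identities $\E_{\mQ}[\AP h]=\E_{\mP}[h]$ and $\E_{\mP}[\AQ h]=\E_{\mQ}[h]$, immediate from the bistochasticity of $\xipq$. Summing over $k=0,\dots,N$ gives the four displayed formulas, and $\E F^{(l)}_{N,x}(\X)=\E F^{(l)}_{N,y}(\Y)=0$ is then transparent because every summand has been centered by construction.

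The hard part will be the path-contraction bookkeeping: for each of the four families one must correctly single out the leaf-adjacent index, orient the induced path so that each vertex is integrated while lying on a unique remaining edge, and count the alternation of $\AP$ and $\AQ$ exactly, since an off-by-one in the power of $k$ or a misplaced centering would break the statement. The only other delicate point is the passage from the distinct-index sums to products when taking conditional expectations, which is handled as indicated above through uniform boundedness of $\xipq$ and the geometric decay of the operator words on centered functions.
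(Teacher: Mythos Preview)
Your approach is essentially the same as the paper's: both identify the centered ``leaf'' factor $(\xipq-1)$ to obtain $\E[f_k^{(l)}]=0$ and the vanishing of the projection in one direction, and both compute the surviving conditional expectation by iterated integration along the chain of $\xi$-factors. The paper formalizes the iterated integration via an explicit induction on an auxiliary kernel
\[
G_k(\y,\x_1,\y_1,\dots,\x_k,\y_k)=\xi(\x_k,\y_k)\xi(\x_k,\y_{k-1})\cdots\xi(\x_1,\y_1)(\xi(\x_1,\y)-1),
\]
showing $\int G_k\,h(\y_k)\,d\x_k\,d\y_k\cdots d\x_1\,d\y_1=((\AP\AQ)^k h)(\y)-\E_\mQ[(\AP\AQ)^{k-1}h]$, while you describe the same computation as path-contraction. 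These are identical in substance.

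You are in fact more careful than the paper on one point. The number of tuples $(\bi,\bj)\in\mathcal S_{k+1,n}\times\mathcal S_{k+2,m}$ with $j_{k+2}=s$ is $(n)_{k+1}(m-1)_{k+1}$, so after dividing by $n^{k+1}m^{k+2}$ the coefficient in front of $\tilde F_k(\Y_s)$ is $\tfrac1m\cdot\frac{(n)_{k+1}}{n^{k+1}}\cdot\frac{(m-1)_{k+1}}{m^{k+1}}$ rather than exactly $\tfrac1m$. The paper's proof writes the projection as $\tfrac1m\sum_j\sum_k\tilde F_k(\Y_j)$ without commenting on this discrepancy; your remark that the multiplicative error is $O(k^2/n)+O(k^2/m)$ and summable over $k\le N$ via the contraction $\|\AP\AQ\|_{L^2_0(\mQ)}\le\delta(c)<1$ from Lemma~\ref{lemma:empiricalInverse} is the correct way to close this gap, and it is all that the downstream Proposition~\ref{prop:HajekProjection} actually needs. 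One caution: the bound $\delta(c)<1$ is an $L^2$-operator norm on centered functions, not a sup-norm bound, so the summability of the corrections should be phrased in $L^2(\mP)$ or $L^2(\mQ)$---which is exactly what the subsequent variance argument requires.
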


\subsection*{Step 6 - Weak convergence of H\'ajek Projection}
  Based on Proposition~\ref{prop:writeFproperly} and since $\AP(\one) = \AQ(\one)= \one$ in conjunction with \eqref{eq:ExpectationExchange} we infer for $\bar\eta_{\x}\coloneqq \eta_{\x} - \E[\eta_{\x}(\X)]$ that
\begin{align*}
    \frac{1}{m}  \sum_{j=1}^m F_{N,y}^{(1)}(\Y_j) &= \frac{1}{m} \sum_{j=1}^m \bigg(\sum_{k=0}^N  ((\AP\AQ)^{k} \AP \bar \eta_{\x})(\Y_j)- \E[((\AP\AQ)^{k}\AP \bar \eta_{\x})(\Y)] \bigg)\\
    &=  \frac{1}{m} \sum_{j=1}^m \bigg(\sum_{k=0}^N  ((\AP\AQ)^{k} \AP \bar \eta_{\x})(\Y_j) \bigg).
\end{align*}
Since $(\AP\AQ)\colon L^2_0(\mQ) \to L^2_0(\mQ)$ has, by Lemma \ref{lemma:empiricalInverse},  an operator norm strictly smaller than one, it follows that the  operator $(1-\AP\AQ)^{-1}$ is  bounded and can be represented in terms of a Neumann series, i.e.,  
$$ (1-\AP\AQ)^{-1} = \sum_{k = 0}^{\infty} (\AP\AQ)^k \colon L^2_0(\mQ) \to L^2_0(\mQ).$$
As a consequence, the term $$\sum_{k= 0}^{\infty} (\AP\AQ)^k\AP \bar \eta_{\x} = (1-\AP\AQ)^{-1}\AP\bar\eta_{\x}$$ is well-defined as an element of $L^2_0(\mQ)$. Therefore, it is finite $\mQ$-almost everywhere and $\sum_{k = 0}^\infty\E[|((\AP\AQ)^{k}\AP \bar\eta_{\x})(\Y)|]<\infty$.  

The next step is to prove that the difference between $ \frac{1}{m}  \sum_{j=1}^m F_{N,y}^{(1)}(\Y_j)$ and its corresponding limit for $N\rightarrow \infty$ tends to $0$ faster than the parametric rate, i.e., is of order $o_p(\sqrt{(n+m)/nm})$. This is content of the following result. 
\begin{Proposition}\label{prop:FandFinf}
Define the functions 
 \begin{align*}%
   F^{(1)}_{\infty,y}(\y) &=  \sum_{k=0}^\infty ( (\AP\AQ)^{k} \AP \bar\eta_{\x})(\y), &
   F^{(2)}_{\infty,x}(\x) &=  \sum_{k=0}^\infty( (\AQ\AP)^{k} \AQ\AP \bar\eta_{\x})(\x),\\
   F^{(3)}_{\infty,x}(\x) &= \sum_{k=0}^\infty ( (\AQ\AP)^{k} \AQ \bar\eta_{\y})(\x), &
   F^{(4)}_{\infty,y}(\y) &= \sum_{k=0}^\infty ( (\AP\AQ)^{k} \AP\AQ\bar\eta_{\y})(\y).
   \end{align*}
    Then, under the standard assumptions, it follows for each $l \in \{1, \dots, 4\}$ and $n,m, N\rightarrow\infty$ with $\frac{m}{n+m} \to \lambda \in (0,1)$ that
    $$  \E\left[\left( F^{(l)}_{N,x}(\X)-F^{(l)}_{\infty,x}(\X)\right)^2 \right]+ \E\left[\left( F^{(l)}_{N,y}(\Y)-F^{(l)}_{\infty,y}(\Y)\right)^2\right] = o\left( \frac{n+m}{nm} \right).$$
In the expectations, we consider $\X\sim {\rm P}$ and $\Y\sim {\rm Q}$. 
\end{Proposition}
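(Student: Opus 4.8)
The plan is to notice that, for each $l$, the quantity $F^{(l)}_{N,\cdot}-F^{(l)}_{\infty,\cdot}$ is \emph{exactly} the tail of a Neumann series whose base operator is a strict contraction on the relevant mean‑zero subspace, so that the whole assertion reduces to a geometric estimate and the substantive input is already available in Lemma~\ref{lemma:empiricalInverse}.

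\emph{First I would recast $F^{(l)}_{N,\cdot}$ in centred–seed form.} Since $\AP$ and $\AQ$ are the conditional–expectation operators of $\pipq$ one has $\AP\one=\AQ\one=\one$; hence, writing $\bar\eta_{\x}=\eta_{\x}-\E_{\X\sim\mP}[\eta_{\x}(\X)]$ and $\bar\eta_{\y}=\eta_{\y}-\E_{\Y\sim\mQ}[\eta_{\y}(\Y)]$, every centering constant appearing in Proposition~\ref{prop:writeFproperly} is precisely absorbed by replacing $\eta_{\x}$ (resp.\ $\eta_{\y}$) with $\bar\eta_{\x}$ (resp.\ $\bar\eta_{\y}$); e.g.\ $((\AP\AQ)^{k}\AP\eta_{\x})(\cdot)-\E[((\AP\AQ)^{k}\AP\eta_{\x})(\Y)]=((\AP\AQ)^{k}\AP\bar\eta_{\x})(\cdot)$, and analogously in the other three cases (also using $\E[\eta_{\x}(\X)]=\E[\eta_{\y}(\Y)]$ from \eqref{eq:ExpectationExchange}). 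Thus $F^{(1)}_{N,y}=\sum_{k=0}^{N}(\AP\AQ)^{k}\AP\bar\eta_{\x}$, $F^{(2)}_{N,x}=\sum_{k=0}^{N}(\AQ\AP)^{k}\AQ\AP\bar\eta_{\x}$, $F^{(3)}_{N,x}=\sum_{k=0}^{N}(\AQ\AP)^{k}\AQ\bar\eta_{\y}$, $F^{(4)}_{N,y}=\sum_{k=0}^{N}(\AP\AQ)^{k}\AP\AQ\bar\eta_{\y}$, the complementary ($x$‑ or $y$‑) component being identically $0$ in both the truncated and the infinite version; the corresponding $F^{(l)}_{\infty,\cdot}$ are the same sums taken to $k=\infty$, so the difference is the tail $-\sum_{k=N+1}^{\infty}(\cdots)$.

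\emph{Next I would invoke the spectral gap.} By the optimality conditions \eqref{eq:optimallityCodt0}, $\int\xipq(\x,\cdot)\,d\mP(\x)=\int\xipq(\cdot,\y)\,d\mQ(\y)=1$, so $\AP$ maps $L^{2}_{0}(\mP)$ into $L^{2}_{0}(\mQ)$ and $\AQ$ maps $L^{2}_{0}(\mQ)$ into $L^{2}_{0}(\mP)$; in particular all four seeds ($\AP\bar\eta_{\x},\AP\AQ\bar\eta_{\y}\in L^{2}_{0}(\mQ)$ and $\AQ\AP\bar\eta_{\x},\AQ\bar\eta_{\y}\in L^{2}_{0}(\mP)$) lie in the space on which, by Lemma~\ref{lemma:empiricalInverse}, the relevant base operator ($\AP\AQ$ on $L^{2}_{0}(\mQ)$, $\AQ\AP$ on $L^{2}_{0}(\mP)$) has operator norm at most $\delta(c)<1$. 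Hence the infinite series converge in $L^{2}$ and, for instance,
\[
\norm{F^{(1)}_{N,y}-F^{(1)}_{\infty,y}}_{L^{2}(\mQ)}=\Big\|\sum_{k=N+1}^{\infty}(\AP\AQ)^{k}\AP\bar\eta_{\x}\Big\|_{L^{2}(\mQ)}\le\frac{\delta(c)^{N+1}}{1-\delta(c)}\,\norm{\AP\bar\eta_{\x}}_{L^{2}(\mQ)},
\]
and analogously in the other three cases. Since $\norm{\eta}_{\infty}<\infty$ and $\norm{\xipq}_{\infty}\le\exp(3\norm{c}_{\infty})$ by Lemma~\ref{lem:regularity}, we get $\norm{\eta_{\x}}_{\infty},\norm{\eta_{\y}}_{\infty}\le\exp(3\norm{c}_{\infty})\norm{\eta}_{\infty}$, and since $\AP,\AQ$ are averaging operators every seed is bounded in $L^{\infty}$, hence in $L^{2}$, by a constant $\cK(c,\norm{\eta}_{\infty})$. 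Collecting the four cases yields
\[
\E\big[(F^{(l)}_{N,x}(\X)-F^{(l)}_{\infty,x}(\X))^{2}\big]+\E\big[(F^{(l)}_{N,y}(\Y)-F^{(l)}_{\infty,y}(\Y))^{2}\big]\le \cK(c,\norm{\eta}_{\infty})\,\frac{\delta(c)^{2(N+1)}}{(1-\delta(c))^{2}}.
\]

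\emph{To conclude}, since $N=N(n,m)\to\infty$ and $\delta(c)\in(0,1)$ this bound vanishes, and for the choice of $N$ used throughout the proof it is $o\big((n+m)/(nm)\big)$, which is the claim. The only genuinely substantive ingredient — the $L^{2}_{0}$‑spectral gap $\delta(c)<1$ of $\AP\AQ$ and $\AQ\AP$ — is already in hand, so this step is largely bookkeeping; the points demanding care are the reduction in the first paragraph (that the centering constants of Proposition~\ref{prop:writeFproperly} are exactly annihilated by passing from $\eta_{\x},\eta_{\y}$ to $\bar\eta_{\x},\bar\eta_{\y}$) and the verification that each seed has zero mean, which is what licenses the geometric rather than merely bounded rate. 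There is no hard obstacle here beyond keeping track of which operator acts on which $L^2_0$ space.
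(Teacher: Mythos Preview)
Your argument is essentially the paper's: recognise $F^{(l)}_{N,\cdot}-F^{(l)}_{\infty,\cdot}$ as the tail of a Neumann series, observe that each seed is mean-zero so the base operator ($\AP\AQ$ on $L^{2}_{0}(\mQ)$ or $\AQ\AP$ on $L^{2}_{0}(\mP)$) acts as a strict contraction by Lemma~\ref{lemma:empiricalInverse}, and bound geometrically. The reduction to centred seeds in your first paragraph and the resulting $L^{2}$ tail estimate are both correct and match the paper.

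The over-claim is in your last sentence. For the paper's choice $N=\lceil\log_{+}^{1/2}(nm/(n+m))\rceil$ one has $\delta(c)^{2N}\asymp\exp\bigl(-\kappa\sqrt{\log M}\bigr)$ with $M=nm/(n+m)$ and $\kappa>0$ fixed, which decays only sub-polynomially and is \emph{not} $o(1/M)$; hence your bound on $\E[(F^{(l)}_{N,\cdot}-F^{(l)}_{\infty,\cdot})^{2}]$ is $o(1)$ but not $o((n+m)/(nm))$. The proposition as literally stated is in fact too strong, and the paper's own proof does not establish it either: instead it bounds the \emph{sample-average} difference $\Delta_{m,N}^{(1)}=\tfrac{1}{m}\sum_{j}\bigl(F^{(1)}_{\infty,y}(\Y_{j})-F^{(1)}_{N,y}(\Y_{j})\bigr)$ directly, using that the summands are centred and i.i.d.\ to write $\E[(\Delta_{m,N}^{(1)})^{2}]=\tfrac{1}{m}\E[(F^{(1)}_{N,y}(\Y)-F^{(1)}_{\infty,y}(\Y))^{2}]$, and the extra $1/m$ combined with your $o(1)$ tail bound gives $\E[(\Delta_{m,N}^{(1)})^{2}]=o((n+m)/(nm))$. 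This is exactly what the corollary immediately after the proposition uses, so your argument is fine once routed through the sample average; only the assertion of the stronger pointwise rate should be withdrawn.
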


As a consequence of Proposition \ref{prop:FandFinf}, it follows for $n,m, N\rightarrow\infty$ and each $l\in\{1, \dots, 4\}$,
$$ \frac{1}{n} \sum_{i = 1}^{n} F_{N,x}^{(l)}(\X_i) +  \frac{1}{m} \sum_{j = 1}^{m} F_{N,y}^{(l)}(\Y_ij) =  \frac{1}{n} \sum_{i = 1}^{n} F_{\infty,x}^{(l)}(\X_i) +  \frac{1}{m} \sum_{j = 1}^{m} F_{\infty,y}^{(l)}(\Y_ij) + \opnm.$$

As a last step before finishing the proof of Theorem \ref{Theo:PotIntApprox} we show that the left-hand side coincides, up to negligible error, with the random quantity $\U_{n,m,N}^{(l)}$ from Step 5. This is content of the subsequent result and relies on careful analysis of the variances. 

\begin{Proposition}\label{prop:HajekProjection}
    Under the standard assumptions, it follows for each $l \in \{1, \dots, 4\}$ and $ N = \Nnm$ for $n,m \to \infty$ with $\frac{m}{n+m}\to \lambda\in (0,1)$ that
    \begin{align*}
    \U_{n,m,N}^{(l)}&= \frac{1}{n}  \sum_{i=1}^n  F^{(l)}_{N,x}(\X_i)   +
 \frac{1}{m}  \sum_{j=1}^m  F^{(l)}_{N,y}(\Y_j) + \opnm,
\end{align*}
where in the expectations we consider $\X\sim {\rm P}$ and $\Y\sim {\rm Q}$.
\end{Proposition}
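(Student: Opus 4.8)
The plan is to show that the residual between $\U^{(l)}_{n,m,N}$ and its Hájek projection has second moment of order $o\!\left((n+m)/(nm)\right)$, which by Chebyshev's inequality yields the claimed $\opnm$ approximation for every $l\in\{1,\dots,4\}$. First I would record that, by construction, the random variable $\hat\U^{(l)}_{n,m,N} \coloneqq \frac1n\sum_{i=1}^n F^{(l)}_{N,x}(\X_i) + \frac1m\sum_{j=1}^m F^{(l)}_{N,y}(\Y_j)$ is the $L^2$-orthogonal projection of $\U^{(l)}_{n,m,N}$ onto the subspace of additive random variables, and that $\E[\U^{(l)}_{n,m,N}] = \E[F^{(l)}_{N,x}(\X)] = \E[F^{(l)}_{N,y}(\Y)] = 0$ by Proposition~\ref{prop:writeFproperly}. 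Consequently the residual is orthogonal to $\hat\U^{(l)}_{n,m,N}$ and
\[
   \E\!\left[\big(\U^{(l)}_{n,m,N} - \hat\U^{(l)}_{n,m,N}\big)^2\right] = \operatorname{Var}\big(\U^{(l)}_{n,m,N}\big) - \operatorname{Var}\big(\hat\U^{(l)}_{n,m,N}\big),
\]
so it suffices to bound this variance defect.

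Next I would expand $\operatorname{Var}(\U^{(l)}_{n,m,N}) = \sum_{k,k'=0}^N \operatorname{Cov}(U^{(l)}_{n,m,k}, U^{(l)}_{n,m,k'})$ and perform, pair by pair, a Hoeffding-type decomposition of the cross-moments $\E[f^{(l)}_k(\X_{\mathbf{i}},\Y_{\mathbf{j}})\,f^{(l)}_{k'}(\X_{\mathbf{i}'},\Y_{\mathbf{j}'})]$ according to the pattern of coincidences among the indices $\mathbf{i},\mathbf{i}'$ inside $[[n]]$ and among $\mathbf{j},\mathbf{j}'$ inside $[[m]]$. If $\mathbf{i}\cap\mathbf{i}' = \emptyset$ and $\mathbf{j}\cap\mathbf{j}' = \emptyset$ the two kernels are independent and the covariance vanishes. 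The patterns with exactly one coincidence, occurring in precisely one of the two samples, reproduce --- after collapsing all unshared indices in the chain-product kernels, which, exactly as in the proof of Proposition~\ref{prop:writeFproperly}, turns into a composition of the operators $\AP$ and $\AQ$ acting on $\eta_{\x}$ or $\eta_{\y}$ --- the variance of the Hájek projection $\operatorname{Var}(\hat\U^{(l)}_{n,m,N})$, and therefore cancel in the variance defect. What remains, and must be shown to be $o((n+m)/(nm))$, is the contribution $R_{n,m}$ of all patterns with at least two coincidences within one sample, or at least one coincidence in each sample.

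The main obstacle is to control $R_{n,m}$ uniformly over the diverging order $N = N(n,m) = \Nnm$, since both the number of coincidence patterns and the crude size of the kernels grow with $N$. Two inputs handle this. First, a combinatorial gain: a pattern with $r\ge 2$ merges inside one sample, or with merges in both samples, restricts the number of admissible index tuples by an extra factor $n^{-1}$ (resp.\ $m^{-1}$) relative to the $\mathcal{O}(1/n)$ (resp.\ $\mathcal{O}(1/m)$) scale of the Hájek terms, so that each such covariance is $\mathcal{O}((n+m)^{-2})$ up to the kernel size and the number of patterns. Second, the uniform two-sided bound $\exp(-3\|c\|_\infty)\le \xipq \le \exp(3\|c\|_\infty)$ of Lemma~\ref{lem:regularity} gives $|f^{(l)}_k| \le \cK(c)^{k}$, while the spectral gap $\delta(c) < 1$ of Lemma~\ref{lemma:empiricalInverse}.\ref{3} forces the partial expectations of $f^{(l)}_k$ over the unshared indices to decay geometrically in $k$ (the same mechanism already used in Propositions~\ref{prop:FandFinf} and~\ref{prop:writeFproperly}). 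Feeding these into $\sum_{k,k'=0}^N$ produces a bound of the form $\cK(c)^{2N}\,e^{\cK(c)\, N\log N}/n^{2}$, where the exponential factor accounts for the number of coincidence patterns of tuples of length $\mathcal{O}(N)$. Since $N\asymp \sqrt{\log\!\big(nm/(n+m)\big)}$, one has $\cK(c)^{2N}\,e^{\cK(c)\,N\log N} = \big(nm/(n+m)\big)^{o(1)} = n^{o(1)}$, hence $R_{n,m} = o(n^{-1}) = o((n+m)/(nm))$ using $n\asymp m$. This completes the variance-defect estimate and thus the proposition; it is precisely this logarithmic choice of $N$ that renders the growing constants harmless.
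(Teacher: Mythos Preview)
Your approach is correct and essentially the same as the paper's: both control the variance defect $\operatorname{Var}(\U^{(l)}_{n,m,N}) - \operatorname{Var}(\hat\U^{(l)}_{n,m,N})$ by classifying the cross-moments $\E[f_k^{(l)} f_r^{(l)}]$ according to index coincidences (the paper phrases this via van der Vaart's Theorem~11.2 on the ratio of variances, which is equivalent but forces a separate treatment of the degenerate case $\operatorname{Var}[F^{(l)}_{\infty}] = 0$ that your direct defect bound handles automatically). The paper's bookkeeping is sharper because it exploits explicitly that $\E[f_k^{(l)}\mid\text{all but the distinguished index}]=0$ (coming from the factor $(\xi-1)$), which forces every nonzero covariance into the set $\mathcal R$ where the two distinguished indices coincide and yields a polynomial $N^2$ combinatorial factor instead of your cruder $e^{\cK N\log N}$; correspondingly, your appeal to the spectral gap for the residual is unnecessary---the pointwise bound $|f_k^{(l)}|\le\cK(c)^{k}$ from Lemma~\ref{lem:regularity} already suffices for the chosen $N$.
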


With all these tools we can finally formulate the proof of Theorem \ref{Theo:PotIntApprox}.

\begin{proof}[Proof of Theorem \ref{Theo:PotIntApprox}]
    Let $l\in \{1, 2\}$ and recall the quantities $A_{n,m}^{(l)}$ from \eqref{eq:Anml-quantity}. Combining Propositions \ref{prop:Vstatistic}, \ref{prop:Ustatistic}, and \ref{prop:writeFproperly} yields for $n,m\to \infty$ with $\frac{m}{n+m} \to \lambda\in (0,1)$ and $N = N(n,m) = \Nnm$ that $A_{n,m}^{(l)} = \U^{(2l)}_{n,m,N} - \U^{(2l-1)}_{n,m,N}+ \opnm.$
    Invoking Propositions \ref{prop:FandFinf} and \ref{prop:HajekProjection} and recalling that $\E[F^{(l)}_{N,x}(\X)] = \E[F^{(l)}_{N,y}(\Y)] = \E[\U^{(l)}_{n,m,N}]=0$, the assertion follows at once. 
\end{proof}

\begin{acks}[Acknowledgments] 
A. González-Sanz expresses gratitude to Professor A. Munk for the kind invitation to the University of Göttingen, where the final stages of this work were conducted.
\end{acks}

\begin{funding}
The research of Alberto González-Sanz is partially supported by grant PID2021-128314NB-I00 funded by MCIN/AEI/
10.13039/501100011033/FEDER, UE and by the AI Interdisciplinary Institute ANITI, which is
funded by the French \emph{``investing for the Future – PIA3''} program under the
Grant agreement ANR-19-PI3A-0004. The research of  S. Hundrieser is partially supported by the 
Research Training Group 2088 ``\emph{Discovering structure in complex data: Statistics
meets Optimization and Inverse Problems}'', which is funded by the  Deutsche Forschungsgemeinschaft (DFG, German Research Foundation).

\end{funding}


\appendix
\section{Omitted proofs of Section~\ref{sec:preliminaries}}\label{app:preliminaries}

\begin{proof}[Proof of Lemma \ref{lemma:empiricalInverse}] We prove Assertions \ref{lem:ApEqualAp0}, \ref{1} and \ref{3} separately -- note that Assertion \ref{2} is consequence of \ref{1}~and~\ref{3}. %

\begin{proof}[Proof of \ref{lem:ApEqualAp0}]
Notice for $f\in L^2(\mP)$ and $g\in L^2(\mQ)$ that
$$(\AP f)(\y) = \int \xipq(\x,\y)f(\x)d\mP(\x) \quad \text{ and }\quad (\AP g)(\x) = \int \xipq(\x,\y)g(\y)d\mQ(\x).$$
Since $\xipq(\x,\y)$ is the density for $\pipq$ with respect to $\mP\otimes \mQ$, and $\pipq$ is a plan between $\mP$ and $\mQ$ we conclude by Fubini's theorem that 
\begin{align*}
    \int(\AP f)(\y) d\mQ(\y) %
    &= \int f(\x) d \pi_{\mP,\mQ}(\x,\y) = \int f(\x) d \mP(\x), \\
    \int(\AQ g)(\y) d\mP(\y) 
    &= \int g(\y) d \pi_{\mP,\mQ}(\x,\y) = \int g(\y) d \mQ(\y). 
\end{align*}
By linearity of $\AP$ and $\AQ$, the asserted properties follow.
\end{proof}

\begin{proof}[Proof of \ref{1}] 
Let $\lambda$ be an eigenvalue of $\mathcal{A}_{\mQ}\mathcal{A}_{\mP}$ (resp. $\AP\AQ$). On the one hand, Jensen's inequality and the optimality conditions yields
$\lambda^2 \|f\| \leq \|f\|  $, so $\lambda\leq 1$. On the other hand, $\lambda$ is positive by \cite[Appendix A.4]{bickel1993efficient}. Hence, since $\MP$ (resp. $\MQ$) admit eigenvalue equal to one on $L^2_0(\mP)$ (resp. $L^2_0(\mQ)$) and equal to zero on the orthogonal complement, the assertion follows from \ref{lem:ApEqualAp0}. 
\end{proof}

\begin{proof}[Proof of \ref{3}]
First note by Assertion \ref{lem:ApEqualAp0} that any constant function is an eigenvector for $\AP\AP$ with eigenvalue equal to one. By definition of $\AQ\bAP$, constant functions admit eigenvalue equal to zero. Hence, it remains to analyze the remaining eigenvalues, which correspond to orthogonal eigenvectors within $L^2_0(\mP)$. Since by Assertion \ref{lem:ApEqualAp0} the operators $\AP$ and $\bAP$ coincide on $L^2_0(\mP)$, let $f\in L^2_0(\mP)\backslash\{0\}$ be an eigenvector of $\AQ\AP$ such that for $\lambda\in [0,1]$ we have
    $$\lambda^2 f(\x') = {\int  \xi_{\mP,\mQ}(\x',\y)\int \xi_{\mP,\mQ}(\x,\y) f(\x) d\mP(\x)  d\mQ(\y)} = (\AQ\AP f)(x).  $$
    Then the pair $(g,f)$, where $g$ is defined  as 
    $$g(\y)=-\frac{1}{\lambda}\int \xi_{\mP,\mQ}(\x,\y) f(\x) d\mP(\x),$$
    satisfies
\begin{align*}
     -\lambda \begin{pmatrix}
         f\\
          g 
    \end{pmatrix} = 
    \begin{pmatrix}
          0 & \AQ \\
          \AP & 0
    \end{pmatrix}
    \begin{pmatrix}
         f \\
         g 
    \end{pmatrix}.
\end{align*}
  Hence, $(1-\lambda)$ is an eigenvalue of $\Gamma$ with associated eigenvector $(f,g)^t$. We normalize $(f,g)^t$ as $(f,g)^t/\left(\| f\|^2  +  \|g\|^2 \right)^{1/2}$, which is well-defined since $f \neq 0$. We keep the same notation for the normalized function. As a consequence,
  \begin{align*}
      (1-\lambda)&= \left\langle \Gamma\left(\begin{array}{c}
           f \\
           g 
      \end{array}\right), \left(\begin{array}{c}
           f \\
           g \end{array}\right)\right\rangle_{L^2(\mP)\times L^2( \mQ) }\\
           &= \int (f^2(\x)+(\mathcal{A}_{\mQ} g)(\x) f(\x))+   (g^2(\y)+(\mathcal{A}_{\mP} f)(\y) g(\y)) d\mP(\x) d\mQ(\y)\\
           &= \int \xi_{\mP,\mQ} (\x,\y)\left(f^2(\x)+ g(\y) f(\x)+   (g^2(\y)+f(\x) g(\y))\right) d\mP(\x) d\mQ(\y),
  \end{align*}
where the last equality is consequence of the optimality conditions. Then
    \begin{align*}
      (1-\lambda)&= \int \left(f(\x)+    g(\y)\right)^2  \xi_{\mP,\mQ} (\x,\y) d\mP(\x) d\mQ(\y) ,
  \end{align*}
By the lower bound for the density $\xi_{\mP, \mQ}$ from Lemma \ref{lem:regularity} it follows that
\begin{align*}
    \vert 1-\lambda\vert  &\geq  \exp(-3\norm{c}_\infty) \int \left(f(\x)+    g(\y)\right)^2  d\mP(\x) d\mQ(\y) \\
    &= \exp(-3\norm{c}_\infty) \left(\| f\|^2   +\|g\|^2 +2 \int f(\x) d\mP(\x) \int  g(\y)  d\mQ(\y)\right).
\end{align*}
Since $f$ is centered and $(f,g)^t$ is normalized, 
$   \vert 1-\lambda\vert    \geq  \exp(-3\norm{c}_\infty).$ 
On the other hand, \ref{1} ensures $0\leq \lambda\leq 1$. 
Therefore $1-\lambda= \vert 1-\lambda\vert    \geq  \exp(-3\norm{c}_\infty)$, which implies $\delta(c) = 1 - \exp(-3\norm{c}_\infty) \geq  \lambda  $. We thus conclude that any eigenvalue of $\AQ\bAP$ associated to an eigenvector in $L^2(\mP)$ satisfies the asserted bound. Moreover, an analogous argument also asserts an identical bound for the eigenvalues of $\bAP\AQ$.
\end{proof}
Overall, we now conclude the proof of all four assertions. 
\end{proof}

\section{Omitted proofs of Section \ref{sec:main_results}}\label{app:main}
\begin{proof}[Proof of Corollary~\ref{cor:PotIntApprox}]
Applying Theorem \ref{Theo:PotIntApprox}, with $\eta=\tilde \eta/\xipq$, gives 
\begin{align*}
    & \int \tilde \eta(\bfmn  - \bfpq)d\mPn \\
    =& \int \Big((I_{L^2_0(\mP)}-\AQ\AP)^{-1}\big(\AQ\AP \eta_{\x}-\E_{\X\sim \mP}[\eta_{\x}(\X)]\big)\Big) d (\mPn - \mP) \\
&-\int \Big((I_{L^2_0(\mQ)}-\AP\AQ)^{-1}\big(\AP \eta_{\x}-\E_{\X\sim \mP}[\eta_{\x}(\X)]\big)\Big) d (\mQm - \mQ)+ \op\left(\sqrt{\frac{n+m}{nm}}\right),
\end{align*}
Since
\begin{equation*}
    \eta_{\x}(\x)= \int \eta(\x,\y)\xi_{\mP,\mQ}(\x,\y)d\mQ(\y)=\int\tilde{\eta}(\x) d\mQ(\y)=\tilde{\eta}(\x),
\end{equation*}
then \eqref{stat1CoroApproxPoint} holds. By using a symmetric argument \eqref{stat2CoroApproxPoint} also holds. 
We prove now the last statement (when $\tilde \eta$ is constant). First we note that for a constant function $\tilde \eta  \in L^\infty(\Omega)$ it holds that $\AQ \tilde \eta=\tilde \eta$ and $\AP\tilde \eta=\tilde \eta$. Then 
$\AQ\AP \tilde \eta-\E_{\X\sim \mP}[\tilde \eta(\X)]= 0. $
By the same reasoning, it follows that$\AP \tilde \eta-\E_{\X\sim \mP}[\tilde \eta(\X)]=0$, $\AP\AQ \tilde\eta-\E_{\Y\sim \mQ}[\tilde\eta(\Y)]=0$ and $\AQ \tilde\eta-\E_{\Y\sim \mQ}[\tilde\eta(\Y)]=0$, so that only $\op\left(\sqrt{\frac{n+m}{nm}}\right)$ remains in \eqref{stat1CoroApproxPoint} and \eqref{stat2CoroApproxPoint}.
\end{proof}

\begin{proof}[Proof of Proposition \ref{prop:TCLcost}]
    We prove that 
    \begin{multline}
        \label{claimStabilityVariance}
        \operatorname{Var}_{\X\sim \mP_n}[f_{\mP_n,\mQ_m}(\X)]= {\frac 1 n}\sum_{i=1}^n f_{\mP_n,\mQ_m}^2(\X_i)-\Big({ \frac 1 n}\sum_{i=1}^n f_{\mP_n,\mQ_m}(\X_i)\Big)^2 \\\convP\operatorname{Var}_{\X\sim \mP}[f_{\mP,\mQ}(\X)].
    \end{multline}
    To do so, let us recall that Equation (4.3)  and Lemma 14 in \cite{rigollet2022sample} give the bound 
    $$ \E \left[\int  \left( f_{\mP_n,\mQ_m}-\int f_{\mP_n,\mQ_m} d\mP_n -  f_{\mP,\mQ} + \int f_{\mP,\mQ} d\mP_n\right)^2 d\mP_n\right]\leq  \frac{\cK(c)}{n}.$$
   This inequality can be rewritten as 
   \begin{equation}
       \label{boundOfRigolletVariance}
       \E\vert \operatorname{Var}_{\X\sim \mP_n}[f_{\mP_n,\mQ_m}(\X)]-\operatorname{Var}_{\X\sim\mP_n}[f_{\mP,\mQ}(\X)]\vert \leq  \frac{\cK(c)}{n}.
   \end{equation}
     The limit 
     $ \operatorname{Var}_{\X\sim \mP_n}[f_{\mP,\mQ}(\X)]\convAS \operatorname{Var}_{\X\sim \mP}[f_{\mP,\mQ}(\X)],$
     is a just application of the univariate strong law of large numbers. Therefore, via \eqref{boundOfRigolletVariance}, 
     $\operatorname{Var}_{\X\sim \mP_n}[f_{\mP_n,\mQ_m}(\X)] \convP\operatorname{Var}_{\X\sim \mP}[f_{\mP,\mQ}(\X)]$, so that \eqref{claimStabilityVariance} holds. A symmetric argument proves the same statement for $\mQ$, i.e., 
     $  \operatorname{Var}_{\Y\sim \mQ_m}[g_{\mP_n,\mQ_m}(\Y)] \convP  \operatorname{Var}_{\Y\sim \mQ}[g_{\mP,\mQ}(\Y)]$.  This concludes the proof of the first claim of this theorem. The second one holds via Slutsky’s theorem.     
\end{proof}

\begin{proof}[Proof of Proposition \ref{Proposition:VariancePlans}]
We only prove that 
\begin{multline}\label{eq:varianceConvergence}
 \hat{V}_{n,m}:=\operatorname{Var}_{\X\sim \mPn} \left[\sum_{k = 0}^N(\AQn\APn)^{k}\big(\hat\eta_{\x,n,m}-\AQn\hat\eta_{\y,n,m}\big)(\X)\right] \\ \convP V:=\operatorname{Var}_{\X\sim \mP}\left( (I_{L^2_0(\mP)}-\AQ\AP)^{-1}\big(\eta_{\x}-\AQ\eta_{\y}\big)(\X)\right).
\end{multline}
The second assertion then follows by Slutsky's Theorem. 
For the proof of \eqref{eq:varianceConvergence} we follow the following steps:
\begin{enumerate}
    \item \label{step0} We show that the truncation parameter $N = N(n,m)\in \NN\cup \{\infty\}$ can be selected arbitrarily, provided that $N(n,m)\to \infty$ for $n,m\to \infty$ with $\frac{m}{n+m}\to \lambda\in(0,1)$.
    \item \label{step1}  We compare $\hat{V}_{n,m}$ with
$$  \hat{U}_{n,m}:=\operatorname{Var}_{\X\sim \mPn} \left[\sum_{k = 0}^N(\AQn\APn)^{k}\big(\eta_{\x}-\AQn\eta_{\y}\big)(\X)\right].$$
 \item \label{step2}  We compare $\hat{U}_{n,m}$ with 
 $$ \hat{V}_{n,m}^{\mathcal{D}}:=\operatorname{Var}_{\X\sim \mPn} \left[\sum_{k = 0}^N(\DQm\DPn)^{k}\big(\eta_{\x}-\AQm\eta_{\y}\big)(\X)\right]. $$
  \item \label{step2.5}  We compare $\hat{V}_{n,m}^{\mathcal{D}}$ with 
 $$ \hat{V}_{n,m}^{\mathcal{D}'}:=\operatorname{Var}_{\X\sim \mPn} \left[\sum_{k = 0}^N(\DQm\DPn)^{k}\big(\eta_{\x}-\AQ\eta_{\y}\big)(\X)\right]. $$
 \item  \label{step3} We compare $\hat{V}_{n,m}^{\mathcal{D}'}$ with 
 $$ \hat{W}_{n,m}:=\operatorname{Var}_{\X\sim \mPn} \left[\sum_{k = 0}^N(\AQ\AP)^{k}\big(\eta_{\x}-\AQ\eta_{\y}\big)(\X)\right].$$
  \item \label{step4}  We compare $\hat{W}_{n,m}$ with 
  $$ V^{(N)}:=\operatorname{Var}_{\X\sim \mP} \left[\sum_{k = 0}^N(\AQ\AP)^{k}\big(\eta_{\x}-\AQ\eta_{\y}\big)(\X)\right].$$
  \item \label{step5}   We compare $V^{(N)}$ with $V$.
\end{enumerate}

\emph{Step \ref{step0} - Independence of truncation parameter.} Consider deterministic truncation sequences $N=N(n,m), N'=N'(n,m)\in \NN\cup\{ \infty\}$ with $N, N'\rightarrow \infty$ under $n,m\to \infty$ with $\frac{m}{n+m}\rightarrow \lambda\in (0,1)$, and define $\hat V_{n,m}'$ analogous to $\hat V_{n,m}$ but with $N(n,m)$ replaced by $N'(n,m)$. In what follows we show that $\hat V_{n,m} - \hat V_{n,m}' \convP 0$. Without loss of generality assume that $N(n,m) < N'(n,m)$. Further, note that 
\begin{multline*}
   \int \hat \eta_{\x,n,m} d\mPn =\int \eta \ximn d(\mPn \otimes \mQm) \\
   =\int  \eta(\x,\y) \ximn(\x,\y) \ximn(\x',\y) d\mPn(\x) d\mPn(\x') d \mQm(\y)
   = \int \AQm \hat \eta_{\y,n,m} d\mPn 
\end{multline*}
and thus $\hat \eta_{\x,n,m}  - \AQm\hat \eta_{\y,n,m}\in L^2_0(\mPn)$ with $L^2_0(\mPn)$-norm bounded by  $2\norm{\eta}_{\infty}$. Hence, it follows using Lemma~\ref{lemma:empiricalInverse} that
\begin{align*}
    &\norm{\sum_{k = N+1}^{N'}(\AQn\APn)^{k}\big(\hat\eta_{\x,n,m}-\AQn\hat\eta_{\y,n,m}\big)}_{L^2_0(\mPn)}\\
     \leq & \sum_{k = N+1}^\infty \norm{\AQm\APn}^k_{L^2_0(\mPn)}(\norm{\hat \eta_{\x,n,m}-\AQn\hat \eta_{\y,n,m}}_{L^2_0(\mPn)})\\
     \leq & \,\frac{\delta(c)^{N+1}}{1 - \delta(c)}( 2\norm{\eta}_\infty)
\end{align*}
for some $\delta(c)\in (0,1)$ which only depends on $c$. As $\delta(c)<1$, it follows that asymptotically the right-hand side (deterministically) tends to zero, which shows that $\hat V_{n,m} - \hat V_{n,m}' \convP 0$. With this insight at our disposal, we will assume throughout the remainder of the proof that $N= N(n,m)=\Nnm$.

\emph{Step \ref{step1} - $\hat{V}_{n,m}- \hat{U}_{n,m}\convP 0$.} Set 
$$
  K_{N,n,m}=  \norm{\sum_{k = 0}^N(\AQn\APn)^{k}}_{L^2_0(\mPn)}\left( \norm{(\eta_{\x}-\hat\eta_{\x,n,m})}_{L^2_0(\mPn)} +\norm{\AQm(\eta_{\y}-\hat\eta_{\y,n,m})}_{L^2_0(\mPn)} \right)
$$
and note that $K_{N,n,m}\leq \cK (c, \eta)$, for all $N,n,m$. This fact holds via Lemma~\ref{lemma:empiricalInverse}, $\|\eta\|_{\infty}<\infty$ and \eqref{eq:unifControlAQ}.
The inequality $|a^2-b^2|\leq |a+b||a-b|$ and Lemma~\ref{lemma:empiricalInverse} yield the bound 
\begin{align*}
    \vert \hat{V}_{n,m}&- \hat{U}_{n,m}\vert   \\
   & \leq K_{N,n,m}\norm{\sum_{k = 0}^N(\AQn\APn)^{k}}_{L^2_0(\mPn)}\norm{(\eta_{\x}-\hat\eta_{\x,n,m})-\AQm(\eta_{\y}-\hat\eta_{\y,n,m})}_{L^2_0(\mPn)}\\
     & \leq K_{N,n,m}\sum_{k = 0}^N\norm{(\AQn\APn)}_{L^2_0(\mPn)}^{k}\left(\norm{\eta_{\x}-\hat\eta_{\x,n,m}}_n+\norm{\AQm(\eta_{\y}-\hat\eta_{\y,n,m})}_{n}\right)\\
     & \leq \frac{K_{N,n,m}}{1- \delta(c)}\left(\norm{\eta_{\x}-\hat\eta_{\x,n,m}}_n+\norm{\AQm(\eta_{\y}-\hat\eta_{\y,n,m})}_{n}\right),
\end{align*}
with $\delta(c)\in (0,1)$. %
Lemma~\ref{lemma:empiricalInverse} also implies the upper bound 
\begin{equation}\label{eq:unifControlAQ}
    \sup_{\|f\|_m\leq 1}\norm{\AQn f}_{n}\leq \norm{f}_{m}.
\end{equation}
As a consequence, 
$$   \vert \hat{V}_{n,m}- \hat{U}_{n,m}\vert \leq \frac{K_{N,n,m}}{1- \delta(c)}\left(\norm{\eta_{\x}-\hat\eta_{\x,n,m}}_n+\norm{\eta_{\y}-\hat\eta_{\y,n,m}}_{m}\right).$$
We show that $\norm{\eta_{\x}-\hat\eta_{\x,n,m}}_n$ tends to $0$ in probability. The same holds for $\norm{\eta_{\y}-\hat\eta_{\y,n,m}}_{m}$. We take expectations in 
\begin{align*}
    \norm{\eta_{\x}-\hat\eta_{\x,n,m}}_n^2&=\frac{1}{n}\sum_{i=1}^n  \left(\hat\eta_{\x,n,m}(\X_i) -\eta_{\x}(\X_i)\right)^2\\
    &= \frac{1}{n}\sum_{i=1}^n\left(\int \eta(\X_i,\y)d(\pinm-\pipq)(\y)\right)^2 
\end{align*}
to obtain 
$$ \E\left[\norm{\eta_{\x}-\hat\eta_{\x,n,m}}_n^2\right]=\E\left[ \left(\int \eta(\X_i,\y)d(\pinm-\pipq)(\y)\right)^2\right].$$
Then we prove that 
$$\E\left[ \left(\int \eta(\X_i,\y)(\ximn(\X_i,\y)-\xipq(\X_i,\y) d\mQm(\y) \right)^2\right]$$ and $$\E\left[ \left(\int \eta(\X_i,\y)\xipq(\X_i,\y)  d(\mQm-\mQ)(\y) \right)^2\right]$$ tend to $0$. The first limit holds via Jensen's inequality and \cite[Theorem 5]{rigollet2022sample}:
\begin{multline*}
    \E\left[ \left(\int \eta(\X_i,\y)(\ximn(\X_i,\y)-\xipq(\X_i,\y)) d\mQm(\y) \right)^2\right]\\\leq  \norm{ \eta}_{\infty}^2 \E \left[ \int (\ximn(\X_i,\y)-\xipq(\X_i,\y))^2 d\mQm(\y)\right] \leq \cK(c,\eta)\left( \frac{n+m}{n\, m}\right).
\end{multline*}
The second limit follows by using independence, i.e., 
$$
    \E\left[ \left(\int \eta(\X_i,\y)\xipq(\y)  d(\mQm-\mQ)(\y) \right)^2\right]= \frac{1}{m}\E\left[ \left( \eta(\X_i,\Y_j)\xipq(\X_i,\Y_j) \right)^2\right],
$$
and the fact that $\eta$ and $\xipq$ are bounded.

\emph{Step \ref{step2} - $ \hat{U}_{n,m}-\hat{V}_{n,m}^{\mathcal{D}}\convP 0$.} 
The proof is based on the  operator exchange trick.  More precisely,
\eqref{eq:boundDifferencePowersDPnApn} states
\begin{multline*}
    \norm{\sum_{k=0}^{N}(\DQm\DPn)^k - (\AQm\APn)^k}_{n}\\ \leq  \sum_{k=0}^{N} \norm{(\DQm\DPn)^k - (\AQm\APn)^k}_{n} \leq \cK(c)^{N}\norm{\xipq - \ximn}_{n\times m},
\end{multline*}
so that, with $N$ chosen as in Step~\ref{step0}, we obtain by \citet[Theorem~5]{rigollet2022sample} that asymptotically, 
\begin{multline*}
    \Bigg\vert  \operatorname{Var}_{\X\sim \mPn} \left[\sum_{k = 0}^N(\AQn\APn)^{k}\big(\eta_{\x}-\AQn \eta_{\y}\big)(\X)\right]\\ -\operatorname{Var}_{\X\sim \mPn} \left[\sum_{k = 0}^N(\DQm\DPn)^{k}\big(\eta_{\x}-\AQm \eta_{\y}\big)(\X)\right]\Bigg\vert \convP 0.
\end{multline*}

\emph{Step \ref{step2.5} - $ \hat{V}_{n,m}^{\mathcal{D}}- \hat{V}_{n,m}^{\mathcal{D}'}\convP 0$}. Again employing the bound $|a^2-b^2|\leq |a+b||a-b|$ and the following inequalities (derived from Lemma~\ref{LemmaBoundOfEigD}):
\begin{multline*}
    \hat{V}_{n,m}^{\mathcal{D}}\leq \sum_{k=0}^N\norm{(\DQm\DPn)^{k}\big(\eta_{\x}-\AQm \eta_{\y}\big)}_n \\ \leq \norm{\eta_{\x}-\AQm \eta_{\y}}_n  \sum_{k=0}^N \cK(c)^k 
    \leq \cK(c) \norm{\eta}_{n\times m}\frac{1-\cK(c)^N}{1-\cK(c)} 
\end{multline*}
and 
$  \hat{V}_{n,m}^{\mathcal{D}'} \leq\norm{\eta}_{n\times m}\frac{1-\cK(c)^N}{1-\cK(c)},$
we obtain using reverse triangle inequality 
\begin{align*}
    &\vert \hat{V}_{n,m}^{\mathcal{D}}-\hat{V}_{n,m}^{\mathcal{D}'}\vert \\
    &\leq  \cK(c,\eta) \frac{1-\cK(c)^N}{1-\cK(c)} \Bigg\vert \norm{\sum_{k = 0}^N(\DQm\DPn)^{k}\big(\eta_{\x}-\AQm \eta_{\y}\big)}_{L_0^2(\mPn)}\\
    & \qquad \qquad \qquad \qquad \qquad \qquad -\norm{\sum_{k = 0}^N(\DQm\DPn)^{k}\big(\eta_{\x}-\AQ\eta_{\y}\big)}_{L_0^2(\mPn)}\Bigg\vert\\
    &\leq \cK(c,\eta) \frac{1-\cK(c)^N}{1-\cK(c)} \norm{\sum_{k = 0}^N(\DQm\DPn)^{k}(\AQm-\AQ) \eta_{\y}}_{L_0^2(\mPn)}.
\end{align*}
Therefore, Lemma~\ref{LemmaBoundOfEigD} implies 
\begin{align*}
    \vert \hat{V}_{n,m}^{\mathcal{D}}-\hat{V}_{n,m}^{\mathcal{D}'}\vert 
    &\leq \cK(c,\eta) \left(\frac{1-\cK(c)^N}{1-\cK(c)}\right)^2 \norm{(\AQm-\AQ) \eta_{\y}}_{n}\\
    &\leq \cK(c,\eta) \left(\frac{1-\cK(c)^N}{1-\cK(c)}\right)^2 (\norm{(\DQm-\AQm) \eta_{\y}}_{n}+\norm{(\DQm-\AQ) \eta_{\y}}_{n})\\
    &\leq \cK(c,\eta) \left(\frac{1-\cK(c)^N}{1-\cK(c)}\right)^2 (\norm{(\xipq-\ximn) \eta_{\y}}_{n\times m}+\norm{(\DQm-\AQ) \eta_{\y}}_{n})\\
    &\leq \cK(c,\eta) \left(\frac{1-\cK(c)^N}{1-\cK(c)}\right)^2 \norm{\eta}_{\infty}(\norm{\xipq-\ximn }_{n\times m}+\norm{(\DQm-\AQ) \eta_{\y}}_{n}).
\end{align*}
The first term of the sum, $\norm{\xipq-\ximn }_{n\times m}$, is upper bounded by $ \cK(c)\frac{n+m}{n\, m}$ (see \citet[Theorem~5]{rigollet2022sample}). To study the second one, $\norm{(\DQm-\AQ) \eta_{\y}}_{n}$,  we note that, since the subsequent terms are uncorrelated for $s\neq j$, i.e.,
\begin{multline*}
     \E\bigg[\left(\xipq(\X_1,\Y_j)\eta_{\y}(\Y_j)-\E [\xipq(\X_1,\Y_j)\eta_{\y}(\Y_j)\vert \X_1] \right)\\
\left(\xipq(\X_1,\Y_s)\eta_{\y}(\Y_s)-\E [\xipq(\X_1,\Y_s)\eta_{\y}(\Y_s)\vert \X_1] \right)\bigg]=0,
\end{multline*}
it follows that
\begin{align*}
    &\E[\norm{(\AQm-\AQ) \eta_{\y}}_{n}^2]\\
    &=\frac{1}{m^2}\E\left[ \left(\sum_{j=1}^m\xipq(\X_1,\Y_j)\eta_{\y}(\Y_j)-\E [\xipq(\X_1,\Y_j)\eta_{\y}(\Y_j)\vert \X_1] \right)^2 \right]\\
    &=\frac{1}{m}\E\left[ \left(\xipq(\X_1,\Y_j)\eta_{\y}(\Y_j)-\E [\xipq(\X_1,\Y_j)\eta_{\y}(\Y_j)\vert \X_1] \right)^2\right]\leq \frac{\cK(c,\eta)}{m}.
\end{align*}
As a consequence, 
$$ \left\vert \hat{V}_{n,m}^{\mathcal{D}}-\E[\hat{V}_{n,m}^{\mathcal{D}'} ]\right\vert
    \leq \frac{\cK(c,\eta)}{m}\left(\frac{1-\cK(c)^N}{1-\cK(c)}\right)^2, $$
    which tends to $0$ by choice of $N$ from Step \ref{step0}. 
    
\emph{Step \ref{step3} - $ \hat{V}_{n,m}^{\mathcal{D}'}- \hat{W}_{n,m}\convP 0$.} Define the quantity 
\begin{align*}
   C_N= \norm{ \sum_{k = 0}^N((\AQ\AP)^{k}\big(\eta_{\x}-\AQ\eta_{\y}\big)}_n+\norm{ \sum_{k = 0}^N(\DQm\DPn)^k\big(\eta_{\x}-\AQ\eta_{\y}\big)}_n
\end{align*}
and observe that  
\begin{equation*}
    C_N\leq  \cK(\eta) \frac{1-\cK(c)^{2N}}{1-\cK(c)}.
\end{equation*}
Then 
\begin{align*}
    \vert \hat{V}_{n,m}^{\mathcal{D}'}- \hat{W}_{n,m} \vert   &\leq  C_N\norm{ \sum_{k = 0}^N((\AQ\AP)^{k}-(\DQm\DPn)^k)\big(\eta_{\x}-\AQ\eta_{\y}\big)}_n\\
    &\leq  C_N \sum_{k = 0}^N \norm{((\AQ\AP)^{k}-(\DQm\DPn)^k)\big(\eta_{\x}-\AQ\eta_{\y}\big) }_n\\
    &\leq   C_N\sum_{k = 0}^N\norm{((\AQ\AP)^{k}-(\DQm\DPn)^k)\big(\eta_{\x}-\AQ\eta_{\y}\big)}_n\\
    &\leq   C_N\sum_{k = 0}^N k \cK(c)^k \norm{(\AQ\AP-\DQm\DPn)\big(\eta_{\x}-\AQ\eta_{\y}\big) }_n\\
    &\leq   N(C_N)^2\norm{(\AQ\AP-\DQm\DPn)\big(\eta_{\x}-\AQ\eta_{\y}\big)}_n.
\end{align*}
We show  now that 
\begin{equation}
    \label{step3Final}
    \norm{(\AQ\AP-\DQm\DPn)\big(\eta_{\x}-\AQ\eta_{\y}\big)}_n=\Op\left( \sqrt{\frac{n+m}{n\,m}}\right).
\end{equation}
To do so, we write
\begin{align*}
  \E  &\norm{(\AQ\AP-\DQm\DPn)\big(\eta_{\x}-\AQ\eta_{\y}\big)}_n^2\\
  &=  \E[((\AQ\AP-\DQm\DPn)\big(\eta_{\x}-\AQ\eta_{\y}\big)(\X_i))^2]\\
  &=  \E\left[ \left(\int \xipq(\X_i,\y) \xipq(\x,\y)  \big(\eta_{\x}-\AQ\eta_{\y}\big)(\x) d(\mPn\otimes \mQm-\mP\otimes \mQ)(\x,\y)\right)^2\right].
\end{align*}
We denote $ p(\X_i,\x,\y): =\xipq(\X_i,\y) \xipq(\x,\y)  \big(\eta_{\x}-\AQ\eta_{\y}\big)(\x)$ and observe that, by using independence, we have 
\begin{align*}
    \E&\left[ \left(\int \xipq(\X_i,\y) \xipq(\x,\y)  \big(\eta_{\x}-\AQ\eta_{\y}\big)(\x) d(\mPn\otimes \mQm-\mP\otimes \mQ)(\x,\y)\right)^2\right]\\
    &= \frac{1}{(n\, m)^2}\sum_{s=1}^n \sum_{j=1}^m\E\left[ \left(p(\X_1,\X_s,\Y_j)-\E[p(\X_1,\X_s,\Y_j)\vert \X_1]\right)^2\right]
    \leq \frac{\cK(c, \eta)}{n\, m}.
\end{align*}
As a consequence \eqref{step3Final} holds and 
$$  \E\vert \hat{V}_{n,m}^{\mathcal{D}'}- \hat{W}_{n,m} \vert \leq  \frac{\cK(c, \eta) }{\sqrt{n\, m}}  N \frac{1-\cK(c)^{2N}}{1-\cK(c)},$$
which tends to zero by our choice of $N$ from Step \ref{step0}.

\emph{Step \ref{step4} - $ \hat{W}_{n,m}
 - V^{(N)}\convP 0$.} We call $f_N(\x)= \sum_{k = 0}^N(\AQ\AP)^{k}\big(\eta_{\x}-\AQ\eta_{\y}\big)(\x))$ and note that 
 \begin{align*}
     &\vert \hat{W}_{n,m}- V^{(N)}\vert \leq \Bigg\vert\int f_N^2 d(\mPn-\mP) \Bigg\vert
     +  \left\vert \left( \int f_N d \mPn\right)^2  -\left( \int f_N  d \mP\right)^2\right\vert .
 \end{align*}
We analyze the first term by noticing that $\|f_N\|_{\infty}\leq \cK(c,\eta) \frac{1-\cK(c)^N}{1-\cK(c)}$ (see Lemma~\ref{LemmaBoundOfEigD}) implies 
\begin{align*}
    \E\left[\left(\int f_N^2 d(\mPn-\mP) \right)^2\right]&=\frac{1}{n} \E\left[f_N^4(\X_1) \right] \leq  \frac{\cK(c,\eta)^4}{n} \left(\frac{1-\cK(c)^N}{1-\cK(c)}\right)^4.
\end{align*}
Our choice of $N$ form Step \ref{step0} then implies the convergence towards $0$ of the first term, i.e., $$\E\left[\left(\int f_N^2 d(\mPn-\mP) \right)^2\right]\to 0.$$
To prove the same convergence of the second term we use  the formula $|a^2-b^2|\leq |a+b||a-b|$ to obtain 
\begin{align*}
    \left\vert \left( \int f_N d \mPn\right)^2  -\left( \int f_N  d \mP\right)^2\right\vert &\leq \left\vert   \int f_N d \mPn  +\int f_N  d \mP\right\vert \left\vert \left( \int f_N d (\mPn-  \mP)\right)\right\vert \\
    & \leq 2\cK(c,\eta) \frac{1-\cK(c)^N}{1-\cK(c)} \left\vert  \int f_N d (\mPn-  \mP)\right\vert .
\end{align*}
As a consequence, 
$$  \E\left[\left( \left( \int f_N d \mPn\right)^2  -\left( \int f_N  d \mP\right)^2\right)^2\right] \to 0
 $$ 
 by repeating the previous arguments. 
 
\emph{ Step \ref{step5} - $ V^{(N)}-V
 \convP 0$.}
 We use the formula $|a^2-b^2|\leq |a+b||a-b|$  and the fact that $V^{(N)}\leq V\leq \cK(c,\eta)$ to obtain
 \begin{align*}
     \vert V^{(N)}-V\vert &\leq \cK(c,\eta) \left(\operatorname{Var}_{\X\sim \mP} \left[\sum_{k =N+1 }^{\infty}(\AQ\AP)^{k}\big(\eta_{\x}-\AQ\eta_{\y}\big)(\X)\right]\right)^{\frac{1}{2}}\\
     &\leq \cK(c,\eta) \left( \sum_{k =N +1}^{\infty}\norm{ \AQ\AP}_{L^2_0(\mP)}^{k}\norm{\eta_{\x}-\AQ\eta_{\y}}_{L^2_0(\mP)}\right)^{\frac{1}{2}}.
 \end{align*}
Lemma~\ref{lemma:empiricalInverse}  yields 
$$ \vert V^{(N)}-V\vert^2\leq   \cK(c,\eta)^2 \sum_{k=N+1}^{\infty} \delta(c)^k ,$$
for $\delta(c)\in (0,1)$. As a consequence $\vert V^{(N)}-V\vert\to 0$ as $N\to \infty$, irrespective of the rate between $N$ and $n$. 
\end{proof}
\begin{proof}[Proof of Proposition \ref{Proposition:VarianceCondPlans}]
    The proof for the convergence of the variance estimator is analogous to the proof of Proposition \ref{Proposition:VariancePlans} with one modification: instead of showing that $$\norm{\hat \eta_{\x,n,m} - \eta_{\x}}_{L^2(\mPn)} + \norm{\hat \eta_{\y,n,m} - \eta_{\y}}_{L^2(\mPn)} = \mathcal{O}_p\left(\sqrt{\frac{n+m}{nm}}\right)$$ we need to show that \begin{align*}
        &\norm{\eta(\x,\cdot) \ximn(\x,\cdot) - \eta(\x,\cdot)\xipq(\x,\cdot)}_{L^2(\mQm)} \\
        &+ \norm{\hat\eta_{\x,n,m}(\x) \ximn(\x,\cdot) - \eta_{\x}(\x)\xipq(\x,\cdot)}_{L^2(\mQm)}
 = \mathcal{O}_p\left(\sqrt{\frac{n+m}{nm}}\right).
    \end{align*}
    We start with the first term, for which we consider the upper bound
    \begin{align*}
         \norm{\eta(\x,\cdot) \ximn(\x,\cdot) - \eta(\x,\cdot)\xipq(\x,\cdot)}_{L^2(\mQm)} &\leq \norm{\eta}_{\infty} \norm{\ximn(\x,\cdot) - \xipq(\x,\cdot)}_{L^2(\mQm)}.
    \end{align*}
    The difference $\ximn(\x,\cdot) - \xipq(\x,\cdot)$ can be rewritten as 
    \begin{align*}
        \ximn(\x,\cdot) - \xipq(\x,\cdot) &= \left[ \exp(\bfmn(\x) + \bgmn(\cdot))  - \exp(\bfpq(\x) + \bgpq(\cdot))\right]\exp(-c(\x,\cdot))%
    \end{align*}
    The mean value theorem states
    $$ \exp(x)-\exp(y)\leq \exp(\vert x\vert+\vert y\vert) \vert x-y\vert.$$
    Then 
     \begin{align*}
      \vert   \ximn(\x,\cdot) - \xipq(\x,\cdot)\vert  &\leq  \cK(c) \vert \bfmn(\x) - \bfpq(\x)\vert +\vert  \bgmn(\cdot)- \bgpq(\cdot)\vert
    \end{align*}
so that
\begin{align*}
      \norm{  \ximn(\x,\cdot) - \xipq(\x,\cdot)}_{m}  &\leq  \cK(c) \vert \bfmn(\x) - \bfpq(\x)\vert +\norm{  \bgmn(\cdot)- \bgpq(\cdot)}_m.
    \end{align*}
Therefore, Lemma \ref{lem:pointwiseConvergence} and \citet[Inequality (4.3) and Lemma~14]{rigollet2022sample} implies 
\begin{align}\label{eq:convergencePointwiseDensity}
    \E[\norm{  \ximn(\x,\cdot) - \xipq(\x,\cdot)}_{m}^2]\leq  \cK(c)\left(\frac{n+m}{n\, m}\right).
\end{align} 

For the second term we note that 
\begin{align*}
   & \norm{\hat\eta_{\x,n,m}(\x) \ximn(\x,\cdot) - \eta_{\x}(\x)\xipq(\x,\cdot)}_{L^2(\mQm)}\\ \leq & \norm{[\hat\eta_{\x,n,m}(\x) - \eta_{\x}(\x)]
(\ximn(\x,\cdot)}_{L^2(\mQm)} +  \norm{\eta_{\x}(\x)[\ximn(\x,\cdot) - \xipq(\x,\cdot)]}_{L^2(\mQm)}\\
& \leq \cK(c) |\hat\eta_{\x,n,m}(\x) - \eta_{\x}(\x)| + \norm{\eta}_\infty \norm{  \ximn(\x,\cdot) - \xipq(\x,\cdot)}_{m}.
\end{align*}
Both terms in the upper bound are of order $\mathcal{O}_p\left(\sqrt{\frac{n+m}{nm}}\right)$; this follows by adapting the proof of Theorem 4 of \cite{rigollet2022sample} for general $\eta(\cdot,\y)$, while convergence of the second term is treated in \eqref{eq:convergencePointwiseDensity}.
\end{proof}
\begin{proof}[Proof of Lemma \ref{lem:pointwiseConvergence}]
By symmetry of the argument it suffices to only provide the proof for the first assertion. Using the optimality condition \eqref{eq:optimallityCodt0} it holds, 
\begin{align*}
     &\left|\exp(-\bfmn(\x)) - \exp(-\bfpq(\x)) \right|\\
     =&\left|\int e^{\bar g_{\mPn,\mQm}(\y)-c(\x,\y)}d\mQn(\y)-\int e^{\bar g_{\mP,\mQ}(\y)-c(\x,\y)}d\mQ(\y)\right| \\
    \leq & \left|\int \left(e^{\bar g_{\mPn,\mQm}(\y)} - e^{\bar g_{\mPn,\mQm}(\y)}\right)C(\x,\y) d\mQn(\y)\right| + \left|\int e^{\bar g_{\mP,\mQ}(\y)-c(\x,\y)}d(\mQn-\mQ)(\y) \right|.
\end{align*}
Based on the quantitative bounds for optimal potentials (Lemma \ref{lem:regularity}), the first is upper bounded by $\exp(2 \cK(c) + \norm{c}_\infty) \norm{\bgmn - \bgpq}_{L^2(\mQm)}$, which tends to zero in probability by Equation (4.3) and Lemma~14 in \cite{rigollet2022sample}, while the second term tends to zero in probability by the weak law of large numbers.
\end{proof}

\section{Omitted proofs of Section \ref{sec:Applications}}\label{app:proofsApplications}
\begin{proof}[Proof Corollary~\ref{Coro:Kernel}]
We decompose $K_n(\mP_n,\mQ_m)-K(\mP,\mQ)=(K_n(\mP_n,\mQ_m)-K_n(\mP,\mQ))+(K_n(\mP,\mQ)-K(\mP,\mQ))$ and study each term separately. We start with the second one. We call $h_{\mathcal{U}}= (g_{\mP,\mathcal{U}}- g_{\mQ, \mathcal{U}})-\E_{\U\sim\mathcal{U}}[g_{\mP,\mathcal{U}}(\U)- g_{\mQ, \mathcal{U}}(\U)]$ and observe that
\begin{align*}
   &  \operatorname{Var}_{\U\sim\mathcal{U}_n}[g_{\mP,\mathcal{U}}(\U)-g_{\mQ, \mathcal{U}}(\U)]-\operatorname{Var}_{\U\sim\mathcal{U}}[g_{\rm P,\mathcal{U}}(\U)-g_{\rm Q, \mathcal{U}}(\U)]) \\&=
        \frac{1}{n} \sum_{i=1}^n (h^2(\U_i)-\E[ h^2(\U_i)])- \left(\frac{1}{n}\sum_{i=1}^n h(\U_i)\right)^2+\left( \E[h(\U_i)]\right)^2
        \\&=
       \frac{1}{n} \sum_{i=1}^n (h^2(\U_i)-\E[ h^2(\U_i)])+\left(\E[h(\U_i)]- \frac{1}{n}\sum_{i=1}^n h(\U_i)\right)\left(\frac{1}{n}\sum_{i=1}^n h(\U_i)+ \E[h(\U_i)]\right).
\end{align*}
Hence, by Slutsky's theorem (note that $h$ is centered) and the delta-method, we obtain 
\begin{multline}
    \label{firsterm}
    K_n(\mP,\mQ)-K(\mP,\mQ)\\=F'( \operatorname{Var}_{\U\sim\mathcal{U}}[g_{\mP,\mathcal{U}}(\U)-g_{\mQ, \mathcal{U}}(\U)])\left(\frac{1}{n} \sum_{i=1}^n (h^2(\U_i)-\E[ h^2(\U_i)])\right)+\op(1/n).
\end{multline}
To analyze the second term, let $\bar{g}_{\nu,\mathcal{U}_n}$  be such that $\int\bar{g}_{\nu,\mathcal{U}_n} d \mathcal{U}_n=0$, for $\nu\in \{\mP,\mQ,\mPn,\mQm\}$. Moreover, in the subsequent computation we will abbreviate the norm $\norm{\cdot}_{L^2(\mathcal{U}_n)}$ by $\norm{\cdot}$ and the  inner product $\langle \cdot, \cdot\rangle_{L^2(\mathcal{U}_n)}$ by $\langle \cdot, \cdot \rangle$. With this notation, a straight-forward computation asserts
    \begin{align*} 
        &\|\bar{g}_{\mPn,\mathcal{U}_n}-\bar{g}_{\mP,\mathcal{U}} \|^2\\
        &=\|\bar{g}_{\mPn,\mathcal{U}_n}-\bar{g}_{\mP,\mathcal{U}} \|^2+\langle \bar{g}_{\mPn,\mathcal{U}_n}-\bar{g}_{\mP,\mathcal{U}}, \bar{g}_{\mP,\mathcal{U}}-\bar{g}_{\mQ,\mathcal{U}}\rangle+ \langle \bar{g}_{\mPn,\mathcal{U}_n}-\bar{g}_{\mP,\mathcal{U}}, \bar{g}_{\mQ,\mathcal{U}}-\bar{g}_{\mQm,\mathcal{U}_n}\rangle\\
        &\  +\langle \bar{g}_{\mP,\mathcal{U}}-\bar{g}_{\mQ,\mathcal{U}}, \bar{g}_{\mPn,\mathcal{U}_n}-\bar{g}_{\mP,\mathcal{U}} \rangle+ \| \bar{g}_{\mP,\mathcal{U}}-\bar{g}_{\mQ,\mathcal{U}}\|^2+ \langle \bar{g}_{\mP,\mathcal{U}}-\bar{g}_{\mQ,\mathcal{U}}, \bar{g}_{\mQ,\mathcal{U}}-\bar{g}_{\mQm,\mathcal{U}_n}\rangle\\
        &\ 
        +\langle \bar{g}_{\mQ,\mathcal{U}}-\bar{g}_{\mQm,\mathcal{U}_n}, \bar{g}_{\mPn,\mathcal{U}_n}-\bar{g}_{\mQ,\mathcal{U}}\rangle+ \langle \bar{g}_{\mQ,\mathcal{U}}-\bar{g}_{\mQm,\mathcal{U}_n}, \bar{g}_{\mP,\mathcal{U}}-\bar{g}_{\mQ,\mathcal{U}}\rangle+ \| \bar{g}_{\mQ,\mathcal{U}}-\bar{g}_{\mQm,\mathcal{U}_n}\|^2\\
        &= \|\bar{g}_{\mP,\mathcal{U}}-\bar{g}_{\mQ,\mathcal{U}}\|^2+2\langle \bar{g}_{\mPn,\mathcal{U}_n}-\bar{g}_{\mP,\mathcal{U}}, \bar{g}_{\mP,\mathcal{U}}-\bar{g}_{\mQ,\mathcal{U}}\rangle +2\langle \bar{g}_{\mQ,\mathcal{U}}-\bar{g}_{\mQm,\mathcal{U}_n}, \bar{g}_{\mP,\mathcal{U}}-\bar{g}_{\mQ,\mathcal{U}}\rangle\\
       & \ +\op\left( \sqrt{\frac{n+m}{n\, m}}\right),
    \end{align*}
    where for the second equality we used the Cauchy-Schwarz inequality combined with the convergence result of empirical potentials detailed in Equation (4.3) and Lemma 14 of \cite{rigollet2022sample}.
  Corollary~\ref{cor:PotIntApprox} applied to $\tilde\eta=(\bar{g}_{\mP,\mathcal{U}}-\bar{g}_{\mQ,\mathcal{U}})$ gives (note that $\tilde\eta$ is centered)
  \begin{align*}
       &\|\bar{g}_{\mPn,\mathcal{U}_n}-\bar{g}_{\mP,\mathcal{U}} \|^2-\|\bar{g}_{\mP,\mathcal{U}}-\bar{g}_{\mQ,\mathcal{U}}\|^2\\
        &= 
        2\int\bigg( \Big((I_{L^2_0(\cU)}-\AQ{\mathcal{A}}_{\mathcal{U}})^{-1}\AQ{\mathcal{A}}_{\mathcal{U}} +(1-\AP\bar{\mathcal{A}}_{\mathcal{U}})^{-1}\AP{\mathcal{A}}_{\mathcal{U}} \Big)\tilde{\eta}\bigg) d (\mathcal{U}_n - \mathcal{U})   \\  &-2\int \Big((I_{L^2_0(\mQ)}-{\mathcal{A}}_{\mathcal{U}}\AQ)^{-1}\big({\mathcal{A}}_{\mathcal{U}} \tilde \eta\big)\Big) d (\mQm - \mQ)- 2\int \Big((I_{L^2_0(\mP)}-{\mathcal{A}}_{\mathcal{U}}\AP)^{-1}\big({\mathcal{A}}_{\mathcal{U}} \tilde \eta\big)\Big) d (\mPn - \mP),
  \end{align*}
which holds up to additive $\op\left(\sqrt{\frac{n+m}{nm}}\right)$ terms. 
    A first order Taylor's development yields
    \begin{multline*}
         K_n(\mPn,\mQm)-K_n(\mP,\mQ)\\
         =F'_n\Bigg( 2\int\bigg( \Big((I_{L^2_0(\cU)}-\AQ{\mathcal{A}}_{\mathcal{U}})^{-1}\AQ{\mathcal{A}}_{\mathcal{U}} +(1-\AP\bar{\mathcal{A}}_{\mathcal{U}})^{-1}\AP{\mathcal{A}}_{\mathcal{U}} \Big)\tilde{\eta}\bigg) d (\mathcal{U}_n - \mathcal{U})   \\  -2\int \Big((I_{L^2_0(\mQ)}-{\mathcal{A}}_{\mathcal{U}}\AQ)^{-1}\big({\mathcal{A}}_{\mathcal{U}} \tilde \eta\big)\Big) d (\mQm - \mQ)\\ \qquad \qquad \qquad- 2\int \Big((I_{L^2_0(\mP)}-{\mathcal{A}}_{\mathcal{U}}\AP)^{-1}\big({\mathcal{A}}_{\mathcal{U}} \tilde \eta\big)\Big) d (\mPn - \mP)\Bigg)
         + \op\left(\sqrt{\frac{n+m}{nm}}\right)
    \end{multline*}
    with $F_n'=F'( \operatorname{Var}_{\U\sim\mathcal{U}_n}[g_{\mP,\mathcal{U}}(\U)-g_{\mQ, \mathcal{U}}(\U)])$. The continuity of $F'(\cdot)$ allows replacing $F_n'$ by $F'( \operatorname{Var}_{\U\sim\mathcal{U}}[g_{\mP,\mathcal{U}}(\U)-g_{\mQ, \mathcal{U}}(\U)])$ and obtain 
      \begin{multline*}
         K_n(\mPn,\mQm)-K_n(\mP,\mQ)
         =F'( \operatorname{Var}_{\U\sim\mathcal{U}}[g_{\mP,\mathcal{U}}(\U)-g_{\mQ, \mathcal{U}}(\U)]) \\
        \times \Bigg( 2\int\bigg( \Big((I_{L^2_0(\cU)}-\AQ{\mathcal{A}}_{\mathcal{U}})^{-1}\AQ{\mathcal{A}}_{\mathcal{U}} +(1-\AP\bar{\mathcal{A}}_{\mathcal{U}})^{-1}\AP{\mathcal{A}}_{\mathcal{U}} \Big)\tilde{\eta}\bigg) d (\mathcal{U}_n - \mathcal{U})   \\  -2\int \Big((I_{L^2_0(\mQ)}-{\mathcal{A}}_{\mathcal{U}}\AQ)^{-1}\big({\mathcal{A}}_{\mathcal{U}} \tilde \eta\big)\Big) d (\mQm - \mQ)\\ \qquad \qquad \qquad- 2\int \Big((I_{L^2_0(\mP)}-{\mathcal{A}}_{\mathcal{U}}\AP)^{-1}\big({\mathcal{A}}_{\mathcal{U}} \tilde \eta\big)\Big) d (\mPn - \mP)\Bigg)+\op\left(\sqrt{\frac{n+m}{nm}}\right),
    \end{multline*}
    which, together with \eqref{firsterm}, concludes the proof. 
\end{proof}

\section{Omitted proofs of Section \ref{sec:proofs}}\label{app:proofs}

\begin{proof}[Proof of Proposition~\ref{prop:linearPot}]

\emph{Step 1 - Perturbation analysis of $\overline F_{n,m}(\cdot, \cdot)$.}
To show that the directional derivative  is actually strong, we verify that
\begin{multline}\label{FrechetBound}
   \left\| \overline F_{n,m}\left(\begin{array}{c}
    \overline  f_{\mP_n,\mQ_m}+f  \\
    \overline  g_{\mP_n,\mQ_m}+g
\end{array} \right)- \overline F_{n,m}\left(\begin{array}{c}
    \overline  f_{\mP_n,\mQ_m} \\
    \overline  g_{\mP_n,\mQ_m}
\end{array} \right)-\overline{\Gamma}_{n,m}\left(\begin{array}{c}
     f  \\
     g
\end{array} \right)\right\|_{n\times m}
\leq \cK(c, \kappa)\left\| \left(\begin{array}{c}
     f \\
     g
\end{array} \right)\right\|_{n\times m}^2,\!\!\!\!\!
\end{multline}
for all $(f,g)^t\in L^2(\mPn)\times L^2_0(\mQm)$ such that
\begin{equation}\label{boundOnDirectins}
    \|(f,g)^t \|_{L^{\infty}(\mP_n)\times L^{\infty}(\mQ_m) }\leq \kappa.
\end{equation}
 The constant $\kappa$ is supposed to be irrespective of the sample sizes, $n$ and $m$. 
We prove this claim first for the first row. That is, 
\begin{multline*}
\bigg\|\log\left(\int e^{g_{\mP_n,\mQ_m} (\y)+g(\y)}C(\cdot,\y)d\mQ_m(\y)\right)-\log\left(\int e^{g_{\mP_n,\mQ_m} (\y)}C(\cdot,\y)d\mQ_m(\y)\right)
\\
-\int \xi_{\mP_n,\mQ_m}(\cdot, \y) g(\y) d\mQ_m 
\bigg\|_n \leq \cK(c, \kappa) \left\| \left(\begin{array}{c}
     f \\
     g
\end{array} \right)\right\|_{n\times m}^2.
\end{multline*}
To simplify the notation, we define for $i=1, \dots, n$, 
$$ M_i=\log\left(\int e^{g_{\mP_n,\mQ_m} (\y)+g(\y)}C(\X_i,\y)d\mQ_m(\y)\right)-\log\left(\int e^{g_{\mP_n,\mQ_m} (\y)}C(\X_i,\y)d\mQ_m(\y)\right).$$
The inequality 
\begin{equation}
    \label{inequalityLog}
    \left\vert \log(a)-\log(b)-\frac{a-b}{b}\right\vert \leq \frac{(a-b)^2}{\min(a^2, b^2)},
\end{equation}
 derived from the mean value theorem and valid for all $a,b>0$, yields 
 \begin{align*}
     &N_i=\left\vert M_i - \frac{\int (e^{g_{\mP_n,\mQ_m} (\y)+g(\y)}-\int e^{g_{\mP_n,\mQ_m} (\y)})C(\X_i,\y)d\mQ_m(\y)}{\int e^{g_{\mP_n,\mQ_m} (\y)}C(\X_i,\y)d\mQ_m(\y)} \right\vert \\
     &\leq 
   \left( \frac{\int (e^{g_{\mP_n,\mQ_m} (\y)+g(\y)}- e^{g_{\mP_n,\mQ_m} (\y)})C(\X_i,\y)d\mQ_m(\y)}{\min \left(\int e^{g_{\mP_n,\mQ_m} (\y)}C(\X_i,\y)d\mQ_m(\y),  \int e^{g_{\mP_n,\mQ_m} (\y)+g(\y)}C(\X_i,\y)d\mQ_m(\y)\right)}\right)^2.
 \end{align*}
 Due to Lemma~\ref{lem:regularity} and \eqref{boundOnDirectins},  we can find a constant $\cK(c,\kappa)$ such that 
  \begin{align*}
     N_i \leq \cK(c,\kappa)
   \left( {\int (e^{g_{\mP_n,\mQ_m} (\y)+g(\y)}- e^{g_{\mP_n,\mQ_m} (\y)})C(\X_i,\y)d\mQ_m(\y)}\right)^2.
 \end{align*}
 The mean value theorem yields the inequality 
 $ \left\vert e^a-e^b\right\vert \leq e^{\max(a,b)} \vert a-b\vert $ (as before, it is valid for $a,b\in \R$), which implies 
   \begin{align*}
     N_i &\leq \cK(c,\kappa) \|e^c\|_{L^{\infty}(\mP_n)\times L^{\infty}(\mQ_m)}
   \left( {\int \left\vert e^{g_{\mP_n,\mQ_m} (\y)+g(\y)}- e^{g_{\mP_n,\mQ_m} (\y)} \right\vert d\mQ_m(\y)}\right)^2\\
   &\leq \cK(c,\kappa) \|e^c\|_{L^{\infty}(\mP_n)\times L^{\infty}(\mQ_m)}
    {\int \left( e^{g_{\mP_n,\mQ_m} (\y)+g(\y)}- e^{g_{\mP_n,\mQ_m} (\y)} \right)^2 d\mQ_m(\y)}\\
   &\leq\cK(c,\kappa) e^{2\|g_{\mP_n,\mQ_m}\|_{L^{\infty}(\mQ_m)}+2\|g\|_{L^{\infty}(\mQ_m)}}
 \int g^2(\y) d\mQ_m(\y)\\
 &\leq \cK(c,\kappa) \|g\|_m^2.
 \end{align*}
The last inequality is consequence of Lemma~\ref{lem:regularity}, the assumption~\eqref{boundOnDirectins} and the boundedness assumption on the cost. 

To prove  \eqref{FrechetBound} for the second row, we set $$A_{n,m}=\log\left(\int e^{f_{\mP_n,\mQ_m} (\x)+f(\x)}C(\x,\cdot)d\mP_n(\x)\right)-\log\left(\int e^{f_{\mP_n,\mQ_m} (\x)}C(\x,\cdot)d\mP_n(\x)\right).$$  As before, we have
$$
\bigg\|A_{n,m}-\int \xi_{\mP_n,\mQ_m}(\x, \cdot) f(\x) d\mP_n(\x) 
\bigg\|_m =\cK(c,\kappa)\left\| \left(\begin{array}{c}
     f \\
     g
\end{array} \right)\right\|_{n\times m}^2.
$$
Jensen's inequality yields the bound
\begin{align*}
    \bigg\vert  \int A_{n,m} d\mQ_m-\int \xi_{\mP_n,\mQ_m}&(\x, \y) f(\x) d\mP_n(\x) d\mQ_m(\y)\bigg\vert\\
    &=\bigg\vert  \int \bigg(A_{n,m}(\y)-\int \xi_{\mP_n,\mQ_m}(\x, \y) f(\x) d\mP_n(\x) \bigg)d\mQ_m(\y)\bigg\vert\\
    &\leq \bigg\|A_{n,m}-\int \xi_{\mP_n,\mQ_m}(\x, \cdot) f(\x) d\mP_n(\x) 
\bigg\|_m,
\end{align*}
where we can conclude 
\begin{equation}
    \label{eq:boundForTheUnbiased}
     \bigg\vert  \int A_{n,m} d\mQ_m-\int \xi_{\mP_n,\mQ_m}(\x, \y) f(\x) d\mP_n(\x) d\mQ_m\bigg\vert =\cK(c)\left\| \left(\begin{array}{c}
     f \\
     g
\end{array} \right)\right\|_{n\times m}^2.
\end{equation}
Since the second row of \eqref{FrechetBound} is just $$A_{n,m}-\int \xi_{\mP_n,\mQ_m}(\x, \cdot) f(\x) d\mP_n(\x)-\left(\int A_{n,m} d\mQ_m-\int \xi_{\mP_n,\mQ_m}(\x, \y) f(\x) d\mP_n(\x) d\mQ_m(\y)\right),$$
we conclude that \eqref{FrechetBound} holds. 

\emph{Step 2 - Fluctuation analysis of $(\overline F_{n,m} - \overline F)(\bfpq, \bgpq)^t$.}
Via Lemma~\ref{lem:regularity}, the difference between the empirical and population potentials is bounded in $L^{\infty}(\mP_n)\times L^{\infty}(\mQ_m)$ by a constant depending on the cost, i.e.,  $ \|(\bar f_{\mP,\mQ}-\bar f_{\mP_n,\mQ_m}, \, \bar g_{\mP,\mQ}-\bar g_{\mP_n,\mQ_m})^t\|_{L^{\infty}(\mP_n)\times L^{\infty}(\mQ_m)}\leq \cK(c)$, for all $n,m\in \N$. Then the pair of functions $(\bar f_{\mP,\mQ}-\bar f_{\mP_n,\mQ_m}, \, \bar g_{\mP,\mQ}-\bar g_{\mP_n,\mQ_m})^t$ satisfies assumption \eqref{boundOnDirectins}. 
As a consequence, via Equation (4.3) and Lemma~14 in \cite{rigollet2022sample}, we obtain 
\begin{equation*}
  \E \left\| \overline{F}_{n,m}\left(\begin{array}{c}
     \bar f_{\mP,\mQ}  \\
     \bar g_{\mP,\mQ}
\end{array} \right)-\overline{F}_{n,m}\left(\begin{array}{c}
     \bar f_{\mP_n,\mQ_m} \\
     \bar g_{\mP_n,\mQ_m}
\end{array} \right)-\overline{\Gamma}_{n,m}\left(\begin{array}{c}
     \bar f_{\mP,\mQ}-f_{\mP_n,\mQ_m}   \\
     \bar g_{\mP,\mQ}-g_{\mP_n,\mQ_m} 
\end{array} \right)\right\|_{n\times m}\leq\frac{\cK(c)}{n}.
\end{equation*}
Since 
$$ 0= \overline{F}_{n,m}\left(\begin{array}{c}
     \bar f_{\mP_n,\mQ_m}  \\
     \bar g_{\mP_n,\mQ_m}
\end{array} \right)=\overline{F}\left(\begin{array}{c}
     \bar f_{\mP,\mQ}  \\
     \bar g_{\mP,\mQ}
\end{array} \right),$$
where 
\begin{equation*}
\overline F\left(\begin{array}{c}
     f  \\
     g
\end{array} \right)=
    \left(\begin{array}{c}
          f+\log\left(\int e^{g(\y)}C(\cdot,\y)d\mQ(\y)\right)\\
    g+\log\left(\int e^{f(\x)}C(\x,\cdot)d\mP(\x)\right) - \int \log\left(\int e^{f(\x)}C(\x,\y)d\mP(\x)\right)d \mQ(\y)
    \end{array}\right)\,,
\end{equation*}
we obtain  
\begin{equation}
    \label{FrechetBound2}
  \E \left\| \overline  F_{n,m}\left(\begin{array}{c}
 \bar f_{\mP,\mQ}  \\
 \bar g_{\mP,\mQ} 
\end{array} \right)- \overline  F\left(\begin{array}{c}
    \bar  f_{\mP,\mQ}  \\
    \bar  g_{\mP,\mQ}
\end{array} \right)-\overline 
 \Gamma_{n,m}\left(\begin{array}{c}
  \bar    f_{\mP,\mQ}-\bar f_{\mP_n,\mQ_m}   \\
    \bar  g_{\mP,\mQ}-\bar g_{\mP_n,\mQ_m} 
\end{array} \right)\right\|_{n\times m}
\leq  \frac{\cK(c)}{n} .
\end{equation}
We denote
$$ R_{n,m}^{(1)}(\cdot)=\log\left(\int e^{\bar g_{\mP,\mQ} (\y)}C(\cdot,\y)d\mQ_m(\y)\right)-\log\left(\int e^{\bar g_{\mP,\mQ} (\y)}C(\cdot,\y)d\mQ(\y)\right) $$
and 
$$ R_{n,m}^{(2)}(\cdot)=    \log\left(\int e^{\bar f_{\mP,\mQ} (\x)}C(\x,\cdot)d\mP_n(\x)\right)-\log\left(\int e^{\bar f_{\mP,\mQ} (\x)}C(\x,\cdot)d\mP(\x)  \right). $$
With this notation, 
$$
    \overline  F_{n,m}\left(\begin{array}{c}
     \bar f_{\mP,\mQ}  \\
    \bar  g_{\mP,\mQ}
\end{array} \right)- \overline  F\left(\begin{array}{c}
     \bar f_{\mP,\mQ}  \\
    \bar  g_{\mP,\mQ}
\end{array} \right)
= \left(\begin{array}{c}
          R_{n,m}^{(1)}(\cdot)\\
    R_{n,m}^{(2)}(\cdot)- \int R_{n,m}^{(2)}(\y) d\mQm(\y)
    \end{array}\right)
$$
is an element of $L^2(\mP_n)\times L^2_0(\mQ_m)$. We focus only on the first component,  the second follows as in \eqref{eq:boundForTheUnbiased}. As before (see \eqref{inequalityLog}) the logarithm can be avoided to obtain  
\begin{equation}
    \label{lineariTn}
    \E\left\| R_{n,m}^{(1)}(\cdot)-\frac{\int e^{\bar g_{\mP,\mQ} (\y)}C(\cdot,\y)d(\mQ_m-\mQ)(\y)}{\int e^{\bar g_{\mP,\mQ} (\y)}C(\cdot,\y)d\mQ(\y)}\right\|_n\!\!\!\!\leq \cK(c) \E\left\|\int e^{\bar g_{\mP,\mQ} (\y)}C(\cdot,\y)d(\mQ_m-\mQ)(\y)\right\|_n^2.
\end{equation}
Since 
$$ \frac{\int e^{\bar g_{\mP,\mQ} (\y)}C(\cdot,\y)d(\mQ_m-\mQ)(\y)}{e^{\bar g_{\mP,\mQ} (\y)}C(\cdot,\y)d\mQ(\y)}=\frac{1}{m}\sum_{j=1}^m \frac{e^{\bar g_{\mP,\mQ} (\Y_j)}C(\cdot,\Y_j)}{\int e^{\bar g_{\mP,\mQ} (\y)}C(\cdot,\y)d\mQ(\y)}-1.$$
Note that 
\begin{align*}
   \E\left[\left(\frac{e^{\bar g_{\mP,\mQ} (\Y_j)}C(\X_i,\Y_j)}{\int e^{\bar g_{\mP,\mQ} (\y)}C(\X_i,\y)d\mQ(\y)}-1\right)\left(\frac{e^{\bar g_{\mP,\mQ} (\Y_l)}C(\X_k,\Y_l)}{\int e^{\bar g_{\mP,\mQ} (\y)}C(\X_k,\y)d\mQ(\y)}-1\right)\right]=0,
\end{align*}
whenever $i\neq k$ or $j\neq l$. Then, since $\Y_1, \dots, \Y_m$ are identically distributed, 
\begin{multline*}
    \E\left[ \int{ \left(\frac{\int e^{\bar g_{\mP,\mQ} (\y)}C(\x,\y)d(\mQ_m-\mQ)(\y)}{e^{\bar g_{\mP,\mQ} (\y)}C(\x,\y)d\mQ(\y)}\right)^2}  d \mPn(\x) \right]\\=
    \frac{1}{n\, m^2}\sum_{i=1}^n \E\left[\left(\sum_{j=1}^m\frac{e^{\bar g_{\mP,\mQ} (\Y_j)}C(\X_i,\Y_j)}{\int e^{\bar g_{\mP,\mQ} (\y)}C(\X_i,\y)d\mQ(\y)}-1\right)^2\right]\\=
    \frac{1}{n\, m^2}\sum_{i,j=1}^n \E\left[\left(\frac{e^{\bar g_{\mP,\mQ} (\Y_j)}C(\X_i,\Y_j)}{\int e^{\bar g_{\mP,\mQ} (\y)}C(\X_i,\y)d\mQ(\y)}-1\right)^2\right]\\=
    \frac{1}{m} \E\left[\left(\frac{e^{\bar g_{\mP,\mQ} (\Y_1)}C(\X_i,\Y_1)}{\int e^{\bar g_{\mP,\mQ} (\y)}C(\X_i,\y)d\mQ(\y)}-1\right)^2\right].
\end{multline*}
As a consequence, 
$$ \E\left[\left\|\int e^{\bar g_{\mP,\mQ} (\y)}C(\cdot,\y)d(\mQ_m-\mQ)(\y)\right\|_n^2\right]\leq \frac{1}{m} \E\left[\left(\frac{e^{\bar g_{\mP,\mQ} (\Y_1)}C(\X_i,\Y_1)}{\int e^{\bar g_{\mP,\mQ} (\y)}C(\X_i,\y)d\mQ(\y)}-1\right)^2\right],$$
so that 
$$ \E\left[\left\|\int e^{g_{\mP,\mQ} (\y)}C(\cdot,\y)d(\mQ_m-\mQ)(\y)\right\|_n^2\right]\leq \frac{\cK(c)}{m}.$$
Then, introducing this bound in \eqref{lineariTn}  we obtain (assuming $m/(n+m)\to \lambda\in (0,1)$)
$$ 
    \E\left[\left\| R_{n,m}^{(1)}(\cdot)-\frac{\int e^{\bar g_{\mP,\mQ} (\y)}C(\cdot,\y)d(\mQ_m-\mQ)(\y)}{\int e^{\bar g_{\mP,\mQ} (\y)}C(\cdot,\y)d\mQ(\y)}\right\|_n\right]\leq \frac{\cK(c)}{n},
$$
which, together with \eqref{FrechetBound2}, concludes the proof of \eqref{linearBothrows}. 

\emph{Step 3 - Expectation bound for $ \int \mathcal{B}(\mP_n-\mP) d\mQ_m $.} For the second claim we note that
$$ \int \mathcal{B}(\mP_n-\mP) d\mQ_m =\frac{1}{n\, m}\sum_{i=1}^n\sum_{j=1}^m \Big(\xi_{\mP,\mQ} (\X_i,\Y_j)- \int \xi_{\mP,\mQ} (\x,\Y_j)d\mP(\x)\Big) $$
and observe,
\begin{multline*}
     \E \left[ \xi_{\mP,\mQ} (\X_{1},\Y_{1})- \int \xi_{\mP,\mQ} (\x,\Y_{1})d\mP(\x)\bigg\vert \{\X_i\}_{i = 2}^{n}, \{\Y_j\}_{j = 1}^m\right]\\
    =\E \left[ \xi_{\mP,\mQ} (\X_{1},\Y_{1})- \int \xi_{\mP,\mQ} (\x,\Y_{1})d\mP(\x)\bigg\vert \Y_1 \right]=0.
\end{multline*}
As a consequence, by using independence, 
\begin{align*}
    &\E \left[\left(\frac{1}{n\, m}\sum_{i=1}^n\sum_{j=1}^m \Big(\xi_{\mP,\mQ} (\X_i,\Y_j)- \int \xi_{\mP,\mQ} (\x,\Y_j)d\mP(\x)\Big)\right)^2\right]\\
    &=\frac{1}{n^2 m^2}\sum_{i=1}^n\sum_{j=1}^m\E \left[\left( \xi_{\mP,\mQ} (\X_i,\Y_j)- \int \xi_{\mP,\mQ} (\x,\Y_j)d\mP(\x)\right)^2\right]\\
    &=\frac{1}{n\,  m}\E \left[\left( \xi_{\mP,\mQ} (\X_i,\Y_j)- \int \xi_{\mP,\mQ} (\x,\Y_j)d\mP(\x)\right)^2\right],
\end{align*}
which asserts the second claim with \ref{lem:regularity}. 
\end{proof}

\begin{proof}[Proof of Lemma \ref{Coro:limitSum}]
Note for $k\in \N$, any $f\in L^2_0(\mPn)$, $g\in L^2_0(\mQm)$, and $\kappa\in \R$ using Lemma \ref{lemma:empiricalInverse}, \begin{align*}
	(\AQm\APn)^k(f + \kappa \one) &= (\AQm\APn)^k(f) + \kappa = (\AQm\bAPn)^k(f) + \kappa,\\
	(\APn\AQm)^k(g + \kappa \one) &= (\APn\AQm)^k(g) + \kappa = (\bAPn\AQm)^k(g)+\kappa. 
\end{align*}
We thus obtain that 
\begin{align*}
            &\sup_{ \substack{f\in L^2_0(\mPn)\\ \|f\|_{n}\leq 1} }\| \sum_{k=0}^{N} (\AQm\APn)^k (f + \kappa) -(I_{L^2(\mP_n)}-\AQm\bAPn)^{-1} (f+\kappa)\|_{n}\\
            \leq \; &\sup_{  \substack{f\in L^2_0(\mPn)\\ \|f\|_{n}\leq 1} }\| \sum_{k=N+1}^{\infty} (\AQm\bAPn)^k (f)\|_{n} + N|\kappa|\\
            \leq \; &\sum_{k=N+1}^{\infty}  \norm{\AQm\bAPn}^k_n + N|\kappa|
            \leq \frac{\delta(c)^{N+1}}{1-\delta(c)} + N|\kappa|.
        \end{align*}
Likewise, it holds that 
 \begin{align*}
            &\sup_{ \substack{g\in L^2_0(\mQm)\\ \|g\|_{n}\leq 1} }\| \sum_{k=0}^{N} (\APn\AQm)^k (g + \kappa) -(I_{L^2(\mQm)}-\bAPn\AQm)^{-1} (g)\|_{n}\\
            \leq \; &\sup_{  \substack{g\in L^2_0(\mQm)\\ \|g\|_{m}\leq 1} }\| \sum_{k=N+1}^{\infty} (\bAPn\AQm)^k (g) \|_{n} + (N+1) |\kappa|\leq \frac{\delta(c)^{N+1}}{1-\delta(c)} + (N+1)|\kappa|.\qedhere
        \end{align*}
\end{proof}

\begin{proof}[Proof of Lemma \ref{LemmaBoundOfEigD}]

For the proof of Assertion 1 let $f\in L^2(\mPn)$ be such that $\norm{f}_n\leq 1$. Then, \begin{align*}
    (\DPn - \APn)(f) = \int \{\xipq(\x,\cdot) - \ximn(\x,\cdot)\}f(\x) d\mPn(\x).
\end{align*}
Hence, by Jensen's inequality combined with the Cauchy-Schwarz inequality we obtain, \begin{align*}
    \norm{(\DPn - \APn)(f)}_m^2 =& \int \left[ \int \{\xipq(\x,\y) - \ximn(\x,\y)\}f(\x) d\mPn(\x)\right]^2 d\mQm(\y)\\
    =& \iint \left[\xipq(\x,\y) - \ximn(\x,\y)\right]^2 f^2(\x) d\mPn(\x) d\mQm(\y)\\
    \leq & \iint \left[\xipq(\x,\y) - \ximn(\x,\y)\right]^2 d\mPn(\x) d\mQm(\y) \norm{f}_n^2\\
    =& \norm{\xipq - \ximn}_{n\times m},
\end{align*}
which asserts that $\norm{\DPn - \APn}_{n,m}\leq \norm{\xipq - \ximn}_{n\times m}$. By symmetry, it also follows that $\norm{\DQm - \AQm}_{m,n}\leq \norm{\xipq - \ximn}_{n\times m}$. The second upper bound in terms of $\cK(c)$ then follows by Lemma \ref{lem:regularity}.

To prove Assertion 2 note using Lemma \ref{lemma:empiricalInverse} and Assertion 1 that $$\max(\norm{\DPn}_{n,m},\norm{\DQm}_{m,n}) \leq 1+\norm{\xipq - \ximn}_{n\times m} \leq 1 + \cK(c) =: \cK'(c).$$
Moreover, since $\DQm\DPn-\AQm\APn= \DQm(\DPn - \APn) + (\DQm-\AQm)\APn$ it follows by triangle inequality that 
\begin{align*}
    \norm{\DQm\DPn-\AQm\APn}_{n} & \leq  \norm{\DQm(\DPn-\APn)}_{n} +  \norm{(\DQm-\AQm)\APn}_{n}\\
    &\leq \norm{\DQm}_{m,n}\norm{\DPn-\APn}_{n,m} + \norm{\DQm-\AQm}_{m,n}\norm{\APn}_{n,m}\\
   & \leq \cK'(c)\norm{\xipq - \ximn}_{n\times m} \leq \cK'(c)\cK(c) =: \cK''(c).
\end{align*}

For Assertion 3 first note for $f\in L^2_0(\mPn)$ with $\norm{f}_n \leq 1$ and $g\in L^2_0(\mQn)$ with $\norm{g}_m\leq 1$ that $\max(\norm{f+\kappa \one}_n,\norm{g + \kappa \one}_m) \leq 1+|\kappa|$. Using Lemma \ref{Coro:limitSum}, the bound for \eqref{eq:UpperBoundDPnDQm1} and \eqref{eq:UpperBoundDPnDQm3} follows by triangle inequality once we show for the operator norms on $L^2(\mPn)$ and $L^2(\mQm)$ for some constant $\tilde \cK'''(c)\geq 1$ that
\begin{align*}
& \sum_{k=0}^{N} \norm{(\DQm\DPn)^k - (\AQm\APn)^k}_{n} + \norm{ (\DPn\DQm)^k - (\APn\AQm)^k}_{m}\\
\leq \;& \tilde \cK'''(c)^N\norm{\xipq - \ximn}_{n\times m}.
\end{align*}
To this end, first note for $k=0$ that $(\DQm\DPn)^0 = (\AQm\APn)^0$ and $(\DPn\DQm)^0 = (\APn\AQm)^0$.
Further, recall for bounded operators $A,B$ on a normed vector space that 
$$\norm{(A+B)^k - A^k}\leq 2^{k} \max(\norm{A}, \norm{B})^{k-1} \norm{B}.$$ 

Since $\norm{\AQm\APn}_n\leq 1$ and $\norm{\APn\AQm}_n\leq 1$ by  Lemma \ref{lemma:empiricalInverse} we conclude via  Assertion~2 and Lemma \ref{lem:regularity} for any $k \in \NN$ that
\begin{align*}
    \norm{(\DQm\DPn)^k -(\AQm\APn)^k}_n  %
    \leq  \;& 2^k(1 +\cK''(c))^{k-1} \norm{(\DQm\DPn - \AQm\APn)}_{n}\\
    \leq  \;& 2(2 + 2\cK''(c))^{k-1} \norm{\xipq - \ximn}_{n\times m}.
\end{align*}
An identical bound also holds for $\norm{(\DPn\DQm)^k -(\APn\AQm)^k}_n$. Hence, summing up over $k=1, \dots, N$ we find 
\begin{align}
\notag& \sum_{k=0}^{N} \norm{(\DQm\DPn)^k - (\AQm\APn)^k}_{n} + \norm{ (\DPn\DQm)^k - (\APn\AQm)^k}_{m} \\
\notag \leq \;& \sum_{k=0}^{N} 2(2 + 2\cK''(c))^{k-1} \norm{\xipq - \ximn}_{n\times m} \\
\notag = \;& 4 \frac{2(2 + 2\cK''(c))^{N}-1}{1 + 2\cK''(c)}  \norm{\xipq - \ximn}_{n\times m} \\
\label{eq:boundDifferencePowersDPnApn} \leq  \;& \tilde \cK'''(c)^{N}\norm{\xipq - \ximn}_{n\times m}.%
\end{align}

To show the assertion for \eqref{eq:UpperBoundDPnDQm2} and \eqref{eq:UpperBoundDPnDQm4} we note, for $f\in L^2_0(\mPn), g\in L^2_0(\mQm)$ and $\kappa \in \RR$, that 
\begin{align*}
  \left| \int \bAPn (f+\kappa \one)d\mQm\right|  = 0 \quad \text{and} \quad  \left| \int \AQm (g+\kappa \one)d\mPn\right| = |\kappa|
\end{align*}
as well as
\begin{align*}
    \left|\int \DPn (f+\kappa \one)d\mQm \right|=& \left|\iint \xipq(\x,\y) (f(\x)+ \kappa) d \mPn(\x) d\mQm(\y)\right|\\
   \notag \leq & \left|\iint \ximn(\x,\y) (f(\x) + \kappa) d \mPn(\x) d\mQm(\y)\right|\\
    \notag& + \left|\iint \left[\ximn - \xipq\right](\x,\y) (f(\x) + \kappa) d \mPn(\x) d\mQm(\y)\right|\\
    \notag \leq & |\kappa| + \norm{\xipq-\ximn}_{n\times m} (\norm{f}_{n}  + |\kappa|)
\end{align*}
and likewise 
$$  \left|\int \DQm (g+\kappa \one)d\mPn \right| \leq  |\kappa| + \norm{\xipq-\ximn}_{n\times m} (\norm{f}_{n}  + |\kappa|).$$
Now for the term \eqref{eq:UpperBoundDPnDQm2} we apply triangle inequality and obtain
\begin{subequations}
\begin{align}
\notag&\sup_{\substack{f\in L^2_0(\mPn)\\ \|f\|_{n}\leq 1} } \left\|\left[\sum_{k=0}^{N} (\DPn\DQm)^k \DPn -(I_{L^2_0(\mQm)}-\bAPn\AQm)^{-1}\bAPn \right] (f + \kappa \one)\right\|_{m}\\
\label{eq:boundDifferenceOp1}\leq &\sup_{\substack{f\in L^2_0(\mPn)\\ \|f\|_{n}\leq 1} } \left\|\sum_{k=0}^{N} (\DPn\DQm)^k (\DPn - \bAPn) (f+\kappa \one)\right\|_m \\
\label{eq:boundDifferenceOp2}&+\sup_{\substack{f\in L^2_0(\mPn)\\ \|f\|_{n}\leq 1} }\left\| \left(\sum_{k=0}^{N} (\DPn\DQm)^k -(I_{L^2_0(\mQm)}-\bAPn\AQm)^{-1}\right)\bAPn  (f)\right\|_{m},
\end{align}
\end{subequations}
where for the last term we used that $\bAPn(f + \kappa \one) = \bAPn(f)$. 

To analyze the term in \eqref{eq:boundDifferenceOp1} we rewrite the term $$(\DPn - \bAPn) (f+\kappa \one) =  (\DPn - \APn) (f+\kappa \one) + \kappa \one$$ as $f_n + \kappa_n \one\in L^2(\mQm)$ for $f_n\in L^2_0(\mQm)$ and $ \kappa_n\in \RR$. Further, by Assertion 1, it follows that $\max(\norm{f_n}_m, |\kappa_n|)\leq \norm{\xipq - \ximn}_{n\times m}(1+|\kappa|) + |\kappa|$. Moreover, upon combining Lemma \ref{lemma:empiricalInverse} with \eqref{eq:boundDifferencePowersDPnApn}, we obtain 
$$  \|\sum_{k=0}^{N} (\DPn\DQm)^k\| \leq N + \tilde \cK'''(c)^N \norm{\xipq - \ximn}_{n\times m},$$
which yields by Lemma \ref{lem:regularity} that
\begin{align*}
&\sup_{\substack{f\in L^2_0(\mPn)\\ \|f\|_{n}\leq 1} } \left\|\sum_{k=0}^{N} (\DPn\DQm)^k (\DPn - \bAPn) (f+\kappa \one)\right\|_m \\
\leq & \left( N + \tilde \cK'''(c)^N \norm{\xipq - \ximn}_{n\times m}\right) 2\left( \norm{\xipq - \ximn}_{n\times m}( 1+ |\kappa|)  + |\kappa|\right)\\
\leq & \left(N + \tilde \cK'''(c)^N  \cK(c)\right)2\left( \norm{\xipq - \ximn}_{n\times m}  +(1+2\cK(c)) |\kappa|\right)\\
\leq &  \;\;\cK'''(c)^N\left( \norm{\xipq - \ximn}_{n\times m}  +|\kappa|\right).
\end{align*}
Further, since $\bAPn  (f)\in L^2_0(\mQm)$ and $\|\bAPn\|_{n,m}\leq 1$, the first part of this assertion that  gives the following upper bound for \eqref{eq:boundDifferenceOp2}:
\begin{align*}
& \sup_{\substack{f\in L^2_0(\mPn)\\ \|f\|_{n}\leq 1} }\left\| \left(\sum_{k=0}^{N} (\DPn\DQm)^k -(I_{L^2_0(\mQm)}-\bAPn\AQm)^{-1}\right)\bAPn  (f)\right\|_{m}\\
\leq \;&\tilde \cK'''(c)^N\norm{\xipq - \ximn}_{n\times m}+\frac{\delta(c)^{N+1}}{1-\delta(c)}.
    \end{align*}

For the upper bound of the term \eqref{eq:UpperBoundDPnDQm4} note that $$(I_{L^2(\mPn)}-\AQm\bAPn)^{-1} \AQm(g) = (I_{L^2(\mPn)}-\AQm\bAPn)^{-1} \AQm(g+\kappa\one) -\kappa \one.$$ Hence, following an analogous strategy as for \eqref{eq:UpperBoundDPnDQm2}, we obtain    
\begin{align*}
   & \sup_{\substack{g\in L^2_0(\mQm)\\ \|g\|_{m}\leq 1}}\left\| \sum_{k=0}^{N} (\DQm\DPn)^k \DQm (g +\kappa \one) - (I_{L^2(\mPn)}-\AQm\bAPn)^{-1} \AQm(g)\right\|_{n}\\
    \leq \;& \sup_{\substack{g\in L^2_0(\mQm)\\ \|g\|_{m}\leq 1}}\left\| \sum_{k=0}^{N} (\DQm\DPn)^k (\DQm - \AQm) (g +\kappa \one)\right\|_{n} +|\kappa|\\
    & + \sup_{\substack{g\in L^2_0(\mQm)\\ \|g\|_{m}\leq 1}}\left\| \left(\sum_{k=0}^{N} (\DQm\DPn)^k  - (I_{L^2(\mPn)}-\AQm\bAPn)^{-1} \right)\AQm(g+\kappa \one)\right\|_{n}\\
    \leq\; & \cK'''(c)^N (\norm{\xipq - \ximn}_{n\times m} + |\kappa|) + \frac{\delta(c)^{N+1}}{1-\delta(c)},
\end{align*}
where we used in the second inequality the bound from the first part of this assertion and that $\AQm(g + \kappa\one) = \AQm(g) +\kappa\one$ for $g\in L^2_0(\mQm)$ and $\AQm(g)\in L^2_0(\mPn)$.
\end{proof}

\begin{proof}[Proof of Proposition \ref{prop:linearPotOpchange}]
   Based on Proposition~\ref{prop:linearPot} we need to check that 
$$\left\|\Theta_{n,m}^N \left(\begin{array}{c}
     \mathcal{B}_\y(\mQ_m-\mQ)  \\
      \mathcal{B}_\x(\mP_n-\mP)
\end{array}\right)  -\Gamma_{n,m}^{-1}\left(\begin{array}{c}
     \mathcal{B}_\y(\mQ_m-\mQ)  \\
      \mathcal{B}_\x(\mP_n-\mP)-\int \mathcal{B}_\x(\mP_n-\mP)d\mQ_m
\end{array}\right)\right\|_{n\times m}= o_p\left(\sqrt{\frac{n+m}{nm}}\right).$$ 

Due to  Proposition~\ref{prop:linearPot} we know that 
$$ \left|\int \mathcal{B}_\x (\mP_n-\mP)d\mQ_m\right| + \left| \int \mathcal{B}_\y (\mQm-\mQ)d\mPn\right| = \Op\left(\frac{1}{\sqrt{nm}}\right)$$
Further, Theorem 5 of \cite{rigollet2022sample} asserts that $$ \norm{\ximn - \xipq}_{n\times m} = \Op\left(\sqrt{\frac{n+m}{nm}}\right).$$
Note that by our choice for $N= N(n,m)$ for any $a>1$ it holds $a^N \lesssim \mathcal{O}\big((\frac{nm}{n+m})^{1/4}\big)$.
Combining these upper bounds with Lemma \ref{LemmaBoundOfEigD} we obtain
\begin{align*}
 & \|\sum_{k=0}^{N} (\DQm\DPn)^k (\mathcal{B}_\y (\mQm-\mQ))-(I_{L^2(\mP_n)}-\AQm\bAPn)^{-1}  (\mathcal{B}_\y (\mQm-\mQ))\|_{n} \\
 \leq \; & \cK(c)^N\norm{\xipq - \ximn}_{n\times m}\\
& \quad \times \left(
	\left\| \mathcal{B}_\y(\mQ_m-\mQ) - \int  \mathcal{B}_\y(\mQ_m-\mQ) d\mPn\right\|_{n}  + \left|\int  \mathcal{B}_\y(\mQ_m-\mQ) d\mPn\right|\right)\\
 &\quad + \frac{\delta(c)^N}{1-\delta(c)} \left\| \mathcal{B}_\y(\mQ_m-\mQ) - \int  \mathcal{B}_\y(\mQ_m-\mQ) d\mPn\right\|_{n}\\
  \lesssim \;& \Op \left( \left(\frac{nm}{n+m}\right)^{\frac{1}{4}-\frac{1}{2}} \right)\left(
	\left\| \mathcal{B}_\y(\mQ_m-\mQ)\right\|_{n} +  \Op \left( \frac{1}{\sqrt{nm}} \right) \right)\\
 &\quad +o(1)\left(
	\left\| \mathcal{B}_\y(\mQ_m-\mQ)\right\|_{n} +  \Op \left( \frac{1}{\sqrt{nm}} \right) \right)\\
\lesssim \; & o_p(1)\left\| \mathcal{B}_\y(\mQ_m-\mQ)\right\|_{n}  + o_p\left( \sqrt{\frac{n+m}{nm}} \right).
 \end{align*}
 Analogous upper bounds also hold for the remaining three terms, where $\mathcal{B}_\y(\mQ_m-\mQ)$ is possibly replaced by $\mathcal{B}_\x (\mPn-\mP)$.
Hence, the assertion follows once we show that 
\begin{align*}
	& \left\|\int \xi_{\mP,\mQ}(\cdot,\y)d(\mQ_m-\mQ)(\y)\right\|_{n} = \mathcal{O}_p\left(\frac{1}{\sqrt{m}}\right),\\
	& \left\| \int \xi_{\mP,\mQ} (\x, \cdot)d(\mP_n-\mP)(\x)\right\|_{m} = \mathcal{O}_p\left(\frac{1}{\sqrt{n}}\right).
\end{align*}
We prove only the first one. The second holds by symmetry.
First we compute  
\begin{multline*}
   \E\left[\left\|\int \xi_{\mP,\mQ}(\cdot,\y)d(\mQ_m-\mQ)(\y)\right\|_{n}^2\right]\\
   =\frac{1}{n\, m^2}\sum_{i=1}^n\sum_{j=1}^m \sum_{k=1}^m \E \left[(\xi_{\mP,\mQ}(\X_i,\Y_j)-1) (\xi_{\mP,\mQ}(\X_i,\Y_k)-1)\right].
\end{multline*}
Then we note that $\E \left((\xi_{\mP,\mQ}(\X_i,\Y_j)-1) (\xi_{\mP,\mQ}(\X_i,\Y_k)-1)\right)=0$, if  $k\neq j$. Therefore, 
\begin{align*}
    \E\left[ \left\|\int \xi_{\mP,\mQ}(\cdot,\y)d(\mQ_m-\mQ)(\y)\right\|_{n}^2\right]
   &=\frac{1}{n\, m^2}\sum_{i=1}^n\sum_{j=1}^m \E \left[\left(\xi_{\mP,\mQ}(\X_i,\Y_j)-1\right)^2\right]\\
      &=\frac{1}{m}\E \left[\left(\xi_{\mP,\mQ}(\X_i,\Y_j)-1\right)^2\right]
\end{align*}
and the claim follows at once. 
\end{proof}

\begin{proof}[Proof of Proposition \ref{prop:Vstatistic}]
We show the assertion by proving that the terms in the definition of $\tilde A_{n,m}^{(l)}$ for $l\in \{1, 2\}$ can be rewritten as in \eqref{eq:AnmPrime_representation}. 
To avoid repetition, we will only state the proof for the first of the terms, the remaining terms can be treated analogously. That is, we show 
    \begin{align*}
   \sum_{k = 0}^N U_{n,m,k}^{(1)}
   &= \frac{1}{n\, m^2}
   \sum_{k=0}^N\sum_{i=1}^n\sum_{s,j=1}^m\xi(\X_i,\Y_j)\eta(\X_i,\Y_s) 
   (\mathcal{D}_{\mQ_m}\mathcal{D}_{\mP_n})^k \left( \xi (\cdot,\Y_k)-1\right)(\X_j)\\
    &=\int  \sum_{k=0}^N(\mathcal{D}_{\mQ_m}\mathcal{D}_{\mP_n})^k(\mathbb{G}_{\mQ,s}^m)  
 \eta \,  \xi d \mP_n d\mQ_m.
\end{align*}
Set  $f\in L^2(\mP_n)$. A routine calculation gives 
$$\mathcal{D}_{\mQ_m}\mathcal{D}_{\mP_n} f
    =\frac{1}{n\, m}\sum_{i_1=1}^n\sum_{j_1=1}^m\xi(\cdot ,\Y_{j_1}) \xi(\X_{i_1},\Y_{j_1}) f(\X_{i}).$$
By repeating this formula again, we obtain 
$$
    (\mathcal{D}_{\mQ_m}\mathcal{D}_{\mP_n})^2 f=
  {\tiny  \frac{1}{n^2 m^2} \sum_{i_1,i_2=1}^n\sum_{j_2,j_2=1}^m }
    \xi(\cdot ,\Y_{j_2})\xi(\X_{i_2},\Y_{j_2}) 
    \xi(\X_{j_2} ,\Y_{j_1})
    \xi(\X_{i_1},\Y_{j_1})  f(\X_{i_1}).
$$
An inductive argument gives for $k\in\NN$ that 
\begin{multline*} 
    (\mathcal{D}_{\mQ_m}\mathcal{D}_{\mP_n})^k f = 
   \tiny  \frac{1}{n^k\, m^k}\sum_{\bi \in \nnk}\sum_{\bj \in \mmk }\xi(\cdot,\Y_{j_k}) \xi(\X_{i_{k}},\Y_{j_k})  f(\X_{i_1})\\
  \times  \prod_{l=1}^{k-1}\xi(\X_{i_l},\Y_{j_l})\xi(\X_{i_{l+1}},\Y_{j_l}),
\end{multline*}
which applied to $f=\mathbb{G}_{\mQ,s}^m=\frac{1}{m}\sum_{j=1}^m\xi(\cdot,\Y_j)-1$ 
gives
\begin{multline*} 
(\mathcal{D}_{\mQ_m}\mathcal{D}_{\mP_n})^k \mathbb{G}_{\mQ,s}^m
   =\tiny  \frac{1}{n^{k}\, m^{k+1}} \sum_{\bi \in \nnk}\sum_{\bj \in \mm{k+1}}\xi(\cdot,\Y_{j_{k}}) \xi(\X_{i_{k}},\Y_{j_{k}}) (\xi(\X_{i_1},\Y_{j_{k+1}})-1) \\
  \times \prod_{l=1}^{k-1}\xi(\X_{i_l},\Y_{j_{l}})\xi(\X_{i_{l+1}},\Y_{j_{l}}).
\end{multline*}
By integrating this quantity  w.r.t. $ \eta \xi d\mP_n d\mQ_m$ we obtain 
\begin{align*}
     & \int \eta \xi(\mathcal{D}_{\mQ_m}\mathcal{D}_{\mP_n})^k \mathbb{G}_{\mQ,s}^m d\mP_n d\mQ_m\\
     =&  \frac{1}{n^{k+1}\, m^{k+2}} \sum_{\bi \in \nn{k+1}}\sum_{\bj \in \mm{k+2}} \eta(\X_{i_{k+1}},\Y_{j_{k+1}})  \xi(\X_{i_{k+1}},\Y_{j_{k+1}}) (\xi(\X_{i_1},\Y_{j_{k+1}})-1)  \\
     & \quad \quad \quad \quad \quad \quad \quad \quad \quad \quad \quad \quad \quad \times \prod_{l=1}^{k}\xi(\X_{i_l},\Y_{j_{l}})\xi(\X_{i_{l+1}},\Y_{j_{l}})\\
     =&\frac{1}{n^{k+1}\, m^{k+2}} \sum_{\substack{\mathbf{i}\in [[n]]^{k+1}}} \sum_{\bj \in \mm{k+2}}f_k^{(1)}( \X_{\mathbf{i}},\Y_{\mathbf{j}}),
\end{align*}
which concludes the proof. 
\end{proof}

\begin{proof}[Proof of Proposition \ref{prop:Ustatistic}]
We only state the proof for the first sum $\sum_{k = 0}^{N} U_{n,m,k}^{(1)}$, the remaining terms can be treated analogously. 

We note that if $k+2\leq m$ and  $k+1\leq n$, the set $\mathcal{S}_{k,n,m} \coloneqq \mathcal{S}_{k+1,n}\times \mathcal{S}_{k+2,m}$ of $(k+1)(k+2)$-tuples $({\bf i}, {\bf j})=(i_1, \dots, i_{k+1}, j_{1}, \dots, j_{k+2})$ such that $i_{l}\neq i_{q}$ and $j_{l}\neq j_{q}$, for all $q\neq l $, has cardinality $ s(n,m,k) = n (n-1) \cdots (n-k)m(m-1) \cdots (m-k-1)$. 
In particular, the cardinality of the complementary set of $\mathcal{S}_{k,n,m}$, denoted by $\mathcal{S}_{k,n,m}^c$, is bounded~by
\begin{align*}
    & n^{k+1}m^{k+2} -  n(n-1)\cdots (n-k)  m(m-1)\cdots (m-k-1) \\
\leq \; & n^{k+1}m^{k+2} - n (n-k)^k m (m-k-1)^{k+1}\\ = \;& nm\{n^{k}(m^{k+1} - (m-k-1)^{k+1}) + (n^{k} - (n-k)^k)(m-k-1)^{k+1}\}\\
\leq \; & n^{k+1}m^{k+1} k^2 + n^{k}m^{k+2} k^2,
\end{align*}
where we used in the last inequality that $a^k - b^k \leq k a^{k-1} (a-b)$ for $a\geq b \geq 0$ and $k \in \N$. Hence, if $k^2/n\to 0$, then $s(n,m,k) = (1-o(1)) n^{k+1} m^{k+2}$.
For $ (\bi,\bj)\in \mathcal{S}_k^c$, we bound 
\begin{equation}
   \label{boundonf} 
    \E  \left[\vert   f_k^{(1)}(\X_{\bi},\Y_{\bj})\vert \right]
    \leq (\cK(c))^{2k+1}\E[\vert \eta(\X,\Y)\vert], 
\end{equation}
with $\cK(c)>1$ from Lemma \ref{lem:regularity}.
As a consequence, 
\begin{multline*}
    {\tiny  \frac{1}{n^{k+1}\, m^{k+2}} \sum_{\substack{\bi\in \nn{k+1}}}\sum_{\substack{\bj\in \mm{k+2}}} }
  f_k^{(1)}(\X_{\bi},\Y_{\bj})
  \\={\tiny  \frac{1}{n^{k+1}\, m^{k+2}} \left[\sum_{(\bi, \bj)\in \mathcal{S}_{k,n,m}}
  f_k^{(1)}(\X_{\bi},\Y_{\bj}) +  \sum_{(\bi, \bj)\in \mathcal{S}_{k,n,m}^c}
  f_k^{(1)}(\X_{\bi},\Y_{\bj})\right] } .
\end{multline*}
We note that the second term, which we denote by $B_k^{(n)}$, can be upper bounded through \eqref{boundonf} as follows 
\begin{multline*}
    \E[\vert B_k^{(n)}\vert]\le \frac{n^{k+1}m^{k+2}-s(n,m,k))}{n^{k+1}m^{k+2}} (\cK(c))^{2k+1}\E[\vert \eta(\X,\Y)\vert]\\
\leq
\frac{n+m}{nm} k^2 (\cK(c))^{2k+1}\E[\vert \eta(\X,\Y)\vert].
\end{multline*}
Then 
$$\E \bigg[\bigg\vert  \int \eta \xi\sum_{k=0}^N (\mathcal{D}_{\mQ_m}\mathcal{D}_{\mP_n})^k \mathbb{G}_{\mQ,s}^m d\mP_n d\mQ_m
     - {\tiny  \sum_{k=0}^N\frac{1}{n^{k+1}\, m^{k+2}} \sum_{(\bi, \bj)\in \mathcal{S}_{k,n,m}}
  f_k^{(1)}(\X_{\bi},\Y_{\bj})}\bigg\vert\bigg] $$
can be  upper bounded by 
\begin{multline*}
     B_N\coloneqq  \sum_{k=0}^N \E[\vert B_k^{(n)}\vert] \leq \sum_{k=0}^N\frac{n+m}{nm}N^2 (\cK(c))^{2k+1}\E[\vert \eta(\X,\Y)\vert ]\\
=\frac{n+m}{nm}\frac{\cK(c)^{2N}-1}{\cK(c)^2-1} N^2\cK(c) \E[\vert \eta(\X,\Y)\vert].
\end{multline*}
Upon choosing $ N = N(n,m) = \Nnm$ it follows  that  
$$ B_N\lesssim \frac{n+m}{nm}\sqrt[4]{\frac{nm}{n+m}}\log^2\left(\frac{nm}{n+m}\right) = o\left(\sqrt{\frac{n+m}{nm}}\right),$$
where the hidden constant only depends on $\cK(c)$ and $\E(\vert \eta(\X,\Y)\vert)$.
As a consequence, 
\begin{multline}
  \sqrt{\frac{n\, m}{n+m}}  \E \bigg\vert  \int \eta \xi_{\mP,\mQ}\sum_{k=0}^N (\mathcal{D}_{\mQ_m}\mathcal{D}_{\mP_n})^k \mathbb{G}_{\mQ,s}^m d\mP_n d\mQ_m
     \\
     - {\tiny  \sum_{k=0}^N\frac{1}{n^{k+1}\, m^{k+2}} \sum_{(\bi, \bj)\in \mathcal{S}_{k,n,m}}
  f_k^{(1)}(\X_{\bi},\Y_{\bj})}\bigg\vert \to 0, 
\end{multline}
as $n,m\to \infty$, with $\frac{m}{n+m}\to \lambda\in (0,1)$, and $ N= N(n,m) = \Nnm$.
\end{proof}

\begin{proof}[Proof of Proposition \ref{prop:writeFproperly}]
We only prove the assertion for the first term, the remaining projections can be treated in an analogous manner. Moreover, to simplify notation we write $d \mP(\x)$ as $d \x$,  $d \mQ(\y)$ as $d \y$ and $ \xi_{\mP,\mQ}$ as  $\xi$. 

First notice that 
  $ \E[{\U}^{(1)}_{n,m,N}\vert \X_i]=0,$ for all $i\in \{ 1, \dots, n\}$. This is a consequence of 
  \begin{multline}\label{eq:condtionalexp}
      \E\bigg[f_k^{(1)}(\X_{\bi},\Y_{\bj})\bigg\vert \X_{\bi}, \{\Y_{j_{l}}\}_{l=1}^{k+1}\bigg]\\ =\xi(\X_{i_{k+1}}\Y_{i_{k+1}})\eta(\X_{i_{k+1}}\Y_{i_{k+1}})
       \prod_{l=1}^{k}\xi(\X_{i_l},\Y_{j_l})\xi(\X_{i_{l+1}},\Y_{j_l}) \\
      \times \E\left[(\xi(\X_{i_1},\Y_{j_{k+2}})-1)\big\vert \X_{i_1} \right]=0,
  \end{multline}
  where the conditional expectation in the last line coincides with zero since the optimality condition \eqref{eq:optimallityCodt0} asserts for any $\x \in \Omega$ that 
  $ \int \xi_{\mP,\mQ}(\x,\y) d{\rm Q}(\y)=1$. Equation \eqref{eq:condtionalexp} yields that the H\'ajek projection of $\U^{(1)}_{n,m,N}$ onto the set of  random variables $\sum_{i=1}^n \mathbf{g}_{i,x}(\X_i)$ is $\mathbf{g}_{i,x}={\bf 0}$, for all $i=1,\dots, n$. Again via \eqref{eq:condtionalexp}, the  $L^2(\P)$ projection of $\U^{(1)}_{n,m,N}$ onto the set of random variables $\sum_{j=1}^m \mathbf{g}_{i,y}(\Y_j)$ is 
  $$ \frac{1}{m} \sum_{j=1}^m \sum_{k=0}^N \tilde F_{k}(\Y_j),$$
  where, for $\y\in \Omega$, the function $ H_{k}(\y)$ is defined as
  \begin{align*}%
    \E\bigg[\xi_{\mP,\mQ}(\X_{k+1}\Y_{k+1})\eta(\X_{k+1}\Y_{k+1})  
     (\xi_{\mP,\mQ}(\X_{1},\y)-1) \prod_{l=1}^{k}\xi_{\mP,\mQ}(\X_{l},\Y_{l})\xi_{\mP,\mQ}(\X_{l+1},\Y_{l})\bigg].
  \end{align*}

    We show, using independence of $\X_1, \dots, \X_n, \Y_1, \dots, \Y_m$ for any $k\in\NN\cup\{0\}$, that
\begin{multline*}
\tilde{F}_k(\y)=
    \int\cdots \int  \bigg\lbrace \xi(\x_{k+1},\y_{k+1})\eta(\x_{k+1},\y_{k+1})  
     (\xi(\x_{1},\y)-1) \\
     \prod_{l=1}^{k}\xi(\x_{l},\y_{l})\xi(\x_{l+1},\y_{l}) \bigg\rbrace d\x_1 d\y_1 \dots d\x_{k+1} d\y_{k+1}.
  \end{multline*}
To prove the assertion for $k = 0$ note that
\begin{align*}
    \tilde{F}_0(\y)&=
    \int \int  \xi(\x_{1},\y_{1})\eta(\x_{1},\y_{1})  
     (\xi(\x_{1},\y)-1) 
      d\x_1 d\y_1\\
      &=\int \left( \int  \xi(\x_{1},\y_{1})\eta(\x_{1},\y_{1}) d\y_1 (\xi(\x_{1},\y)-1)\right)  d\x_1 \\
      &=\int \left( \int  \eta_{\x}(\x_1) (\xi(\x_{1},\y)-1)\right)  d\x_1  
      \\
      &=(\AP  \eta_{\x})(\y)-\int \eta_{\x}(\x_1)  d\x_1.
\end{align*}
Using the entropic optimal transport plan $\pi$ and its representation \eqref{eq:optimallityCodPlan} we obtain by Fubini's theorem that
$$ \int \eta_{\x}(\x_1)  d\x_1 = \int \eta_{\x}(\x_1) d \pi(\x_1,\y_1) = \iint \xi(\x_1, \y) \eta_{\x}(\x_1)  d\x_1 d\y_1  =\int \AP \eta_{\x}(\y_1)  d\y_1.$$

  Next, we consider the case $k\geq 1$. For the sake of readability we define the function  \begin{multline*}
      G_k(\y,\x_1,\y_1, \dots ,\x_{k} ,\y_{k})
    \\= \xi(\x_{k},\y_{k}) \xi(\x_{k},\y_{k-1}) \xi(\x_{k-1},\y_{k-1})\cdots \xi(\x_{2},\y_{2}) \xi(\x_{2},\y_{1})  \xi(\x_{1},\y_{1}) (\xi(\x_{1},\y)-1).
  \end{multline*}
  Then, by applying Fubini's theorem it follows that 
  \begin{align*}
       \tilde{F}_k(\y)&= \int\cdots \int  \bigg( \int \bigg( \int \bigg(\xi(\x_{k+1},\y_{k+1})\eta(\x_{k+1},\y_{k+1})  \bigg) d\y_{k+1} \xi(\x_{k+1},\y_{k}) \bigg) d \x_{k+1}\\
    & \qquad \qquad \qquad \qquad \qquad \qquad G_k(\x_1,\y_1, \dots ,\x_{k} ,\y_{k})
    \bigg) d\x_k d\y_k \dots d\x_{1} d\y_{1}\\
    &= \int\cdots \int \bigg( \int \bigg( \eta_{\x}(\x_{k+1})\xi(\x_{k+1},\y_{k}) \bigg) d \x_{k+1}\\
    & \qquad \qquad \qquad \qquad \qquad \qquad  G_k(\x_1,\y_1, \dots ,\x_{k} ,\y_{k})
    \bigg) d\x_k d\y_k \dots d\x_{1} d\y_{1}\\
    &= \int\cdots \int \bigg( (\AP \eta_{\x})(\y_{k})
    G_k(\x_1,\y_1, \dots ,\x_{k} ,\y_{k})
    \bigg) d\x_k d\y_k \dots d\x_{1} d\y_{1}.
  \end{align*}
 
 We prove by induction for $k\geq 1$ that
 \begin{multline}
       \label{inductionOnG}
     \int\cdots \int G_k(\y, \x_1,\y_1, \dots ,\x_{k} ,\y_{k}) f(\x_k)d\x_k d\y_k \dots d\x_{1} d\y_{1}\\
     = ((\AP\AQ)^{k} f)(\y)- \int ((\AQ\AP)^{k-1} f)  (\y)d\y,
 \end{multline}
for all $f\in L^2({\rm P})$. Note that for $k=1$, we have
\begin{align*}
    \iint G_1(\y, \x_1,\y_1) f(\y_1)d\x_1 d\y_1 &= \iint  \xi(\x_{1},\y_1) (\xi(\x_{1},\y)-1) f(\y_1) d\x_1 d\y_1 \\
    &=\iint  \xi(\x_{1},\y_1) f(\y_1) d\y_1 (\xi(\x_{1},\y)-1)  d\x_1     \\
    &=\int    (\AQ f)(\x_1) (\xi(\x_{1},\y)-1) d\x_1\\
    &=(\AP \AQ f)(\y)-\int f(\y) d\y. 
\end{align*}
We assume that \eqref{inductionOnG} holds for $k-1$, and we prove it for $k$; first we note that
$$
    G_k(\y, \x_1,\y_1, \dots ,\x_{k} ,\y_{k}) 
    = \xi(\x_{k},\y_{k}) \xi(\x_{k-1},\y_{k}) G_{k-1}(\y, \x_1,\y_1, \dots ,\x_{k-1} ,\y_{k-1}).
$$
As a consequence, by definitions of $\AP$ and $\AQ$ it holds that
\begin{align*}
     &\int G_k(\y, \x_1,\y_1, \dots ,\x_{k} ,\y_{k}) f(\y_k) d\y_k \dots d\x_{1} d\y_{1}\\
    &= \int \bigg( \cdots\int \bigg(\int \bigg(f(\y_k) \xi(\x_{k},\y_{k}) \bigg)  d\y_k \xi(\x_{k},\y_{k-1}) \bigg) d\y_k \\
   & \qquad \qquad \qquad \qquad G_{k-1}(\y, \x_1,\y_1, \dots ,\x_{k-1} ,\y_{k-1}) \bigg) d\x_{k-1}  d\y_{k-1} \dots d\x_{1} d\y_{1}\\
    &= \int \bigg( \cdots\int \bigg(\int (\AQ f)(\x_k)  \xi(\x_{k},\y_{k-1}) \bigg) d\x_k \\
   & \qquad \qquad \qquad \qquad G_{k-1}(\y, \x_1,\y_1, \dots ,\x_{k-1} ,\y_{k-1}) \bigg) d\x_{k-1}  d\y_{k-1} \dots d\x_{1} d\y_{1}\\
   &= \int (\AP\AQ f)(\y_{k-1})  G_{k-1}(\y, \x_1,\y_1, \dots ,\x_{k-1} ,\y_{k-1})  d\x_{k-1}  d\y_{k-1} \dots d\x_{1} d\y_{1}.
\end{align*}
By the induction hypothesis (applied to $(\AP\AQ f)$ instead of $f$), we obtain 
$$  \tilde{F}_{k}(\y) =  ( (\AP\AQ)^{k} \AP \eta_{\x})(\y)-\E[(\AP\AQ)^{k-1}\AP \eta_{\x})(\Y)].$$
Finally, to conclude the proof we infer \eqref{inductionOnG} for $k\geq  1$ from the observation 
$$ \E[(\AP\AQ)^{k-1}\AP \eta_{\x})(\Y)]=\E[(\AP\AQ)^{k}\AP \eta_{\x})(\Y)],$$
which is consequence of the optimality conditions \eqref{eq:optimallityCodPlan}; set  $g\in L^2(\mQ)$ and compute 
\begin{align*}
    \int (\AP\AQ g) (\y)  d\y&= \int \xi(\x,\y) \xi(\x,\y') g (\y')d\y'd\x   d\y\\
   &=\int \left( \int \xi(\x,\y) d\y \right)\xi(\x,\y') g (\y')d\y'd\x  \\ 
     &=\int \xi(\x,\y') g (\y')d\y'd\x \\
     &=\int g (\y')d\y'. \qedhere
\end{align*}
\end{proof}

\begin{proof}[Proof of Proposition \ref{prop:FandFinf}]

We only show the assertion for $l=1$, the remaining cases $l \in \{2,3,4\}$ can be treated analogously. 

     To do so, we define the difference  
\begin{equation*}
 \Delta_{m,N}^{(1)} :=\frac{1}{m} \sum_{j=1}^m F_{\infty,y}(\Y_j) - F_{N,y}(\Y_j)= \frac{1}{m} \sum_{j=1}^m\sum_{k=N+1}^\infty  \bigg(((\AP\AQ)^{k} \AP \bar \eta_{\x})(\Y_j)
    \bigg)
\end{equation*}
and introduce the function $$ H_N^{(1)}(\y)=\sum_{k=N+1}^\infty  \left(((\AP\AQ)^{k} \AP \bar\eta_{\x})(\y)
    \right).$$ Based on \eqref{eq:ExpectationExchange} we observe for any $j \in \{1, \dots, m\}$ and $N\in \NN$ that $\E[H_N^{(1)}(\Y_j)]=0$.
Further, since $\Y_1, \dots, \Y_m$ are independent and identically distributed it holds that 
\begin{equation}
    \label{DeltaLeq}
    \E((\Delta_{m,N}^{(1)})^2)= \frac{1}{m} \E[H_N^{(1)}(\Y_1)^2],
\end{equation}
where by triangle inequality we obtain
\begin{align*}
    \E(H_N^{(1)}(\Y_1)^2)^{1/2} & = \norm{\sum_{k=N+1}^\infty  ((\AP\AQ)^{k} \AP \bar \eta_{\x})}_{L^2(\mQ)} \\
    &\leq  \sum_{k=N+1}^\infty \norm{  (\AP\AQ)^{k} \AP \bar\eta_{\x}}_{L^2(\mQ)} \leq \sum_{k=N+1}^\infty \norm{\AP\AQ}^{k} \norm{ \AP \bar\eta_{\x}}_{L^2(\mQ)}.
\end{align*}
A straight-forward computation yields \begin{align*}
     \norm{ (\AP \bar\eta_{\x})}_{L^2(\mQ)}^2  %
 &= \int \left(\int \xipq(\x,\y) \bar\eta_\x(\x) d\mP(\x) \right)^2d\mQ(\y) \\
 &= \int \left(\int \bar\eta_\x(\x) d\mP(\x) \right)^2d\mQ(\y) = \left(\int \bar\eta_\x(\x) d\mP(\x) \right)^2 \\
 &= \left(\iint \bar\eta(\x,\y)\xipq(\x,\y) d \mQ(\y) d\mP(\x) \right)^2 \\
 &\leq \left(\iint \bar\eta^2(\x,\y)d\pi_{\mP,\mQ}(\x,\y) \right) = \norm{\bar\eta}_{L^2(\pi_{\mP,\mQ})}^2,
\end{align*}
where we use Jensen's inequality for the last line.
Combined with Lemma \ref{lemma:empiricalInverse} we have 
\begin{align*}
\E(H_N^{(1)}(\Y_j)^2) &\leq \left(\|\bar\eta \|_{L^2(\pi_{\mP,\mQ})}\sum_{k=N+1}^\infty \delta(c)^k\right)^2= \|\bar\eta \|_{L^2(\pi_{\mP,\mQ})}^2 \left(\frac{\delta(c)^{N+1}}{1-\delta(c)}\right)^2.
\end{align*}
Therefore, $\E(H_N^{(1)}(\Y_j)^2)\to 0$ as $N\to \infty$. Via \eqref{DeltaLeq}, we obtain 
$$  \frac{n\, m}{n+m}\E((\Delta_{m,N}^{(1)})^2)\leq \frac{n}{n+m} \E(H_N^{(1)}(\Y_j)^2) \to 0,  $$
as $\frac{n}{n+m} \to 1-\lambda\in (0,1) $ and $N\to \infty$. 
\end{proof}

\begin{proof}[Proof of Proposition \ref{prop:HajekProjection}]
    Again we focus on the first term $\U_{n,m,N}^{(1)}$.
We need to prove that
\begin{align}\label{eq:ApproxUnm}
    \sqrt{\frac{nm}{n+m}} \left(\U_{n,m,N}^{(1)} - \E[\U_{n,m,N}^{(1)}] - \frac{1}{m}  \sum_{j=1}^m  F^{(1)}_{N,y}(\Y_j)   \right)  = o_p(1)
\end{align}
for $n,m=m(n) \rightarrow \infty$ with $\frac{m}{n+m}\rightarrow \lambda \in (0,1)$ and $N=N(n,m) = \Nnm$. To this end, we first assume that $\Var[F^{(1)}_{\infty,y}(\Y)]>0$, then by Proposition \ref{prop:FandFinf} for $N$ sufficiently large it also follows that $$\Var\left[F^{(1)}_{N,y}(\Y)\right] = m \Var\left[\frac{1}{ m} \sum_{i=1}^m  F_{N}(\Y_i)\right] >0.$$
For this setting, it suffices by \citet[Theorem 11.2]{vaart_1998} and Slutsky's Lemma to verify that  
\begin{equation}
    \label{variancesame}
    \frac{\Var[\U_{n,m,N}]}{  \Var  \left[  \frac{1}{ m} \sum_{i=1}^m  F_{N}(\Y_i)\right]}\to  1.
\end{equation}
Notably, by  Proposition \ref{prop:writeFproperly} it holds $\E\left[   F^{(1)}_{N,y}(\Y)\right] = 0$, hence we note that 
\begin{align}
    \begin{split}
         \label{varianceofProjection}
     \Var  \left[  \frac{1}{ m} \sum_{j=1}^m F^{(1)}_{N,y}(\Y_j)\right] &= \E \left[\left(  \frac{1}{ m} \sum_{j=1}^m F^{(1)}_{N,y}(\Y_j)\right)^2\right]\\
     &=\frac{1}{m} \sum_{\substack{k = 0\\ r = 0}}^N\E \left[(\AP\AQ)^{k} \AP \bar\eta_{\x})(\Y_1)(\AP\AQ)^{r} \AP \bar\eta_{\x})(\Y_1)\right].
    \end{split}
\end{align}
Moreover, we recall that  
\begin{align*}
 \U_{n,m,N}^{(1)}=  \sum_{k = 0}^{N} \frac{1}{n^{k+1}\, m^{k+2}} \sum_{\substack{\mathbf{i}\in \mathcal{S}_{k+1,n}\\ {\mathbf{j}\in \mathcal{S}_{k+2,m}}}}
  f_k^{(1)}( \X_{\mathbf{i}},\Y_{\mathbf{j}})
\end{align*}
and note by \eqref{eq:condtionalexp} that
$$ \E [ \U_{n,m,N}^{(1)}]=\sum_{k = 0}^{N} \frac{1}{n^{k+1}\, m^{k+2}} \sum_{\substack{\mathbf{i}\in \mathcal{S}_{k+1,n}\\ {\mathbf{j}\in \mathcal{S}_{k+2,m}}}}
 \E[ f_k^{(1)}( \X_{\mathbf{i}},\Y_{\mathbf{j}})]=0.$$
 In consequence, we have  
 \begin{multline*}
    \Var[\U_{n,m,N}^{(1)}] =  \E [(\U_{n,m,N}^{(1)})^2]= \\\sum_{\substack{k=0\\ r=0}}^N \frac{1}{(  n^{k+1}\, m^{k+2}) (  n^{r+1}\, m^{r+2})}\sum_{\substack{\mathbf{i}, \mathbf{i}'\in \mathcal{S}_{k+1,n}\\ {\mathbf{j}, \mathbf{j}'\in \mathcal{S}_{k+2,m}}}}
 \E[ f_k^{(1)}( \X_{\mathbf{i}},\Y_{\mathbf{j}})f_r^{(1)}( \X_{\mathbf{i}'},\Y_{\mathbf{j}'})].
 \end{multline*}
 We observe that 
 $$ \E\left[ f_k^{(1)}( \X_{\mathbf{i}},\Y_{\mathbf{j}})f_r^{(1)}( \X_{\mathbf{i}'},\Y_{\mathbf{j}'})\right]=0, $$
 if ${j_{k+2}'}\neq {j_{k+2}} $. We denote the set of $(k+1)(k+2)(r+1)(r+2)$-tuples $\left(\begin{array}{c}
      (\mathbf{i}, \mathbf{i}')  \\
     (\mathbf{j}, \mathbf{j}')
 \end{array}\right)$, with $j_{k+2}'=j_{k+2}$ as $\mathcal{R}$.  Then we can restrict the precedent sum to such a set, i.e., 
 \begin{multline*}
    \Var[\U_{n,m,N}^{(1)}] =  \E [(\U_{n,m,N}^{(1)})^2]= \\\sum_{\substack{k=0\\ r=0}}^N \frac{1}{(  n^{k+1}\, m^{k+2}) (  n^{r+1}\, m^{r+2})}\sum_{\left(\begin{array}{c}
      (\mathbf{i}, \mathbf{i}')  \\
     (\mathbf{j}, \mathbf{j}')
 \end{array}\right)\in \mathcal{R}}
 \E[f_k^{(1)}( \X_{\mathbf{i}},\Y_{\mathbf{j}})f_r^{(1)}( \X_{\mathbf{i}'},\Y_{\mathbf{j}'})].
 \end{multline*}
The set $\mathcal{T}$ of $(k+1)(k+2)(r+1)(r+2)$-tuples $\left(\begin{array}{c}
      (\mathbf{i}, \mathbf{i}')  \\
     (\mathbf{j}, \mathbf{j}')
 \end{array}\right)$ such that $\mathbf{i},\mathbf{i}'\in \mathcal{S}_{k+1,n}$, $\mathbf{j},\mathbf{j}'\in \mathcal{S}_{k+2,m}$ and 
 $$ \{i_1, \dots, i_{k+1}, j_{1}', \dots, j_{k+1}\}\cap \{i_1', \dots, i_{r+1}', j_{1}', \dots, j_{r+1}' \}=\emptyset$$
  with $j_{k+2}=j'_{k+2}$ admits cardinality (note that in both tuples the term $j_{k+2}$ agrees) 
 \begin{align*}t(n,m,k,r) &= n (n-1) \cdots (n-r-k-1)m(m-1) \cdots (m-r-k-2).
 \end{align*}
 Its complement intersected with $\mathcal{R}$, call it $ \mathcal{T}^c\cap \mathcal{R}$, admits cardinality upper bounded by %
 \begin{align*}
     & n^{k+r+2}m^{k+r+3} - t(n,m,k,r) \\
    \leq  \; & n^{k+r+2}m^{k+r+2} (k+r+2)^2 + n^{k+r+1} m^{k+r+3} (k+r+1)^2 ,
 \end{align*}
 which yields that 
 \begin{equation}
     \label{cardinalofT}
     \text{ $t(n,m,k,r) = (1-o(1)) n^{k+r+2}m^{k+r+3}$ if $k,r \leq  \lceil\log\{(n\,m)/(n+m)\}\rceil$.}
 \end{equation}
 Now, if $\left(\begin{array}{c}
      (\mathbf{i}, \mathbf{i}')  \\
     (\mathbf{j}, \mathbf{j}')
 \end{array}\right)\in \mathcal{T}$, since $\X_1, \dots, \X_n\sim \mP$ and $\Y_1,\dots, \Y_m\sim \mQ$ are independent, the value of the expectation
 $$ \E\left[ f_k^{(1)}( \X_{\mathbf{i}},\Y_{\mathbf{j}})f_r^{(1)}( \X_{\mathbf{i}'},\Y_{\mathbf{j}'})\right]$$
 does not depend on $\X_{\mathbf{i}},\Y_{\mathbf{j}},  \X_{\mathbf{i}'},\Y_{\mathbf{j}'}$, but only on $ j_{k+2}=j'_{k+2}$. That is, the value of the precedent display is 
 \begin{align*}
\mathbb{M}_{k,r}&=\E\left[f_k(\X_{\mathbf{i}},\Y_{\mathbf{j}}) f_r(\{ \X_{i}\}_{i=k+2}^{k+r+3},\{ \Y_{k+3}, \dots \Y_{k+r+4} , \Y_{k+2}\})\right]\\
     &=\E \left[(\AP\AQ)^{k} \AP \bar\eta_{\x})(\Y_1)(\AP\AQ)^{r} \AP \bar\eta_{\x})(\Y_1)\right].   
 \end{align*}
Hence, 
\begin{equation}
    \label{eq:varianceIndep}
    \sum_{\substack{k=0\\ r=0}}^N \frac{1}{m^{k+r+4} n^{k+r+2}} \sum_{\left(\begin{array}{c}
      (\mathbf{i}, \mathbf{i}')  \\
     (\mathbf{j}, \mathbf{j}')
 \end{array}\right)\in \mathcal{T}}
 \E[ f_k^{(1)}( \X_{\mathbf{i}},\Y_{\mathbf{j}})f_r^{(1)}( \X_{\mathbf{i}'},\Y_{\mathbf{j}'})] 
 = \sum_{\substack{k=0\\ r=0}}^N \frac{t(m,n,r,k)}{m^{k+r+4} n^{k+r+2}}\mathbb{M}_{k,r}.
\end{equation}
 Recall by Lemma \ref{lem:regularity} that $\vert f_k(\X_{\mathbf{i}},\Y_{\mathbf{j}}) \vert \leq \cK(c)^{2k}$ for $\cK(c)>1$  and thus
\begin{align*}
 \sum_{\substack{k=0\\ r=0}}^N \frac{1}{m^{k+r+4} n^{k+r+2}} &\sum_{\left(\begin{array}{c}
      (\mathbf{i}, \mathbf{i}')  \\
     (\mathbf{j}, \mathbf{j}')
 \end{array}\right)\in \mathcal{T}^c\cap \mathcal{R}}
 \E[ f_k^{(1)}( \X_{\mathbf{i}},\Y_{\mathbf{j}})f_r^{(1)}( \X_{\mathbf{i}'},\Y_{\mathbf{j}'})]\\
 & \leq \left| \sum_{\substack{k=0\\ r=0}}^N \frac{m^{k+r+3} n^{k+r+2} - t(m,n,r,k)}{m^{k+r+3} n^{k+r+2}} \cK(c)^{2(k+r)}\right|\\
 & \leq   \left(\frac{1}{n\, m}  +\frac{1}{m^2}\right)N^2 \sum_{\substack{k=0\\ r=0}}^N \cK(c)^{2(k+r)} =  \left(\frac{1}{n\, m}  +\frac{1}{m^2}\right)N^2 \left(\frac{\cK(c)^{2N} - 1}{\cK(c)^2-1}\right)^2 \\ &\leq  \left(\frac{1}{n\, m}  +\frac{1}{m^2}\right)N^2 2\left(\frac{\cK(c)^{4N} + 1}{\cK(c)^4- 2\cK(c)+1}\right).
\end{align*}
Hence, on the one hand, by choosing $N= N(n,m) = \Nnm$ we find that 
\begin{equation*}
 \sum_{\substack{k=0\\ r=0}}^N \frac{1}{m^{k+r+4} n^{k+r+2}} \sum_{\left(\begin{array}{c}
      (\mathbf{i}, \mathbf{i}')  \\
     (\mathbf{j}, \mathbf{j}')
 \end{array}\right)\in \mathcal{T}^c\cap \mathcal{R}}
 \E[ f_k^{(1)}( \X_{\mathbf{i}},\Y_{\mathbf{j}})f_r^{(1)}( \X_{\mathbf{i}'},\Y_{\mathbf{j}'})]=o\left(\frac{n+m}{mm}\right).
\end{equation*}
On the other hand,
\begin{align*}
&\sum_{\substack{k=0\\ r=0}}^N \frac{1}{m^{k+r+4} n^{k+r+2}} \sum_{\left(\begin{array}{c}
      (\mathbf{i}, \mathbf{i}')  \\
     (\mathbf{j}, \mathbf{j}')
 \end{array}\right)\in \mathcal{T}}
 \E[ f_k^{(1)}( \X_{\mathbf{i}},\Y_{\mathbf{j}})f_r^{(1)}( \X_{\mathbf{i}'},\Y_{\mathbf{j}'})] \\
&= \frac{1}{m^2} \sum_{\substack{k = 0\\ r = 0}}^N\E \left[(\AP\AQ)^{k} \AP \bar\eta_{\x})(\Y_1)(\AP\AQ)^{r} \AP \bar\eta_{\x})(\Y_1)\right]+ o\left(\frac{n+m}{mm}\right)\\
& = \Var  \left[  \frac{1}{ m} \sum_{j=1}^m F^{(1)}_{N,y}(\Y_j)\right] + o\left(\frac{n+m}{mm}\right),
\end{align*}
 where the first equality is consequence of \eqref{eq:varianceIndep} and \eqref{cardinalofT} while the others are consequence of \eqref{varianceofProjection}.
Therefore, \eqref{variancesame} holds via
\begin{align}
     \Var[\U_{m,N}^2]&=\sum_{\substack{k=0\\ r=0}}^N \frac{1}{m^{k+r+4} n^{k+r+2}} \sum_{\left(\begin{array}{c}
      (\mathbf{i}, \mathbf{i}')\notag  \\ \notag 
     (\mathbf{j}, \mathbf{j}')
 \end{array}\right)\in \mathcal{T}^c\cap \mathcal{R}}
 \E[ f_k^{(1)}( \X_{\mathbf{i}},\Y_{\mathbf{j}})f_r^{(1)}( \X_{\mathbf{i}'},\Y_{\mathbf{j}'})]
 \\& \qquad \qquad+\sum_{\substack{k=0\\ r=0}}^N \frac{1}{m^{k+r+4} n^{k+r+2}} \sum_{\left(\begin{array}{c}
      (\mathbf{i}, \mathbf{i}') \\
     (\mathbf{j}, \mathbf{j}')
 \end{array}\right)\in \mathcal{T}}
 \E[ f_k^{(1)}( \X_{\mathbf{i}},\Y_{\mathbf{j}})f_r^{(1)}( \X_{\mathbf{i}'},\Y_{\mathbf{j}'})]\notag\\
\label{eq:varianceApproximation} &= \Var  \left[  \frac{1}{ m} \sum_{j=1}^m F^{(1)}_{N,y}(\Y_j)\right] + o\left(\frac{n+m}{mm}\right).
\end{align}

Moreover, in case $\Var[F^{(1)}_{\infty,y}(\Y)] = 0$, it follows that $\lim_{N\rightarrow \infty} \Var[F^{(1)}_{N,y}(\Y)] = 0$. Hence, as  equality \eqref{eq:varianceApproximation} still remains valid, the convergence in \eqref{eq:ApproxUnm} follows. 
\end{proof}

\end{document}